\author{Eske Ewert}
\title[\(\Psi\)DOs on filtered manifolds as generalized fixed points] {Pseudodifferential operators on filtered manifolds as generalized fixed points}
\address[Eske Ewert]{Institut für Analysis, Leibniz Universität Hannover, Welfengarten 1,
	30167 Hannover}
\newtheorem{theorem}{Theorem}[section]
\newtheorem{lemma}[theorem]{Lemma}
\newtheorem{proposition}[theorem]{Proposition}
\newtheorem{corollary}[theorem]{Corollary}
\theoremstyle{definition}
\newtheorem{definition}[theorem]{Definition}
\theoremstyle{remark}
\newtheorem{remark}[theorem]{Remark}
\newtheorem{example}[theorem]{Example}
\setlist[enumerate,1]{label=\textup{(\roman*)}}
\setlist[enumerate,2]{label=\textup{(\alph*)}}
\newcommand*{\defeq}{\mathrel{\vcentcolon=}}
\DeclarePairedDelimiter{\abs}{\lvert}{\rvert}
\DeclarePairedDelimiter{\norm}{\lVert}{\rVert}
\DeclarePairedDelimiterX{\braket}[2]{\langle}{\rangle}{#1\,\delimsize\vert\,\mathopen{}#2}
\DeclarePairedDelimiterX{\braketop}[3]{\langle}{\rangle}{#1\,\delimsize\vert #2\delimsize\vert\,\mathopen{}#3}
\DeclarePairedDelimiterX{\BRAKET}[2]{\langle}{\rangle}{\!\delimsize\langle#1\,\delimsize\vert\,\mathopen{}#2\delimsize\rangle\!}
\DeclarePairedDelimiter{\BRA}{\langle\!\langle}{\rvert}
\DeclarePairedDelimiter{\KET}{\lvert}{\rangle\!\rangle}
\newcommand{\idealin}{\mathrel{\triangleleft}} 
\newcommand{\properideal}{%
\mathrel{\ooalign{$\lneq$\cr\raise.22ex\hbox{$\lhd$}\cr}}}
\newcommand*{\injto}{\hookrightarrow}
\newcommand*{\acts}{\curvearrowright}
\newcommand*{\Id}{\mathrm{id}}
\newcommand*{\ev}{\mathrm{ev}}
\newcommand{\rank}{\mathrm{rank}}
\DeclareMathOperator{\supp}{supp}
\newcommand*{\Mult}{\mathcal M}
\newcommand*{\Uni}{\mathcal U}
\DeclareMathOperator{\Op}{Op}
\DeclareMathOperator{\cp}{{cp}}
\DeclareMathOperator{\End}{End}
\newcommand{\W}{\mathbb W}
\newcommand{\U}{\mathbb U}
\newcommand{\V}{\mathbb V}
\newcommand{\J}{\mathbb J}
\newcommand{\F}{\mathbb F}
\newcommand{\E}{\mathbb E}
\newcommand{\f}{\mathbb f}
\newcommand{\D}{\mathbb D}
\newcommand{\C}{\mathbb{C}}
\renewcommand{\t}{\mathbb{t}}
\newcommand{\distru}{\mathbb{u}}
\newcommand{\N}{\mathbb{N}}
\newcommand{\Z}{\mathbb{Z}}
\newcommand{\R}{\mathbb{R}}
\newcommand{\Q}{\mathbb{Q}}
\newcommand{\T}{\mathbb{T}}
\newcommand{\Rp}{{\mathbb{\R}_{>0}}}
\DeclareMathOperator{\ind}{ind}
\renewcommand{\P}{\mathbb{P}}
\newcommand{\Pseu}{\mathbb{\Psi}}
\newcommand*{\Hils}[1][H]{\mathcal #1}
\newcommand{\Comp}{\mathbb K}
\newcommand{\Bound}{\mathbb B}
\newcommand*{\Cont}{\mathrm C}
\newcommand{\Schwartz}{\mathcal{S}}
\newcommand{\Smooth}{\mathcal{E}}
\newcommand*{\princ}[1][]{{s_H^{#1}}}
\newcommand*{\Princ}[1][]{{S_H^{#1}}}
\newcommand{\Four}{\mathcal{F}}
\newcommand{\grpd}{\mathcal{G}}
\newcommand{\kernel}{\mathcal{K}}
\newcommand{\Ess}{\mathrm{Ess}}
\newcommand{\A}{\mathcal{A}}
\newcommand*{\Fix}{\mathrm{Fix}}
\newcommand{\triv}{\mathrm{triv}} 
\newcommand{\Rel}{\mathcal{R}} 
\newcommand{\si}{\mathrm{si}} 
\DeclareMathOperator{\pr}{pr}
\DeclareMathOperator{\p}{p}
\newcommand*{\nb}{\nobreakdash}
\newcommand*{\Cst}{\mathrm C^*}
\newcommand*{\Cred}{\mathrm C^*_\mathrm r}
\newcommand*{\diff}{\,\mathrm{d}}
\newsavebox\myboxA
\newsavebox\myboxB
\newlength\mylenA
\newcommand*\xoverline[2][0.75]{%
\sbox{\myboxA}{$\m@th#2$}%
\setbox\myboxB\null
\ht\myboxB=\ht\myboxA%
\dp\myboxB=\dp\myboxA%
\wd\myboxB=#1\wd\myboxA
\sbox\myboxB{$\m@th\overline{\copy\myboxB}$}
\setlength\mylenA{\the\wd\myboxA}
\addtolength\mylenA{-\the\wd\myboxB}%
\ifdim\wd\myboxB<\wd\myboxA%
\rlap{\hskip 0.5\mylenA\usebox\myboxB}{\usebox\myboxA}%
\else
\hskip -0.5\mylenA\rlap{\usebox\myboxA}{\hskip 0.5\mylenA\usebox\myboxB}%
\fi}
\newcommand*{\cl}[1]{\xoverline{#1}}
\newcommand*{\conj}[1]{\overline{#1}} 
\newcommand*{\K}{\mathrm{K}}
\newcommand*{\KK}{\mathrm{KK}}
\newcommand{\lie}[1]{\mathfrak{#1}} 
\newcommand{\refcheckize}[1]{%
\expandafter\let\csname @@\string#1\endcsname#1%
\expandafter\DeclareRobustCommand\csname relax\string#1\endcsname[1]{%
\csname @@\string#1\endcsname{##1}\wrtusdrf{##1}}%
\expandafter\let\expandafter#1\csname relax\string#1\endcsname
}
\begin{document}
	
\begin{abstract}
	On filtered manifolds one can define a different notion of order for the differential operators. In this paper, we use generalized fixed point algebras to construct a pseudodifferential extension that reflects this behaviour. In the corresponding calculus, the principal symbol of an operator is a family of operators acting on certain nilpotent Lie groups. The role of ellipticity as a Fredholm condition is replaced by the Rockland condition on these groups. Our approach allows to understand this in terms of the representation of the corresponding algebra of principal symbols. Moreover, we compute the \(\K\)-theory of this algebra. 
\end{abstract}

\maketitle

\section{Introduction}
A filtered manifold is a smooth manifold \(M\) whose tangent bundle is filtered by subbundles \[0=H^0\subseteq H^1\subseteq H^2\subseteq \ldots\subseteq H^r=TM.\] Moreover, one requires that \([X,Y]\in \Gamma^\infty(H^{i+j})\) for all \(X\in\Gamma^\infty(H^i),Y\in\Gamma^\infty(H^j)\), where one sets \(H^{k}=TM\) for all \(k\geq r\). It is called a filtration is of step \(r\), if \(H^r=TM\) and \(H^{r-1}\neq TM\). Then one defines the order of \(X\in \Gamma^\infty(H^i)\setminus\Gamma^\infty(H^{i-1})\) to be \(i\) to obtain a new notion of order for the differential operators. For example, the contact structure of a contact manifold gives rise to a step~\(2\) filtration. In this case, one attaches order~\(2\) to the Reeb vector field, while the sections of the contact structure have order \(1\). Other examples are graded Lie groups, general Heisenberg manifolds or foliations. Also note that every smooth manifold can be understood as a filtered manifold of step \(1\).

This approach of different orders was first used to examine hypoelliptic operators like Hörmander's sum of squares operators, Kohn's Laplacian and other operators that do not fit into the classical pseudodifferential calculus, see \cites{follandstein1974, folland_subelliptic,rothschildstein}. Several  pseudodifferential calculi were defined to examine these operators, see for example \cites{christ1992pseudo,fischer2016quantization, taylornoncommutative} for graded Lie groups,  \cites{bealsgreiner} for Heisenberg manifolds or the unpublished manuscript \cites{melin} for general filtered manifolds. Recently, in noncommutative geometry a tangent groupoid approach was used to study these manifolds and their operators, see \cites{Baumvanerp,choi2017privilegedI,choi2017privilegedII, choi2019tangent, davehaller, davehaller-heat, vanerpcontactI, vanerpcontactII, erp2015groupoid, erp2017tangent, haj-higson, mohsen2018deformation, mohsen2020index, pongeheisenberg}.

A crucial difference to the usual differential calculus is that the highest order parts of operators do not necessarily commute. This is reflected by the fact that one can understand the highest order part to act as right-invariant operators on certain graded Lie groups, which are not necessarily Abelian. Namely, as the filtration is compatible with the Lie bracket, it induces a smooth family of Lie brackets on the fibres of the graded bundle
\[\lie{t}_HM\defeq \bigoplus_{i=1}^r H^i/H^{i-1}\to M.\]
The resulting Lie algebras in the fibres are nilpotent and integrate to the so called osculating Lie groups \(G_x\) for \(x\in M\). Together they form the bundle of osculating groups \(T_HM\). It is not a group bundle in the usual sense as the fibres can be non-isomorphic as groups. 

The Rockland conditions is useful to analyse the highest order part of an operator acting as right-invariant operators on the osculating groups. It was introduced by Rockland in \cite{rockland}, where he showed that a right-invariant differential operator \(D\) on a graded Lie group \(G\) is hypoelliptic if and only if \(D\) and its formal adjoint \(D^t\) satisfy the Rockland condition. Every irreducible unitary representation \(\pi\colon G\to \Uni(\Hils)\) induces an infinitesimal representation \(\diff \pi\) of the Lie algebra of \(G\) as (possibly unbounded) operators on \(\Hils\). Then one says that \(D\) satisfies the Rockland condition if  \(\diff\pi(D)\) is injective (in a suitable sense) for all representations \(\pi\) except for the trivial representation. This condition was generalized in \cite{christ1992pseudo} to right-invariant operators on \(G\) that appear as the highest order parts of pseudodifferential operators. 

Van Erp and Yuncken \cite{erp2015groupoid} defined a pseudodifferential calculus for filtered manifolds based on the tangent groupoid and a zoom action of \(\Rp\). This builds on the work of Debord and Skandalis~\cite{debordskandalis2014} who showed that the classical pseudodifferential calculus can be recovered from the zoom action on the tangent groupoid in the case where no filtration is present. The tangent groupoid of a filtered manifold was constructed in \cites{choi2019tangent,erp2017tangent,haj-higson,mohsen2018deformation} using different methods. As a set it is given by
\[\T_HM =T_HM\times\{0\}\cup M\times M\times(0,\infty).\]
It is in fact a smooth field of Lie groupoids over \([0,\infty)\), where one uses at \(t=0\) the group multiplication in the fibres and for \(t>0\) the pair groupoid structure. The zoom action of \(\Rp\) on \(\T_HM\) is given by
\begin{align*}
	\alpha_\lambda(x,y,t)&=(x,y,\lambda^{-1}t)&&\text{for }x,y\in M\text{, }t>0,\\
	\alpha_\lambda(x,v,0)&=(x,\delta_\lambda(v),0) &&\text{for }x\in M\text{, }v\in G_x.
\end{align*}
Here, \(\delta\) denotes the dilation action of \(\Rp\) on \(T_HM\) induced by \(\delta_\lambda(v)=\lambda^i v\) for \(\lambda>0\), \(v\in H^i_x/H^{i-1}_x\) and~\(x\in M\) to encode the new notion of order. 

In this article, we use a different approach to the pseudodifferential calculus using generalized fixed point algebras. This extends the construction in \cite{ewert2020pseudodifferential}, where we considered graded Lie groups. This is in nature closer to the approach of Debord and Skandalis in \cite{debordskandalis2014} where the pseudodifferential operators are obtained by averaging certain functions on the tangent groupoid over the zoom action.

Generalized fixed point algebras were introduced by Rieffel \cites{rieffel1998,rieffel1988} to define a noncommutative version of orbit spaces of proper group actions. When a locally compact group \(G\) acts on a locally compact Hausdorff space \(X\) properly, the functions on the orbit space \(\Cont_0(G\backslash X)\) can be understood as \(G\)-invariant multipliers of \(\Cont_0(X)\). For example, the principal symbols of classical pseudodifferential operators of order zero belong to \(\Cont_0(S^*M)\), where \(S^*M\) denotes the cosphere bundle. This is the generalized fixed point algebra of the \(\Rp\)-action on \(T^*M\setminus M\times\{0\}\) given by scaling in the cotangent direction. Here, one needs to take out the zero section of the cotangent bundle to obtain a proper action.

Now suppose \(G\) acts instead on a \(\Cst\)-algebra \(A\). Meyer gives in \cite{meyer2001} conditions under which one can build a generalized fixed point algebra \(\Fix^G(A)\) inside the \(G\)-invariant multiplier algebra of \(A\). Namely, one needs to find a continuously square-integrable subset \(\Rel\subset A\). Then one obtains elements of the generalized fixed point algebra by averaging elements of \(\Rel\) over the group action in an appropriate sense. Moreover, the constructions yields a Morita--Rieffel equivalence between~\(\Fix^G(A)\) and an ideal in the reduced crossed product \(\Cred(G,A)\). 

 To find such a subset \(\Rel\) for the zoom action on the groupoid \(\Cst\)-algebra of the tangent groupoid, we define an analogue of the Schwartz type algebra of Carillo Rouse \cite{rouseschwartz} adapted to the filtered setting. It consists of smooth functions on \(\T_HM\) such that \(f_t\) is compactly supported in \(M\times M\) for \(t>0\), whereas~\(f_0\) has rapid decay in the fibres of \(T_HM\). 
Furthermore, we need to take a zoom-invariant ideal \(\J\) in~\(\Cst(\T_HM)\) for the generalized fixed point algebra construction. This corresponds in the unfiltered case to taking out the zero section under the Fourier transform \(\Cst(TM)\to\Cont_0(T^*M)\) at~\(t=0\). For filtered manifolds, \(\J\) consists of all elements that restricted to \(t=0\) and \(x\in M\) lie in the kernel of the trivial representation \(\pi_\triv\colon G_x\to\C\) of the osculating group. Evaluation at \(t=0\) leads to a short exact sequence of \(\Cst\)-algebras with compatible \(\Rp\)\nb-actions
\begin{equation*}
	\begin{tikzcd}
		\Comp(L^2M)\otimes\Cont_0(\Rp) \arrow[r,hook] & \J\arrow[r,twoheadrightarrow,"\ev_0"] & \J_0.
	\end{tikzcd}
\end{equation*}
We show that one can find continuously square-integrable subsets for them such that there is a corresponding extension of generalized fixed point algebras
\begin{equation*}
	\begin{tikzcd}
		\Comp(L^2M) \arrow[r,hook] & \Fix^\Rp(\J)\arrow[r,twoheadrightarrow,"S_H"] & \Fix^\Rp(\J_0).
	\end{tikzcd}
\end{equation*}
We call it a pseudodifferential extension and \(S_H\) the principal symbol map. In fact, we show that the sequence is the \(\Cst\)-completion of the order zero extension of~\cite{erp2015groupoid}. 

Consequently, a pseudodifferential operator \(P\) of order zero on a compact filtered manifold is Fredholm if and only if its principal symbol~\(S_H(P)\) is invertible in \(\Fix^\Rp(\J_0)\). The principal symbol algebra \(\Fix^\Rp(\J_0)\) is a continuous field of \(\Cst\)-algebras over \(M\) where the fibre over \(x\in M\) is \(\Fix^\Rp(\ker(\pi_\triv\colon G_x\to \C))\). To obtain a more concrete Fredholm condition, we compute the spectrum of the latter \(\Cst\)-algebras. This yields a natural way to show that invertibility in the principal symbol algebra is equivalent to the Rockland condition on all osculating groups. Namely, for every \(x\in M\) and every non-trivial unitary irreducible representation \(\pi\colon G_x\to\Uni(\Hils)\) of \(G_x\) one can define an operator \(\pi(S_H(P)_x)\in\Bound(\Hils)\). The Rockland condition asks that \(\pi(S_H(P)_x)\in\Bound(\Hils)\) is invertible for all \(\pi\neq\pi_\triv\) and \(x\in M\). In case of a step \(1\) filtration, all osculating groups are isomorphic to \((\R^{\dim M},+)\) and we recover the well-known ellipticity condition that the principal symbol~\(p\) satisfies \(p(x,\xi)\neq 0\) for all \(\xi\neq 0\) and \(x\in M\). 

Our approach allows to show that the algebra of principal cosymbols \(\Fix^\Rp(\J_0)\) is \(\KK\)\nb-equivalent to the usual algebra of principal symbols \(\Cont_0(S^*M)\). This uses the mentioned Morita--Rieffel equivalence with an ideal in \(\Cred(\Rp,\J_0)\) coming from the generalized fixed point algebra construction. Then we use results from \cite{ewert2020pseudodifferential} to deduce that it is is fact Morita--Rieffel equivalent to the whole crossed product. Now, one can use a deformation to the Abelian case by scaling the Lie brackets to zero and the Connes--Thom isomorphism to relate it to the unfiltered case. Lastly, we show that the index problem amounts to inverting the Connes--Thom isomorphism, which was shown in the contact manifold case in \cite{Baumvanerp}, see also \cite{mohsen2020index} for  filtered manifolds. 
 
 This article is organised as follows. The definition of a filtered manifold and its tangent groupoid is recalled in \cref{sec:filtered_groupoid}. Its groupoid \(\Cst\)-algebra is introduced in \cref{sec:groupoid_c_star}. In \cref{sec:schwartztype} the Schwartz type algebra is defined. The pseudodifferential extension is built as an extension of generalized fixed point algebras in \cref{sec:gfpa}. In \cref{sec:principal_symbol} we show that the resulting algebra of principal symbols is a field over \(M\) whose fibres consist of operators of type zero on the osculating groups. We compare our construction to the calculus by van Erp and Yuncken in \cref{sec:comparison} . The results on the Morita equivalences are shown in \cref{sec:morita}. In \cref{sec:k-theory_index} we describe the \(\K\)-theory of the principal symbol algebra and show that index problem reduces to the Atiyah--Singer Index Theorem. 
 
 The results from this article are also contained in the author's PhD thesis \cite{ewert2020index}. 
 
\subsection*{Acknowledgements} The author would like to thank her PhD advisors Ralf Meyer and Ryszard Nest. Moreover, she would like to thank Thomas Schick, Elmar Schrohe and Niels Martin M\o ller for helpful remarks on her thesis. 
\section{Filtered manifolds and their groupoids}\label{sec:filtered_groupoid}
First, we recall the definition of a filtered manifold, its tangent groupoid and zoom action.
	\begin{definition}[\cite{tanaka}]
	A \emph{filtered manifold} \((M,H)\) is a smooth manifold with a filtration of its tangent bundle \(0=H^0\subseteq H^1 \subseteq H^2 \subseteq\ldots\subseteq H^r=TM\) consisting of smooth subbundles satisfying
	\begin{align}\label{eq:filtration} 
	\left[\Gamma^\infty(H^i),\Gamma^\infty(H^j)\right]\subseteq \Gamma^\infty(H^{i+j}) \quad \text{for all }i,j\geq 0.
	\end{align}
	Here, we set \(H^i=TM\) for all \(i\geq r\). A manifold is filtered of \emph{step \(r\)}, if \(H^r=TM\) and \(H^{r-1}\neq TM\). 
\end{definition} 
A filtered manifold of step \(r=1\) is just the data of an ordinary smooth manifold. A contact manifold is an example of a filtered manifold of step \(2\). \begin{example}\label{ex:filtered_graded_group}
	A graded Lie group \(G\) of step \(r\) is a Lie group whose Lie algebra admits a grading
	\[\lie{g}=\bigoplus_{i=1}^r\lie{g}_i\] 
	with \(\lie{g}_r\neq0\) and \([\lie{g}_i,\lie{g}_j]\subset\lie{g}_{i+j}\) for all \(i,j\geq 0\). Again, one sets \(\lie{g}_i=0\) for \(i>r\). In particular, the Lie algebra \(\lie{g}\) is nilpotent and, consequently, the group \(G\) as well. For more details on graded Lie groups and the analysis on these groups we refer to \cites{folland1982homogeneous,fischer2016quantization}.
	Every graded Lie group of step \(r\) can be understood as a filtered manifold of step \(r\) in the following way.  Let~\(n_i=\dim\lie{g}_i\). Define a basis \(\{X_1,\ldots,X_n\}\) of \(\lie{g}\) by choosing a basis \(\{X_{n_{i-1}+1},\ldots,X_{n_i}\}\) for all \(\lie{g}_i\). Extend these to right-invariant vector fields \(\{X_1,\ldots,X_n\}\) on \(G\) and define \(H^i\) to be the subbundle spanned by  \(\{X_1,\ldots,X_{n_i}\}\). This defines a filtration of the tangent bundle of \(G\).
\end{example} For a broader overview on different types of filtered manifolds appearing in various areas of mathematics see \cite{choi2017privilegedI}*{Section 2.3}. 

\subsection{The osculating groupoid}
The filtration of the tangent bundle of a filtered manifold \((M,H)\) of step \(r\) allows to define the graded vector bundle
\[ \bigoplus_{i=1}^r H^i/H^{i-1}\to M.\]
This bundle can be equipped with the structure of a Lie algebroid over \(M\), which we will denote by \(\lie{t}_HM\). For the general theory of Lie algebroids and Lie groupoids see, for example, \cite{mackenzie}. The bracket 
\[[\,\cdot\,,\cdot\,]\colon \Gamma^\infty(\lie{t}_HM)\times\Gamma^\infty(\lie{t}_HM) \to \Gamma^\infty(\lie{t}_HM)\]
is induced by the Lie bracket of vector fields on \(TM\). Let \(X\in\Gamma^\infty(H^i)\) and \(Y\in\Gamma^\infty(H^j)\) be representatives of \(\langle X\rangle\in\Gamma^\infty(H^i/H^{i-1})\) and \(\langle Y\rangle \in\Gamma^\infty(H^j/H^{j-1})\) and set
\[ [\langle X\rangle ,\langle Y\rangle]=\langle[X,Y]\rangle .\]
The condition~\eqref{eq:filtration} ensures that this is well-defined. The anchor \(\lie{t}_HM\to TM\) is given by the zero map. 
Therefore, the bracket restricts to each fibre \((\lie{t}_HM)_x\) for \(x\in M\) and turns \((\lie{t}_HM)_x\) into a graded Lie algebra. It is nilpotent by condition \eqref{eq:filtration} and the fact that \(H^i/H^{i-1}=0\) for \(i>r\).

The Lie algebroid \(\lie{t}_HM\) integrates to a Lie groupoid \(T_HM\) over \(M\) (see \cite{erp2017tangent}*{Sec.~3, Sec.~8}). As a manifold, it is the graded vector bundle \(\bigoplus_{i=1}^r H^i/H^{i-1}\). Its source and range map are the base projection.
\begin{definition}
	For \(x\in M\) denote by \(G_x\) the simply connected Lie group of~\((\lie{t}_HM)_x\) and call it the \emph{osculating group} at \(x\in M\). 
\end{definition}

Every osculating group is a graded Lie group of step \(r\). The group multiplication of \(G_x\) is uniquely determined in terms of the Lie bracket by the Baker--Campbell--Hausdorff formula (see, for example, \cite{corwin1990representations}*{1.2.1}). The groupoid multiplication in \(T_HM\) is given by the group product in the fibres. As the brackets vary smoothly along \(M\), this defines a Lie groupoid multiplication. The Lie groupoid \(T_HM\) is called the \emph{osculating groupoid} or the \emph{bundle of osculating groups} in \cite{erp2017tangent}. In \cite{choi2019tangent} it is called the \emph{tangent group bundle}. However, \(T_HM\) is in general not a group bundle in the sense of principal bundles as the group structure might vary from point to point.
\begin{example}
	Let \(M=\R^3\) and define three vector fields
	\[X=\partial_x+y^2\partial_z\text{,}\quad Y=\partial_y\quad\text{and}\quad Z=\partial_z.\]
	Let \(H\subset T\R^3\) be the subbundle spanned by \(X,Y\). One computes \([X,Y]=-2yZ\). Therefore, the osculating groups are Abelian whenever \(y=0\), while they are isomorphic to the Heisenberg group for \(y\neq 0\).
\end{example}
\begin{example}
	In the step \(r=1\) case and \(x\in M\), the osculating group \(G_x\) is the tangent space \(T_xM\) with group operation being the addition of tangent vectors. Hence, all osculating groups are isomorphic to the Abelian group \(\R^{\dim(M)}\). 
\end{example}

\begin{example}For a contact manifold of dimension \(2k+1\) all osculating groups are isomorphic to the \((2k+1)\)-dimensional Heisenberg group. 
\end{example}
\begin{example}
	When \(G\) is a graded Lie group, understood as a filtered manifold as in \cref{ex:filtered_graded_group}, all osculating groups are isomorphic to \(G\).
\end{example}	

\begin{definition}\label{def:dilation_osculating}
	The \emph{dilation action} of \(\Rp\) on \(\lie{t}_HM\) is defined by \(D_\lambda( v )\defeq\lambda^i v \) for \( v \in H^i_x/H^{i-1}_x\). It integrates to an action on \(T_HM\), which we denote by \(\delta_\lambda(\xi)=\lambda\cdot \xi\) for \(\xi\in G_x\). 
\end{definition}

From now on, we will always assume that the bundles \(H^i\) of the filtration of a filtered manifold~\((M,H)\) have constant rank, which is automatic if \(M\) is connected. 
\begin{definition}
	The \emph{homogeneous dimension} of a filtered manifold \((M,H )\) is 
	\[d_H \defeq \sum_{i=1}^r {i\left(\rank\left(H^i\right)-\rank\left(H^{i-1}\right)\right)}.\]
	In the following we denote by \(n\) the dimension of \(M\) as a manifold. Then the \emph{weight sequence} of \((M,H)\) is defined as
	\[(q_1,\ldots,q_n)=(1,\ldots,1,2,\ldots,2,\ldots,r,\ldots,r),\]
	where each \(1\leq i \leq r\) occurs \((\rank(H^i)-\rank(H^{i-1}))\)-times. 
\end{definition}
\begin{remark}
The homogeneous dimension of a filtered manifold is the homogeneous dimension of all osculating groups as defined in \cite{folland1982homogeneous}. 
\end{remark}
Assigning to a filtered manifold \((M,H)\) its osculating groupoid \(T_HM\) is functorial, when considering the following morphisms.
\begin{definition}
	A \emph{filtered manifold map} or \emph{Carnot map} \(f\colon(M_1,H_1)\to (M_2,H_2)\) is a smooth map between filtered manifolds \((M_1,H_1)\) and \((M_2,H_2)\) such that
	\begin{align}\label{eq:Carnot_map}
	\diff f(H_1^i)\subset H_2^i \quad\text{for all }i\in\N.
	\end{align}
	A \emph{Carnot diffeomorphism} is a diffeomorphism \(f\colon M_1\to M_2\) such that \(f\) and \(f^{-1}\) are filtered manifold maps. 
\end{definition}	
Let \(f\colon (M_1,H_1)\to (M_2,H_2)\) be a Carnot map. Condition \eqref{eq:Carnot_map} ensures that there is a well-defined induced vector bundle morphism \(\lie{t}f\colon \lie{t}_{H_1}M_1\to \lie{t}_{H_2}M_2\), which satisfies
\[[\lie{t}f(X),\lie{t}f(Y)]=\lie{t}f([X,Y])\quad \text{for }X,Y\in \Gamma^\infty(\lie{t}_{H_1}M_1).\]
Consequently, using the exponential maps, one obtains a Lie groupoid homomorphism between the osculating groupoids \(Tf\colon T_{H_1}M_1\to T_{H_2}M_2\). It restricts in each fibre to a homomorphism of graded Lie groups \(T_xf\colon G_x\to G_{f(x)}\).	The map \(Tf\) is equivariant for the dilation actions, that is,
\[T_xf(\lambda \cdot\xi)=\lambda\cdot T_xf(\xi) \quad\text{for }x\in M\text{, }\lambda>0\text{ and }\xi\in G_x.\]
We will consistently use the notation \(\diff f\colon TM_1\to TM_2\) for the usual differential, whereas the homomorphism between the osculating groupoids is denoted \(Tf\colon T_{H_1}M_1\to T_{H_2}M_2\). The latter is called the \emph{Carnot differential} in \cite{choi2019tangent}.
\begin{proposition}[\cite{choi2019tangent}*{5.5}]\label{res:osculating_functor}
	The assignment \((M,H)\mapsto T_HM\) and \(f\mapsto Tf\) defines a functor from the category of filtered manifolds with filtered manifold maps to the category of Lie groupoids with Lie groupoid homomorphisms. 
\end{proposition}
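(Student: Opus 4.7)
The plan is to verify three items: that $Tf$ is a Lie groupoid homomorphism for each Carnot map $f\colon (M_1,H_1)\to (M_2,H_2)$, that $T(\Id_M)=\Id_{T_HM}$, and that $T(g\circ f)=Tg\circ Tf$ for composable Carnot maps. Because $T_HM$ is a bundle of simply connected nilpotent Lie groups whose source and range are both the base projection, a Lie groupoid homomorphism $T_{H_1}M_1\to T_{H_2}M_2$ is precisely a smooth bundle map covering a base map that restricts to a group homomorphism on each fibre. So I need to exhibit $Tf$ and check these properties.

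The first step is to establish the associated graded Lie algebra bundle map $\lie{t}f$. The Carnot condition $\diff f(H_1^i)\subseteq H_2^i$, applied also for $i-1$, ensures that $\diff f$ descends to a smooth bundle map $H_1^i/H_1^{i-1}\to H_2^i/H_2^{i-1}$ covering $f$; summing over $i$ yields $\lie{t}f\colon \lie{t}_{H_1}M_1\to\lie{t}_{H_2}M_2$. To check the bracket identity at $x\in M_1$, given representatives $X\in\Gamma^\infty(H_1^i)$ and $Y\in\Gamma^\infty(H_1^j)$, I would choose adapted Carnot frames near $f(x)$ and use them to produce local extensions $Z\in\Gamma^\infty(H_2^i)$ and $W\in\Gamma^\infty(H_2^j)$ with $Z_{f(x)}=\diff f_x X_x$ and $W_{f(x)}=\diff f_x Y_x$. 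A short computation in these frames, using that both the bracket on $\lie{t}_HM$ and the induced map $\lie{t}f$ are tensorial modulo the next lower filtration piece, gives $\diff f_x[X,Y]_x\equiv [Z,W]_{f(x)}$ modulo $H_2^{i+j-1}$, which is the required identity $\lie{t}f[\langle X\rangle,\langle Y\rangle]=[\lie{t}f\langle X\rangle,\lie{t}f\langle Y\rangle]$.

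The second step is to integrate $\lie{t}f$ fibrewise. Each fibre of $\lie{t}_HM$ is a graded, hence nilpotent, Lie algebra whose exponential map is a global diffeomorphism onto the simply connected nilpotent Lie group $G_x$, with product given by the Baker--Campbell--Hausdorff polynomial. Consequently, $\lie{t}_xf$ integrates uniquely to a Lie group homomorphism $T_xf=\exp_{f(x)}\circ\,\lie{t}_xf\circ\exp_x^{-1}$, and these assemble to a bundle map $Tf\colon T_{H_1}M_1\to T_{H_2}M_2$ over $f$. Smoothness of $Tf$ follows because the fibrewise exponential provides a smooth bundle diffeomorphism $T_HM\cong \lie{t}_HM$ under which $Tf$ is identified with the smooth bundle map $\lie{t}f$.

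Functoriality is then formal: $\lie{t}(\Id_M)=\Id$ gives $T(\Id_M)=\Id$, and the chain rule $\diff(g\circ f)=\diff g\circ \diff f$ descends to $\lie{t}(g\circ f)=\lie{t}g\circ\lie{t}f$, whence $T(g\circ f)=Tg\circ Tf$ by uniqueness of integration of Lie algebra homomorphisms to simply connected groups. The main obstacle I anticipate is the fibrewise bracket computation for $\lie{t}f$: while the inputs (naturality of the Lie bracket of vector fields and stability of the filtration under $\diff f$) are clean, arranging local extensions of $\diff f_x X_x$ to sections of $H_2^i$ is most conveniently carried out in an adapted Carnot frame, and this requires confirming that the answer modulo $H_2^{i+j-1}$ is independent of the chosen extensions, which is the same well-definedness statement that underlies the bracket on $\lie{t}_HM$ itself.
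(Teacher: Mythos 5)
Your proposal is correct and follows essentially the same route the paper takes: the paper cites \cite{choi2019tangent}*{5.5} and, in the paragraph preceding the proposition, sketches exactly this construction — the Carnot condition \eqref{eq:Carnot_map} yields the graded bundle map \(\lie{t}f\) compatible with the brackets, which is then integrated fibrewise via the exponential maps (a global diffeomorphism on each simply connected nilpotent fibre) to obtain \(Tf\), with functoriality following from the chain rule and uniqueness of integration. Your added care about the bracket identity — extending \(\diff f_xX_x\) in an adapted frame and using that the induced bracket is tensorial modulo the next lower filtration piece — is precisely the point the cited reference handles, so there is no gap.
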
	
\subsection{The tangent groupoid}
	Besides the osculating groupoid \(T_HM\), the pair groupoid is another important groupoid attached to a filtered manifold \((M,H)\).  
\begin{example}
	For a set \(M\), the \emph{pair groupoid of \(M\)} is the groupoid with arrow space \(M\times M\) and unit space \(M\). The range and source \(r,s\colon M\times  M\to M\) and unit \(u\colon M\to M\times M\) are given by
	\[r(x,y)=x, \quad s(x,y)=y,\quad u(x)=(x,x).\]
	The inverse and multiplication are defined by
	\[(x,y)^{-1}=(y,x)\quad\text{ and }\quad  (x,y)\cdot(y,z)=(x,z).\]
	If \(M\) is a smooth manifold, the pair groupoid of \(M\) is a Lie groupoid.
\end{example}
The two groupoids \(T_HM\) and \(M\times M\) can be glued together in a smooth way, yielding the \emph{tangent groupoid} of \(M\). We first discuss its groupoid structure. 
\begin{definition}
	The \emph{tangent groupoid} \(\T_HM\) of a filtered manifold~\((M,H)\) consists of the arrow space
	\begin{equation*}
	\T_HM = (T_HM \times \{0\}) \cup (M\times M\times(0,\infty))
	\end{equation*}
	and the unit space \(M\times[0,\infty)\). The range and source maps \(r,s\colon \T_HM\to M\times[0,\infty)\) are given by 
	\begin{align*}
	r(x,\xi,0)&=(x,0), &s(x,\xi,0)&=(x,0) &&\text{for }\xi\in G_x,\\
	r(x,y,t)&=(x,t), &s(x,y,t)&=(y,t)&&\text{for }x,y\in M\text{ and }t>0.
	\end{align*}
	The unit \(u\colon M\times[0,\infty)\to \T_HM\) and the inverse \(i\colon \T_HM\to \T_HM\) are defined by 
	\begin{align*}
	u(x,0)&=(x,0,0) &&\text{for }x\in M, &u(x,t)=(x,x,t) &&\text{for }x\in M\text{ and }t>0,\\
	i(x,\xi,0)&=(x,\xi^{-1},0) &&\text{for }\xi\in G_x, &i(x,y,t)=(y,x,t) &&\text{for }x,y\in M\text{ and }t>0.
	\end{align*}
	The multiplication \(m\colon\T_HM^{(2)}\to\T_HM\) is given by
	\begin{align*}
	(x,\xi,0)(x,\eta,0)&=(x,\xi\cdot \eta,0) &&\text{for }\xi,\eta\in G_x,\\
	(x,y,t)(y,z,t)&=(x,z,t) &&\text{for }x,y,z\in M\text{ and }t>0.
	\end{align*}
	At \(t=0\), the multiplication and inversion in the osculating groups are used. 
\end{definition}		
The range fibres are given by 
\begin{equation*}
\T_HM^{(x,t)}= 
\begin{cases}
\{(x,\xi,0)\mid\xi\in G_x\} &\text{for }x\in M\text{ and }t=0,\\
\{(x,y,t)\mid y\in M\} &\text{for }x\in M\text{ and }t>0.
\end{cases}
\end{equation*}	
\begin{example}\label{ex:tangentofgradedliegroup}
	Let \(M=G\) be a graded Lie group with the filtration~\(H\) as in \cref{ex:filtered_graded_group}. The tangent groupoid \(\T_HG\) is isomorphic to the transformation groupoid \[\grpd=(G\times[0,\infty))\rtimes G\]
	of the action \((G\times[0,\infty))\curvearrowleft G\) given by \((x,t).v=(x\delta_t(v),t)\). Here, we set \(\delta_0(v)=\lim_{t\to 0}\delta_t(v)=0\) for all \(v\in G\). 
	
	The isomorphism \(\phi\colon \T_HG\to\grpd\) is given by \(\phi(x,y,t)=(x,t,\delta_{t^{-1}}(x^{-1}y))\) for \(t>0\) and \(\phi(x,\xi,0)=(x,0,\xi)\) when identifying \(G_x\) with \(G\). The inverse is given by \((x,t,v)\mapsto (x,x\delta_t(v),t)\) for \(t>0\) and \((x,0,v)\mapsto(x,v,0)\). 
\end{example}
A crucial feature of the tangent groupoid of a filtered manifold is that it defines a Lie groupoid:
\begin{theorem}[\cites{erp2015groupoid,choi2019tangent}]
	The tangent groupoid \(\T_HM\) of a filtered manifold \((M,H)\) admits a smooth structure such that it becomes a Lie groupoid. 
\end{theorem}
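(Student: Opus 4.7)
The plan is to construct a smooth atlas on $\T_HM$ whose restriction to $M\times M\times(0,\infty)$ is the product smooth structure, and which glues in $T_HM\times\{0\}$ smoothly at $t=0$. Once this is done, smoothness of the groupoid operations has to be verified at $t=0$; everywhere else it is automatic. I would follow the adapted-coordinate approach used in \cite{erp2015groupoid} and \cite{choi2019tangent} (as opposed to a more abstract deformation-to-the-normal-cone construction, which is also possible).

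The first step is to pick, around each $x_0\in M$, a local frame $(X_1,\ldots,X_n)$ of $TM$ on an open $U\ni x_0$ that is adapted to the filtration, meaning $\{X_1,\ldots,X_{\rank H^i}\}$ spans $H^i$ for every $i$. Assign to $X_j$ the weight $q_j$ from the weight sequence. This frame simultaneously trivialises $\lie{t}_HM|_U\cong U\times\R^n$ via $(v_1,\ldots,v_n)\mapsto \sum v_j\langle X_j\rangle(x)$. Next, I would choose a smooth family of local diffeomorphisms $\Theta_x\colon V\subset \R^n\to M$ with $\Theta_x(0)=x$ and $\partial_{v_j}|_{v=0}\Theta_x=X_j(x)$; any smooth choice works (iterated or graded exponential, or the exponential of a connection). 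Using these, define a chart
\[
\Phi\colon U\times V\times[0,\varepsilon)\longrightarrow \T_HM
\]
by $\Phi(x,v,t)=(x,\Theta_x(t^{q_1}v_1,\ldots,t^{q_n}v_n),t)$ for $t>0$ and $\Phi(x,v,0)=\bigl(x,\sum v_j\langle X_j\rangle(x),0\bigr)$. The rescaling by the weights is exactly what allows the pair-groupoid picture to degenerate to the osculating groupoid.

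The core of the proof is to verify that two such charts $\Phi_1,\Phi_2$ coming from different adapted frames are smoothly compatible across $t=0$. Writing the transition in Taylor series around $v=0$ and reading off the $k$\nb-th output coordinate produces monomials $v^\alpha$ weighted by powers $t^{q_1\alpha_1+\cdots+q_n\alpha_n-q_k}$. The filtration hypothesis~\eqref{eq:filtration} forces every Taylor coefficient attached to a \emph{negative} exponent of $t$ to vanish: a bracket $[X_i,X_j]$ takes values in $H^{q_i+q_j}$, so its expansion in the target frame has no component on $X_\ell$ with $q_\ell>q_i+q_j$. Consequently the transition map extends smoothly to $t=0$, and on the $t=0$ fibre it restricts to the linear change of frame on $\lie{t}_HM$ composed with the identification with $T_HM$. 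This is the step I expect to be the main obstacle, since it is precisely here that the algebraic filtration condition is converted into an analytic smoothness statement; the bookkeeping of weights and Taylor orders is essential and subtle.

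With the smooth structure in place, smoothness of source, range, unit, and inversion is immediate from the chart formula because each is a polynomial (indeed degree\nb-$1$) expression in $v$ and $t$. For the multiplication, one works on two composable copies of the chart and uses that the product in each osculating group $G_x$ is polynomial by Baker--Campbell--Hausdorff; the homogeneity of the BCH polynomials with respect to the dilation action \(\delta_\lambda\) matches the rescaling $v\mapsto t^{q_\bullet}v$ built into $\Phi$, so the multiplication extends smoothly across $t=0$. This establishes that $\T_HM$ is a Lie groupoid, recovering Connes's classical tangent groupoid in the step $r=1$ case and the graded Lie group picture of \cref{ex:tangentofgradedliegroup} when $M=G$.
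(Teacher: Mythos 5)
The paper does not prove this theorem itself; it cites \cite{erp2015groupoid} and \cite{choi2019tangent} and recalls the latter's construction, namely the charts \(\phi_\kappa^{-1}(x,v,t)=(\kappa^{-1}(x),(\varepsilon^\kappa_x\circ\kappa)^{-1}(t\cdot v),t)\) built from \(H\)-frames and the \(\varepsilon\)-Carnot maps. Your proposal follows the same rescaled-adapted-coordinate route, and you correctly identify the crux: showing that the transition between two such charts has no Taylor coefficients carrying a negative power of \(t\).

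However, there is a genuine gap exactly at that step. You assert that \emph{any} smooth family \(\Theta_x\) with \(\Theta_x(0)=x\) and \(\partial_{v_j}|_{0}\Theta_x=X_j(x)\) works. This is false for filtrations of step \(r\geq 3\). The zeroth- and first-order conditions leave the quadratic and higher Taylor coefficients of \(\Theta_x\) completely unconstrained, so the transition \(F=\Theta_{2,x}^{-1}\circ\Theta_{1,x}\) can contain a monomial \(c\,v^\alpha\) in its \(k\)-th component with \([\alpha]\defeq\sum q_j\alpha_j<q_k\) (e.g.\ with weights \((1,1,2,3)\), a term \(c\,w_1w_2\) in the weight-\(3\) coordinate), producing \(c\,t^{[\alpha]-q_k}v^\alpha\) which blows up as \(t\to 0\). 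The bracket condition \eqref{eq:filtration} constrains the vector fields of the frame, not the higher jets of an arbitrary \(\Theta_x\) with first-order contact; it only yields the conclusion you want once the coordinates are \emph{privileged}, i.e.\ each coordinate function vanishes to weighted order exactly \(q_k\) (for step \(\leq 2\) linearly adapted coordinates are automatically privileged, which is why the argument looks harmless, but the theorem is for general step). This is precisely what the graded/iterated exponentials and Choi--Ponge's \(\varepsilon\)-Carnot coordinates provide, and the statement that the transition between two privileged systems is \(T_xF(v)+O_q(\norm{v}^{q+1})\) is a nontrivial theorem (\cite{choi2019tangent}*{6.7}), not a direct consequence of the bracket hypothesis. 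A secondary overstatement: inversion is not ``degree-\(1\) polynomial'' in the rescaled chart; for \(t>0\) it is \((x,v,t)\mapsto(\Theta_x(t\cdot v),\,t^{-1}\cdot\Theta_{\Theta_x(t\cdot v)}^{-1}(x),\,t)\), whose extension across \(t=0\) requires the same weighted-Taylor estimate \(\varepsilon_{\varepsilon_x^{-1}(v)}(x)=-v+O_q(\norm{v}^{q+1})\) used in \cref{res:invmultest}, on the same footing as multiplication.
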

We recall the construction of coordinate charts in \cite{choi2019tangent}. 
\begin{definition}[\cite{choi2019tangent}*{2.14, 9.2}]
	An \emph{\(H\)-frame} \(X=(X_1,\ldots,X_n)\) over an open subset \(V\subset M\) consists of vector fields \(X_i\colon V\to TM\), such that \(\left(X_1,\ldots,X_{\rank H^i}\right)\) defines a frame for \(H^i|_V\) for all \(i=1,\ldots,r\). 	
	An \emph{\(H\)-chart} is a local chart \(\kappa\colon V\to U\) between open subsets \(V\subset M\) and \(U\subset\R^n\) together with an \(H\)-frame \(X\) over \(V\). 
\end{definition}
For an \(H\)-chart \(\kappa\colon M\supseteq V\to U\subseteq \R^n\), Choi and Ponge construct in \cite{choi2019tangent}*{(9.5)} a chart \(\phi_\kappa\colon \T_HM\supseteq \V\to\U\), where
\begin{align*}
\V &= (T_HM|_V \times \{0\})\cup (V \times V \times (0,\infty)),\\
\U &= \left\{(x,v,t)\in U\times\R^n\times[0,\infty)\mid \left(\varepsilon^\kappa_x\right)^{-1}(t\cdot v)\in U\right\}.
\end{align*}
Here, \(\varepsilon^\kappa\) is the \(\varepsilon\)-Carnot map \(U\times\R^n\to\R^n\) associated with the \(H\)-chart \(\kappa\) as described in~\cite{choi2019tangent}*{4.17}. The map \(\phi_\kappa\) is given by
\begin{align*}
\phi_\kappa(x,\xi,0)&= (\kappa(x),T_x\kappa(\xi),0) &&\text{for }\xi\in G_x,\\
\phi_\kappa(x,y,t)&= \left(\kappa(x),t^{-1}\cdot \varepsilon^\kappa_{\kappa(x)}(\kappa(y)),t\right) &&\text{for }x,y\in V\text{ and }t>0. 
\end{align*}
Its inverse is 
\begin{equation*}
\phi_\kappa^{-1}(x,v,t)=
\begin{cases}
(\kappa^{-1}(x),(T_x\kappa)^{-1}(v), 0) &\text{for }(x,v)\in U\times\R^n \text{ and }t=0,\\
\left(\kappa^{-1}(x), \left(\varepsilon^\kappa_x\circ\kappa\right)^{-1}(t\cdot v),t\right) &\text{for }(x,v,t)\in\U \text{ and }t>0.
\end{cases}
\end{equation*}
The smooth structure of \(\T_HM\) is uniquely determined by the charts \(\phi_\kappa\) for all \(H\)\nb-charts \(\kappa\) of \(M\) and by requiring that the inclusion \(M\times M\times(0,\infty)\injto \T_HM\) is a smooth embedding (see \cite{choi2019tangent}*{9.7}).		
To shorten notation, we will sometimes denote \(V_\infty=M\) and \(\V_\infty = M\times M\times (0,\infty)\) in the following.
Note that for each \(H\)-chart \(\kappa\colon V\to U\), the open subset \(\V\) is a subgroupoid of~\(\T_HM\).
\begin{example}
	Let \(G\) be a graded Lie group and let \(\kappa\colon G\to \R^n\) be the global coordinate chart obtained from the exponential map. Then the \(\varepsilon\)-Carnot map is \(\varepsilon^\kappa_x(y)=x^{-1}\cdot y\), see \cite{choi2017privilegedII}*{9.12}. Therefore, \(\phi_\kappa\) is the isomorphism from \cref{ex:tangentofgradedliegroup}.
\end{example}
Often, it will be useful to understand \(\T_HM\) as a smooth field of groupoids over the space \([0,\infty)\) in the sense of \cite{landsman}*{5.2}. 
\begin{lemma}\label{res:contfield}
	The tangent groupoid \(\T_HM\) of a filtered manifold \((M,H)\) is a smooth field of groupoids over \([0,\infty)\) with fibres isomorphic to the pair groupoid of \(M\) for \(t>0\) and the osculating groupoid \(T_HM\) for \(t=0\). All these subgroupoids are amenable.
\end{lemma}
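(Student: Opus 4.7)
The argument unfolds in three short steps: identify the projection to the parameter $[0,\infty)$, verify the field structure via the $H$-chart atlas, and then treat amenability of the fibres.

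First, the projection $p\colon M\times[0,\infty)\to[0,\infty)$ onto the second coordinate satisfies $p\circ r=p\circ s$ on $\T_HM$, as one reads off from the range and source formulas in the definition of $\T_HM$. Hence $\pi\defeq p\circ r\colon \T_HM\to[0,\infty)$ is a well-defined groupoid homomorphism into $[0,\infty)$ regarded as a trivial (unit) groupoid. By inspection, $\pi^{-1}(t)=M\times M\times\{t\}$ with its pair groupoid structure for $t>0$, while $\pi^{-1}(0)=T_HM\times\{0\}$ is the osculating groupoid; both are subgroupoids of $\T_HM$.

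Next, I would verify smoothness and the submersion property using the chart atlas $\phi_\kappa\colon \V\to\U\subseteq U\times\R^n\times[0,\infty)$ recalled above. In these coordinates $\pi$ is simply the projection onto the third coordinate $t$, and on the complementary open subset $\V_\infty=M\times M\times(0,\infty)$ it is the projection to the $(0,\infty)$-factor. Both are surjective submersions, and these pieces cover $\T_HM$, so $\pi$ is a smooth surjective submersion. This is exactly the data realising $\T_HM$ as a smooth field of Lie groupoids over $[0,\infty)$ in the sense of \cite{landsman}*{5.2}, and the fibres identified above are the advertised ones.

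Finally, amenability splits cleanly into two cases. For $t>0$ the pair groupoid $M\times M$ is Morita equivalent to the trivial groupoid on a point and is therefore amenable. For $t=0$ the osculating groupoid $T_HM$ is a Lie group bundle over $M$ whose isotropy groups $G_x$ are simply connected nilpotent Lie groups, hence amenable. To conclude I would invoke the standard result that a Lie group bundle carrying a continuous Haar system is an amenable groupoid whenever every isotropy group is amenable. I expect this to be the only step that is not pure bookkeeping, because $T_HM$ need not be locally trivial as a group bundle; the required continuous Haar system is nevertheless furnished fibrewise by the Haar measures on the $G_x$, so the criterion applies and delivers amenability of the $t=0$ fibre.
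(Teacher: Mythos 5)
Your proposal is correct and follows essentially the same route as the paper: identify the projection $\theta=\pr_2\circ r=\pr_2\circ s$ as a smooth submersion realising the field structure, read off the fibres, and deduce amenability of $T_HM$ from amenability of its nilpotent isotropy groups via the standard criterion for group bundles (the paper cites Anantharaman-Delaroche--Renault 5.3.4 for exactly this). Your extra remarks on the pair groupoid being Morita equivalent to a point and on the non-local-triviality of $T_HM$ are accurate but not needed beyond what the paper records.
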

\begin{proof}The projection \(\theta\colon \T_HM\to [0,\infty)\) is a smooth submersion. It satisfies \(\theta=\pr_2\circ r= \pr_2\circ s\) where \(\pr_2\colon M\times[0,\infty)\to [0,\infty)\) is the projection to the second coordinate. Restricting the structure maps of \(\T_HM\) to \(t\geq 0\), it is clear that the groupoids \(\theta^{-1}\{t\}\) are the pair groupoid of \(M\) for \(t>0\) and \(T_HM\) for \(t=0\). 
	
	As all fibres of \(T_HM\) are nilpotent Lie groups, thus amenable, it follows from \cite{ad-renault-amenable}*{5.3.4} that \(T_HM\) is amenable. The pair groupoid of \(M\) is amenable as well. 
\end{proof}
\subsection{The zoom action}
	The following zoom action of \(\Rp\) on \(\T_HM\) by Lie groupoid automorphisms was defined in \cite{erp2015groupoid}*{Def.~17} and \cite{haj-higson}*{5.3}. It plays an essential role for the definition of the pseudodifferential calculus in \cite{erp2015groupoid}. We will use it to construct a generalized fixed point algebra.
\begin{definition}\label{def:zoomgroupoid}
	The \emph{zoom action} of \(\Rp\) on \(\T_HM\) is defined for \(\lambda>0\) by
	\begin{align*}
	\alpha_\lambda(x,y,t)&=\left(x,y,\lambda^{-1}t\right) &&\text{for }(x,y,t)\in M\times M\times(0,\infty),\\
	\alpha_\lambda(x,\xi,0)&=(x,\delta_\lambda(\xi),0) &&\text{for }(x,\xi)\in T_HM.
	\end{align*}
\end{definition}
\begin{lemma}
	The zoom action of \(\Rp\) on the tangent groupoid of a filtered manifold \((M,H)\) is a smooth action by Lie groupoid automorphisms. 
\end{lemma}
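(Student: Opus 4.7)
The plan is to verify three things: (i) for each fixed \(\lambda>0\), the map \(\alpha_\lambda\) is a Lie groupoid automorphism; (ii) the assignment \(\lambda\mapsto \alpha_\lambda\) is a group homomorphism \(\Rp\to\Aut(\T_HM)\); and (iii) the action map \(\Rp\times \T_HM\to \T_HM\) is smooth. Items (i) and (ii) are essentially algebraic, while (iii) is the analytic content and relies on the \(H\)\nb-chart description of \(\T_HM\) recalled above.

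For (i), I would check compatibility with the structure maps on the two parts of \(\T_HM\) separately. On \(M\times M\times (0,\infty)\), where \(\alpha_\lambda\) merely rescales the \(t\)\nb-coordinate by \(\lambda^{-1}\), preservation of source, range, unit, inverse and multiplication is immediate because all of these act fibrewise in \(t\). On \(T_HM\times\{0\}\), I would use that the dilation \(\delta_\lambda\) is a Lie group automorphism of each osculating group \(G_x\): this follows from \cref{def:dilation_osculating}, since the grading of \(\lie{t}_HM\) and condition~\eqref{eq:filtration} give \([D_\lambda X,D_\lambda Y]=\lambda^{i+j}[X,Y]=D_\lambda[X,Y]\) for \(X\in H^i_x/H^{i-1}_x\) and \(Y\in H^j_x/H^{j-1}_x\); exponentiation then yields that \(\delta_\lambda\) respects the group law in \(G_x\). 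Hence \(\alpha_\lambda\) preserves all groupoid structure maps on both pieces. Item (ii) reduces to \(\delta_{\lambda\mu}=\delta_\lambda\circ\delta_\mu\), which is clear from the definition.

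The main point is (iii). Here I would work in the charts \(\phi_\kappa\colon \V\to\U\) from \cite{choi2019tangent}. A direct computation using the explicit formulas for \(\phi_\kappa\) and \(\phi_\kappa^{-1}\) shows that in these coordinates the zoom action takes the uniform form
\begin{equation*}
\phi_\kappa\circ\alpha_\lambda\circ\phi_\kappa^{-1}(x,v,t)=(x,\lambda\cdot v,\lambda^{-1}t),
\end{equation*}
valid for all \(t\geq 0\). For \(t>0\) this follows by substituting \(\phi_\kappa^{-1}(x,v,t)=(\kappa^{-1}(x),(\varepsilon^\kappa_x\circ\kappa)^{-1}(t\cdot v),t)\), applying \(\alpha_\lambda\) and then \(\phi_\kappa\), and using that \((\lambda^{-1}t)^{-1}\cdot(t\cdot v)=\lambda\cdot v\) by homogeneity of the dilations on \(\R^n\). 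For \(t=0\) it uses the Carnot-equivariance \(T_x\kappa\circ\delta_\lambda=\lambda\cdot T_x\kappa\) built into \cref{res:osculating_functor}. Since the dilation \(\R^n\ni v\mapsto \lambda\cdot v\) is jointly smooth in \((\lambda,v)\in\Rp\times\R^n\), the composite \((\lambda,x,v,t)\mapsto(x,\lambda\cdot v,\lambda^{-1}t)\) is smooth on \(\Rp\times\U\). Covering \(\T_HM\) by the charts \(\phi_\kappa\) together with the open subgroupoid \(M\times M\times(0,\infty)\) (on which smoothness is obvious) yields smoothness of the action globally.

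The main obstacle I expect is keeping track of the two presentations of points in \(\T_HM\) across \(t=0\), since the zoom action is defined by different formulas on the two pieces and one has to show these glue to something smooth. The key that resolves this is precisely the uniform coordinate expression above, which is why the \(H\)\nb-chart construction of Choi--Ponge was set up to incorporate the dilations via \(\varepsilon^\kappa\) in the first place.
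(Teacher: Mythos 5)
Your proposal is correct and follows essentially the same route as the paper: the algebraic verifications are routine, and the substantive point is the coordinate computation \(\phi_\kappa\circ\alpha_\lambda\circ\phi_\kappa^{-1}(x,v,t)=(x,\lambda\cdot v,\lambda^{-1}t)\) in the Choi--Ponge charts, which the paper records as equation~\eqref{eq:zoomincoord} and from which smoothness follows exactly as you argue. The extra details you supply (that \(\delta_\lambda\) is a group automorphism of each \(G_x\), and the Carnot-equivariance at \(t=0\)) are precisely what the paper leaves as "can be checked easily."
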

\begin{proof}
	It can be checked easily that all \(\alpha_\lambda\) are groupoid morphisms with underlying maps of the unit space 
	\[M\times[0,\infty)\to M\times[0,\infty), \quad (x,t)\mapsto(x,\lambda^{-1}t).\]
	They satisfy \(\alpha_{\lambda\mu}=\alpha_{\lambda}\circ\alpha_{\mu}\) for all \(\lambda,\mu>0\) and \(\alpha_1=\Id\). The smoothness on \(M \times M \times (0,\infty)\) is clear. Let \(\kappa\colon V\to U\) be an \(H\)\nb-chart for \(M\) and \(\phi_\kappa\colon\V\to\U\) the corresponding chart for \(\T_HM\). Then one computes for \(\lambda>0\) and \((x,v,t)\in \U\)
	\begin{equation}\label{eq:zoomincoord}
	\beta_\lambda(x,v,t)\defeq\left(\phi_\kappa\circ\alpha_\lambda\circ\phi_\kappa^{-1}\right)(x,v,t)=\left(x,\lambda\cdot v,\lambda^{-1}t\right),
	\end{equation}
	where \(\lambda\cdot (v_1,\ldots,v_n)= (\lambda^{q_1}v_1,\ldots,\lambda^{q_n}v_n)\).  Hence, the action is smooth.
\end{proof}
\begin{definition}
	Define a \emph{homogeneous quasi-norm} \(\norm{\,\cdot\,}\colon \U\to \R_{\geq 0}\) for \(H\)-charts \(V\to U\) by
	\begin{align}\label{eq:homogeneousquasi}
		\norm{(x,v,t)}=\sum_{j=1}^n\abs{v_j}^{1/q_j} \qquad\text{for }(x,v,t)\in\U.
	\end{align}
\end{definition}
Even though this is not a norm, it has the advantage of being compatible with the zoom action. Namely, it satisfies \(\norm{\beta_\lambda(\gamma)}=\lambda\norm{\gamma}\)  for all \(\lambda>0\) and \(\gamma\in\U\), where \(\beta\) is the zoom action in coordinates as in \eqref{eq:zoomincoord}.

\subsection{Functoriality}	
For a filtered manifold map \(f\colon(M_1,H_1)\to(M_2,H_2)\), we already know from \cref{res:osculating_functor} that it induces a Lie groupoid morphism \(Tf\colon T_{H_1}M_1\to T_{H_2}M_2\). It can be extended to a map of the corresponding tangent groupoids \(\f\colon  \T_{H_1}M_1\to \T_{H_2}M_2\) with
\begin{align*}
\f(x,y,t)&=(f(x),f(y),t)&&\text{for }(x,y,t)\in M_1\times M_1\times(0,\infty),\\
\f(x,\xi,0)&=(f(x),T_xf(\xi),0) &&\text{for }(x,\xi)\in T_{H_1}M_1.
\end{align*}
Note that the induced map \(\f\) is equivariant for the respective zoom actions.
\begin{proposition}[\cite{choi2019tangent}*{9.17, 9.18}]\label{res:inducedcarnot}
	Let \(f\colon(M_1,H_1)\to(M_2,H_2)\) be a filtered manifold map. Then \(\f\colon  \T_{H_1}M_1\to \T_{H_2}M_2\) is a Lie groupoid homomorphism with underlying map \((x,t)\mapsto(f(x),t)\) of the unit spaces. 
	The assignments \((M,H)\mapsto \T_HM\) and \(f\mapsto\f\) define a functor from the category of filtered manifolds to the category of Lie groupoids. 	
\end{proposition}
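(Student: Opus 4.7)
The plan is to split the argument into three parts: checking that \(\f\) is a groupoid homomorphism with underlying unit map \((x,t)\mapsto(f(x),t)\), establishing smoothness of \(\f\), and verifying functoriality. The algebraic parts are routine; smoothness at \(t=0\) is the real obstacle.

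For the groupoid homomorphism property, I would decompose \(\f\) along the partition
\[\T_{H_1}M_1=(T_{H_1}M_1\times\{0\})\sqcup(M_1\times M_1\times(0,\infty))\]
and similarly for \(\T_{H_2}M_2\). On the pair-groupoid piece, \(\f\) equals \(f\times f\times\mathrm{id}\), which is obviously a pair-groupoid homomorphism with the correct unit map. On the osculating piece it agrees with \(Tf\times\mathrm{id}\), a Lie groupoid homomorphism by \cref{res:osculating_functor}. Compatibility with source, range, inversion, units, and multiplication thus holds in each piece separately. Functoriality likewise reduces piecewise to the trivial identity on the pair-groupoid piece and to functoriality of \(f\mapsto Tf\) (\cref{res:osculating_functor}) at \(t=0\); preservation of identities is immediate.

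For smoothness, the case \(t>0\) follows from smoothness of \(f\) together with the fact that \(M_1\times M_1\times(0,\infty)\) embeds as a smooth open submanifold of \(\T_{H_1}M_1\). To handle \(t=0\), I would fix \(x_0\in M_1\), choose an \(H\)-chart \(\kappa_1\colon V_1\to U_1\) at \(x_0\), and an \(H\)-chart \(\kappa_2\colon V_2\to U_2\) at \(f(x_0)\) with \(f(V_1)\subset V_2\). Conjugating by the Choi--Ponge charts \(\phi_{\kappa_i}\) yields a local expression \(\Phi\defeq\phi_{\kappa_2}\circ\f\circ\phi_{\kappa_1}^{-1}\), whose second coordinate for \(t>0\) is
\[t^{-1}\cdot\varepsilon^{\kappa_2}_{\kappa_2(f(\kappa_1^{-1}(x)))}\!\Bigl(\kappa_2\!\bigl(f\!\bigl((\varepsilon^{\kappa_1}_x\circ\kappa_1)^{-1}(t\cdot v)\bigr)\bigr)\Bigr).\]

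The hard part is showing that this expression extends smoothly through \(t=0\) with the correct limit given by \(T_{\kappa_1^{-1}(x)}f(v)\) in the trivialization induced by the \(H\)-frames. This is exactly the point where the Carnot condition \(\diff f(H_1^i)\subset H_2^i\) becomes essential: it guarantees that the weighted Taylor expansion of \(\kappa_2\circ f\circ\kappa_1^{-1}\) at \(\kappa_1(x_0)\) — where the coordinates carry weights \((q_1,\ldots,q_n)\) — has no low-order cross-terms that would blow up under the \(t^{-1}\cdot\) rescaling, and that the surviving leading-order contributions assemble into the Carnot differential. The detailed verification of this smooth extension is carried out in \cite{choi2019tangent}*{9.17}, which I would invoke directly; the functor statement is then \cite{choi2019tangent}*{9.18}.
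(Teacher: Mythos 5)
The paper gives no proof of this proposition at all: it is stated as a citation to \cite{choi2019tangent}*{9.17, 9.18}, which is exactly where your proposal ultimately defers the one nontrivial step (smooth extension of the chart expression through \(t=0\) via the Carnot condition). Your outline of the routine algebraic verifications and the local coordinate expression is correct and consistent with how the paper handles the analogous computation elsewhere (cf.\ the proof of \cref{res:schwartzinvar}), so this is essentially the same approach.
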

\section{The groupoid \(\Cst\)-algebra of the tangent groupoid}\label{sec:groupoid_c_star}
To build a generalized fixed point algebra out of the zoom action on the tangent groupoid, we need to attach a \(\Cst\)-algebra to the tangent groupoid. In this section we recall the construction of groupoid \(\Cst\)-algebras.  
\subsection{Haar system}
As \(\T_HM\) is a Lie groupoid, it admits a smooth left Haar system (see for example \cite{paterson}*{2.3.1}). 
In the following, we explicitly describe a left Haar system. 

Fix an atlas of \(H\)-charts \((\kappa_i\colon M\supseteq V_i\to U_i\subseteq \R^n)_{i\in I}\) for \(M\). Let \((\rho_i)_{i\in I}\) be a partition of unity which is subordinate to the open cover \((V_i)_{i\in I}\) of \(M\). One can define a measure \(\nu\) on \(M\) by setting
\[ \int_M f\diff \nu = \sum_{i\in I}\int_{U_i}(f\cdot \rho_i)(\kappa_i^{-1}(x))\diff x \quad\text{for }f\in\Cont_c(M),\]
where \(\diff x\) denotes the Lebesgue measure on \(U_i\subseteq \R^n\). Furthermore, the atlas of \(H\)-charts gives rise to a smooth family of measures on the osculating groups. Each \(H\)-chart \(\kappa\colon V\to U\) induces a local trivialisation \[T\kappa\colon T_HM|_V\overset{\cong}{\to} U\times \R^n.\]
The Lebesgue measure on \(\R^n\) can be pulled back using the graded isomorphism \[T_x\kappa\colon G_x\overset{\cong}{\to} \R^n \quad \text{for all }x\in V.\]  Write the vector fields corresponding to the \(H\)-frame \(\kappa_*(X_j)\) for \(j=1,\ldots,n\) in terms of the coordinate vector fields as
\[\kappa_*(X_j)=\sum_{k=1}^nb_{jk}\frac{\partial}{\partial x_k}\quad\text{with }b_{jk}\in\Cont^\infty(U).\]
Let \(B_X(x)\defeq(b_{jk}(x))_{j,k=1}^n\) for \(x\in U\) and define as in \cite{choi2017privilegedI}*{3.9} the invertible, affine linear map 
\begin{align}\label{eq:affine_map}A_{x}(y)\defeq(B_X(x)^t)^{-1}(y-x).\end{align} Recall that \(d_H\) denotes the homogeneous dimension of \(M\). Define for \(f\in\Cont_c(\T_HM)\)
\begin{align*} \int f \diff\nu^{(x,0)} &\defeq \sum_{i\in I} \int_{\R^n}\rho_i(x)\abs{\det{B_X(\kappa(x))}}f(x,(T_x\kappa_i)^{-1}(v),0)\diff v&&\text{for }x\in M,\\
\int f \diff\nu^{(x,t)}&\defeq t^{-d_H} \int_M f(x,y,t)\diff\nu(y)&&\text{for }x\in M\text{, }t>0.
\end{align*}
\begin{lemma}
	The family of measures \(\{\nu^{(x,t)}\}_{(x,t)\in M\times[0,\infty)}\) defines a smooth left Haar system on \(\T_HM\).
\end{lemma}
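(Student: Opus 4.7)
I would check the three defining properties of a smooth left Haar system in turn: that each \(\nu^{(x,t)}\) is a positive Radon measure supported on the range fibre \(\T_HM^{(x,t)}\), that the family is left invariant under the groupoid multiplication, and that \((x,t)\mapsto \int f\diff\nu^{(x,t)}\) is smooth for every \(f\in \Cont_c^\infty(\T_HM)\). Positivity and the support condition are immediate from the defining formulas, so the content lies in the last two properties.

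For left invariance I would split according to whether \(t>0\) or \(t=0\). On \(\{t>0\}\) the fibre \(\T_HM^{(x,t)}\) is parametrised by \(y\mapsto(x,y,t)\), and \(\nu^{(x,t)}\) pulls back along this parametrisation to the \(x\)\nb-independent measure \(t^{-d_H}\nu\) on \(M\); since left multiplication \((x,y,t)\cdot(y,z,t)=(x,z,t)\) merely changes the range coordinate, invariance is automatic. At \(t=0\), invariance amounts to the statement that each \(\nu^{(x,0)}\) is a left Haar measure on the graded nilpotent Lie group \(G_x\). Each summand in the definition is the pushforward of Lebesgue measure on \(\R^n\) via the graded linear isomorphism \((T_x\kappa_i)^{-1}\colon \R^n\to G_x\); since \(G_x\) is simply connected nilpotent, the exponential map is a global diffeomorphism and any Lebesgue measure on the Lie algebra becomes a bi-invariant Haar measure on the group under the Baker--Campbell--Hausdorff product. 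A positive combination of Haar measures on the same group is again a Haar measure, so \(\nu^{(x,0)}\) is Haar.

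The main obstacle is smoothness across \(t=0\). On \(\{t>0\}\) smoothness is immediate since \(\diff\nu\) is a smooth measure on \(M\) and the prefactor \(t^{-d_H}\) is smooth there. For the extension through \(t=0\), I would work in a tangent-groupoid chart \(\phi_\kappa\colon \V\to \U\) and change variables in the integral \(\int f\diff\nu^{(x,t)}\) for \(t>0\) using \(y=\phi_\kappa^{-1}(\kappa(x),v,t)\). The inverse formula involves the anisotropic dilation \(v\mapsto t\cdot v\), whose Jacobian with respect to Lebesgue measure is \(t^{q_1+\cdots+q_n}=t^{d_H}\), and this cancels exactly the \(t^{-d_H}\) prefactor. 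The integral then takes the form
\[
\int_{\R^n}(f\circ\phi_\kappa^{-1})(\kappa(x),v,t)\,J(x,v,t)\diff v
\]
for a smooth strictly positive Jacobian \(J\) defined for all \(t\geq 0\), whose restriction at \(t=0\) matches the density of \(\nu^{(x,0)}\) in the same coordinates. Joint smoothness in \((x,t)\) then follows by differentiation under the integral sign, using that \(f\) has compact support in \(\T_HM\) and therefore that the \(v\)-support of the integrand is uniformly compact on compact subsets of the \((x,t)\)-parameter space. The delicate book-keeping in this computation is matching the \(H\)\nb-frame trivialisation \(T\kappa\) of \(T_HM\) with the coordinate-basis expression of the \(\varepsilon\)\nb-Carnot map, which is precisely what the factor \(\abs{\det B_X(\kappa(x))}\) built into the definition of \(\nu^{(x,0)}\) encodes.
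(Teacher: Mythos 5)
Your argument is correct and follows essentially the same route as the paper: left invariance is checked separately on the pair-groupoid fibres and on the osculating groups (where the pushed-forward Lebesgue measures are bi-invariant Haar measures and a convex combination of Haar measures is again Haar), and smoothness across \(t=0\) is obtained by changing variables through \(\phi_\kappa^{-1}\) in an \(H\)-chart, with the anisotropic dilation contributing a Jacobian \(t^{d_H}\) that cancels the prefactor \(t^{-d_H}\). The one step you assert rather than verify is that the remaining Jacobian is smooth down to \(t=0\) and matches \(\abs{\det B_X(\kappa(x))}\); the paper justifies this via the factorisation \(\varepsilon^\kappa_{\kappa(x)}=\hat{\varepsilon}^\kappa_{\kappa(x)}\circ A_{\kappa(x)}\), where \(\diff\hat{\varepsilon}^\kappa_{\kappa(x)}\) is unipotent upper triangular, so that the Jacobian is in fact the constant \(\abs{\det B_X(\kappa(x))}^{-1}\) independent of the integration variable.
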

\begin{proof}
	For \((x,t)\in M\times[0,\infty)\) the support of \(\nu^{(x,t)}\) is contained in \(\T_HM^{(x,t)}\). The left invariance follows for \(t>0\) as in the pair groupoid case. For \(t=0\), this is due to the fact that the Lebesgue measure induces a bi-invariant Haar measure on the osculating groups. For \(f\in\Cont_c^\infty(\T_HM)\), we show that the map
	\[(x,t)\mapsto \int f\diff \nu^{(x,t)}\] is smooth. Using the partition of unity, \(f\) can be written as a finite sum 
	\(f=\sum_{i\in I\cup\{\infty\}}f_i\) 
	with~\(f_i\in\Cont_c^\infty(\V_i)\) for \(i\in I\cup\{\infty\}\). As smoothness for \(t>0\) is clear, it suffices to prove for all \(H\)-charts~\(\kappa\colon V\to U\) and \(f\in\Cont_c^\infty(\U)\) that the following map is smooth
	\begin{align*}
	(x,t)&\mapsto t^{-d_H}\int_U (f\circ\phi_\kappa)(x,\kappa^{-1}(y),t)\diff y \qquad \text{for }t>0 ,\\
	(x,0)&\mapsto \abs{\det{B_X(\kappa(x))}}\int_{\R^n} (f\circ\phi_\kappa)(x,(T_x\kappa)^{-1}(v),0)\diff v =\abs{\det{B_X(\kappa(x))}}\int_{\R^n}f(\kappa(x),v,0)\diff v,
	\end{align*}
	For \(x\in V\) and \(t>0\) consider the diffeomorphism 
	\begin{align*}
	\tilde{\phi}_\kappa^{(x,t)}\colon U &\to \U^{(x,t)}\defeq\{v\in \R^n \mid (\varepsilon^\kappa_{\kappa(x)})^{-1}(t\cdot v)\in U\},\\
	y &\mapsto t^{-1}\cdot \varepsilon^\kappa_{\kappa(x)}(y).
	\end{align*}
	By \cite{choi2019tangent}*{4.17} and \cite{choi2017privilegedII}*{9.15} \(\varepsilon^\kappa_{\kappa(x)}\) can be decomposed as \(\varepsilon^\kappa_{\kappa(x)}=\hat{\varepsilon}^\kappa_{\kappa(x)}\circ A_{\kappa(x)}\) with \(A_{\kappa(x)}\) as in~\eqref{eq:affine_map}. Moreover, it follows from their description of \(\hat{\varepsilon}^\kappa\) that the differential \(\diff(\hat{\varepsilon}^\kappa_{\kappa(x)})(y)\) is of upper triangular form with ones on the diagonal. Consequently, \(\abs{\det (\diff\tilde  {\phi}_\kappa^{(x,t)}(y))}=t^{-d_H}\abs{\det{B_X(\kappa(x))}}^{-1}\) holds for all \(y\in U\). Therefore, we obtain
	\begin{align*}
	t^{-d_H}\int_U (f\circ\phi_\kappa)(x,\kappa^{-1}(y),t)\diff y &= t^{-d_H}\int_U f(\kappa(x),t^{-1}\cdot \varepsilon^\kappa_{\kappa(x)}(y),t)\diff y\\
	& = \abs{\det{B_X(\kappa(x))}}\int_{\U^{(x,t)}} f(\kappa(x),v,t)\diff v.
	\end{align*}
	Thus the Haar measure is smooth. 
\end{proof}
\subsection{The groupoid \(\Cst\)-algebra}
Using the left Haar system \(\{\nu^{(x,t)}\}_{(x,t)\in M\times[0,\infty)}\),  the linear space \(\Cont_c(\T_HM)\) can be equipped with the following involution and convolution:
\begin{align}
f^*(\gamma)&=\conj{f(\gamma^{-1})},\label{def:inv}\\
(f*g)(\gamma)&= \int f(\gamma\eta)g(\eta^{-1})\diff\nu^{s(\gamma)}(\eta)=\int f(\eta)g(\eta^{-1}\gamma)\diff\nu^{r(\gamma)}(\eta)\label{def:conv}
\end{align}
for \(f,g\in\Cont_c(\T_HM)\) and \(\gamma\in\T_HM\). More explicitly, the involution is given by 
\[f^*(x,y,t)=\conj{f(y,x,t)} \quad\text{for }t>0, \qquad f^*(x,\xi,0)=\conj{f(x,\xi^{-1},0)}.\]
The convolution can be written as
\begin{align*}
(f*g)(x,y,t)&=t^{-d_H}\int_M f(x,z,t)g(z,y,t)\diff\nu(z) \quad\text{for }t>0,\\
(f*g)(x,\xi,0)&= \int f(x,\eta,0)g(x,\eta^{-1}\xi,0)\diff\nu^{(x,0)}(x,\eta,0).
\end{align*}
Let the \(I\)\nb-norm on \(\Cont_c(\T_HM)\) be given by \(\norm{f}_I=\max\{\norm{f}_{I,r},\norm{f}_{I,s}\}\), where
\begin{align*}
\norm{f}_{I,r}&= \sup_{(x,t)}{\int \abs{f}\diff \nu^{(x,t)}}
\end{align*}
and \(\norm{f}_{I,s}=\norm{f^*}_{I,r}\). The (full) groupoid \(\Cst\)-algebra of \(\T_HM\) is defined as the \(\Cst\)-completion of \(\Cont_c(\T_HM)\) with respect to representations that are bounded by the \(I\)-norm  as in \cite{renaultgroupoid}*{II,1.12}. 
\begin{example}
	Let \(G\) be a graded Lie group. By the description of \(\T_HG\) as a transformation groupoid in \cref{ex:tangentofgradedliegroup},  \(\Cst(\T_HG)\) is isomorphic to the crossed product
	\( \Cst(G,\Cont_0(G\times[0,\infty))\) (see \cite{renaultgroupoid}).
\end{example}

\subsection{Continuous field structure} As the tangent groupoid \(\T_HM\) is a continuous field of amenable groupoids, its \(\Cst\)-algebra admits a continuous field structure. The same is true for the bundle of osculating groups \(T_HM\) viewed as a continuous bundle of groups over \(M\).
\begin{proposition}
	The \(\Cst\)-algebra of the tangent groupoid \(\T_HM\) is a continuous field of \(\Cst\)-algebras over \([0,\infty)\) with fibres isomorphic to \(\Cst(T_HM)\) for \(t=0\) and the \(\Cst\)-algebra of compact operators \(\Comp(L^2M)\) for \(t>0\).
\end{proposition}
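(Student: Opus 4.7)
The plan is to use \cref{res:contfield}, which says $\T_HM$ is a smooth field of amenable Lie groupoids over $[0,\infty)$, together with the general principle that the full groupoid $\Cst$-algebra of a continuous field of amenable Lie groupoids is a continuous $\Cont_0$-algebra over the base with the expected fibrewise groupoid $\Cst$-algebras. This strategy goes back to Landsman--Ramazan and Debord--Skandalis in the tangent groupoid setting.

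First, I would make $\Cst(\T_HM)$ into a $\Cont_0([0,\infty))$-algebra. Because $\theta = \pr_2\circ r = \pr_2\circ s$, pullback along $\theta$ sends $h\in\Cont_0([0,\infty))$ to a $\T_HM$-invariant bounded function that acts on $\Cont_c(\T_HM)$ by pointwise multiplication, and this action commutes with convolution and involution. Extending, one obtains a nondegenerate $*$\nb-homomorphism $\Cont_0([0,\infty))\to Z(\Mult(\Cst(\T_HM)))$.

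Next, for each $t_0\in[0,\infty)$ I would show that the restriction map $\ev_{t_0}\colon\Cont_c(\T_HM)\to\Cont_c(\theta^{-1}\{t_0\})$ extends to a surjective $*$\nb-homomorphism $\Cst(\T_HM)\to\Cst(\theta^{-1}\{t_0\})$ whose kernel is the ideal $\Cont_0([0,\infty)\setminus\{t_0\})\cdot\Cst(\T_HM)$. Surjectivity of $\ev_{t_0}$ on the level of compactly supported smooth functions is immediate because $\theta$ is a surjective submersion, so every element of $\Cont_c^\infty(\theta^{-1}\{t_0\})$ lifts to a compactly supported smooth function on $\T_HM$ by bump functions in the $H$\nb-charts $\phi_\kappa$. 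The amenability of every fibre, provided by \cref{res:contfield}, guarantees that full and reduced groupoid $\Cst$-algebras agree, so the expected short exact sequence
\[ 0\to\Cst(M\times M\times(0,\infty))\to\Cst(\T_HM)\to\Cst(T_HM)\to 0 \]
is exact by the standard open/closed decomposition for reduced groupoid $\Cst$-algebras; the analogous sequence at each $t_0>0$ identifies the remaining fibres. For $t>0$ the fibre groupoid is the pair groupoid of $M$ with Haar system $t^{-d_H}\nu$, and $\Cst(M\times M)\cong\Comp(L^2(M,\nu))$ by the classical computation; for $t=0$ the fibre is $\Cst(T_HM)$ by definition.

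The main technical obstacle is the continuity of the field at $t=0$, i.e.\ lower semicontinuity of $t\mapsto\norm{\ev_t(f)}$ for $f\in\Cont_c(\T_HM)$ as $t\to 0$. Upper semicontinuity follows from the $\Cont_0$-algebra structure above. Lower semicontinuity at $t=0$ requires showing that no representation of $\Cst(T_HM)$ is lost in the limit, which I would deduce by combining the smoothness of the Haar system established in the preceding lemma with the amenability of the osculating groupoid: this allows one to realize any state on $\Cst(T_HM)$ as a limit of states on $\Cst(\T_HM^{(x,t)})$ restricted to $\Cont_c(\T_HM)$, which gives the required norm convergence.
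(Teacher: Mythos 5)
Your proposal is correct and follows essentially the same route as the paper: the paper likewise combines \cref{res:contfield} with the general result that the \(\Cst\)-algebra of a continuous field of amenable groupoids is a continuous field of \(\Cst\)-algebras (citing Landsman, 5.6, rather than re-deriving the semicontinuity arguments you sketch), and then identifies the fibres via the isomorphism \(\Cst(M\times M,\mu_t)\cong\Comp(L^2M)\) for \(t>0\) and by definition at \(t=0\).
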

\begin{proof}
	Recall that \(\T_HM\) is a continuous field of groupoids by \cref{res:contfield}. As all groupoids~\(\theta^{-1}\{t\}\) are amenable, \(\Cst(\T_HM)\) defines a continuous field of \(\Cst\)\nb-algebras with fibres \(\Cst(\theta^{-1}\{t\})\) by \cite{landsman}*{5.6}. 	
	For \(t>0\) the groupoid \(\theta^{-1}\{t\}\) is isomorphic to the pair groupoid of \(M\). The Haar measure on \(\theta^{-1}\{t\}\) is given by
	\[\int K(\gamma)\diff\mu_t^x(\gamma)=t^{-d_H}\int_MK(x,y)\diff \nu(y) \qquad\text{for }K\in\Cont_c(M\times M).\]
	There is a well-known isomorphism \(\Phi_t\colon\Cst(M\times M,\mu_t)\to \Comp(L^2M)\) with
	\[(\Phi_t(K)\psi)(x)=t^{-d_H}\int_M K(x,y)\psi(y)\diff\nu (y)\]
	for \(K\in\Cont_c(M\times M)\) and \(\psi\in \Cont_c(M)\).	
	Hence, for \(t>0\) we obtain  epimorphisms \(p_t\colon \Cst(\T_HM)\to \Comp(L^2M)\) given by
	\begin{align}\label{eq:p_t}
	\left(p_t(f)\psi\right)(x)= t^{-d_H}\int_M{ f\left(x,y,t\right)\psi(y)\diff \nu(y)}
	\end{align} 
	for \(f\in\Cont_c(\T_HM)\), \(\psi\in \Cont_c(M)\) and \(x\in M\). 
\end{proof}
Now consider the \(\Cst\)-algebra of the osculating groupoid at \(t=0\). As \(T_HM\) is a continuous field of amenable groups over \(M\), \cite{landsman}*{5.6} applies again. 
\begin{lemma}
	The \(\Cst\)-algebra \(\Cst(T_HM)\) is a continuous field of \(\Cst\)-algebras over~\(M\) with fibre projections
	\[q_x\colon\Cst(T_HM)\to\Cst(G_x)\quad \text{for }x\in M.\]
\end{lemma}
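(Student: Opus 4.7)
The plan is to apply \cite{landsman}*{5.6} to the osculating groupoid $T_HM$, viewed as a groupoid fibred over $M$ via the common source/range projection. Since the tangent groupoid case was handled with exactly this reference, the proof of the lemma will be the direct analogue, only simpler, as there is no deformation parameter to worry about.

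First, I would verify that $T_HM$ is a smooth field of groupoids over $M$ in the sense of \cite{landsman}*{5.2}. The projection map is the base projection $\pi\colon T_HM \to M$, which coincides with both the range and the source map because $T_HM$ is a bundle of groups. It is a smooth submersion, as $T_HM$ is a Lie groupoid whose object space is $M$ and whose source is submersive by definition. The fibres over $x\in M$ are precisely the osculating groups $G_x$, regarded as groupoids over a point.

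Next, I would check amenability of each fibre. Every osculating group $G_x$ is a simply connected nilpotent Lie group (as explained in \cref{sec:filtered_groupoid}), hence amenable as a locally compact group, and therefore amenable as a groupoid. Consequently \cite{landsman}*{5.6} yields that $\Cst(T_HM)$ is a continuous $\Cont_0(M)$-algebra whose fibre at $x\in M$ is $\Cst(G_x)$, with fibre projections $q_x\colon \Cst(T_HM)\to\Cst(G_x)$. On compactly supported functions these projections are simply given by restriction to the fibre, $q_x(f) = f|_{G_x}$, which makes sense since the Haar system on $T_HM$ was chosen to restrict on each fibre to the Haar measure on $G_x$ induced by the chosen $H$-frame.

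I do not expect any genuine obstacle here: both hypotheses of \cite{landsman}*{5.6} (continuous field structure and fibrewise amenability) are immediate once the framework set up in \cref{sec:filtered_groupoid} and in the previous proposition is in place. If anything, the only point deserving a line of explanation is the identification of the fibre $\Cst$-algebras, for which one observes that the restriction to the fibre is compatible with the convolution and involution formulas~\eqref{def:inv}--\eqref{def:conv} because the Haar system is fibrewise the Haar measure on $G_x$.
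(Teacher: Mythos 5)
Your proposal is correct and follows exactly the route the paper takes: the paper justifies this lemma in one sentence by noting that \(T_HM\) is a continuous field of amenable groups over \(M\) so that \cite{landsman}*{5.6} applies again, just as for the tangent groupoid. You simply spell out the same verification (submersion, amenability of the nilpotent fibres, identification of \(q_x\) with restriction) in more detail.
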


\begin{lemma}
	Denote by \(p_0\colon\Cst(\T_HM)\to\Cst(T_HM)\) the \(^*\)\nb-homomorphism induced by restriction to \(t=0\). There is a corresponding short exact sequence 
	\begin{equation}\label{ses:tangentgroupoid}
	\begin{tikzcd}
	\Cont_0(\Rp)\otimes\Comp(L^2M) \arrow[r,hook] & \Cst(\T_HM)\arrow[r,twoheadrightarrow,"p_0"] & \Cst(T_HM).
	\end{tikzcd}
	\end{equation}		
\end{lemma}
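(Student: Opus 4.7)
The plan is to realize the stated sequence as the one coming from the decomposition of the unit space $M\times[0,\infty)$ into the closed invariant subset $M\times\{0\}$ and its open invariant complement $M\times\Rp$. The restriction of $\T_HM$ to $M\times\{0\}$ as unit space is exactly the osculating groupoid $T_HM$, while the restriction to $M\times\Rp$ is the product groupoid $(M\times M)\times\Rp$, i.e., the pair groupoid of $M$ with a trivial $\Rp$-parameter.

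First, I would invoke the continuous field structure of $\Cst(\T_HM)$ over $[0,\infty)$ just established. Since all fibre groupoids are amenable, evaluation at $t=0$ defines a surjection $p_0$ onto $\Cst(T_HM)$ whose kernel is the ideal of continuous sections vanishing at $t=0$. This kernel can be identified with the groupoid $\Cst$-algebra of the open restriction $\T_HM|_{M\times\Rp}=M\times M\times\Rp$, either by the standard open/closed decomposition for amenable groupoid $\Cst$-algebras in \cite{renaultgroupoid}, or equivalently via the continuous-field description as $\Cont_0(\Rp, A_\bullet|_{\Rp})$.

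Second, I would identify $\Cst(M\times M\times\Rp)$ with $\Cont_0(\Rp)\otimes\Comp(L^2M)$. Because $\Rp$ enters only as a parameter, the multiplication $(x,y,t)(y,z,t)=(x,z,t)$ keeping $t$ fixed, the groupoid $M\times M\times\Rp$ is the external product of the pair groupoid of $M$ with the unit groupoid on $\Rp$. Hence, up to smooth rescaling of the Haar system, its $\Cst$-algebra factors as $\Cst(M\times M)\otimes\Cont_0(\Rp)\cong\Comp(L^2M)\otimes\Cont_0(\Rp)$. Concretely, the isomorphisms $\Phi_t\colon\Cst(M\times M,\mu_t)\to\Comp(L^2M)$ from the preceding proof assemble into a $\Cont_0(\Rp)$-linear trivialization of the restricted continuous field.

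The main obstacle I expect is the bookkeeping around the Haar system: the factor $t^{-d_H}$ in the fibrewise Haar measure obstructs an immediate identification as an external tensor product. This is dealt with either by the standard fact that two smooth left Haar systems on the same Lie groupoid yield canonically isomorphic groupoid $\Cst$-algebras via the smooth positive function relating them, or by verifying directly that the family $\Phi_t$ depends continuously on $t\in\Rp$ and thus glues to the required trivialization. Once this identification is in place, combining the two steps yields the short exact sequence \eqref{ses:tangentgroupoid}.
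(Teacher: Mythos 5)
Your proposal is correct and follows essentially the same route as the paper: the paper identifies the kernel of \(p_0\) with \(\Cst(\T_HM|_{M\times(0,\infty)})\) via the open invariant subset \(M\times(0,\infty)\) (citing Hilsum--Skandalis rather than Renault, but this is the same standard fact), and then assembles the fibre projections \(p_t\) from \eqref{eq:p_t} into the isomorphism \(p\colon\Cst(\T_HM|_{M\times(0,\infty)})\to\Cont_0(\Rp,\Comp(L^2M))\). Your worry about the \(t^{-d_H}\) factor is resolved exactly as you suggest: the maps \(p_t\) already carry that factor, so they form the required continuous trivialization directly.
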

\begin{proof}
	The subset \(M\times(0,\infty)\subset M\times [0,\infty)\) is open and invariant.  By \cites{hilsumskandalis}, the kernel of \(p_0\) is \(\Cst(\T_HM|_{M\times(0,\infty)})\). The fibre projections \(p_t\) from \eqref{eq:p_t} for \(t>0\) combine to an isomorphism
	\begin{align}\label{eq:iso_p}
	p\colon\Cst(\T_HM|_{M\times(0,\infty)})&\to\Cont_0(\Rp,\Comp(L^2M))
	\end{align}
	defined by \(p(f)(t)=p_t(f)\) for \(f\in\Cont_c(\T_HM|_{M\times(0,\infty)})\).
\end{proof}

\begin{remark}If the filtration is of step \(r=1\), the \(\Cst\)\nb-algebra \(\Cst(T_HM)\) is isomorphic to \(\Cont_0(T^*M)\). Namely, the fibrewise Fourier transform yields an isomorphism \(\Cst(TM)\to\Cont_0(T^*M)\). If the osculating groups are not Abelian, \(\Cst(T_HM)\) is noncommutative.
\end{remark}
\section{A Schwartz type algebra}\label{sec:schwartztype}
In this section the Schwartz type algebra \(\A(\T_HM)\subset\Cst(\T_HM)\) is defined by adapting the construction in \cite{rouseschwartz} to the filtered manifold setting. The Schwartz type algebra consists of functions \(f\in\Cont^\infty(\T_HM)\) which restrict at \(t>0\) to a compactly supported function~\(f_t\in\Cont^\infty_c(M\times M)\), whereas \(f_0\) has rapid decay in the fibres of~\(T_HM\).  This algebra will be convenient for the generalized fixed point algebra construction.
\subsection{Definition of the Schwartz type algebra}
First, the Schwartz type algebra will be defined locally as in \cite{rouseschwartz}*{4.1} using the charts  \(\phi_\kappa\colon \V\to\U\) of \(\T_HM\) obtained from \(H\)-charts \(\kappa\colon V\to U\). In the following we use the homogeneous quasi-norm on \(\U\) from~\eqref{eq:homogeneousquasi}. Consider the smooth function \[k\colon U\times\R^n\times[0,\infty)\to U\times\R^n\] given by \(k(x,v,t)=(x,t\cdot v)\). Recall that \(\theta\colon \T_HM\to[0,\infty)\) denotes the projection.
\begin{definition}\label{def:schwartz_local}
	Let \(\A(\U)\) consist of all functions \(f\in\Cont^\infty(\U)\) satisfying
	\begin{enumerate}
		\item\label{item:compactsupportlocal} there is \(T>0\) and a compact subset \(K\subset k(\U)\) such that \((k(\gamma),\theta(\gamma))\notin K\times[0,T]\) implies \(f(\gamma)=0\), 
		\item \label{item:rapiddecaylocal} for all \(p\in\N_0\) and \(\alpha=(\alpha_1,\alpha_2,\alpha_3)\in\N^n_0\times\N^n_0\times\N_0\) there is a constant \(D_{p,\alpha}>0\) such that
		\[\sup_{\gamma\in\U}{(1+\norm{\gamma})^p\abs{\partial_x^{\alpha_1}\partial_v^{\alpha_2}\partial_t^{\alpha_3}f(\gamma)}}\leq D_{p,\alpha}.\]
	\end{enumerate}
\end{definition}	
We check first that this space is invariant under Carnot diffeomorphisms.  
\begin{proposition}[\cite{rouseschwartz}*{4.2}]\label{res:schwartzinvar}
	Let \(F\colon U_1\to U_2\) be a Carnot diffeomorphism and \(\F\colon \U_1\to \U_2\) the induced map as in \cref{res:inducedcarnot}. Then \(f\circ \F\in \A(\U_1)\) for all \(f\in\A(\U_2)\).  
\end{proposition}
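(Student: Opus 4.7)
The plan is to verify the two conditions of \cref{def:schwartz_local} directly for $g \defeq f \circ \mathbb{F}$. By \cref{res:inducedcarnot}, $\mathbb{F}$ is a smooth Lie groupoid isomorphism, preserves the $t$-coordinate ($\theta \circ \mathbb{F} = \theta$), and is equivariant for the zoom actions. In the coordinates of $\phi_\kappa$, I may write $\mathbb{F}(x,v,t) = (F(x),\Phi(x,v,t),t)$ with $\Phi$ smooth on $\mathbb{U}_1$ and $\Phi(x,v,0) = T_xF(v)$ the Carnot differential, which is a graded linear isomorphism depending smoothly on $x$.

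The support condition~(i) is the easier half. The auxiliary map $k(x,v,t) = (x,t\cdot v)$ records the underlying pair-groupoid point before rescaling, and for $t>0$ the restriction of $\mathbb{F}$ to the pair groupoid is $F\times F$ on the two $M$-factors while $t$ is fixed. A short computation shows that $k\circ\mathbb{F}$ maps a set of the form $K\times[0,T]\subset k(\mathbb{U}_2)\times[0,\infty)$ back to a set of the same type in $k(\mathbb{U}_1)\times[0,\infty)$, with extension to $t=0$ by continuity. Hence vanishing of $f$ outside such a set implies vanishing of $g$ outside a corresponding one.

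The decisive step, and the main obstacle, is the rapid-decay condition~(ii). The heart of the matter is the homogeneous quasi-norm equivalence: for every compact $K\subset U_1$ there exist constants $c,C>0$ with
\[
c(1+\|\gamma\|) \leq 1 + \|\mathbb{F}(\gamma)\| \leq C(1+\|\gamma\|)
\]
for all $\gamma = (x,v,t)\in\mathbb{U}_1$ with $x\in K$. At $t=0$ this is immediate because $T_xF$ is a graded linear isomorphism depending continuously on $x$, and therefore respects the quasi-norm up to uniform constants on $K$. For $t>0$ one exploits the description of the $\varepsilon$-Carnot maps from~\cite{choi2017privilegedII}: the correction terms in $\Phi - T_xF$ are of strictly positive homogeneous degree, so they are dominated by the leading graded term in the homogeneous quasi-norm uniformly as $t$ ranges over a bounded interval. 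The nonlinearity of the dilation means that this genuinely uses the Carnot-homogeneous structure of the coordinates and does not reduce to a linear change of variables.

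The remaining bookkeeping for (ii) is then routine. Faà di Bruno expresses $\partial^{\alpha_1}_x\partial^{\alpha_2}_v\partial^{\alpha_3}_t g$ as a finite sum of products of derivatives of $f$ evaluated at $\mathbb{F}(\gamma)$ against derivatives of the components of $\mathbb{F}$. Smoothness of $\mathbb{F}$ together with its zoom-equivariance gives polynomial bounds in $\|\gamma\|$ on the latter, uniformly over the compact $x$-support provided by (i). Feeding in the rapid decay of $f$ and its derivatives together with the quasi-norm equivalence then yields the required decay estimate for every derivative of $g$, completing the verification of $g\in\mathcal{A}(\mathbb{U}_1)$.
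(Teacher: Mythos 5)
Your proposal follows essentially the same route as the paper: the commuting square with $k$ for the support condition, the Choi--Ponge expansion of the $\varepsilon$-Carnot maps to control $\Phi-T_xF$, Fa\`a di Bruno for the derivatives, and a quasi-norm comparison between $\norm{\gamma}$ and $\norm{\F(\gamma)}$ to transport the rapid decay. One caution: the two-sided \emph{linear} equivalence $c(1+\norm{\gamma})\leq 1+\norm{\F(\gamma)}\leq C(1+\norm{\gamma})$ is stated for all $\gamma$ with $x\in K$, but the correction terms $t^{[\alpha]-q_j}v^\alpha$ with $[\alpha]>q_j$ are superlinear in the quasi-norm for fixed $t>0$, so the upper bound can fail unless one also confines $k(\gamma)=(x,t\cdot v)$ to a compact set; the paper sidesteps this by proving only the one-sided polynomial bound $1+\norm{\gamma}\leq D(1+\norm{\F(\gamma)})^{l}$ on $k^{-1}(K_1)\cap\theta^{-1}[0,T]$, which is all that rapid decay requires and is where your estimate should be localized.
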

\begin{proof}
	As the induced map \(\F\colon \U_1\to \U_2\) is smooth, it is clear that \(f\circ \F\) is smooth for all \(f\in\A(\U_2)\).  In fact, as in the proof of \cite{choi2019tangent}*{9.15} we have
	\begin{align*}
	\F(x,v)=\begin{cases}(F(x),t^{-1}\cdot (\varepsilon_{F(x)}\circ F\circ \varepsilon_x^{-1})(t\cdot v),t)&\text{for }t>0,\\
	(F(x),T_xF(v),0) &\text{for }t=0,
	\end{cases}
	\end{align*}
	for \((x,v,t)\in\U_1\) and the respective \(\varepsilon\)-Carnot maps \( U_i\times\R^n\to\R^n\) for \(i=1,2\). 
	To show condition~\ref{item:compactsupportlocal}, define \(F_k\colon k_1(\U_1)\to k_2(\U_2)\) by
	\begin{align*}
	(x,v)\mapsto \left(F(x), \left(\varepsilon_{F(x)}\circ F\circ\varepsilon^{-1}_x\right)(v)\right).
	\end{align*}
	It is a diffeomorphism with inverse \((F^{-1})_k\). The following diagram commutes
	\begin{equation*}
	\begin{tikzcd}
	\U_1 \arrow[r, "\F"] \arrow[d, "k_1"]& \U_2\arrow[d, "k_2"]	\\
	k_1(\U_1) \arrow[r, "F_k"]& k_2(\U_2).
	\end{tikzcd}
	\end{equation*}	
	Let \(K_2\subset k_2(\U_2)\) be a compact subset for \(f\) as in \ref{item:compactsupportlocal}. Then \(K_1\defeq (F_k)^{-1}(K_2)\) is a compact subset such that \((f\circ\F)(\gamma)=0\) if \(k_1(\gamma)\notin K_1\). If \(f\) vanishes for \(t\geq T\), also \(f\circ\F\) vanishes for \(t\geq T\). 
	
	For the rapid decay property, write \(\F(x,v,t)=(F(x),w(x,v,t),t)\). Because of this structure of~\(\F\), one can write for \(\gamma=(x,v,t)\), \(\eta=\F(\gamma)\) and \(\alpha\in\N_0^n\times\N_0^n\times\N_0\)
	\begin{align*}
	\partial^\alpha_\gamma(f\circ\F)(\gamma)=\sum_{\abs{\delta}\leq\abs{\alpha}} \partial^\delta_\eta f(\eta)\cdot P_\delta(\gamma),
	\end{align*}
	where \(P_\delta\) is a finite sum of products of the form
	\[\partial_x^{\delta_1}F_i(x)\cdot \partial^{\delta_2}_\gamma w_j(x,v,t).\]
	We only need to estimate each \(P_\delta\) for \(\gamma=(x,v,t)\) such that  \(k(x,v,t)\in K_1\) as otherwise \(\partial^\delta_\eta f(\F(\gamma))=0\). In particular, \(x\) is contained in a compact subset, so that \(\partial_x^{\delta_1}F_i(x)\) is bounded. 
	By \cite{choi2019tangent}*{6.7},
	\[\varepsilon_{F(x)}\circ F \circ \varepsilon_x^{-1}(v)=T_xF(v)+O_q(\norm{v}^{q+1}).\]
	holds near \(v=0\). Here, \(O_q(\norm{v})\) is defined as in \cite{choi2019tangent}*{3.2}. 
	Hence, \cite{choi2019tangent}*{3.9} implies that there are smooth \(R_{j,\alpha}\) for \(j=1,\ldots,n\) such that
	\[w_j(x,v,t)=(T_xF(v))_j+\sum_{\abs{\alpha}\leq q_j +1 \leq [\alpha]}t^{[\alpha]-q_j}v^\alpha R_{j,\alpha}(x,t\cdot v). \]
	For all \((x,v,t)\in k^{-1}(K_1)\) the components \(x\) and \((x,t\cdot v)\) are in compact sets. Moreover, we only need to consider \(t\leq T\). It follows that one can find \(C_{\delta_2,j}>0\) and \(m_{\delta_2,j}\) such that
	\[\abs{\partial^{\delta_2}_\gamma w_j(\gamma)}\leq C_{\delta_2,j}(1+\norm{\gamma})^{m_{\delta_2,j}}\quad \text{for all }\gamma\in k^{-1}(K)\cap\theta^{-1}[0,T].\]
	Together, this means that there are \(C_\delta>0\) and \(m_\delta\in\N\) such that \(\abs{P_\delta(\gamma)}\leq C_\delta(1+\norm{\gamma})^{m_\delta}\) for all such \(\gamma\).
	As \(\F\) is a Carnot diffeomorphism, one can find likewise \(D>0\) and \(l>0\) such that for all \(\gamma\) with \(k(\gamma)\in K_1\)
	\[1+\norm{\gamma}\leq D (1+\norm{\F(\gamma)})^l.\]
	Let \(p\in\N_0\). Because \(f\) satisfies the rapid decay condition \ref{item:rapiddecaylocal}, there are constants \(D_{l(m_\delta+p),\delta}>0\) such that for all  \(\eta\in\U_2\)
	\[\abs{\partial^\delta_\eta f(\eta)}\leq D_{l(m_\delta+p),\delta}(1+\norm{\eta})^{-l(k+m_\delta)}.\]
	It follows that 
	\(\sup_{\gamma\in \U}(1+\norm{\gamma})^p\abs{\partial^\alpha_\gamma \F(\gamma)}< \infty\).
\end{proof}
The invariance under Carnot diffeomorphisms allows to define the Schwartz type algebra in the following way. 
For an \(H\)-chart \(\kappa\colon V\to U\) let \[\A(\V)\defeq\{f\in\Cont^\infty(\V)\mid f\circ\phi_\kappa^{-1}\in\A(\U)\}.\]	
We will denote by \(r_1,s_1\colon \T_HM\to M\) the maps given by \(r_1=\pr_1\circ r\) and \(s_1=\pr_1\circ s\).
\begin{definition}[\cite{rouseschwartz}*{4.4}]\label{def:schwartz_global}
	The \emph{Schwartz type algebra}  \(\A(\T_HM)\) is the space of functions \(f\in\Cont^\infty(\T_HM)\) such that
	\begin{enumerate}
		\item \label{item:compact} there are \(T>0\) and a compact subset \(K\subset M\times M\) such that  \((r_1,s_1,\theta)(\gamma)\notin K\times[0,T]\) implies \(f(\gamma)=0\),
		\item \label{item:rapid} \(f\) \emph{has rapid decay at \(t=0\)}, that is, for all \(H\)-charts \(\kappa\colon V \to U\) and \(\chi\in\Cont^\infty_c(V\times V\times[0,\infty))\), the function \(f_\chi\) belongs to \(\A(\V)\), where
		\[f_\chi(\gamma)=(\chi\circ (r_1,s_1,\theta))(\gamma)f(\gamma)\quad\text{for }\gamma\in\V.\]			
	\end{enumerate}
\end{definition}
We will verify later that \(\A(\T_HM)\) is indeed an algebra. 
\begin{lemma}[\cite{rouseschwartz}*{(5)}]\label{res:decompschwartz}
	Let \((\kappa_i\colon V_i\to U_i)_{i\in I}\) be an atlas of \(H\)-charts for \(M\). The space \(\A(\T_HM)\) can be decomposed as 
	\[\A(\T_HM)=\sum_{i\in I}\A(\V_i)+\Cont^\infty_c(\V_\infty).\]
\end{lemma}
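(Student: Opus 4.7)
The plan is a partition-of-unity argument in the style of Carrillo--Rouse, adapted to the filtered setting. Fix a locally finite partition of unity \((\rho_i)_{i\in I}\) on \(M\) subordinate to \((V_i)_{i\in I}\), and for each \(i\) choose \(\tau_i\in\Cont_c^\infty(V_i)\) with \(\tau_i\equiv 1\) on an open neighbourhood of \(\supp(\rho_i)\). For \(f\in\A(\T_HM)\) with compactness data \((T,K)\) as in \cref{def:schwartz_global}~(i), define
\[
g_i(\gamma)\defeq\rho_i(r_1(\gamma))\,\tau_i(s_1(\gamma))\,f(\gamma)\qquad\text{and}\qquad h\defeq f-\sum_{i\in I}g_i.
\]
Only finitely many \(g_i\) are nonzero since \((r_1,s_1)(\supp(f))\subset K\) and the partition of unity is locally finite, and each \(g_i\) is supported in \(\V_i\) by construction. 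At \(t=0\) the equality \(r_1=s_1\) together with \(\tau_i\equiv 1\) on \(\supp(\rho_i)\) yields \(\sum_i\rho_i(x)\tau_i(x)=\sum_i\rho_i(x)=1\), so \(h\) vanishes identically on the \(t=0\) face.

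To verify \(g_i\in\A(\V_i)\) I pass to the chart \(\phi_{\kappa_i}\colon\V_i\to\U_i\) and check the two conditions of \cref{def:schwartz_local}. Condition (i) is inherited from the compact supports of \(\rho_i,\tau_i\) combined with the bound \(\theta\leq T\). For the rapid-decay condition (ii), the factor \(\rho_i(r_1(\gamma))\tau_i(s_1(\gamma))\) pulls back to a smooth function of \((x,t\cdot v)\) on \(\U_i\); a chain-rule computation analogous to the one carried out in the proof of \cref{res:schwartzinvar} shows that each \(v\)-derivative of this pulled-back cutoff produces a factor \(t^{q_j}\), and so the cutoff and all its derivatives stay bounded for \(t\in[0,T]\). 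Leibniz then transfers the rapid-decay seminorm bounds of \(f\) directly to \(g_i\).

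The remaining—and most delicate—step is to show \(h\in\Cont_c^\infty(\V_\infty)\). By construction \(h\) is smooth on \(\T_HM\), its \((r_1,s_1)\)-projection lies in a compact subset of \(M\times M\), and its \(\theta\)-projection lies in \([0,T]\). Because \(\tau_i\equiv 1\) on a neighbourhood of \(\supp(\rho_i)\), there is an open neighbourhood \(N\) of the diagonal \(\Delta_M\) in \(M\times M\) on which \(\sum_i\rho_i(x)\tau_i(y)=1\); thus \(h\) vanishes identically on \((r_1,s_1)^{-1}(N)\), and combined with \(h|_{t=0}=0\) this confines \(\supp(h)\) to the open subset \(\V_\infty\). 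The main obstacle is then to upgrade this qualitative statement to membership in \(\Cont_c^\infty(\V_\infty)\): away from the diagonal the chart formula \(\phi_\kappa(x,y,t)=(\kappa(x),t^{-1}\varepsilon^\kappa_{\kappa(x)}(\kappa(y)),t)\) forces \(\norm{v}\gtrsim 1/t\), so the rapid-decay seminorm bounds of \(f\) yield \(\abs{\partial^\alpha h}=O(t^N)\) for every \(N\) and every multi-index \(\alpha\) on the off-diagonal region. This is precisely the estimate used in \cite{rouseschwartz}*{(5)} for the ordinary tangent groupoid, and transcribing that argument to the filtered setting identifies \(h\) as a genuine element of \(\Cont_c^\infty(\V_\infty)\) and closes the proof.
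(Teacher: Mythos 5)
Your verification that \(g_i\in\A(\V_i)\) is correct but unnecessary: condition (ii) of \cref{def:schwartz_global} says precisely that multiplying \(f\) by a cutoff pulled back from \(\Cont^\infty_c(V_i\times V_i\times[0,\infty))\) lands in \(\A(\V_i)\), and \(\chi(x,y,t)=\rho_i(x)\tau_i(y)\omega(t)\) with \(\omega\equiv 1\) on \([0,T]\) is such a cutoff, so no chain-rule computation is needed. The genuine gap is in the last step. Membership in \(\Cont^\infty_c(\V_\infty)=\Cont^\infty_c(M\times M\times(0,\infty))\) requires the support of \(h\) to be contained in \(K'\times[t_0,T]\) for some \(t_0>0\), i.e.\ bounded away from \(t=0\). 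The estimate \(\abs{\partial^\alpha h}=O(t^N)\) for every \(N\) only gives vanishing to infinite order as \(t\to 0\); it does not force \(h\) to vanish for small \(t\). Your remainder is \(h=f\cdot\bigl(1-\sum_i\rho_i(r_1)\tau_i(s_1)\bigr)\), where the cutoff factor is independent of \(t\) and equals \(1\) only near the diagonal; nothing in \cref{def:schwartz_global} forbids \(f\) from being nonzero at off-diagonal points \((x,y,t)\) for arbitrarily small \(t>0\) (a germ of the form \(\psi(x,y)e^{-1/t}\) with \(\psi\) supported off the diagonal is consistent with the rapid-decay condition, exactly because of the relation \(t\asymp\norm{v}^{-1}\) that you invoke). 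So the qualitative support statement cannot be upgraded to compact support in \(\V_\infty\), and the argument does not close.

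The paper avoids this by taking the partition of unity on \(M\times M\times[0,\infty)\) rather than on \(M\): it covers \(K\times[0,T]\) by the sets \(V_i\times V_i\times[0,\infty)\) together with \(M\times M\times(0,\infty)\) and chooses a finite, compactly supported partition of unity \((\rho_i)_{i\in I\cup\{\infty\}}\) subordinate to this cover. Each \(f_{\rho_i}\) then lies in \(\A(\V_i)\) directly by \cref{def:schwartz_global}(ii), and the remainder is \(f_{\rho_\infty}\) with \(\rho_\infty\) itself compactly supported in \(M\times M\times(0,\infty)\), so its membership in \(\Cont^\infty_c(\V_\infty)\) is automatic rather than something to be extracted from decay estimates. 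Finally, the lemma asserts an equality of spaces: your proposal does not address the inclusion \(\sum_i\A(\V_i)+\Cont^\infty_c(\V_\infty)\subseteq\A(\T_HM)\), which is not purely formal — showing \(\A(\V_i)\subseteq\A(\T_HM)\) requires verifying the global rapid-decay condition with respect to every \(H\)-chart, and this is where the invariance under Carnot diffeomorphisms from \cref{res:schwartzinvar} enters in the paper's proof.
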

\begin{proof}
	Let \(f\in\A(\V)\) for an \(H\)-chart \(\kappa\colon V\to U\). We claim that \(f\in\A(\T_HM)\). There is a diffeomorphism \(\kappa_k\colon V\times V\to k(\U)\) given by
	\[(x,y)\mapsto \left(\kappa(x),\varepsilon^\kappa_{\kappa(x)}(\kappa(y))\right).\] It makes the following diagram commute
	\begin{equation*}
	\begin{tikzcd}
	\V \arrow[r, "\phi_\kappa"] \arrow[d, "{(r_1,s_1)}"]& \U\arrow[d, "k"]	\\
	V\times V \arrow[r, "\kappa_k"]& k(\U).
	\end{tikzcd}
	\end{equation*}			
	Let \(K_\U\subset k(\U)\) be a compact subset for \(f\circ\phi_\kappa^{-1}\) as in \cref{def:schwartz_local}. Then \(\varphi^{-1}(K_\U)\) is compact and \(f(\gamma)=0\) whenever \((r_1,s_1)(\gamma)\notin \varphi^{-1}(K_\U)\). In fact, condition \ref{item:compact} from \cref{def:schwartz_global} for \(f\in\Cont^\infty(\V)\) is equivalent to \ref{item:compactsupportlocal} from \cref{def:schwartz_local} for \(f\circ\phi_\kappa^{-1}\in\Cont^\infty(\U)\). Moreover, \(f\) has rapid decay at \(t=0\) by the invariance under Carnot diffeomorphisms from \cref{res:schwartzinvar}. Clearly, \(\A(\T_HM)\) contains all smooth function with compact support in \(M\times M\times\Rp\) and is closed under finite sums. 
	
	For the converse inclusion, let \(f\in\A(\T_HM)\) and let \(K\subset M\times M\) be a compact set and \(T>0\) as in \ref{item:compact}. Note that \(V_i\times V_i\times[0,\infty)\) for \(i\in I\) and \(M\times M\times(0,\infty)\) define an open cover of \(K\times[0,T]\). Therefore, there is a finite partition of unity \((\rho_i)_{i\in I\cup\{\infty\}}\) consisting of smooth, compactly supported functions subordinate to this open cover. By \ref{item:rapid}, \(f_i\defeq f_{\rho_i}\) are in \(\A(\V_i)\) for \(i\in I\) and \(f_\infty\in\Cont^\infty_c(\V_\infty)\). This yields a decomposition of \(f\) as above. 
\end{proof}
For a vector bundle \(E\to M\) consider functions that have uniform rapid decay in the fibres. 
\begin{definition}[\cite{rouseschwartz}*{4.6}]
	Let \(\pi\colon E\to M\) be a smooth vector bundle. A function \(f\in\Cont^\infty(E)\) has \emph{uniform rapid decay in the fibres}, if for all local trivializations \(\varphi\colon E|_V\to V\times\R^m \), \(p\in\N_0\) and \(\alpha=(\alpha_1,\alpha_2)\in\N^n_0\times\N^m_0 \) and all cutoff functions \(\chi\in\Cont^\infty_c(V)\)
	\[\sup_{(x,v)\in V\times\R^m}(1+\abs{v})^p\abs{\partial_x^{\alpha_1}\partial_v^{\alpha_2} \chi(x)f(\varphi^{-1}(x,v))}<\infty. \]
	Let \(\Schwartz(E)\) be the space of functions with uniform rapid decay in the fibres. Let \(\Schwartz_{\cp}(E)\) consist of all \(f\in\Schwartz(E)\) such that \(\pi(\supp f)\) is compact.
\end{definition}	

\begin{lemma} \label{res:schwartzrestsurj}
	The restrictions \(e_t\colon f\mapsto f_t\) for \(t\in[0,\infty)\) yield surjections
	\begin{align*}
	e_t\colon \A(\T_HM) &\to \Cont_c^\infty(M\times M)\qquad\text{for } t>0,\\
	e_0\colon \A(\T_HM) &\to \Schwartz_{\cp}(T_HM).			
	\end{align*}					
\end{lemma}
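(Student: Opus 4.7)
The strategy is to handle the two cases separately, building extensions via a cutoff in the time variable.

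For $t > 0$, surjectivity is essentially immediate. Given $g \in \Cont_c^\infty(M \times M)$, choose $\chi \in \Cont_c^\infty((0,\infty))$ with $\chi(t) = 1$ and set $f(x, y, s) = \chi(s)\, g(x, y)$ for $s > 0$, extending by zero to $T_HM \times \{0\}$. Since $\chi$ has support bounded away from $0$, $f$ is a compactly supported smooth function on $\V_\infty = M \times M \times (0,\infty)$. By the decomposition \cref{res:decompschwartz}, $\Cont_c^\infty(\V_\infty) \subset \A(\T_HM)$, so $f \in \A(\T_HM)$ and $e_t(f) = g$.

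For $t = 0$, a localization argument is needed. Given $g \in \Schwartz_{\cp}(T_HM)$, fix an atlas of $H$\nb-charts $(\kappa_i \colon V_i \to U_i)_{i \in I}$ and a subordinate partition of unity $(\rho_i)$ on $M$. Decompose $g = \sum_i g_i$, where $g_i$ is the product of $g$ with the pullback of $\rho_i$ along the base projection $\pi \colon T_HM \to M$; each $g_i$ has base-compact support in $V_i$. Under the trivialization $T\kappa_i$, $g_i$ corresponds to a smooth function $\tilde{g}_i \colon U_i \times \R^n \to \C$ with compact $x$-support and Schwartz decay in $v$. Define
\[
\tilde{f}_i(x, v, s) = \chi(s)\, \psi(s \cdot v)\, \tilde{g}_i(x, v) \qquad \text{for } (x,v,s) \in \U_i,
\]
where $\chi \in \Cont^\infty([0,\infty))$ satisfies $\chi(0) = 1$ and has compact support in $[0, T]$ for some small $T > 0$, and $\psi \in \Cont_c^\infty(\R^n)$ equals $1$ near $0$. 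Pulling back via $\phi_{\kappa_i}^{-1}$ yields $f_i \in \Cont^\infty(\V_i)$, and the sum $f = \sum_i f_i \in \Cont^\infty(\T_HM)$ satisfies $f_0 = g$ since $\chi(0)\psi(0) = 1$.

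It remains to verify $\tilde{f}_i \in \A(\U_i)$. The compact support condition holds by construction: the support of $\tilde{f}_i$ projects under $k$ into the compact set $\pi(\supp \tilde{g}_i) \times \supp \psi$, which lies in $k(\U_i)$ provided $\supp \psi$ and $T$ are chosen small enough — possible because $\pi(\supp \tilde{g}_i)$ is compact and $\varepsilon^\kappa_x(x) = 0 \in U_i$. The main obstacle is the rapid decay in the quasi-norm $\norm{(x,v,s)} = \sum_j \abs{v_j}^{1/q_j}$, because differentiating $\psi(s \cdot v)$ in $s$ produces singular factors of the form $s^{q_j - 1} v_j$ as $s \to 0$. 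However, on the support of any nontrivial derivative $(\partial^\alpha \psi)(s \cdot v)$, the point $s \cdot v$ lies in an annulus bounded away from $0$, which forces $\abs{v_k} \gtrsim s^{-q_k}$ for some $k$; the Schwartz decay of $\tilde{g}_i$ in $v$ then provides arbitrary polynomial decay in $\abs{v}$ that dominates both the singular $s$-factors and the polynomial growth of $\norm{(x,v,s)}^p$. Carrying out these estimates shows $\tilde{f}_i \in \A(\U_i)$, hence $f \in \A(\T_HM)$ and $e_0(f) = g$.
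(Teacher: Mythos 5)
Your construction is correct and essentially the same as the paper's: at \(t=0\) both proofs multiply \(f_0(x,v)\) by a cutoff in \(t\) and by a compactly supported cutoff evaluated at \(t\cdot v\) (the paper takes \(\chi(\Phi(t\cdot v))\) with \(\Phi\) a smooth dilation\nb-homogeneous polynomial, which is just a particular choice of your \(\psi\)), and the derivatives in \(t\) produce only polynomial factors in \(v\) that are absorbed by the Schwartz decay of \(f_0\), as you argue. The one small point the paper's proof also records, and which the lemma's phrasing requires, is the forward containment — that \(e_t\) actually maps \(\A(\T_HM)\) into \(\Cont_c^\infty(M\times M)\), resp.\ \(\Schwartz_{\cp}(T_HM)\) — which follows from conditions (i) and (ii) of the definition together with the equivalence of the homogeneous quasi-norm with any fibrewise norm.
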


\begin{proof}Let \(f\in\A(\T_HM)\). Condition \ref{item:compact} ensures that \(f_t\) is compactly supported for each \(t>0\). For \(t=0\) it implies compact support in the \(M\)-direction. Moreover, \(f_0\) belongs to \(\Schwartz_{\cp}(T_HM)\) as any locally defined norm on the fibres of \(T_HM\) is equivalent to the homogeneous quasi-norm. 
	
	For \(t>0\), surjectivity is easily seen by  extending a function in \(\Cont^\infty_c(M\times M\times\{t\})\) smoothly to a function in \(\Cont^\infty_c(M\times M\times\Rp)\). 
	
	At \(t=0\), it suffices to show that \(e_0\colon \A(\U)\to\Schwartz_{\cp}(U\times\R^n)\) is surjective for each \(H\)-chart \(\kappa\colon V\to U\). Let \(f_0\in\Schwartz_{\cp}(U\times\R^n)\) and let \(K_0\subset U\) be a compact subset such that \(f_0(x,v)=0\) whenever \(x\notin K_0\). Let \(q\) be a common multiple of the weights \(q_1,\ldots,q_n\in \N\). Define a smooth function \(\Phi\colon\R^n\to[0,\infty)\) which is \((2q)\)-homogeneous with respect to the dilations by \[\Phi(v)\defeq\sum_{j=1}^n v_j^{2q/q_j} \quad\text{for }v\in\R^n.\]
	One can estimate \(\norm{v}\leq \Phi(v)\leq n\norm{v}^{2q}\) for all \(v\in\R^n\) and the homogeneous quasi-norm. As \(K_0\) is compact there is a \(1>\delta>0\) such that \((\varepsilon^\kappa_x)^{-1}(v)\in U\) for all \(x\in K_0\) and \(\Phi(v)\leq \delta\).
	Choose a smooth function \(0\leq\chi\leq 1\) on \([0,\infty)\) which satisfies \(\chi(0)=1\) and \(\chi(t)=0\) whenever \(t\geq \delta\). Define \(f(x,v,t)\defeq f_0(x,v)\chi(t)\chi(\Phi(t\cdot v))\). The following set is compact and contained in \(k(\U)\) 
	\[K\defeq\left\{(x,v)\in U\times\R^n\mid x\in K_0\text{ and }\Phi(v)\leq \delta \right\}.\]
	Then \(f(x,v,t)=0\) whenever \(k(x,v,t)\notin K\) or \(t\geq \delta\). Moreover, \(f\) satisfies the rapid decay condition and \(e_0(f)=f_0\). 		
\end{proof}
\subsection{Algebra structure} We proceed by showing that the Schwartz type algebra is a \(^*\)-algebra with respect to the operations in \eqref{def:inv} and \eqref{def:conv}. First, we prove the following estimates for the groupoid inversion \(i\colon\U\to\U\) and product \(m\colon \U^{(2)}\to\U\) with respect to the homogeneous quasi-norm. 
\begin{lemma}\label{res:invmultest}
	Let \(K\subset k(\U)\) be compact, \(T>0\) and \(\alpha\in \N_0^{2n+1}\). 
	\begin{enumerate}
		\item \label{item:estim_inversion} There are \(C_{i,\alpha,K,T}>0\) and \(l_{i,\alpha}\in\N\) such that for all \(\gamma\in\U\) with \(k(\gamma)\in K\) and \(\theta(\gamma)\leq T\) 
		\[\abs{\partial^\alpha_\gamma i(\gamma)_j}\leq C_{i,\alpha,K,T}(1+\norm{\gamma})^{l_{i,\alpha}}\]
		for \(j=1,\ldots,2n+1\).
		\item\label{item:estim_mult} There are \(C_{m,\alpha,K,T}>0\) and \(l_{m,\alpha}\in\N\) such that for all \((\gamma,\eta)\in\U^{(2)}\) with \(k(\gamma), k(\eta)\in K\) and \(\theta(\gamma)\leq T\)
		\[\abs{\partial^\alpha_\gamma m(\gamma,\eta)_j}\leq C_{m,\alpha,K,T}(1+\norm{\gamma})^{l_{m,\alpha}}(1+\norm{\eta})^{l_{m,\alpha}}\]
		for \(j=1,\ldots,2n+1\).
	\end{enumerate}
\end{lemma}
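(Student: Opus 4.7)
The plan is to compute the local coordinate expressions of the groupoid inversion $i$ and multiplication $m$ via the chart $\phi_\kappa$, exploit the zoom-equivariance to extract a weighted Taylor decomposition, and reduce the bound to the elementary estimate $\abs{v_j}\leq \norm{\gamma}^{q_j}$ which follows directly from the definition of the homogeneous quasi-norm.

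For $\gamma=(x,v,t)\in \U$ with $t>0$, unpacking $\phi_\kappa$ one computes $i(x,v,t)=(y,w,t)$ with $y=(\varepsilon^\kappa_x)^{-1}(t\cdot v)$ and $w=t^{-1}\cdot\varepsilon^\kappa_y(x)$, and a parallel formula holds for $m$ on $\U^{(2)}$. Since $\T_HM$ is a Lie groupoid, both $i$ and $m$ extend smoothly to $t=0$. The $\Rp$-equivariance $i\circ\beta_\lambda=\beta_\lambda\circ i$ forces the first $n$ components of $i$ to be $\beta_\lambda$-invariant, hence of the form $f(x,t\cdot v)$ with $f$ smooth. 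The remaining components $w_j$ are $\beta_\lambda$-homogeneous of weight $q_j$, so one can write $w_j=t^{-q_j}g_j(x,t\cdot v)$ with $g_j$ smooth and $g_j(x,0)=0$. The smooth extension of $i$ to $t=0$ forces the weighted Taylor expansion of $g_j$ at $0$ to start at weighted order at least $q_j$; iterating Taylor's formula as in the proof of \cref{res:schwartzinvar} and invoking the structural description of the $\varepsilon$-Carnot map from \cite{choi2019tangent}*{3.9, 6.7} yields a finite decomposition
\[
w_j=\sum_{\alpha\colon [\alpha]\geq q_j} t^{[\alpha]-q_j}\, v^\alpha\, R_{j,\alpha}(x,t\cdot v)
\]
with smooth coefficients $R_{j,\alpha}$, extendable to any prescribed order modulo a remainder of the same form.

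On the set $A=\{\gamma\in\U:k(\gamma)\in K,\;\theta(\gamma)\leq T\}$ the pair $(x,t\cdot v)$ lies in a fixed compact subset of $k(\U)$ and $t\in[0,T]$, so $f$, $g_j$, $R_{j,\alpha}$ and all their $(x,z)$-derivatives are uniformly bounded. The estimate $\abs{v_j}\leq \norm{\gamma}^{q_j}$ gives $\abs{v^\alpha}\leq (1+\norm{\gamma})^{r\abs{\alpha}}$, and substituting into the expansion above immediately yields the bound for $\abs{i(\gamma)_j}$ itself. Differentiation only introduces further polynomially bounded factors: $\partial/\partial v_k$ picks up $t^{q_k}\leq T^{q_k}$ via the chain rule on $t\cdot v$, and $\partial/\partial t$ picks up $q_k t^{q_k-1}v_k$, bounded by $q_k T^{q_k-1}\norm{\gamma}^{q_k}$. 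Consequently every $\partial^\alpha_\gamma i(\gamma)_j$ is a finite sum of terms $t^m v^\beta\cdot(\text{smooth bounded})$ with $m\geq 0$, each bounded by $C_{i,\alpha,K,T}(1+\norm{\gamma})^{l_{i,\alpha}}$ for some $l_{i,\alpha}$ depending only on $\alpha$ and the step $r$.

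The argument for $m$ runs in parallel: after expressing $m(\gamma,\eta)$ in local coordinates and decomposing its $v$-type components using the polynomial Baker--Campbell--Hausdorff structure of the graded fibre groups, the $t^{-1}$ factor is again compensated by weighted vanishing, and bounding each variable separately by $(1+\norm{\gamma})^{r\abs{\alpha}}$ and $(1+\norm{\eta})^{r\abs{\beta}}$ produces the product estimate. The main obstacle throughout is producing the correct weighted Taylor decomposition of the $v$-components: the $t^{-1}$ factor would be destructive without the weighted vanishing of the numerator, and verifying this cancellation is exactly the content of the smoothness of $\T_HM$ at $t=0$, encoded in the asymptotic behaviour of the $\varepsilon$-Carnot map recorded in \cite{choi2019tangent}. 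Once these expansions are in place, all remaining work is polynomial bookkeeping driven by $\abs{v_j}\leq\norm{\gamma}^{q_j}$ and compactness of $K$.
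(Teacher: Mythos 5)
Your proof is correct and follows essentially the same route as the paper: write $i$ and $m$ in the coordinates of $\phi_\kappa$ via the $\varepsilon$\nb-Carnot map, obtain the weighted Taylor decomposition $\sum t^{[\alpha]-q_j}v^\alpha R_{j,\alpha}(x,t\cdot v)$ from \cite{choi2019tangent}*{3.9} together with the asymptotics of $\varepsilon_{\varepsilon_x^{-1}(v)}(x)$ and of the fibrewise product, bound the smooth coefficients by compactness of $K\times[0,T]$, and finish with $\abs{v_j}\leq\norm{\gamma}^{q_j}$. Your extra observation that zoom-equivariance forces the homogeneity structure of the components is a nice way to motivate why the $t^{-q_j}$ factor must be compensated, but it does not change the substance of the argument.
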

\begin{proof}
	For \ref{item:estim_inversion} the inversion is given in local coordinates as \(i\colon \U\to\U\) with
	\begin{align*}
	i(x,v,t)=\begin{cases} \left(\varepsilon^{-1}_x(t\cdot v), t^{-1}\cdot \varepsilon_{\varepsilon^{-1}_x(t\cdot v)}(x),t\right) &\text{for }t>0,\\
	(x,-v,0) &\text{for }t=0,
	\end{cases}
	\end{align*}
	for \((x,v,t)\in\U\) by \cite{choi2019tangent}*{9.9}. As noted there, near \(v=0\)
	\[\varepsilon_{\varepsilon^{-1}_x(v)}(x)=-v+O_q(\norm{v}^{q+1})\] 
	holds. So \cite{choi2019tangent}*{3.9} and the compactness of \(K\times[0,T]\) can be used, similarly as in the proof of \cref{res:schwartzinvar}, to derive bounds of the desired form. 
	
	We proceed similarly for \ref{item:estim_mult} and write for \(((x,v,t),(\varepsilon^{-1}_x(t\cdot v),w,t))\in\U^{(2)}\)
	\begin{align*}
	m((x,v,t),(\varepsilon^{-1}_x(t\cdot v),w,t))=\begin{cases} \left(x,t^{-1}\cdot \left(\varepsilon_x\circ\varepsilon^{-1}_{\varepsilon^{-1}_x(t\cdot v)}\right)(t\cdot w), t\right) &\text{for }t>0,\\
	(x,v\cdot w,0) &\text{for }t=0,
	\end{cases}
	\end{align*}
	as in \cite{choi2019tangent}*{9.11}. By their argument, for all \(x\in U\)
	\[\varepsilon_x\circ\varepsilon^{-1}_{\varepsilon^{-1}_x(v)}(w)= v\cdot_x w+O(\norm{(v,w)}^{q+1})\]
	holds for \((v,w)\) near \((0,0)\). As \(K\times[0,T]\) is compact and the group multiplication is polynomial and depends continuously on \(x\), one obtains estimates of the claimed form using again \cite{choi2019tangent}*{3.9}.
\end{proof}
\begin{corollary}\label{res:schwartztriangle}
	For \(K\subset k(\U)\) compact and \(T>0\) there are \(C_{K,T}>0\) and \(l\in\N\) such that 
	\begin{enumerate}
		\item for all \(\gamma\in \U\) with \(k(\gamma)\in K\) and \(\theta(\gamma)\leq T\)
		\[1+\norm{\gamma^{-1}}\leq C_{K,T}(1+\norm{\gamma})^l,\]
		\item for all  \(\gamma,\eta\in \U\) with \((\eta^{-1},\gamma)\in\U^{(2)}\), \(k(\eta),k(\eta^{-1}\cdot \gamma)\in K\) and \(\theta(\eta)\leq T\)
		\[1+\norm{\gamma}\leq C_{K,T}(1+\norm{\eta})^{l}(1+\norm{\eta^{-1}\cdot \gamma})^{l}.\]		
	\end{enumerate}	
\end{corollary}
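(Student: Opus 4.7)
The plan is to deduce both estimates directly from \cref{res:invmultest} by unpacking the definition of the homogeneous quasi-norm. Recall that \(\norm{(x,v,t)}=\sum_{j=1}^n\abs{v_j}^{1/q_j}\), so the quasi-norm depends only on the \(n\) ``middle'' coordinates of a point in \(\U\), and these coordinates enter with exponents \(1/q_j\leq 1\). In particular, any polynomial bound on the individual coordinate functions \(i(\gamma)_{n+j}\) or \(m(\gamma,\eta)_{n+j}\) of the form given by \cref{res:invmultest} with \(\alpha=0\) immediately produces a polynomial bound on the corresponding quasi-norm, at the cost of at most enlarging the exponent.

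For part (i), I apply the inversion estimate of \cref{res:invmultest} with \(\alpha=0\) to the \(v\)\nb-components of \(i(\gamma)\). This produces constants \(C>0\) and \(l\in\N\) depending only on \(K\) and \(T\) such that \(\abs{i(\gamma)_{n+j}}\leq C(1+\norm{\gamma})^l\) for all \(j=1,\dots,n\) whenever \(k(\gamma)\in K\) and \(\theta(\gamma)\leq T\). Taking \(q_j\)\nb-th roots, summing, and using \(1/q_j\leq 1\) together with \(1+\norm{\gamma}\geq 1\) gives
\[\norm{i(\gamma)}=\sum_{j=1}^n\abs{i(\gamma)_{n+j}}^{1/q_j}\leq \sum_{j=1}^n C^{1/q_j}(1+\norm{\gamma})^{l/q_j}\leq nC'(1+\norm{\gamma})^l,\]
and adding \(1\) and enlarging the constant finishes the first inequality. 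For part (ii), I apply the multiplication estimate of \cref{res:invmultest} to the pair \((\eta,\eta^{-1}\cdot\gamma)\in\U^{(2)}\), noting that \(m(\eta,\eta^{-1}\cdot\gamma)=\gamma\) and that \(\theta\) is constant on multiplicable pairs, so the hypothesis \(\theta(\eta)\leq T\) forces \(\theta(\eta^{-1}\cdot\gamma)\leq T\) as well. This yields \(\abs{\gamma_{n+j}}\leq C(1+\norm{\eta})^l(1+\norm{\eta^{-1}\cdot\gamma})^l\), and the same \(q_j\)\nb-th root and absorption argument then produces the required polynomial bound on \(1+\norm{\gamma}\).

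I do not expect any real obstacle: the quasi-norm is built from coordinate functions in a completely elementary way, so the content of the corollary is essentially already inside \cref{res:invmultest}. The only minor care needed is to enlarge \(l\) enough to simultaneously absorb the dilution by the \(q_j\)\nb-th roots and to dominate both factors in the multiplicative estimate, which is achieved by taking the maximum of the finitely many exponents appearing in the two parts.
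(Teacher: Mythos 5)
Your proof is correct and is essentially the argument the paper intends: the corollary is stated without proof precisely because it follows from \cref{res:invmultest} with \(\alpha=0\) by reading off the middle coordinates of \(i(\gamma)\) and \(m(\eta,\eta^{-1}\cdot\gamma)=\gamma\) through the quasi-norm, exactly as you do. The substitution \((\gamma,\eta)\mapsto(\eta,\eta^{-1}\cdot\gamma)\) in part (ii), the matching of the hypotheses \(k(\eta),k(\eta^{-1}\cdot\gamma)\in K\) and \(\theta(\eta)\leq T\), and the absorption of the exponents \(1/q_j\leq 1\) using \(1+\norm{\gamma}\geq 1\) are all handled correctly.
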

\begin{theorem}\label{res:schwartzalgebra}
	The Schwartz type algebra \(\A(\T_HM)\) is a \(^*\)\nb-algebra with respect to the involution and convolution defined in \eqref{def:inv} and \eqref{def:conv}. 
	Moreover, there are inclusions of \(^*\)-algebras \[\Cont^\infty_c(\T_HM)\subset \A(\T_HM)\subset \Cst(\T_HM).\] 
\end{theorem}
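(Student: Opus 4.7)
The plan is to reduce to purely local computations via the decomposition $\A(\T_HM)=\sum_i\A(\V_i)+\Cont^\infty_c(\V_\infty)$ from \cref{res:decompschwartz}. Working in the coordinate chart $\phi_\kappa\colon\V\to\U$ associated with an $H$\nobreakdash-chart $\kappa\colon V\to U$, the groupoid inversion and multiplication become the explicit polynomial-type formulas recorded in \cref{res:invmultest}, and the calculation carried out for the Haar system shows that the left Haar measure pulls back to $\abs{\det B_X(\kappa(x))}\,\diff v$ on the fibre, i.e. Lebesgue measure up to a smooth weight. Thus every claim reduces to an estimate for smooth rapidly decaying functions on $U\times\R^n\times[0,\infty)$ with the homogeneous quasi-norm \eqref{eq:homogeneousquasi}, and the two key tools are the derivative bounds of \cref{res:invmultest} and the triangle-type inequalities of \cref{res:schwartztriangle}.

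For the involution I would argue that $f^*=\conj{f\circ i}$ preserves $\A(\V)$: the chain rule writes $\partial^\alpha(f\circ i)(\gamma)$ as a finite sum $\sum_{\abs{\delta}\leq\abs{\alpha}}(\partial^\delta f)(\gamma^{-1})P_\delta(\gamma)$ with $P_\delta$ a polynomial in derivatives of the components of $i$. \cref{res:invmultest}\ref{item:estim_inversion} bounds each $P_\delta$ by a power of $(1+\norm{\gamma})$ on the support, while \cref{res:schwartztriangle} converts the rapid decay of $f$ at $\gamma^{-1}$ into rapid decay at $\gamma$, and composing the two bounds gives the required Schwartz estimate. The support condition transfers because under the chart $k\circ i$ agrees at $t>0$ with the pair-groupoid flip $(x,y)\mapsto(y,x)$ and is essentially trivial in $v$ at $t=0$, so it maps compacts to compacts.

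The main technical step, and the step I expect to be the principal obstacle, is closure under convolution for $f,g\in\A(\V)$. In coordinates the formula $(f*g)(\gamma)=\int f(\eta)g(\eta^{-1}\gamma)\,\diff\nu^{r(\gamma)}(\eta)$ becomes an absolutely convergent $\R^n$\nobreakdash-integral for each fixed $\gamma$, and uniform rapid decay in the parameters justifies differentiation under the integral. The delicate point is to control $(1+\norm{\gamma})^p\abs{\partial^\alpha(f*g)(\gamma)}$ uniformly in $\gamma$ and in $t\in[0,T]$: after the chain rule distributes derivatives onto $g(\eta^{-1}\gamma)$ with polynomial-bounded coefficients supplied by \cref{res:invmultest}\ref{item:estim_mult}, one applies the triangle inequality of \cref{res:schwartztriangle} in the form
\[
(1+\norm{\gamma})^p\;\leq\;C\,(1+\norm{\eta})^{lp}(1+\norm{\eta^{-1}\gamma})^{lp},
\]
splitting the polynomial weight between the two Schwartz factors so that the integrand is dominated by a product of rapidly decaying functions and is thus $L^1$ in $\eta$ uniformly in $\gamma$. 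Compactness of the support of $f*g$ in the $k$-direction follows from continuity of the groupoid product together with the compact supports of $f,g$. The mixed terms involving $\Cont^\infty_c(\V_\infty)$ are handled by the same argument but are simpler since one of the factors has fully compact support. Finally, $\Cont^\infty_c(\T_HM)\subset\A(\T_HM)$ is immediate, and for $\A(\T_HM)\subset\Cst(\T_HM)$ I would verify that every $f\in\A(\T_HM)$ has finite $I$\nobreakdash-norm via the same change of variables as above, then approximate $f$ by compactly supported truncations $\chi_n f\in\Cont^\infty_c(\T_HM)$ in the $I$\nobreakdash-norm (hence in the $\Cst$\nobreakdash-norm), so that $f$ lies in the $\Cst$\nobreakdash-completion.
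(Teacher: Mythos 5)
Your overall strategy --- localize via \cref{res:decompschwartz}, control derivatives of the groupoid operations through \cref{res:invmultest}, and split the polynomial weight with \cref{res:schwartztriangle} --- is the same as the paper's, and your treatment of the involution, of the single-chart estimate \(\A(\U)*\A(\U)\subseteq\A(\U)\) (including the use of the triangle-type inequality to distribute \((1+\norm{\gamma})^p\) over the two factors), and of the inclusion \(\A(\T_HM)\subset\Cst(\T_HM)\) by truncation in the \(I\)-norm all match the actual proof.

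There is, however, a gap in your reduction for the convolution. Writing \(f=\sum_i f_i+f_\infty\) and \(g=\sum_j g_j+g_\infty\) produces cross terms \(f_i*g_j\) with \(i\neq j\) in which \emph{both} factors are genuine Schwartz-type functions living on different charts \(\V_i\), \(\V_j\), and neither is compactly supported away from \(t=0\); these are covered neither by ``closure under convolution for \(f,g\in\A(\V)\)'' nor by your remark about terms involving \(\Cont^\infty_c(\V_\infty)\). The paper spends a substantial part of the proof exactly here: because source and range coincide at \(t=0\), the set \(m(K_f,K_{g_j})\times[0,T]\) is covered by \(V_j\times V_j\times[0,\infty)\) and \(M\times M\times(0,\infty)\); a subordinate partition of unity \(\{\chi,1-\chi\}\) splits \(f*g_j\) into a piece supported in \(t\geq t_0>0\), which after inserting a bump function \(\omega\) in the \(t\)-direction becomes a convolution of functions in \(\Cont^\infty_c(M\times M\times\Rp)\), and a piece \((f*g_j)_\chi\) which, after inserting cutoffs \(\omega_1\circ r_1\) and \(\omega_2\circ s_1\) supported in \(V_j\), equals \((\tilde f*g_j)_\chi\) with \(\tilde f\in\A(\V_j)\). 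Only after this localization does one land in the single-chart situation that your estimate handles. Without some such argument, your proof does not establish that \(\A(\T_HM)\) is closed under convolution.
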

\begin{proof}
	For the involution, note that if \(K\subset M\times M\) is a compact subset for \(f\in\A(\T_HM)\) as in \cref{def:schwartz_global}, \(i(K)\subset M\times M\) is a compact subset for~\(f^*\). Here \(i\colon M\times M\to M\times M\) is the inversion on the pair groupoid. By \cref{res:decompschwartz} and linearity, it suffices to show that \(f^*\in\A(\V_j)\) for \(f\in\A(\V_j)\) for \(j\in I\). For \(j=\infty\), it is clear that \(f^*\) lies again in \(\Cont^\infty_c(\V_\infty)\). 
	
	Suppose now that \(f\in\A(\U)\) for an \(H\)-chart \(\kappa\colon V\to U\). 
	Denote by \(K_f, K_{f^*}\subset k(\U)\) the compact subsets for \(f,f^*\) respectively. 
	As \(f^*(\gamma)=\conj{f\circ i(\gamma)}\), the derivatives of the inversion \(i\) can be bounded by powers of \(1+\norm{\gamma}\) as in \cref{res:invmultest} with \(K=K_{f^*}\) and \(T\) such that \(f\) vanishes for \(t\geq T\).  By \cref{res:schwartztriangle},  \(1+\norm{\gamma}\leq C_{K_f,T}(1+\norm{i(\gamma)})^l\) holds for all \(\gamma\in k^{-1}(K_{f})\) with \(\theta(t)\leq T\). Then the Schwartz seminorms for \(f\) can be used to show that \(f^*\) has the rapid decay property. 
	
	For the convolution, note that \(f*g\) for \(f,g\in\A(\T_HM)\) is a well-defined smooth function as~\(f,g\) are Schwartz functions at \(t=0\) and compactly supported otherwise. If \(K_f,K_g\subset M \times M\) are compact subsets for \(f,g\), it follows that \(K\defeq m(K_f,K_g)\) is a compact subset for \(f*g\), where \(m\) denotes the multiplication in the pair groupoid.
	
	To prove the rapid decay, we decompose \(g\) as in \cref{res:decompschwartz} into a finite sum of \(g_j\in\A(\V_j)\), \(j=1,\ldots,m\), and \(g_\infty\in\Cont_c^\infty(\V_\infty)\) and use linearity to write
	\[f*g=\sum_{j\in\{1,\ldots,m,\infty\}}f*g_j.\]
	Let \(T>0\) be such that \(f\) and \(g\) vanish for \(t\geq T\).  As there is a compact subset \(K_{g_j}\subset V_j\times V_j\) for each \(g_j\), it follows that \[K_{f*g_j}\times[0,T]=m(K_f,K_{g_j})\times[0,T]\] 
	is openly covered by \(V_j\times V_j\times [0,\infty)\) and \(M\times M\times(0,\infty)\) as for \(t=0\) the source and range maps coincide. Let \(\{\chi,1-\chi\}\) be a corresponding compactly supported partition of unity.
	Let \(t_0>0\) be such that \(1-\chi\) vanishes for \(t<t_0\) and choose a bump function \(\omega\in\Cont^\infty(\Rp)\) with \(\omega(t)=1\) for \(t\geq t_0\) and \(\omega(t)=0\) for \(t<t_0/2\). Then we can write 
	\begin{align}\label{eq:omega}
	(f*g_j)_{(1-\chi)}= \left(f\cdot(\omega\circ\theta)*g_j\cdot(\omega\circ\theta)\right)_{(1-\chi)}.
	\end{align}
	This is a convolution of functions in \(\Cont^\infty_c(M\times M\times\Rp)\), so that the result is clearly contained in \(\A(\T_HM)\). 
	
	Consider now \((f*g_j)_\chi\). Let \(\omega_i\in\Cont_c^\infty(V_j)\) for \(i=1,2\) be such that \(\omega_1(x)=1\) for all \(x\in r_1(\supp \chi) \) and \(\omega_2(x)=1\) for all \(x\in r(K_{g_j})\). For each \(\gamma\in \T_HM\)
	\begin{align*}
	(f*g_j)_\chi(\gamma)&= \chi\circ(r_1,s_1,\theta)(\gamma)\int f(\gamma\eta)g_j(\eta^{-1})\diff^{s(\gamma)}(\eta)\\
	& = \chi\circ(r_1,s_1,\theta)(\gamma)\,\omega_1(r_1(\gamma))\int f(\gamma\eta)\,\omega_2(r_1(\eta^{-1}))g_j(\eta^{-1})\diff^{s(\gamma)}(\eta)\\
	&= \chi\circ(r_1,s_1,\theta)(\gamma)\int \omega_1(r_1(\gamma\eta))f(\gamma\eta)\,\omega_2(s_1(\gamma\eta)))g_j(\eta^{-1})\diff^{s(\gamma)}(\eta)\\
	&= \left(f\cdot(\omega_1\circ r_1)\cdot (\omega_2\circ s_1)*g_j\right)_\chi
	\end{align*}
	holds. In conclusion, we obtained a finite decomposition
	\begin{align}\label{eq:decomposeconvolution}
	f*g=\sum_{j=1}^m(f_j*g_j)_{\chi_j}+\sum_{j\in\{1,\ldots,m,\infty\}} f^\infty_j*g^\infty_j
	\end{align}
	with \(f^\infty_j,g^\infty_j\in\Cont^\infty_c(\V_\infty)\), \(f_j,g_j\in \A(\V_j)\) and \(\chi_j\in\Cont_c^\infty(V_j\times V_j\times[0,\infty))\). 
	Therefore, it is left to show that \(\A(\U)*\A(\U)\subseteq \A(\U)\).
	
	Let \(f,g\in\A(\U)\) and denote by \(K_f,K_g,K_{f*g}\subseteq k(\U)\) the respective compact subsets, and let \(T>0\) be such that \(f,g\) vanish for \(t\geq T\).
		To show rapid decay, we must estimate the derivatives
	\[\partial_\gamma^\alpha g(\eta^{-1}\gamma)=\sum \partial^{\delta}_{\eta^{-1}\gamma}g(\eta^{-1}\gamma)\cdot M^{\delta}(\eta^{-1},\gamma) \quad\text{for }\eta^{-1}\gamma\in k^{-1}(K_g)\text{ and }t< T,\]
	where \(M^{\delta}(\eta^{-1},\gamma)\) is a product of \(\partial^{\delta_i}_\gamma m_j(\eta^{-1},\gamma)\).
	The bounds from \cref{res:invmultest} for the multiplication and inverse allow to estimate \(\abs{M^\delta}\) for all \(\gamma\in k^{-1}(K_{f*g})\) and \(\eta\in k^{-1}(K_f)\) with \(t<T\) by \(C(1+\norm{\eta})^r(1+\norm{\gamma})^s\) for some \(C>0\) and \(r,s\in\N_0\). Therefore, we can use the rapid decay of \(f\) and \(g\) to estimate for \(p\in\N\) using \cref{res:schwartztriangle}
	\begin{align*}
	&(1+\norm{\gamma})^p\int \abs{f(\eta)} \abs{\partial^{\delta}_{\eta^{-1}\gamma}g(\eta^{-1}\gamma)\cdot M^{\delta}(\eta^{-1},\gamma)}\diff\nu^{r(\gamma)}(\eta)\\
	\lesssim\, & (1+\norm{\gamma})^{p+s}\int \abs{f(\eta)}(1+\norm{\eta})^r \abs{\partial^{\delta}_{\eta^{-1}\gamma}g(\eta^{-1}\gamma)}\diff\nu^{r(\gamma)}(\eta)\\
	\lesssim\, & (1+\norm{\gamma})^{p+s}\int \abs{f(\eta)} (1+\norm{\eta})^r (1+\norm{\eta^{-1}\gamma})^{-l(p+s)} \diff\nu^{r(\gamma)}(\eta)\\
	\lesssim\, & \int \abs{f(\eta)} (1+\norm{\eta})^{r+l(p+s)}\diff\nu^{r(\gamma)}(\eta)\\
	\lesssim\, & \int (1+\norm{\eta})^{-d_H-1}\diff\nu^{r(\gamma)}(\eta)<\infty
	\end{align*}		
	for all \(\gamma\in\U\). The last integral converges by \cite{folland1982homogeneous}*{1.17}. 
	This finishes the proof that \(\A(\T_HM)\) is a \(^*\)-algebra. 
	
	Clearly, \(\Cont^\infty_c(\T_HM)\) is contained in \(\A(\T_HM)\). For \(f\in\A(\U)\), we can construct a sequence \(f_m\in\Cont_c^\infty(\U)\) which converges to \(f\) in the \(I\)-norm. This will imply that \(\A(\T_HM)\subset\Cst(\T_HM)\). This can be done by choosing a sequence of functions \(\chi_m\in\Cont^\infty_c(\R^n)\) with \(0\leq\chi_m\leq 1\), \(\supp(\chi_m)\subset B(0,m)\) and \(\chi_m|_{B(0,m-1)}\equiv 1\). Then \(f_m(x,v,t)\defeq f(x,v,t)\chi_m(v)\) is such a sequence. 
\end{proof}

\section{Generalized fixed point algebras for filtered manifolds}\label{sec:gfpa}
In this secton we briefly recall the notion of a generalized fixed point algebra. Then we use the Schwartz type algebra to define a generalized fixed point algebra of the zoom action on a certain ideal in the \(\Cst\)-algebra of the tangent groupoid. 
\subsection{Generalized fixed point algebras}
Generalized fixed point algebras were defined by Rieffel as a noncommutative analogue of proper group actions on spaces in \cites{rieffel1998,rieffel1988}. 
Here, we follow the approach of Meyer in \cite{meyer2001}. We recall the main definitions and refer to \cites{meyer2001,ewert2020pseudodifferential} for more details. 

For this section, let \(G\) be a locally compact group and \(A\) a \(\Cst\)-algebra with a strongly continuous \(G\)-action \(\alpha\). For the pseudodifferential operators in the following sections we will always consider the multiplicative group~\(G=\Rp\). 
Define for \(a\in A\) the following operators as in \cite{meyer2001}*{(1),(2)}
\begin{align}\label{BRAKET}
	\BRA{a}\colon& A \to \Cont_b(G,A), &\left(\BRA{a}b\right)(x)&\defeq\alpha_x(a)^*b,\\
	\KET{a}\colon& \Cont_c(G,A) \to A, &\KET{a}f&\defeq \int_G \alpha_x(a)f(x)\diff x.
\end{align}
Here, \(\diff x\) denotes the Haar measure on \(G\). They are \(G\)\nb-equivariant for the diagonal action of \(G\) on \(\Cont_c(G,A)\) and \(\Cont_b(G,A)\), respectively. Furthermore they are adjoint to each other with respect to the pairings \(\braket{a}{b}=a^*b\) for \(a,b\in A\) and \(\braket{f}{g}=\int_G f(x)^*g(x)\diff x\) for \(f\in\Cont_b(G,A)\) and \(g\in\Cont_c(G,A)\). Recall that \(\Cont_c(G,A)\) can be completed into the right Hilbert \(A\)-module \(L^2(G,A)\).
\begin{definition}[\cite{meyer2001}]
	An element \(a\in A\) is \emph{square-integrable} if the operator \(\KET{a}\) extends to an adjointable operator \(\KET{a}\colon L^2(G,A)\to A\). 
\end{definition}
If \(a\in A\) is square-integrable, \(\BRA{a}\) can be understood as an adjointable operator \(A\to L^2(G,A) \) with adjoint \(\KET{a}\) as explained in \cite{meyer2001}*{Sec.~4}.
On the subspace of square-integrable elements \(A_\si\subseteq A\) one can define a norm \(\norm{\,\cdot\,}_{\si}\), which turns it into a Banach space:
\[\norm{a}_\si = \norm{a}+\norm{\BRA{a}\circ\KET{a}}^{1/2}\]
\begin{definition}[\cite{meyer2001}*{6.4}]
	A \emph{continuously square-integrable \(G\)-\(\Cst\)-algebra} \((A,\Rel)\) is a \(\Cst\)\nb-algebra \(A\) with a strongly continuous \(G\)-action together with a subset \(\Rel\subset A_\si\) which is
	\begin{enumerate}
		\item \emph{relatively continuous}, that is, \(\BRAKET{a}{b}\defeq \BRA{a}\circ\KET{b}\in\Cred(G,A)\subset \Bound(L^2(G,A))\) for all \(a,b\in\Rel\). 
		\item \emph{complete}, that is, \(\Rel\) is a closed subspace of \((A_\si,\norm{\,\cdot\,}_\si)\) and \(\KET{a}(\Cont_c(G,A))\subset \Rel\) for all \(a\in\Rel\).
		\item dense in \(A\).
	\end{enumerate}	
\end{definition}
We remark that not every \(G\)-\(\Cst\)-algebra has such a subset \(\Rel\) and that it does not have to be unique, see \cite{meyer2001}*{Sec.~8}.

\begin{definition}\label{def:fix}
Let \((A,\Rel)\) be a continuously square-integrable \(G\)-\(\Cst\)-algebra. The \emph{generalized fixed point algebra} \(\Fix^G(A,\Rel)\) is defined as the closed linear span of \(\KET{a}\BRA{b}\) for \(a,b\in\Rel\) inside the \(G\)-invariant multiplier algebra \(\Mult^G(A)\).
\end{definition} 
It is in fact a \(\Cst\)-algebra and \(\KET{\Rel}\subset \Bound(L^2(G,A),A)\) can be completed into an imprimitivity bimodule between \(\Fix^G(A,\Rel)\) and the ideal generated by \(\BRAKET{\Rel}{\Rel}\) inside \(\Cred(G,A)\) \cite{meyer2001}*{6.10}.
\begin{example}
	Let \(G\acts X\) be a proper group action on a locally compact Hausdorff space \(X\). Then the orbit space \(G\backslash X\) is a locally compact Hausdorff space. The group action induces an action on the \(\Cst\)-algebra \(\Cont_0(X)\) by \((g\cdot f)(x)\defeq f(g^{-1}\cdot x)\) for \(g\in G\), \(x\in X\). Denote by \(\cl{\Cont_c(X)}\) the closure of \(\Cont_c(X)\subseteq \Cont_0(X)_{\si}\) with respect to \(\norm{\,\cdot\,}_\si\). Then \((\Cont_0(X),\cl{\Cont_c(X)})\) is a continuously square-integrable \(G\)-\(\Cst\)-algebra. The generalized fixed point algebra \(\Fix^G(\Cont_0(X),\cl{\Cont_c(X)})\) is isomorphic to the \(\Cst\)-algebra of the orbit space \(\Cont_0(G\backslash X)\).
\end{example}
\begin{definition}[\cite{rieffel1988}]
	A \emph{continuously square-integrable \(G\)-\(\Cst\)-algebra} \((A,\Rel)\) is called \emph{saturated}, if the ideal generated by \(\BRAKET{\Rel}{\Rel}\subseteq \Cred(G,A)\) is \(\Cred(G,A)\). 
\end{definition}
\begin{example}[see \cite{rieffelapplications}, \cite{ewert2020pseudodifferential}*{2.18}]
	Let \(G\) act properly on a locally compact Hausdorff space \(X\). Then \((\Cont_0(X),\cl{\Cont_c(X)})\) is saturated if and only if the action \(G\acts X\) is free.
\end{example}

\subsection{The zoom action}
The zoom action \(\alpha\) on the tangent groupoid from \cref{def:zoomgroupoid} induces an action on the introduced convolution algebras. 
\begin{lemma}\label{res:zoom_on_tangent}
	The	maps \(\sigma_\lambda \colon \Cont_c(\T_HM) \to \Cont_c(\T_HM)\) defined by
	\begin{equation*}
		(\sigma_\lambda f)(\gamma) = \lambda^{d_H} f(\alpha_\lambda(\gamma)) \quad \text{for }\lambda > 0\text{ and }f \in \Cont_c(\T_HM)
	\end{equation*}
	extend to a strongly continuous \(\Rp\)-action on \(\Cst(\T_HM)\). Moreover, \(\A(\T_HM)\) is invariant under the action. 
\end{lemma}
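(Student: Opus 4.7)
My plan is to verify the claim in four steps. First I would check that each $\sigma_\lambda$ is a $*$-automorphism of the convolution $*$\nb-algebra $\Cont_c(\T_HM)$ with inverse $\sigma_{\lambda^{-1}}$. The crucial computation is to track how the Haar system transforms: pushing $\nu^{r(\gamma)}$ forward along $\alpha_\lambda$ gives $\lambda^{-d_H}\nu^{r(\alpha_\lambda\gamma)}$. For $t>0$ this is because $\alpha_\lambda$ acts as the identity on $M\times M$ and the parametrization of the pair groupoid Haar system is $\nu^{(x,t)}=t^{-d_H}\nu$, so the factor $\lambda^{d_H}$ arises from the change $t\mapsto\lambda^{-1}t$; for $t=0$, $\alpha_\lambda$ restricts to the dilation $\delta_\lambda$ on the osculating group $G_x$, which rescales Lebesgue measure by $\lambda^{d_H}$ by the definition of the homogeneous dimension. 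The prefactor $\lambda^{d_H}$ in $\sigma_\lambda$ precisely absorbs this Jacobian, so a change-of-variables argument shows $\sigma_\lambda(f\ast g)=\sigma_\lambda f\ast\sigma_\lambda g$, while compatibility with the involution in \eqref{def:inv} is automatic from the fact that $\alpha_\lambda$ is a groupoid morphism.

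Second, the same change of variables applied to $\|f\|_{I,r}$ shows $\|\sigma_\lambda f\|_I=\|f\|_I$. Hence for any $I$\nb-bounded representation $\pi$ of $\Cont_c(\T_HM)$ the composition $\pi\circ\sigma_\lambda$ is also $I$\nb-bounded, and since $\sigma_\lambda$ is an automorphism the $\Cst$\nb-norm is preserved. Therefore $\sigma_\lambda$ extends uniquely to an isometric $*$\nb-automorphism of $\Cst(\T_HM)$, and the group law $\sigma_{\lambda\mu}=\sigma_\lambda\circ\sigma_\mu$ (which is clear on $\Cont_c(\T_HM)$ because $\alpha$ is an action) persists by density.

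Third, for strong continuity I would fix $f\in\Cont_c(\T_HM)$ and $\lambda_0>0$; for $\lambda$ in a compact neighborhood of $\lambda_0$, the sets $\alpha_{\lambda^{-1}}(\supp f)$ are contained in a common compact $K_0\subset\T_HM$. On $K_0$, uniform continuity of $f$ gives $\sigma_\lambda f\to\sigma_{\lambda_0}f$ uniformly as $\lambda\to\lambda_0$, and the $I$\nb-norm of any function supported in $K_0$ is controlled by its supremum times a uniform bound on $\nu^{(x,t)}(K_0\cap\T_HM^{(x,t)})$; hence $\|\sigma_\lambda f-\sigma_{\lambda_0}f\|_I\to 0$, and therefore also in $\Cst$\nb-norm. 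Density of $\Cont_c(\T_HM)$ in $\Cst(\T_HM)$ together with the isometry from Step~2 promotes this to strong continuity on all of $\Cst(\T_HM)$ by an $\varepsilon/3$ argument.

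Finally, invariance of $\A(\T_HM)$ reduces via \cref{res:decompschwartz} to showing $\sigma_\lambda\A(\V)\subseteq\A(\V)$ for each $H$\nb-chart, for which I would work in the coordinates of \eqref{eq:zoomincoord}. A direct computation gives $k\circ\beta_\lambda=k$, so the compact-support requirement \cref{def:schwartz_local}\ref{item:compactsupportlocal} transfers with the same $K$ and with $T$ replaced by $\lambda T$. The Schwartz-type estimates \cref{def:schwartz_local}\ref{item:rapiddecaylocal} are preserved because $\|\beta_\lambda(\gamma)\|=\lambda\|\gamma\|$ for the homogeneous quasi\nb-norm, so that a derivative of order $\alpha$ of $f\circ\beta_\lambda$ is bounded by a constant depending polynomially on $\lambda$ times a rapidly decreasing function of $\|\gamma\|$. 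The main obstacle throughout is the case-wise bookkeeping in Step~1: verifying that the anisotropic nature of $\delta_\lambda$ at $t=0$ produces the same factor $\lambda^{d_H}$ as the simple rescaling of $t^{-d_H}$ at $t>0$. Both, however, reduce to the homogeneity of the weight sequence, so the calculation goes through uniformly in $t\in[0,\infty)$.
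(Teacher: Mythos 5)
Your proposal is correct and follows essentially the same route as the paper: the key identity is the transformation law of the Haar system under the zoom action (the paper's equation \eqref{eq:haar_zoom}, equivalent to your pushforward computation $(\alpha_\lambda)_*\nu^{(x,t)}=\lambda^{-d_H}\nu^{(x,\lambda^{-1}t)}$), which gives multiplicativity and the $I$-norm isometry, and the invariance of $\A(\T_HM)$ is reduced to $\A(\U)$ using $k\circ\beta_\lambda=k$ and the homogeneity of the quasi-norm, exactly as in the paper. You merely spell out the strong-continuity and Jacobian bookkeeping in more detail than the paper does.
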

\begin{proof}
	Note that the Haar system \(\{\nu^{(x,t)}\}_{(x,t)\in M\times[0,\infty)}\) satisfies 
	\begin{align}\label{eq:haar_zoom} \int \sigma_\lambda f \diff\nu^{(x,t)}=\int f \diff \nu^{(x,\lambda^{-1} t)} \quad \text{for all }f\in\Cont_c(\T_HM).
	\end{align}
	Using this one can show \(\sigma_\lambda(f*g)=\sigma_\lambda(f)*\sigma_\lambda(g)\) for \(f,g\in\Cont_c(\T_HM)\). Furthermore, all \(\sigma_\lambda\) are linear and satisfy \(\sigma_\lambda(f^*)=(\sigma_\lambda(f))^*\) for all \(f\in\Cont_c(\T_HM)\). As each \(\sigma_\lambda\) is an isometry with respect to the \(I\)-norm, it follows that \(\sigma\) extends to a strongly continuous action on \(\Cst(\T_HM)\). 
	
	To see that \(\A(\T_HM)\) is invariant, note that it suffices to show this for \(f\in\A(\U)\), as \(\Cont_c^\infty(M\times M\times\Rp)\) is invariant. Because \(k(\beta_\lambda(x,v,t))=(x,t\cdot v)=k(x,v,t)\) for all \(\lambda>0\), one can take the compact set \(K_f\) for \(f\) for all \(\sigma_\lambda(f)\). Furthermore, for fixed \(\lambda>0\) the function \(\sigma_\lambda(f)\) has compact support in the \(t\)-direction and satisfies the rapid decay condition. This follows from the homogeneity of the quasi-norm.
\end{proof}
\begin{lemma}\label{res:kernelev}
	The ideal \(\ker(p_0)\idealin\Cst(\T_HM)\) is invariant under the zoom action. Under the isomorphism from \eqref{eq:iso_p} \[p\colon \ker(p_0)\to\Cont_0(\Rp)\otimes\Comp(L^2M)\] the zoom action corresponds to the action \(\tau\otimes 1\), where \(\tau\) is induced by the free and proper scaling action of \(\Rp\) on itself, namely,
	\[(\tau_\lambda f)(t)=f(\lambda^{-1}t) \quad\text{for }f\in\Cont_0(\Rp) \text{ and }\lambda,t>0.\]		
\end{lemma}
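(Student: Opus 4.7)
The plan is to verify both assertions by a direct computation on the dense subalgebra $\Cont_c(\T_HM|_{M\times(0,\infty)})\subset\ker(p_0)$ and then to extend by continuity, exploiting the explicit formulas for $p_t$ from \eqref{eq:p_t} and for $\sigma_\lambda$ from \cref{res:zoom_on_tangent}.

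For the invariance of $\ker(p_0)$, I first observe that the zoom action satisfies $\theta\circ\alpha_\lambda=\lambda^{-1}\theta$, so the open invariant subset $M\times(0,\infty)\subset M\times[0,\infty)$ of the unit space is preserved by every $\alpha_\lambda$. Consequently, if $f\in\Cont_c(\T_HM)$ vanishes on $\theta^{-1}\{0\}$, then $\sigma_\lambda f=\lambda^{d_H}f\circ\alpha_\lambda$ vanishes on the same set. Together with $\sigma_{\lambda^{-1}}\sigma_\lambda=\Id$ and density of $\Cont_c$, this gives $\sigma_\lambda(\ker p_0)=\ker p_0$ for every $\lambda>0$.

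For the intertwining, I compute directly for $f\in\Cont_c(\T_HM|_{M\times(0,\infty)})$, $\psi\in\Cont_c(M)$ and $(x,t)\in M\times\Rp$:
\begin{align*}
(p_t(\sigma_\lambda f)\psi)(x)
&= t^{-d_H}\int_M\lambda^{d_H}f(x,y,\lambda^{-1}t)\psi(y)\diff\nu(y)\\
&= (\lambda^{-1}t)^{-d_H}\int_M f(x,y,\lambda^{-1}t)\psi(y)\diff\nu(y)\\
&= (p_{\lambda^{-1}t}(f)\psi)(x).
\end{align*}
Thus $p(\sigma_\lambda f)(t)=p(f)(\lambda^{-1}t)$, which under the identification $\Cont_0(\Rp,\Comp(L^2M))\cong\Cont_0(\Rp)\otimes\Comp(L^2M)$ is precisely $(\tau_\lambda\otimes 1)(p(f))$. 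A density and continuity argument then extends the intertwining to all of $\ker(p_0)$.

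I do not expect any serious obstacle: the only subtlety is the bookkeeping of the factor $\lambda^{d_H}$, which is guaranteed to cancel correctly because of the normalization of $\sigma_\lambda$ fixed in \cref{res:zoom_on_tangent} to make it isometric for the $I$\nb-norm via the identity \eqref{eq:haar_zoom}. Freeness and properness of the scaling action $\tau$ of $\Rp$ on itself, mentioned in the statement, are immediate since it is the regular action of a locally compact group on itself by translations.
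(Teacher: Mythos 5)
Your proposal is correct and follows essentially the same route as the paper: the paper's proof simply records that \(p_0\) is equivariant (whence the kernel is invariant) and that \(p\circ\sigma_\lambda=(\tau_\lambda\otimes 1)\circ p\), which is exactly the computation \(p_t(\sigma_\lambda f)=p_{\lambda^{-1}t}(f)\) that you carry out explicitly on the dense subalgebra \(\Cont_c(\T_HM|_{M\times(0,\infty)})\). The cancellation of the factor \(\lambda^{d_H}\) against \(t^{-d_H}\) is handled correctly, so there is nothing to add.
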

\begin{proof}The homomorphism \(p_0\) is equivariant for the zoom action and the action on \(\Cst(T_HM)\) induced by the dilations in \cref{def:dilation_osculating}. The second claim follows from the computation that \(p\circ\sigma_\lambda=(\tau_\lambda\otimes 1)\circ p\) for all \(\lambda>0\).
\end{proof}
\subsection{Generalized fixed point algebras for filtered manifolds}
In this section, we show that the generalized fixed point algebra construction can be applied to certain ideals in the groupoid \(\Cst\)-algebras associated with the tangent groupoid of filtered manifolds. 
\begin{definition}
	Let \(\J_0\) be the ideal in \(\Cst(T_HM)\) defined as \[\J_0 = \bigcap_{x\in M}{\ker(\widehat{\pi}_\triv\circ q_x)}.\]
	Here, \(\widehat{\pi}_\triv\colon\Cst(G_x)\to \C\) denotes the representation induced by the trivial representation of the osculating group \(G_x\), that is,
	\[\widehat{\pi}_\triv(f)=\int_{G_x} f(x)\diff x\quad\text{for }f\in\Cont_c(G_x)\]
\end{definition}
The ideal \(\J_0\) can be extended to an ideal \(\J\) in \(\Cst(\T_HM)\) as follows. 
\begin{definition}
	Let \(\J\) denote the ideal in \(\Cst(\T_HM)\) given by
	\[ \J = \bigcap_{x\in M}{\ker(\widehat{\pi}_\triv\circ q_x\circ p_0)}.\]
\end{definition}
Both ideals \(\J\) and \(\J_0\) are invariant under the zoom action \(\sigma\) of \(\Rp\).
To apply the generalized fixed point algebra construction to this \(\Rp\)-action on \(\J\), consider the following \(^*\)-subalgebra of \(\J\).
\begin{definition}
	Let \(\Rel\subset \J\) consist of all \(f\in\A(\T_HM)\) such that
	\begin{align}\label{eq:vanishing_integral}
	\int f\diff\nu^{(x,0)}=0 \quad \text{for all }x\in M.
	\end{align}
\end{definition}

We show first the following lemma, which will replace an application of the mean value theorem in the Euclidean case. 
\begin{lemma}\label{res:meanvalue}
	Let \(g\in\A(\U)\) and \(K_1,K_2\subset k(\U)\) be compact subsets. For all \(a\in\N\) there are~\(D>0\) and \(b\in\N\) such that for all \((\gamma^{-1},\eta)\in\U^{(2)}\) with \(k(\gamma)\in K_1, k(\eta)\in K_2 \)
	\[\abs{g(\gamma^{-1}\eta)-g(\gamma^{-1})}\leq D\frac{(1+\norm{\eta})^b}{(1+\norm{\gamma})^{a}}\sum_{j=1}^n\norm{\eta}^{q_j}.\]
\end{lemma}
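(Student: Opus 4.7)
The plan is to work in local coordinates on \(\U\) and apply the fundamental theorem of calculus along a one-parameter deformation of \(\eta\) to the unit, combining the polynomial bounds from \cref{res:invmultest} with the rapid decay of \(g\). Write \(\gamma^{-1}=(x,v,t)\) and \(\eta=(y,w,t)\) in coordinates, where composability forces \(y\) to equal the source-projection of \(\gamma^{-1}\). Define the path \(\eta_s\defeq(y,sw,t)\) for \(s\in[0,1]\), which satisfies \(\norm{\eta_s}\leq\norm{\eta}\), interpolates between the unit \(\eta_0=u(y,t)\) and \(\eta\), and, after a mild enlargement of \(K_2\) inside \(k(\U)\), remains in a compact set on which the estimates of \cref{res:invmultest} and \cref{res:schwartztriangle} apply uniformly in \(s\). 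Using \(m(\gamma^{-1},\eta_0)=\gamma^{-1}\) and \(m(\gamma^{-1},\eta_1)=\gamma^{-1}\eta\), the fundamental theorem of calculus and the chain rule give
\[
g(\gamma^{-1}\eta)-g(\gamma^{-1})=\int_0^1\sum_{k,j=1}^n w_k\,\partial_{w_k}m(\gamma^{-1},\eta_s)_j\,(\partial_{v_j}g)\bigl(m(\gamma^{-1},\eta_s)\bigr)\,ds.
\]

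The integrand is controlled by three ingredients. First, \(\abs{w_k}\leq\norm{\eta}^{q_k}\) directly from the definition of the homogeneous quasi-norm, which supplies the factor \(\sum_k\norm{\eta}^{q_k}\) in the final bound. Second, \cref{res:invmultest}\,(ii) applied to \(\partial_{w_k}m\), together with \cref{res:schwartztriangle}\,(i) used to replace \(\norm{\gamma^{-1}}\) by \(\norm{\gamma}\), yields \(\abs{\partial_{w_k}m(\gamma^{-1},\eta_s)_j}\leq C(1+\norm{\gamma})^L(1+\norm{\eta})^L\) uniformly in \(s\). Third, the rapid-decay condition in \cref{def:schwartz_local}\,(ii) gives \(\abs{(\partial_{v_j}g)(\zeta)}\leq D_p(1+\norm{\zeta})^{-p}\) for every \(p\in\N_0\).

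The decisive step is extracting the factor \((1+\norm{\gamma})^{-a}\) from the rapid-decay bound evaluated at the moving point \(\zeta=m(\gamma^{-1},\eta_s)\). For this I use the identity \(\gamma=\eta_s\cdot m(\gamma^{-1},\eta_s)^{-1}\) and apply \cref{res:schwartztriangle}\,(ii) followed by part~(i) to obtain
\[
1+\norm{\gamma}\leq C(1+\norm{\eta_s})^{l}\bigl(1+\norm{m(\gamma^{-1},\eta_s)}\bigr)^{l^{2}}\leq C(1+\norm{\eta})^{l}\bigl(1+\norm{m(\gamma^{-1},\eta_s)}\bigr)^{l^{2}},
\]
and hence \(\bigl(1+\norm{m(\gamma^{-1},\eta_s)}\bigr)^{-p}\leq C'(1+\norm{\eta})^{p/l}(1+\norm{\gamma})^{-p/l^{2}}\). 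Choosing \(p\) large enough that \(p/l^{2}\geq a+L\), multiplying the three estimates together and integrating over \(s\in[0,1]\) yields the claimed inequality with \(b=L+\lceil p/l\rceil\). The main technical obstacle is the bookkeeping of compact subsets of \(k(\U)\): one has to verify that both \(k(\eta_s)\) and \(k(m(\gamma^{-1},\eta_s))\) remain in compact sets depending only on \(K_1\), \(K_2\) and a cutoff \(T\) in the \(t\)-variable coming from the support condition on \(g\), so that the uniform estimates of \cref{res:invmultest} and \cref{res:schwartztriangle} are applicable along the entire path.
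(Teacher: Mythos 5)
Your proof is correct and follows essentially the same route as the paper's: a fundamental-theorem-of-calculus argument along a path contracting \(\eta\) to the unit (the paper uses the dilation path \(h\cdot w\) where you use the linear path \(sw\), a cosmetic difference), with the derivative of the multiplication controlled by \cref{res:invmultest}, the factor \((1+\norm{\gamma})^{-a}\) extracted from the rapid decay of \(g\) via \cref{res:schwartztriangle}, and \(\abs{w_j}\le\norm{\eta}^{q_j}\) supplying the remaining factor. The compact-set bookkeeping you flag at the end is handled in the paper by choosing a star-shaped compact enlargement \(\tilde K\supset K_2\), exactly as you anticipate.
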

\begin{proof}
	It suffices to show the claim for real-valued  \(g\in\A(\U)\). Let \(\gamma^{-1}=(x,v,t)\) and \(\eta=(\varepsilon_x^{-1}(t\cdot v),w,t)\).  Define the function \(G\colon[0,1]\times\U^{(2)}\to \R\) by
	\[G(x,v,t,w,h)=g((x,v,t)\cdot(\varepsilon^{-1}_x(t\cdot v),h\cdot w,t)).\]
	Hence, we obtain
	\begin{align*}
	g((x,v,t)(\varepsilon^{-1}_x(t\cdot v), w,t))-g(x,v,t)=\int_0^1 \partial_hG(x,v,t,w,s)\diff s.
	\end{align*}
	To estimate \(\abs{\partial_hG}\) note that \(G=g\circ m\circ(\Id\times\delta)\), where \(\delta(w,h)=h\cdot w\). Writing \(\eta_h=(\varepsilon_x^{-1}(t\cdot v),h\cdot w,t)\), one calculates that
	\[\partial_hG(\gamma^{-1},\eta,s)=\sum_{i,j=1}^n \partial_{v_i}g(\gamma^{-1}\cdot\eta_h)\cdot \partial_{w_j}m_i(\gamma^{-1},\eta_h)\cdot \partial_h\delta_j(w,s).\]
	By the structure of \(\U\) we can find a compact subset \(K_2\subset \tilde{K}\subset k(\U)\) that is star-shaped in the sense that for \((x,v)\in \tilde{K}\) also \((x,h\cdot v)\in\tilde{K}\) holds for all \(h\in[0,1]\). 
	Let \(T>0\) be such that \(g\) vanishes for \(t\geq T\). 
	\cref{res:invmultest} applied to \(i(K_1)\cup\tilde{K}\) and \(T\) gives \(C>0\) and \(l\in\N\) with 
	\[ \abs{\partial_{w_j}m_i(\gamma^{-1},\eta_h)}\leq C(1+\norm{\gamma^{-1}})^{l}(1+\norm{\eta_h})^{l}\leq C(1+\norm{\gamma})^{l^2}(1+\norm{\eta})^{l}\]
	for all \(\gamma,\eta\) with \(k(\gamma)\in K_1\), \(k(\eta)\in K_2\) and \(t<T\). For these \(\gamma,\eta\) use the rapid decay condition for~\(g\) to estimate using \cref{res:schwartztriangle}
	\begin{align*}
	\abs{\partial_{v_i}g(\gamma^{-1}\cdot \eta_h)}&\lesssim (1+\norm{\gamma^{-1}\cdot\eta_h})^{-l^2(a+l^2)}
	\lesssim (1+\norm{\eta_h^{-1}\cdot\gamma})^{-l(a+l^2)}\\
	&\lesssim \frac{(1+\norm{\eta_h})^{l(a+l^2)}}{(1+\norm{\gamma})^{a+l^2}}\leq \frac{(1+\norm{\eta})^{l(a+l^2)}}{(1+\norm{\gamma})^{a+l^2}}.\end{align*}
	As \(\delta_j(w,h)=h^{q_j}w_j\), it follows that \(\abs{\partial_h\delta_j(w,s)}\lesssim \abs{w_j}\leq \norm{\eta}^{q_j}\). Together, these estimates imply the claim. 
\end{proof}
\begin{lemma}\label{res:mainestimate}
	Let \((M,H)\) be a filtered manifold. Consider the restricted zoom action \(\sigma\colon \Rp\acts \J\). For \(f\in\Rel\) the operator \(\BRA{f}\) as in \eqref{BRAKET} satisfies 
	\(\BRA{f}g\in L^1(\Rp,\J)\) for all \(g\in \Rel\). 
\end{lemma}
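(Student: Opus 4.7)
The plan is to bound \(\lambda\mapsto\norm{\sigma_\lambda(f)^* g}_{\Cst(\T_HM)}\) and show it is integrable against the Haar measure \(\diff\lambda/\lambda\) on \(\Rp\). Since \((\sigma_\lambda f)^* * g\in\A(\T_HM)\) by \cref{res:schwartzalgebra,res:zoom_on_tangent}, and \(\Cst(\T_HM)\) is a continuous field over \([0,\infty)\) with fibres \(\Cst(T_HM)\) at \(t=0\) and \(\Comp(L^2M)\) at each \(t>0\), the \(\Cst\)-norm is the supremum over fibres, and the analysis splits into \(t=0\) and \(t>0\), and further into \(\lambda\geq 1\) and \(\lambda\leq 1\).

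At \(t=0\), bi-invariance of the Haar measure on \(G_x\) yields
\begin{equation*}
p_0((\sigma_\lambda f)^* * g)(x,\eta)=\int\conj{f(x,\tau,0)}\,g(x,\delta_{\lambda^{-1}}(\tau)\eta,0)\diff\nu^{(x,0)}(\tau).
\end{equation*}
For \(\lambda\geq 1\), I use \eqref{eq:vanishing_integral} for \(f\) to subtract \(g(x,\eta,0)\) from the integrand and apply \cref{res:meanvalue} to \(g\) (after inversion, to reduce to a right translation) in a local \(H\)\nb-chart via \cref{res:decompschwartz}; since \(\norm{\delta_{\lambda^{-1}}(\tau)}=\lambda^{-1}\norm{\tau}\), this produces a factor \(\sum_j\lambda^{-q_j}\norm{\tau}^{q_j}\lesssim\lambda^{-1}\sum_j\norm{\tau}^{q_j}\) that is absorbed by the rapid decay of~\(f\), yielding the fibrewise \(I\)-norm bound \(\norm{p_0(\cdot)}_{\Cst(T_HM)}\lesssim\lambda^{-1}\). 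For \(\lambda\leq 1\), the substitution \(\xi=\delta_{\lambda^{-1}}(\tau)\) rewrites the expression as \(\lambda^{d_H}\int\conj{f(x,\delta_\lambda(\xi),0)}g(x,\xi\eta,0)\diff\nu^{(x,0)}(\xi)\); then \eqref{eq:vanishing_integral} for \(g\) lets me subtract \(\conj{f(x,0,0)}\), and splitting the \(\xi\)\nb-integral into \(\lambda\norm{\xi}\leq 1\) and \(\lambda\norm{\xi}>1\) and using respectively the smoothness of \(f\) near the origin and its rapid decay produces decay of positive order in \(\lambda\).

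For the fibres at \(t>0\), a direct computation using the zoom action shows \(p_t(\sigma_\lambda f)=p_{\lambda^{-1}t}(f)\), so
\begin{equation*}
p_t((\sigma_\lambda f)^* * g)=p_{\lambda^{-1}t}(f)^*\,p_t(g)
\end{equation*}
has integral kernel \((x,y)\mapsto t^{-d_H}\lambda^{d_H}\int\conj{f(z,x,\lambda^{-1}t)}\,g(z,y,t)\diff\nu(z)\), supported in \(t\in(0,\min(\lambda T_f,T_g)]\). A Taylor expansion of \(f(z,x,s)\) and \(g(z,y,s)\) around \(s=0\), combined with \eqref{eq:vanishing_integral} for both functions (which makes the zeroth-order terms cancel after the \(z\)\nb-integration in each chart of \cref{res:decompschwartz}), produces Schur-type estimates of the kernel giving \(\sup_{t>0}\norm{p_t(\cdot)}\lesssim\lambda^{-1}\) for \(\lambda\geq 1\) and \(\lesssim\lambda\) for \(\lambda\leq 1\). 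Together with the \(t=0\) estimates, the total norm decays appropriately in both regimes, and integrability against \(\diff\lambda/\lambda\) is immediate from \(\int_0^1\lambda\,\diff\lambda/\lambda+\int_1^\infty\lambda^{-1}\,\diff\lambda/\lambda<\infty\).

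The main obstacle is the \(t>0\) analysis for \(\lambda\) large: the naive product bound \(\norm{p_{\lambda^{-1}t}(f)}\cdot\norm{p_t(g)}\) stays bounded but does not decay in \(\lambda\), so the cancellation provided by both vanishing integrals must be extracted at the level of the kernel via a Taylor expansion in \(t\) that couples the \(t=0\) and \(t>0\) analysis. A secondary difficulty is the small-\(\lambda\) regime at \(t=0\), where the mean value estimate fails directly because \(\delta_{\lambda^{-1}}(\tau)\) is large, and one must instead change variables to \(\xi=\delta_{\lambda^{-1}}(\tau)\) and use \eqref{eq:vanishing_integral} for~\(g\) alongside that for~\(f\).
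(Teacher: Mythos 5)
Your overall architecture (extract decay in \(\lambda\) from the vanishing fibre integrals via a mean-value argument) is the right one, and your \(t=0\), \(\lambda\geq 1\) step matches the paper's mechanism. But there are two genuine gaps. First, the small-\(\lambda\) analysis: after the substitution you have \(\lambda^{d_H}\int(\conj{f(x,\delta_\lambda\xi,0)}-\conj{f(x,0,0)})g(x,\xi\eta,0)\diff\xi\), and your proposed splitting at \(\lambda\norm{\xi}=1\) does not close. The Schur/\(I\)-norm requires integrating the result over \(\eta\), which replaces \(\abs{g(x,\xi\eta,0)}\) by the constant \(\norm{g_{0,x}}_{L^1}\); the outer region then contributes \(\lambda^{d_H}\int_{\norm{\zeta}>1}\abs{f(x,\zeta,0)}\lambda^{-d_H}\diff\zeta=O(1)\) with no decay, and the inner region, bounded via \(\abs{f(x,\delta_\lambda\xi,0)-f(x,0,0)}\lesssim\sum_j(\lambda\norm{\xi})^{q_j}\), contributes \(\sim\sum_j\lambda^{1-q_j}\), which \emph{diverges} as \(\lambda\to 0\) once the step is \(r\geq 2\). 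To get decay one must retain the decay of \(g\) in the integration variable, which is exactly the \(f\leftrightarrow g\)-swapped version of your large-\(\lambda\) argument. The paper sidesteps this entirely: since each \(\sigma_\lambda\) is an \(I\)-norm isometry and \(\Rel\) is \(^*\)-closed, \(\norm{\sigma_{\lambda^{-1}}(f^*)*g}_I=\norm{\sigma_\lambda(g^*)*f}_I\), so only \(\lambda\geq 1\) ever needs to be treated.

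Second, the \(t>0\) analysis. A Taylor expansion of \(f(z,x,s)\) and \(g(z,y,s)\) in \(s\) at fixed manifold points is vacuous: off the diagonal these functions vanish to infinite order at \(s=0\) (the fibre coordinate \(t^{-1}\cdot\varepsilon_{\kappa(z)}^{\kappa}(\kappa(x))\) blows up), so there is no useful ``zeroth-order term,'' and the cancellation you describe cannot be extracted by integrating over the manifold variable \(z\) — the hypothesis \eqref{eq:vanishing_integral} is an integral over the osculating fibre, not over the base. Passing to the groupoid chart, the convolution becomes \(b(x)\int f(x,z,\lambda^{-1}t)\,g^*((x,\lambda^{-1}\cdot z,t)^{-1}\gamma)\diff z\) with \(z\) the fibre coordinate; the Taylor expansion in \(t\) of \(f\) (the paper's \(R_2\)) handles only one error term, while the main term \(\int f(x,z,0)g^*((x,\lambda^{-1}\cdot z,t)^{-1}\gamma)\diff z\) still requires comparing \(g^*\) at the translated point with \(g^*(\gamma)\) — i.e.\ \cref{res:meanvalue}, applied at general \(t\), which is where the decisive factor \(\lambda^{-1}\) comes from via \(\norm{\lambda^{-1}\cdot z}=\lambda^{-1}\norm{z}\). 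Once this is set up, the decomposition into \(g^*(\gamma)\int f(x,z,0)\diff z\) (vanishing), the \(R_1\) term, and the \(R_2\) term works uniformly for all \(t\geq 0\), so the case split between \(t=0\) and \(t>0\) in your proposal is both unnecessary and, as written for \(t>0\), not a proof.
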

\begin{proof}
	We show that \((\lambda\mapsto \norm{\sigma_\lambda(f^*)*g}_I)\in L^1(\Rp,\tfrac{\diff\lambda}{\lambda})\) holds for all \(f,g\in\Rel\). 
	Because  \(\sigma_\lambda\) for~\(\lambda>0\) is an isometry with respect to the \(I\)\nb-norm
	\begin{equation*}
	\norm{\sigma_{\lambda^{-1}}(f^*)*g}_I = \norm{f^**\sigma_{\lambda}(g)}_I=\norm{\sigma_{\lambda}(g^*)*f}_I
	\end{equation*}
	holds for all \(f,g\in\Rel\). Therefore, and as \(\Rel\) is invariant under involution, it will suffice to show 
	\begin{equation}\label{eq:rgreater1} 
	\int_1^\infty \norm{\sigma_\lambda(f)*g}_I\tfrac{\diff \lambda}{\lambda}<\infty \quad \text{for all }f,g\in\Rel. 
	\end{equation}			
	We decompose  \(f\) as in \cref{res:decompschwartz} and write
	\[\sigma_\lambda(f)*g=\sum_{j\in \{1,\ldots,m,\infty\}} \sigma_\lambda(f_j)*g\] 
	with \(f_j\in\Rel_{\V_j}\defeq \Rel\cap\A(\V_j)\) and \(f_\infty\in\Cont^\infty_c(\V_\infty)\).
	We proceed to decompose this further as in~\eqref{eq:decomposeconvolution}. Let \(K_{f_j},K_g\) be compact subsets for \(f_j,g\) and let \(T>0\) be such that \(f_j\) and \(g\) vanish for \(t\geq T\).
	As before, let \(\{\chi,1-\chi\}\) be a compactly supported partition of unity subordinate to the open cover \(V_j\times V_j\times[0,\infty)\) and \(\V_\infty\) of \(m(K_{f_j},K_g)\times[0,T]\). 	
	As noted in the proof of \cref{res:zoom_on_tangent} we can take the same compact subset \(K_{f_j}\subset k(\U)\) for all \(\sigma_\lambda(f_j)\). For \(\lambda\geq 1\), let
	\(\chi_\lambda\) be the scaled version of \(\chi\) defined by \(\chi_\lambda(x,y,t)=\chi(x,y,\lambda^{-1}t)\). As \(\lambda\geq 1\), \(\{\chi_\lambda,1-\chi_\lambda\}\) is still a partition of unity for the cover of
	\[m(K_{f_j},K_g)\times[0,T]=m(K_{\sigma_\lambda(f_j)},K_g)\times[0,T].\]
	As before, we have a decomposition \[\sigma_\lambda(f_j)*g=(\sigma_\lambda(f_j)*g)_{\chi_\lambda}+(\sigma_\lambda(f_j)*g)_{(1-\chi_\lambda)}.\]
	Inspecting the argument for the first summand in the proof of \cref{res:schwartzalgebra}, we can find a \(g_j\in\Rel_{\V_j}\) that does not depend on \(\lambda\) such that  \[(\sigma_\lambda(f_j)*g)_{\chi_\lambda}=(\sigma_\lambda(f_j)*g_j)_{\chi_\lambda}.\]
	For the second summand, recall the construction of \(\omega\in\Cont^\infty([0,\infty))\) in \cref{res:schwartzalgebra}. It follows as in \eqref{eq:omega} that for \(\omega_\lambda(t)\defeq \omega(\lambda^{-1}t)\)
	\begin{align*}
	(\sigma_\lambda(f_j)*g)_{(1-\chi_\lambda)}&= \left(\sigma_\lambda(f_j)\cdot(\omega_\lambda\circ\theta)*g\right)_{(1-\chi_\lambda)}\\
	&=\left(\sigma_\lambda(f_j\cdot(\omega\circ\theta))*g\right)_{(1-\chi_\lambda)}.
	\end{align*}
	Note that \(f_j\cdot(\omega\circ\theta)\in\Cont_c^\infty(\V_\infty)\).
	As \(\abs{\chi_\lambda},\abs{1-\chi_\lambda}\leq 1\) for all \(\lambda\geq 1\), it suffices to prove \eqref{eq:rgreater1} for the two cases \(f\in\Cont^\infty_c(\V_\infty)\) and \(g\in\Rel\), and \(f,g\in\Rel_\U\). Here, \(\Rel_\U\) denotes all functions \(f\in\A(\U)\) with \(f\circ\phi_\kappa\in\Rel_\V\) for an \(H\)-chart \(\kappa\colon V\to U\). 
	
	Suppose first  that \(f\in\Cont^\infty_c(\V_\infty)\) and \(g\in\Rel\). Let \(T,t_0>0\) be such that \(f(x,y,t)=g(x,y,t)=0\) whenever \(t>T\) and \(f(x,y,t)=0\) for \(t<t_0\). For \(x\in M\) and \(t>0\) we have
	\[\left(\sigma_\lambda(f)*g\right)(x,y,t)=\lambda^{d_H}t^{-d_H}\int_M f(x,z,\lambda^{-1}t)g(z,y,t)\diff \nu(z),\]	
	which is only non-zero for \(t\leq T\). Moreover, it vanishes if \(\lambda>Tt_0^{-1}\) because then \(\lambda^{-1}t<t_0\) holds for  \(t\leq T\), so that \(f(x,z,\lambda^{-1}t)=0\). As \(\lambda\geq 1\), only \(t\geq t_0\) have to be considered. As \(g\) restricted to \(t\geq t_0\) is a compactly supported function, we can find a compact subset \(K\subset M\) such that \(f(x,y,t)=0\) if \((x,y)\notin K \times K\) and \(g(x,y,t)=0\) if \(t\geq t_0\) and \((x,y)\notin K\times K\).
	Moreover, there is a constant \(C>0\) such that \(\abs{g}\leq C\) for \(t\geq t_0\) and \(\abs{f}\leq C\). 
	We obtain
	\[\int_1^\infty\norm{\sigma_\lambda(f)*g}_{I}\tfrac{\diff\lambda}{\lambda}\leq T^{2d_H}	C^2\nu(K)^2\int_1^{T/t_0}\lambda^{d_H-1}\diff\lambda<\infty.\] 		
	Consider now the case that \(f,g\in\Rel_\U\). To shorten notation write \(b(x)=\abs{\det B_X(x)}\) for \(x\in U\). For \(\gamma\in\U\) with \(r(\gamma)=(x,t)\), one has
	\begin{align*}
	\left(\sigma_\lambda(f)*g^*\right)(\gamma)&= \lambda^{d_H}b(x)\int_{\U^{(x,t)}} f(x,\lambda\cdot z,\lambda^{-1}t)\,g^*((x,z,t)^{-1}\gamma)\diff z\\
	&=b(x)\int_{\U^{(x,t)}} f(x,z,\lambda^{-1}t)\,g^*((x,\lambda^{-1}\cdot z,t)^{-1}\gamma)\diff z.
	\end{align*}
	Let \(K_f,K_g\) be compact subsets for \(f\) and \(g\) as in \cref{def:schwartz_local}. As \(K_f\) is a compact subset for all \(\sigma_\lambda(f)\), we only need to consider \(\gamma\in K\defeq m(K_f,K_g)\) by \cref{res:schwartzalgebra}. Moreover, we only need to consider \(z\in\U^{(x,t)}\) with \((x,(\lambda^{-1}t)\cdot z)\in K_f\). 
	Define two functions \(R_1,R_2\) by
	\begin{align*}
	R_1(\gamma,\eta)\defeq & g^*(\eta^{-1}\cdot\gamma)-g^*(\gamma),\\
	R_2(x,z,t)\defeq & f(x,z,t)-f(x,z,0).
	\end{align*}
	We write 
	\begin{align*}
	\left(\sigma_\lambda(f)*g^*\right)(\gamma) = &b(x)\left( g^*(\gamma)\int f(x,z,0)\diff z +\int f(x,z,0)R_1(\gamma,(x,\lambda^{-1}\cdot z,t))\diff z\right.\\
	&\left.+ \int R_2(x,z,\lambda^{-1}t)g^*((x,\lambda^{-1}\cdot z ,t)\cdot\gamma)\diff z\right).
	\end{align*}
	The first term vanishes as \(f\in\Rel_\U\). 
	The mean value theorem in \cref{res:meanvalue} applied to \(i(K)\), \(K_f\), \(g\) and \(a=d_H+1\) yields \(D>0\) and \(b\in\N\) such that \begin{align*}
	\abs{R_1(\gamma,(x,\lambda^{-1}\cdot z,t))}&\leq  D\,\frac{(1+\norm{\lambda^{-1}\cdot z})^b}{(1+\norm{\gamma})^{d_H+1}}\sum_{j=1}^{n}\norm{\lambda^{-1}\cdot z}^{w_i}\\
	&= D\,\frac{(1+\lambda^{-1}\norm{ z})^b}{(1+\norm{\gamma})^{d_H+1}}\sum_{j=1}^{n}\lambda^{-w_i}\norm{z}^{w_i}\\
	&\leq D\lambda^{-1}\, \frac{(1+\norm{ z})^{b+d_H}}{(1+\norm{\gamma})^{d_H+1}}
	\end{align*}
	for \(\gamma\in K\) and \(z\in \U^{(x,t)}\) with \((x,(\lambda^{-1}t)\cdot z)\in K_f\).
	For the last inequality we used that \(\lambda\geq 1\).		
	The usual mean value theorem and the rapid decay of \(f\) allow to find \(C>0\) such that
	\[\abs{R_2(x,z,\lambda^{-1}t)}\leq \lambda^{-1}t C(1+\norm{z})^{-(l^2+1)(d_H+1)}. \]
	As \(f\) has rapid decay, one can estimate
	\begin{align*}
	\abs{f(x,z,0)}&\lesssim (1+\norm{z})^{-b-2d_H-1}.		
	\end{align*}
	As \(g^*\) is rapidly decaying, as well, and using \cref{res:schwartztriangle} and \(\lambda\geq 1\), we find
	\begin{align*}
	\abs{g^*((x,\lambda^{-1}z,t)\cdot \gamma)}&\lesssim (1+\norm{(x,\lambda^{-1}z,t)\cdot \gamma})^{-l(d_H+1)}\\
	&\lesssim \frac{(1+\norm{(x,\lambda^{-1}z,t)^{-1}})^{l(d_H+1)}}{(1+\norm{\gamma})^{d_H+1}} \\ &\lesssim	\frac{(1+\norm{(x,\lambda^{-1}z,t)})^{l^2(d_H+1)}}{(1+\norm{\gamma})^{d_H+1}} 	\leq 
	\frac{(1+\norm{z})^{l^2(d_H+1)}}{(1+\norm{\gamma})^{d_H+1}}.
	\end{align*}
	Therefore, we obtain for all \((x,t)\in M\times[0,\infty)\)
	\begin{align*}\int\abs{\sigma_\lambda(f)*g^*}\diff\nu^{(x,t)}\lesssim b(x)\lambda^{-1}(1+t).
	\end{align*}
	As there is a \(T>0\) such that \(g^*\) vanishes for \(t\geq T\) and \(f\) is compactly supported in \(x\) this implies \(\norm{\sigma_\lambda(f)*g^*}_{I,r}\lesssim \lambda^{-1}\).
	For \(\norm{\,\cdot\,}_{I,s}\), replace \(\gamma\) by \(\gamma^{-1}\) in the estimates above and use  \cref{res:schwartztriangle} to derive similar estimates. 	
	The convergence of \(\int_1^\infty \lambda^{-2}\diff\lambda\)
	finishes the proof of \eqref{eq:rgreater1}. 
\end{proof}
\begin{theorem}
	For a filtered manifold \((M,H)\) the \(^*\)-subalgebra \(\Rel\subset \J\) is square-integrable with respect to the zoom action of \(\Rp\). 	
	Denote by \(\cl{\Rel}\) its closure with respect to the \(\norm{\,\cdot\,}_\si\)-norm.
	Then \((\J,\cl{\Rel})\) is a continuously square-integrable \(\Rp\)-\(\Cst\)-algebra. 
\end{theorem}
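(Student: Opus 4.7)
The theorem should follow from the $L^1$-bound of \cref{res:mainestimate}, combined with a density argument and a general closure principle. I would verify the three conditions of a continuously square-integrable algebra separately: square-integrability together with relative continuity of $\Rel$, preservation of these properties under taking the $\norm{\,\cdot\,}_\si$-closure, and density of $\Rel$ in $\J$.

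First, square-integrability and relative continuity of $\Rel$ fall out directly from \cref{res:mainestimate}. The left regular representation gives a norm-decreasing embedding $L^1(\Rp, \J) \hookrightarrow \Cred(\Rp, \J)$, so the $L^1$-function $\lambda \mapsto \sigma_\lambda(f^*) * g$ produced by \cref{res:mainestimate} defines an element $\BRAKET{f}{g} \in \Cred(\Rp, \J)$ with
\begin{equation*}
 \norm{\BRAKET{f}{g}}_{\Cred(\Rp, \J)} \leq \int_{\Rp} \norm{\sigma_\lambda(f^*) * g} \, \tfrac{\diff\lambda}{\lambda} < \infty.
\end{equation*}
Taking $g = f$ gives boundedness of $\BRA{f}\KET{f}$, which by \cite{meyer2001} is equivalent to $f$ being square-integrable; the general case yields relative continuity of $\Rel$.

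Next I would pass to the closure $\cl{\Rel}$. Relative continuity is preserved because the assignment $a \mapsto \BRAKET{a}{b}$ is continuous in $\norm{\,\cdot\,}_\si$ for every fixed $b \in \Rel$, and $\Cred(\Rp, \J) \subset \Bound(L^2(\Rp, \J))$ is norm-closed. For the completeness condition $\KET{a}(\Cont_c(\Rp, \J)) \subseteq \cl{\Rel}$ with $a \in \cl{\Rel}$, I would approximate $a$ by elements of $\Rel$ and exploit that $\Rel$ is a $^*$-algebra invariant under $\sigma$, so that finite Riemann sums approximating $\KET{a}h$ already lie in a $\norm{\,\cdot\,}_\si$-neighbourhood of $\Rel$.

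For density, by \cref{res:schwartzalgebra} the algebra $\A(\T_HM)$ is dense in $\Cst(\T_HM)$. Given $a \in \J$, choose $a_n \in \A(\T_HM)$ converging to $a$. The only obstruction to $a_n$ lying in $\Rel$ is the defect
\begin{equation*}
 h_n(x) \defeq \int a_n \, \diff \nu^{(x,0)},
\end{equation*}
which is smooth, compactly supported on $M$, and tends to zero uniformly since $(\widehat{\pi}_\triv \circ q_x \circ p_0)(a) = 0$ for every $x \in M$. Using a partition of unity on $M$ subordinate to $H$-charts, together with locally defined bumps in $\A(\T_HM)$ whose integrals over the osculating fibres are prescribed, one constructs corrections $\phi_n \in \A(\T_HM)$ with $\int \phi_n \, \diff\nu^{(x,0)} = h_n(x)$ and $\norm{\phi_n} \to 0$; then $a_n - \phi_n \in \Rel$ converges to $a$. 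The main obstacle I foresee is the completeness condition in Meyer's sense, since it forces one to leave the convenient Schwartz type algebra and argue purely at the Banach level of $(\J_\si, \norm{\,\cdot\,}_\si)$; the density step is technical but routine once the corrections $\phi_n$ are chosen to control the $\Cst$-norm and the fibrewise integrals simultaneously.
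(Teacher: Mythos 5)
Your proposal is correct and follows essentially the same route as the paper: the \(L^1\)-estimate of \cref{res:mainestimate} gives square-integrability and relative continuity, the passage to the \(\norm{\,\cdot\,}_\si\)-closure is handled by the general closure principles that the paper simply cites from \cite{ewert2020pseudodifferential}*{2.11, 2.12}, and your density argument --- correcting a Schwartz-type approximant by a small function with prescribed fibrewise integrals built from a partition of unity --- is exactly the paper's construction. The only ingredient you take for granted that the paper verifies explicitly is that \(\Rel\) is a zoom-invariant \(^*\)-subalgebra, i.e.\ that the vanishing-integral condition at \(t=0\) is preserved under convolution, involution and the action (via the \(d_H\)-homogeneity of the Haar system).
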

\begin{proof}
	The Schwartz type algebra \(\A(\T_HM)\) is a \(^*\)-subalgebra of \(\Cst(\T_HM)\) by \cref{res:schwartzalgebra}. Condition \eqref{eq:vanishing_integral}, which is that the Haar integrals vanish at \(t=0\), is preserved when taking the involution or convolution of functions in \(\Rel\). Therefore, \(\Rel\) is a \(^*\)-subalgebra of \(\J\). Moreover, it is invariant under the zoom action of \(\Rp\), as \(\A(\T_HM)\) is invariant by \cref{res:zoom_on_tangent} and the \(d_H\)-homogeneity of the Haar system at \(t=0\), which follows from \eqref{eq:haar_zoom}. 
	
	To see that \(\Rel\) is dense in \(\J\), let \(f\in \J\) and \(\epsilon>0\). There is a \(g\in\Cont^\infty_c(\T_HM)\) with \(\norm{f-g}<\epsilon/2\). To adjust \(g\) to have vanishing integrals at \(t=0\), define the function \(h\in\Cont_c^\infty(M)\) by
	\[ h(x)= \int g \diff\nu^{(x,0)}\]
	for \(x\in M\). It satisfies \(\abs{h(x)}=\abs{\widehat{\pi}_{\triv}(q_x(p_0(g)))-\widehat{\pi}_{\triv}(q_x(p_0(f))) }\leq \norm{f-g}<\varepsilon/2\) for all \(x\in M\). Choose a function \(k\in\Cont^\infty_c(\T_HM)\) such that \(\int k \diff\nu^{(x,0)}=1\) for all \(x\in r(p_0(\supp g))\) and \(\norm{k}_I\leq 1\). This can be done by defining such a function locally on the charts \(\V_i\) and pasting them together with a smooth partition of unity. Let \(\tilde{g}\defeq (h\circ r_1)\cdot k\).
	It is a smooth, compactly supported function on \(\T_HM\), and \(\norm{\tilde{g}}\leq \epsilon/2\). As \(g-\tilde{g}\in\Rel\) and \(\norm{f-(g-\tilde{g})}\leq\epsilon\), this finishes the proof that \(\Rel\) is dense in \(\J\).

	Now, the estimate in \cref{res:mainestimate} together with \cite{ewert2020pseudodifferential}*{2.11, 2.12} imply that \((\J,\cl{\Rel})\) is a continuously square-integrable \(\Rp\)-\(\Cst\)-algebra. 
\end{proof}
Hence, the generalized fixed point algebra \(\Fix^\Rp(\J,\cl{\Rel})\) is defined as in \cref{def:fix}. It is the closed linear span of \(\KET{f}\BRA{g}\) for \(f,g\in\Rel\) by~\cite{ewert2020pseudodifferential}*{2.11}. By \cite{meyer2001}*{(19)} these can be described as strict limits in the multiplier algebra of \(\J\). Use a net \((\chi_i)_{i\in I}\) consisting of smooth, compactly supported functions \(\chi_i\colon \Rp\to [0,1]\) that converge uniformly on compact subsets to \(1\) to cut off at zero and infinity. Assume that \(\chi_i(\lambda)=\chi_i(\lambda^{-1})\) for all \(i\in I\) and \(\lambda>0\). Then \(\KET{f}\BRA{g}\) is given by the following strict limit \begin{align}\label{eq:strict_limit}\KET{f}\BRA{g}=\lim_{i,s}\int_0^\infty \chi_i(\lambda)\sigma_\lambda(f^**g)\tfrac{\diff \lambda}{\lambda}.\end{align}

\subsection{The pseudodifferential extension}
Recall that \(\J_0=p_0(\J)\). The \(\Cst\)-algebra extension for the tangent groupoid in~\eqref{ses:tangentgroupoid} restricts to the short exact sequence of \(\Rp\)-\(\Cst\)-algebras
\begin{equation}\label{ses:ideals}
\begin{tikzcd}
\Cont_0(\Rp)\otimes\Comp(L^2M) \arrow[r,hook] & \J\arrow[r,twoheadrightarrow,"p_0"] & \J_0.
\end{tikzcd}
\end{equation}
There is a corresponding short exact sequence of generalized fixed point algebras by \cite{ewert2020pseudodifferential}*{2.19}.  Using the subsets \(\tilde{\Rel}\defeq\cl{\Rel}\cap\Cont_0(\Rp)\otimes\Comp(L^2M)\) and \(\Rel_0\defeq p_0(\Rel)\) one obtains continuously square-integrable \(\Rp\)-\(\Cst\)-algebras \((\Cont_0(\Rp)\otimes\Comp(L^2M),\tilde{\Rel})\) and \((\J_0,\cl{\Rel_0})\) by \cite{ewert2020pseudodifferential}*{2.14, 2.15}. The same arguments as in the proof of \cite{ewert2020pseudodifferential}*{5.11} yield the following description of the corresponding extension of generalized fixed point algebras. 
\begin{proposition}
	Let \(\Rel_0\defeq p_0(\Rel)\). 
	The zoom action of \(\Rp\) on the extension in~\eqref{ses:ideals} gives rise to a short exact sequence of generalized fixed point algebras
	\begin{equation}\label{ses:fixedalg}
	\begin{tikzcd}
	\Comp(L^2M) \arrow[r,hook] & \Fix^\Rp(\J,\cl{\Rel})\arrow[r,twoheadrightarrow,"\widetilde{p}_0"] & \Fix^\Rp(\J_0,\cl{\Rel_0}).
	\end{tikzcd}
	\end{equation} 	
	Here, \(\widetilde{p_0}\) denotes the restriction of the strictly continuous extension of \(p_0\) to the corresponding multiplier algebras.
\end{proposition}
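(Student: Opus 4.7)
The plan is to invoke the general machinery developed by the author in \cite{ewert2020pseudodifferential} for extending generalized fixed point algebras along invariant ideals, and then to identify the ideal in the resulting extension explicitly.

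First I would verify the set-up required by \cite{ewert2020pseudodifferential}*{2.14, 2.15}. The ideal \(\Cont_0(\Rp)\otimes\Comp(L^2M)\idealin\J\) is zoom-invariant by \cref{res:kernelev}, so the short exact sequence \eqref{ses:ideals} is an exact sequence of \(\Rp\)\nb-\(\Cst\)\nb-algebras. Setting \(\tilde{\Rel}=\cl{\Rel}\cap(\Cont_0(\Rp)\otimes\Comp(L^2M))\) and \(\Rel_0=p_0(\Rel)\), those references provide continuously square-integrable structures \((\Cont_0(\Rp)\otimes\Comp(L^2M),\tilde{\Rel})\) and \((\J_0,\cl{\Rel_0})\) induced from \((\J,\cl{\Rel})\). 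Then \cite{ewert2020pseudodifferential}*{2.19} yields a short exact sequence
\begin{equation*}
\begin{tikzcd}
\Fix^\Rp(\Cont_0(\Rp)\otimes\Comp(L^2M),\tilde{\Rel}) \arrow[r,hook] & \Fix^\Rp(\J,\cl{\Rel})\arrow[r,twoheadrightarrow,"\widetilde{p}_0"] & \Fix^\Rp(\J_0,\cl{\Rel_0}),
\end{tikzcd}
\end{equation*}
where \(\widetilde{p_0}\) is the restriction of the strictly continuous extension of \(p_0\) to the \(\Rp\)-invariant multiplier algebras, as asserted.

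Next I would identify the ideal term with \(\Comp(L^2M)\). By \cref{res:kernelev}, the zoom action on \(\Cont_0(\Rp)\otimes\Comp(L^2M)\) is conjugate to \(\tau\otimes 1\), where \(\tau\) is the translation action of \(\Rp\) on itself by scaling. Since \(\tau\) is free and proper with one-point orbit space, standard computation (or equivalently the argument in \cite{ewert2020pseudodifferential}*{5.11}) gives
\[\Fix^\Rp(\Cont_0(\Rp)\otimes\Comp(L^2M),\tilde{\Rel})\cong \Fix^\Rp(\Cont_0(\Rp))\otimes\Comp(L^2M)\cong \Comp(L^2M),\]
provided one checks that \(\tilde{\Rel}\) agrees with the tensor-product completeness structure built from \(\cl{\Cont_c(\Rp)}\) and \(\Comp(L^2M)\). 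This is the step that requires a small computation: one must see that the functions coming from \(\Rel\) are, after restriction to the open subset \(M\times M\times\Rp\) and passage through the fibrewise isomorphism \(p\) of~\eqref{eq:iso_p}, norm-dense in the natural square-integrable subset of \(\Cont_c(\Rp,\Comp(L^2M))\). This works because restrictions \(f_t=e_t(f)\) of Schwartz type elements yield all compactly supported smooth kernels on \(M\times M\) by \cref{res:schwartzrestsurj}, and cutting off in \(t\) (using the bump functions \(\omega\) from \cref{res:schwartzalgebra}) produces sufficiently many elements of \(\Cont_c^\infty(\V_\infty)\subset\Rel\) to exhaust \(\Cont_c(\Rp)\odot\Comp(L^2M)\) in the \(\norm{\,\cdot\,}_\si\)-topology.

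The main obstacle is thus precisely this identification of \(\tilde{\Rel}\) with the standard square-integrable subset on \(\Cont_0(\Rp)\otimes\Comp(L^2M)\); once that is in place the rest is a direct transcription of the argument for \cite{ewert2020pseudodifferential}*{5.11}, using the same partition-of-unity decomposition (cf.\ \cref{res:decompschwartz}) and the strict-limit formula \eqref{eq:strict_limit} to compute averages. The equivariance of \(p_0\) and \(p\) for the respective zoom actions, together with the density of \(\Rel\) in \(\J\), then guarantees that the extension of generalized fixed point algebras is indeed surjective and has the stated kernel.
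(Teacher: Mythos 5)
Your proposal is correct and follows essentially the same route as the paper, which likewise obtains the sequence from \cite{ewert2020pseudodifferential}*{2.14, 2.15, 2.19} and identifies the ideal as in the proof of \cite{ewert2020pseudodifferential}*{5.11}. The only remark worth adding is that the step you flag as the main obstacle --- matching \(\tilde{\Rel}\) with the standard square-integrable subset of \(\Cont_0(\Rp)\otimes\Comp(L^2M)\) --- is automatic, since for the free and proper action \(\tau\otimes 1\) the dense, relatively continuous, complete subspace is unique by \cite{meyer2001}*{9.4}, so no separate density computation is needed.
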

We call this extension the order zero pseudodifferential extension in the following. The symbol algebra \(\Fix^\Rp(\J_0,\cl{\Rel_0})\) is a continuous field of \(\Cst\)-algebras. 
\begin{proposition}\label{res:fix_0_isfield}
	The generalized fixed point algebra \(\Fix^\Rp(\J_0,\cl{\Rel_0})\) is a continuous field of \(\Cst\)-algebras over \(M\) with fibre projections
	\[\widetilde{q}_x\colon \Fix^\Rp(\J_0,\cl{\Rel_0})\to \Fix^\Rp(J_x,\cl{\Rel_x}).\]
	Here \(J_x\defeq\ker(\widehat{\pi}_\triv)\idealin\Cst(G_x)\) and \(\Rel_x\) consists of all \(f\in\Schwartz(G_x)\) with vanishing integral with respect to the Haar measure on \(G_x\).
\end{proposition}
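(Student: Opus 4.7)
The plan is to transfer the continuous-field structure of $\Cst(T_HM)$ to the generalized fixed point algebra by exhibiting it as a $\Cont_0(M)$-algebra. To begin, I would observe that the unit inclusion $M\hookrightarrow T_HM$ makes $\Cont_0(M)$ act as central multipliers of $\Cst(T_HM)$, and that this action is $\Rp$-invariant since the zoom action fixes the unit space pointwise at $t=0$. The image therefore lies in $\Mult^{\Rp}(\J_0)$ and yields a central $*$-homomorphism $\Cont_0(M)\to\Mult(\Fix^\Rp(\J_0,\cl{\Rel_0}))$, turning the latter into a $\Cont_0(M)$-algebra.

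Next I would construct the candidate fibre projections. For each $x\in M$, the map $q_x\colon\Cst(T_HM)\to\Cst(G_x)$ is equivariant for the zoom action, which on the fibre becomes the dilation action on $G_x$; hence it restricts to an equivariant surjection $\J_0\twoheadrightarrow J_x$. Moreover $q_x$ sends $\Rel_0$ onto $\Rel_x$: restriction to $G_x$ takes an element of $\Schwartz_{\cp}(T_HM)$ to one of $\Schwartz(G_x)$, and the vanishing-integral condition~\eqref{eq:vanishing_integral} is exactly fibrewise vanishing of the Haar integral; surjectivity can be handled by the same cut-off-and-correct procedure used in the density proof of $\Rel\subseteq\J$. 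Specialising the square-integrability estimate of \cref{res:mainestimate} to a single osculating group (equivalently, invoking the corresponding statement from \cite{ewert2020pseudodifferential} for graded Lie groups) shows that $(J_x,\cl{\Rel_x})$ is a continuously square-integrable $\Rp$-$\Cst$-algebra, and the strict-limit formula~\eqref{eq:strict_limit} then yields a surjection $\widetilde{q}_x\colon\Fix^\Rp(\J_0,\cl{\Rel_0})\twoheadrightarrow\Fix^\Rp(J_x,\cl{\Rel_x})$ compatible with the $\Cont_0(M)$-structure.

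Finally I would identify the fibre of $\Fix^\Rp(\J_0,\cl{\Rel_0})$ at $x$, i.e.\ the quotient by $\Cont_0(M\setminus\{x\})\cdot\Fix^\Rp(\J_0,\cl{\Rel_0})$, with $\Fix^\Rp(J_x,\cl{\Rel_x})$. Surjectivity of $\widetilde{q}_x$ is already in hand; for the converse it suffices, by density of generators, to show that $\widetilde{q}_x(\KET{f}\BRA{g})=0$ with $f,g\in\Rel$ implies that $\KET{f}\BRA{g}$ is a norm limit of elements $\chi\cdot\KET{f}\BRA{g}$ with $\chi\in\Cont_0(M\setminus\{x\})$. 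The main obstacle is this approximation: one must show that the $\lambda$-averaging in~\eqref{eq:strict_limit} interacts well with fibrewise restriction, so that smallness of $\widetilde{q}_x(\KET{f}\BRA{g})$ propagates to smallness of $\KET{f}\BRA{g}$ in a shrinking neighbourhood of $x$. This should follow from the continuity of the field $\J_0$ together with the uniform-in-$\lambda$ estimates in the proof of \cref{res:mainestimate}, which render the strict limit locally uniform in $x$. Once this is established, the vanishing $\bigcap_{x\in M}\ker\widetilde{q}_x=0$ is inherited from the corresponding property of $\J_0$, yielding the desired continuous-field structure with the claimed fibres.
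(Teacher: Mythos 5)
Your overall strategy -- make \(\Fix^\Rp(\J_0,\cl{\Rel_0})\) a \(\Cont_0(M)\)-algebra via the invariant central action, push the fibre projections \(q_x\) through the fixed-point construction, and then identify the fibres -- is the right one, and it is essentially a reconstruction of the general result the paper invokes: the paper's entire proof is a citation of \cite{rieffel1988}*{3.2} (see also \cite{ewert2020pseudodifferential}*{Remark~2.24}), which says precisely that a generalized fixed point algebra of a continuous field with fibrewise action and compatible relatively continuous subset is again a continuous field with fibres the fixed point algebras of the fibres. Your first two steps (the \(\Cont_0(M)\)-structure and the equivariant surjections \(\Rel_0\to\Rel_x\), \(\J_0\to J_x\)) are fine and are exactly the hypotheses one must check to apply that result.

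The gap is in your third step, and you flag it yourself: you need that \(\ker\widetilde{q}_x\) is no larger than \(\cl{\Cont_0(M\setminus\{x\})\cdot\Fix^\Rp(\J_0,\cl{\Rel_0})}\), and (for a \emph{continuous} rather than merely upper semicontinuous field) that \(x\mapsto\norm{\widetilde{q}_x(a)}\) is also lower semicontinuous. Your proposed mechanism -- that the uniform-in-\(\lambda\) \(L^1\)-estimates from \cref{res:mainestimate} make the strict limit in \eqref{eq:strict_limit} ``locally uniform in \(x\)'' -- does not deliver this: those estimates give upper bounds on \(\norm{\BRAKET{f}{g}}\) through the \(L^1\)-norm and hence at best upper semicontinuity; they give no lower control on \(\norm{\KET{f_x}\BRA{g_x}}\) in terms of nearby fibres. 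The clean route, and the one underlying the cited proposition, is through the linking algebra: since \(\Rp\) is amenable and acts fibrewise, \(\Cred(\Rp,\J_0)\) is a continuous field over \(M\) with fibres \(\Cred(\Rp,J_x)\), and the imprimitivity bimodule \(\cl{\KET{\Rel_0}}\) between \(\Fix^\Rp(\J_0,\cl{\Rel_0})\) and the ideal generated by \(\BRAKET{\Rel_0}{\Rel_0}\) transports the field structure (including lower semicontinuity of the norm functions and the identification of the fibres) to the generalized fixed point algebra. Without an argument of this kind, your step three remains an assertion rather than a proof.
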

\begin{proof}This follows from \cite{rieffel1988}*{3.2}, see also \cite{ewert2020pseudodifferential}*{Remark~2.24}.\end{proof}

The order zero operators in \(\Fix^\Rp(\J,\cl{\Rel})\) have a faithful representation as bounded operators on~\(L^2(M)\). The \(^*\)-homomorphisms \(p_t\colon \Cst(\T_HM)\to \Comp(L^2M)\) defined in \eqref{eq:p_t} for \(t>0\) can be restricted to the ideal \(\J\). The restrictions are still surjective.  Therefore, they yield strictly continuous representations
\[\widetilde{p}_t\colon \Fix^\Rp(\J,\cl{\Rel})\to \Mult(\Comp(L^2M))=\Bound(L^2M) \quad \text{for all }t>0.\]
\begin{lemma}
	The representation \(\widetilde{p}_1\colon \Fix^\Rp(\J,\cl{\Rel})\to\Bound(L^2M)\) is faithful. 
\end{lemma}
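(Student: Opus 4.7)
The plan is to leverage the $\Rp$-invariance of $T \in \Fix^\Rp(\J, \cl\Rel)$ together with the continuous field structure of $\Cst(\T_HM)$ from \cref{res:contfield}: equivariance of the family $\{p_t\}_{t > 0}$ under the zoom action will identify $\widetilde{p}_t(T)$ with $\widetilde{p}_1(T)$ for all $t > 0$, and the continuous field property then propagates the vanishing at $t > 0$ to $t = 0$, forcing $T = 0$.

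Concretely, a direct computation from \eqref{eq:p_t} and the definition of $\sigma_\lambda$ gives the transformation law $p_t \circ \sigma_\lambda = p_{\lambda^{-1} t}$ on $\Cst(\T_HM)$ for all $\lambda, t > 0$. The restriction of $p_t$ to $\ker(p_0) \cong \Cont_0(\Rp) \otimes \Comp(L^2 M)$ corresponds under \eqref{eq:iso_p} to $\ev_t \otimes \Id$, so $p_t\colon \J \to \Comp(L^2 M)$ is surjective and extends strictly to $\widetilde{p}_t\colon \Mult(\J) \to \Bound(L^2 M)$. Assume now $\widetilde{p}_1(T) = 0$. Since $T$ is an $\Rp$\nb-invariant multiplier, $T \sigma_\lambda(a) = \sigma_\lambda(T a)$ for all $a \in \J$ and $\lambda > 0$ (using that $\sigma_\lambda(\J) \subseteq \J$); applying $p_t$ yields
\[
\widetilde{p}_t(T)\, p_{\lambda^{-1} t}(a) = p_t(T \sigma_\lambda(a)) = p_t(\sigma_\lambda(T a)) = p_{\lambda^{-1} t}(T a) = \widetilde{p}_{\lambda^{-1} t}(T)\, p_{\lambda^{-1} t}(a).
\]
As $p_s\colon \J \to \Comp(L^2 M)$ is surjective for each $s > 0$ and $\Comp(L^2 M)$ acts nondegenerately on $L^2 M$, we conclude $\widetilde{p}_t(T) = \widetilde{p}_{\lambda^{-1} t}(T)$; taking $\lambda = t$ gives $\widetilde{p}_t(T) = 0$ for every $t > 0$.

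Finally, for any $a \in \J$ we then have $p_t(T a) = \widetilde{p}_t(T)\, p_t(a) = 0$ for all $t > 0$. Since $\Cst(\T_HM)$ is a continuous field of $\Cst$-algebras over $[0, \infty)$, the map $t \mapsto \norm{p_t(T a)}$ is continuous, and its vanishing on $(0, \infty)$ forces $p_0(T a) = 0$. All fiber projections thus vanish on $T a \in \J$, so $T a = 0$; since $a \in \J$ was arbitrary, $T = 0$ in $\Mult(\J)$. The only delicate point is the bookkeeping linking the zoom action on multipliers with the identities between the $\widetilde{p}_t$; once this is unwound, the continuous field property does the rest.
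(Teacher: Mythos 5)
Your argument is correct and is essentially the one the paper invokes by reference: the paper's proof is a one-line citation to \cite{ewert2020pseudodifferential}*{5.13}, whose reasoning is exactly your combination of the covariance relation \(p_t\circ\sigma_\lambda = p_{\lambda^{-1}t}\), the \(\Rp\)-invariance of elements of the generalized fixed point algebra to force \(\widetilde{p}_t(T)=\widetilde{p}_1(T)=0\) for all \(t>0\), and the continuous-field structure of \(\Cst(\T_HM)\) to push the vanishing of \(p_t(Ta)\) to \(t=0\) and conclude \(Ta=0\) for all \(a\in\J\). You have simply supplied in full the details the paper delegates to that reference.
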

\begin{proof}The result holds by the same reasoning as in the proof of \cite{ewert2020pseudodifferential}*{5.13}.
\end{proof}
\begin{lemma}\label{res:fix_on_l2}
	Let \((\chi_i)_{i\in I}\) be a net of \(\chi_i\in\Cont^\infty_c(\Rp)\) that converge uniformly on compact subsets to~\(1\) and satisfy \(\chi_i(\lambda^{-1})=\chi_i(\lambda)\) for all \(\lambda>0\). Let \(f,g\in\Rel\) and \(h=f^**g\). Then the operators~\(T_i(h)\) given by
	\begin{align}\label{def:T_i}
	T_i(h)\psi(x)=\int_0^\infty \chi_i(\lambda)\lambda^{-d_H}\int h(x,y,\lambda)\psi(y)\diff\nu(y) \tfrac{\diff\lambda}{\lambda}\end{align} for \(\psi\in L^2(M)\), \(x\in M\),
	converge strictly to \(\widetilde{p}_1(\KET{f}\BRA{g})\) as multipliers of \(\Comp(L^2M)\). 
\end{lemma}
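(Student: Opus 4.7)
The plan is to recognize $T_i(h)$ as $p_1$ applied to a compactly supported Bochner integral in $\J$, and then to pull the strict convergence in $\Mult(\J)$ provided by \eqref{eq:strict_limit} through the strictly continuous extension $\widetilde{p}_1$.

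First, since $\chi_i \in \Cont_c^\infty(\Rp)$ and $\lambda \mapsto \sigma_\lambda(h)$ is norm-continuous into $\J$, the integral
\[
A_i \defeq \int_0^\infty \chi_i(\lambda)\,\sigma_\lambda(h)\,\tfrac{\diff\lambda}{\lambda}
\]
converges in norm in $\J$, and $A_i$ is represented by the function $\gamma \mapsto \int_0^\infty \chi_i(\lambda)\lambda^{d_H} h(\alpha_\lambda(\gamma))\,\tfrac{\diff\lambda}{\lambda}$ on $\T_HM$. Specializing to $\gamma=(x,y,1)$ yields
\[
A_i(x,y,1) \;=\; \int_0^\infty \chi_i(\lambda)\,\lambda^{d_H} h(x,y,\lambda^{-1})\,\tfrac{\diff\lambda}{\lambda}.
\]
Substituting $\mu = \lambda^{-1}$ (so that $\tfrac{\diff\lambda}{\lambda} = -\tfrac{\diff\mu}{\mu}$ and the orientation of the integration domain flips) and using $\chi_i(\mu^{-1}) = \chi_i(\mu)$, one obtains
\[
A_i(x,y,1) \;=\; \int_0^\infty \chi_i(\mu)\,\mu^{-d_H}\, h(x,y,\mu)\,\tfrac{\diff\mu}{\mu}.
\]
Plugging this into the defining formula for $p_1$ from \eqref{eq:p_t} produces exactly the right-hand side of \eqref{def:T_i}, so $p_1(A_i) = T_i(h)$.

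Next, by equation \eqref{eq:strict_limit} applied to $h = f^**g$, the net $A_i$ converges strictly to $\KET{f}\BRA{g}$ in $\Mult(\J)$. To transport this to $\Mult(\Comp(L^2M))$, recall that $p_1$ restricts to a surjection $\J \to \Comp(L^2 M)$: under the isomorphism \eqref{eq:iso_p} the kernel $\Cont_0(\Rp)\otimes\Comp(L^2M)$ of $p_0$ maps onto $\Comp(L^2M)$ via evaluation at $t=1$. Hence every $K\in\Comp(L^2M)$ is $p_1(a)$ for some $a\in\J$, and
\[
\widetilde{p}_1(A_i)\,K \;=\; p_1(A_i a) \;\xrightarrow[i]{}\; p_1\bigl(\KET{f}\BRA{g}\cdot a\bigr) \;=\; \widetilde{p}_1(\KET{f}\BRA{g})\,K
\]
in norm, because $A_i a \to \KET{f}\BRA{g}\cdot a$ in $\J$ by strict convergence in $\Mult(\J)$ and $p_1$ is norm-continuous. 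The symmetric identity holds for $K\widetilde{p}_1(A_i)$. Thus $T_i(h) = p_1(A_i) \to \widetilde{p}_1(\KET{f}\BRA{g})$ strictly in $\Bound(L^2M)=\Mult(\Comp(L^2M))$, which is the claim.

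No step is really an obstacle here; the only delicate point is the substitution $\mu=\lambda^{-1}$, which is the reason the symmetry hypothesis $\chi_i(\lambda^{-1})=\chi_i(\lambda)$ is imposed, together with verifying that the strict topology is respected under $\widetilde{p}_1$. Both are standard once the identification $p_1(A_i)=T_i(h)$ is in place, mirroring the argument used for the graded Lie group case in \cite{ewert2020pseudodifferential}*{5.13}.
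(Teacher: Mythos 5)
Your proposal is correct and follows essentially the same route as the paper: identify the cut-off averages with $p_1$ applied to $\int_0^\infty\chi_i(\lambda)\sigma_\lambda(h)\tfrac{\diff\lambda}{\lambda}$, use the covariance of the fibre representations under the zoom action together with the substitution $\mu=\lambda^{-1}$ and the symmetry of $\chi_i$ to recognize $T_i(h)$, and then push the strict limit \eqref{eq:strict_limit} through the strictly continuous extension $\widetilde{p}_1$. You merely spell out two points the paper compresses, namely the change of variables hidden in the identity $p_1\circ\sigma_\lambda=p_{\lambda^{-1}}$ and the verification that strict convergence survives the surjection $p_1\colon\J\to\Comp(L^2M)$.
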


\begin{proof} 		
	Use the description of \(\KET{f}\BRA{g}\) as a strict limit as in \eqref{eq:strict_limit}. As \(\widetilde{p}_1\) is strictly continuous and  \(p_t\circ\sigma_\lambda = p_{t\lambda^{-1}}\) for all \(t,\lambda>0\) we get 
	\begin{align*}
	\widetilde{p}_1\left(\KET{f}\BRA{g}\right)=\lim_{i,s} \int_0^\infty \chi_i(\lambda )p_1(\sigma_\lambda(f^**g))\tfrac{\diff\lambda}{\lambda}=\lim_{i,s} \int_0^\infty \chi_i(\lambda)p_\lambda(f^**g)\tfrac{\diff\lambda}{\lambda}.
	\end{align*}
	The operators \(T_i(h)\) above are obtained by inserting the definition of \(p_\lambda\) in \eqref{eq:p_t}. 
\end{proof}
The same argument as in \cite{ewert2020pseudodifferential}*{5.15} shows:
\begin{lemma}\label{res:compboundcommute}
	The following diagram commutes
	\begin{equation*}
	\begin{tikzcd}
	\Comp(L^2M) \arrow[r,hook] \arrow[dr, hook]& \Fix^\Rp(\J,\cl{\Rel})\arrow[d, "\widetilde{p}_1"]	\\
	 & \Bound(L^2M).
	\end{tikzcd}
	\end{equation*}	
\end{lemma}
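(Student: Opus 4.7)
The plan is to unravel the two inclusions in the diagram and identify them explicitly on a dense set of elementary compact operators, then conclude by continuity. The key input is the identification $\Comp(L^2M) \cong \Fix^{\Rp}(\Cont_0(\Rp) \otimes \Comp(L^2M), \tilde{\Rel})$ coming from the free and proper action $\tau$ of \cref{res:kernelev}. Under this identification, an element $k \in \Comp(L^2M)$ corresponds to the $\Rp$-invariant multiplier of $\Cont_0(\Rp) \otimes \Comp(L^2M)$ given by the constant function $\lambda \mapsto k$. Since $\Cont_0(\Rp) \otimes \Comp(L^2M) \cong \ker(p_0)$ is an essential ideal in $\J$ (as a continuous field over $[0,\infty)$ with dense open complement $(0,\infty)$), this multiplier extends uniquely to a multiplier of $\J$, and this extension realises the left vertical arrow $\Comp(L^2M) \hookrightarrow \Fix^\Rp(\J, \cl{\Rel})$.

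Next I would reduce to a dense subset using \cref{res:fix_on_l2}. Pick $f, g \in \tilde{\Rel} \defeq \cl{\Rel} \cap (\Cont_0(\Rp)\otimes\Comp(L^2M))$, so that $\KET{f}\BRA{g}$ lies in the image of $\Comp(L^2M)$ and, at the same time, in $\Fix^\Rp(\J, \cl{\Rel})$. The goal is to show that the operator $\widetilde{p}_1(\KET{f}\BRA{g})$ on $L^2(M)$ agrees with the compact operator $\KET{f}\BRA{g}$ computed inside $\Fix^{\Rp}(\Cont_0(\Rp) \otimes \Comp(L^2M), \tilde{\Rel}) \cong \Comp(L^2M)$. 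For this I would invoke \cref{res:fix_on_l2}: choosing a suitable cut-off net $(\chi_i)_{i\in I}$, the operators $T_i(f^* \ast g)$ given by \eqref{def:T_i} converge strictly to $\widetilde{p}_1(\KET{f}\BRA{g})$, and one checks directly that the integral
\begin{equation*}
\int_0^\infty \chi_i(\lambda)\, p_\lambda(f^* \ast g) \tfrac{\diff \lambda}{\lambda}
\end{equation*}
is nothing but the pairing that computes $\KET{f}\BRA{g}$ inside $\Comp(L^2M)$, using $p_t \circ \sigma_\lambda = p_{t\lambda^{-1}}$ and the fact that, on the ideal $\ker(p_0)$, the map $p_1$ is just evaluation at $t=1$ under the isomorphism \eqref{eq:iso_p}.

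Finally, I would conclude by density and continuity. Since the elements $\KET{f}\BRA{g}$ with $f, g \in \tilde{\Rel}$ have dense linear span in $\Fix^\Rp(\Cont_0(\Rp)\otimes \Comp(L^2M), \tilde{\Rel}) \cong \Comp(L^2M)$, and both $k \mapsto \widetilde{p}_1(k)$ and the canonical inclusion $\Comp(L^2M) \hookrightarrow \Bound(L^2M)$ are continuous, the agreement on this dense subset forces commutativity of the triangle. The main obstacle, if any, is just bookkeeping: correctly tracking the three different roles of $p_1$ (on $\Cst(\T_HM)$, on the ideal $\ker(p_0)$, and strictly extended to multipliers), together with the two embeddings of $\Comp(L^2M)$ (as a fixed point algebra and as an ideal of multipliers of $\J$). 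This is exactly the content of the argument in \cite{ewert2020pseudodifferential}*{5.15} and transfers verbatim to the filtered setting once \cref{res:kernelev} and \cref{res:fix_on_l2} are in place.
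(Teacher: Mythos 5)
Your proposal is correct and follows essentially the same route as the paper, which simply invokes the argument of \cite{ewert2020pseudodifferential}*{5.15}: identify the copy of \(\Comp(L^2M)\) as \(\Fix^\Rp(\Cont_0(\Rp)\otimes\Comp(L^2M),\tilde\Rel)\), check on the dense span of \(\KET{f}\BRA{g}\) with \(f,g\in\tilde\Rel\) that \(\widetilde p_1\) returns the same compact operator via the strict-limit formula of \cref{res:fix_on_l2} and \(p_t\circ\sigma_\lambda=p_{t\lambda^{-1}}\), and conclude by density. Your write-up is a faithful expansion of exactly that argument, so there is nothing substantive to add.
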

\begin{lemma}\label{res:compact_fact}
	Let \(h\in \Rel\cap\ker(p_0)\) and let \(T_i(h)\) be defined as in \eqref{def:T_i}. Then \((T_i(h))\) converges in norm in \(\Comp(L^2M)\). In particular, its strict limit as multipliers of \(\Comp(L^2M)\) exists and is contained in \(\widetilde{p}_1(\Fix^\Rp(\J,\cl{\Rel}))\). 
\end{lemma}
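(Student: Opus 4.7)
\medskip

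The strategy is to use the isomorphism \(p\colon\ker(p_0)\xrightarrow{\cong}\Cont_0(\Rp,\Comp(L^2M))\) of \cref{res:kernelev} to view \(h\in\Rel\cap\ker(p_0)\) as the \(\Comp(L^2M)\)\nb-valued function \(\lambda\mapsto p_\lambda(h)\). The plan is to prove the \(L^1\)\nb-estimate
\[
\int_0^\infty\norm{p_\lambda(h)}_{\Bound(L^2M)}\,\tfrac{\diff\lambda}{\lambda}<\infty,
\]
after which dominated convergence (using \(\abs{\chi_i}\leq 1\) and \(\chi_i\to 1\) uniformly on compact sets) yields norm convergence \(T_i(h)\to T(h)\defeq\int_0^\infty p_\lambda(h)\,\tfrac{\diff\lambda}{\lambda}\) in \(\Comp(L^2M)\), with \(T(h)\) realised as a Bochner integral of compact operators. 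The conclusion that the strict limit lies in \(\widetilde{p}_1(\Fix^\Rp(\J,\cl{\Rel}))\) is then immediate: \(T(h)\in\Comp(L^2M)\) sits inside \(\Fix^\Rp(\J,\cl{\Rel})\) via the pseudodifferential extension \eqref{ses:fixedalg}, and \(\widetilde{p}_1\) restricts to the canonical inclusion into \(\Bound(L^2M)\) on this ideal by \cref{res:compboundcommute}.

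The large-\(\lambda\) tail is trivial because condition \ref{item:compact} of \cref{def:schwartz_global} provides \(T>0\) with \(p_\lambda(h)=0\) for \(\lambda>T\). For the behaviour as \(\lambda\to 0\), I reduce to a single chart using \cref{res:decompschwartz}, writing \(h=\sum h_i+h_\infty\) with \(h_i\in\A(\V_i)\cap\ker(p_0)\) and \(h_\infty\in\Cont^\infty_c(\V_\infty)\). The piece \(h_\infty\) is compactly supported away from \(\lambda=0\), so contributes trivially. For a local piece, the hypothesis \(h_0=0\) means that \(h\circ\phi_{\kappa_i}^{-1}\) vanishes on \(\{t=0\}\subset\U_i\); Hadamard's lemma then yields \(h\circ\phi_{\kappa_i}^{-1}(x,v,t)=t\,H(x,v,t)\) with \(H(x,v,t)=\int_0^1(\partial_t(h\circ\phi_{\kappa_i}^{-1}))(x,v,st)\,\diff s\) smooth on \(\U_i\) and inheriting uniform Schwartz-type decay in \(v\) from the corresponding seminorms of \(h\), since the quasi-norm in \eqref{eq:homogeneousquasi} does not involve \(t\).

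Translating back through \(\phi_{\kappa_i}\), the integral kernel of \(p_\lambda(h_i)\) at \((x,y)\in V_i\times V_i\) becomes
\[
\lambda^{-d_H+1}\,H\!\left(\kappa_i(x),\lambda^{-1}\cdot\varepsilon^{\kappa_i}_{\kappa_i(x)}(\kappa_i(y)),\lambda\right).
\]
Applying the Schur test and substituting \(v=\lambda^{-1}\cdot\varepsilon^{\kappa_i}_{\kappa_i(x)}(\kappa_i(y))\) — whose Jacobian contributes exactly the factor \(\lambda^{d_H}\abs{\det B_X(\kappa_i(x))}\) computed in the proof of the Haar system lemma — the prefactor \(\lambda^{-d_H+1}\) collapses to \(\lambda\), and the remaining integral \(\int\abs{H(\kappa_i(x),v,\lambda)}\,\diff v\) is uniformly bounded using integrability of \((1+\norm{v})^{-d_H-1}\) from \cite{folland1982homogeneous}*{1.17}. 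The symmetric Schur bound follows by running the identical argument on \(h^*\in\Rel\cap\ker(p_0)\), using \(p_\lambda(h^*)=p_\lambda(h)^*\). Together these yield \(\norm{p_\lambda(h)}\lesssim\lambda\) for \(\lambda\in(0,T]\), closing the \(L^1\)\nb-estimate. The delicate point of the argument is that the single factor of \(\lambda\) produced by the vanishing of \(h\) at \(t=0\) is precisely what cancels the borderline logarithmic divergence of \(\diff\lambda/\lambda\) at the origin; everything else is bookkeeping with the Schwartz-type seminorms.
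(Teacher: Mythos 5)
Your proof is correct, and its backbone — the vanishing of \(h\) at \(t=0\) produces one factor of \(\lambda\) in \(\norm{p_\lambda(h)}\), which is exactly what tames the \(\tfrac{\diff\lambda}{\lambda}\) singularity at the origin — is the same as the paper's. Where you diverge is in how the bound \(\norm{p_\lambda(h)}\lesssim\lambda\) is obtained. The paper factors \(h=tf\) with \(f\in\A(\T_HM)\) \emph{globally} (the same Hadamard-type factorization you perform chartwise), notes that \(p_\lambda(h)=\lambda\,p_\lambda(f)\) directly from the definition \eqref{eq:p_t}, and then simply invokes \(\norm{p_\lambda(f)}\leq\norm{f}\) because \(p_\lambda\) is a \(^*\)-homomorphism of \(\Cst\)-algebras, hence contractive; no chart decomposition, no Schur test, no Jacobian computation is needed, and the Cauchy estimate for \((T_i(h))\) reduces to \(\norm{f}\int_0^T(1-\chi_i(\lambda))\diff\lambda\to 0\). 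Your route instead redoes this norm bound by hand: decomposing via \cref{res:decompschwartz}, applying Hadamard's lemma in each chart, and running a two-sided Schur test on the explicit kernel, with the substitution \(v=\lambda^{-1}\cdot\varepsilon^{\kappa}_{\kappa(x)}(\kappa(y))\) supplying the compensating \(\lambda^{d_H}\). This is all sound (and the details — the uniform rapid decay of \(H\), the integrability from \cite{folland1982homogeneous}*{1.17}, the treatment of \(h^*\) for the second Schur condition, and the harmless \(\Cont_c^\infty(\V_\infty)\) piece — check out), but it reproves a fact that the abstract contractivity of \(p_\lambda\) gives for free. What your version buys is an explicit, representation-free kernel estimate in the spirit of \cref{res:mainestimate}; what the paper's version buys is brevity and independence from the chart machinery. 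The concluding step (norm convergence implies strict convergence, and \(\Comp(L^2M)\subseteq\widetilde{p}_1(\Fix^\Rp(\J,\cl{\Rel}))\) by \cref{res:compboundcommute}) is identical in both.
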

\begin{proof}
	As \(h\in\Rel\subset\A(\T_HM)\) vanishes for \(t=0\), it can be written as \(h=tf\) with \(f\in\A(\T_HM)\). By definition of the representation \(p_\lambda\) in \eqref{eq:p_t}, it follows that \(p_\lambda(h)=\lambda p_\lambda(f)\) for all \(\lambda>0\). Hence, for all \(\lambda>0\) \begin{align*}
	\norm{p_\lambda(h)}\leq \lambda\norm{p_\lambda(f)}\leq \lambda\norm{f}.
	\end{align*}
	We show that \((T_i(h))\) is Cauchy. Let \(T>0\) be such that \(h\) vanishes for \(t\geq T\). For \(j\geq i\), we estimate
	\begin{align*}
	\norm{T_j(h)-T_i(h)}\leq \int_0^\infty (\chi_j(\lambda)-\chi_i(\lambda))\norm{p_\lambda(h)}\tfrac{\diff\lambda}{\lambda}
	\leq \norm{f} \int_0^T (1-\chi_i(\lambda))\diff\lambda.
	\end{align*}
	As \(\chi_i\to 1\) on compact subsets, the claim follows. As \(\Comp(L^2M)\) is complete, it follows that \((T_i(h))\) converges in norm. The second claim follows as convergence in norm implies strict convergence and \(\Comp(L^2M)\) is contained in \(\widetilde{p}_1(\Fix^\Rp(\J,\cl{\Rel}))\) by \cref{res:compboundcommute}.
\end{proof}

\section{The principal symbol algebra}\label{sec:principal_symbol}
In this section we examine the principal symbol algebra \(\Fix^\Rp(\J_0,\cl{\Rel_0})\). It is a continuous field of \(\Cst\)-algebras over \(M\) with fibres \(\Fix^\Rp(J_x,\cl{\Rel_x})\) by \cref{res:fix_0_isfield}. Here, \(J_x\) is the kernel of the trivial representation \(\widehat{\pi}_\triv\colon \Cst(G_x)\to\C\). It was shown in \cite{ewert2020pseudodifferential}*{6.11} that \(\Fix^\Rp(J_x,\cl{\Rel_x})\) is the \(\Cst\)-closure of the operators of type zero on \(G_x\). We state now a bundle version of this result. We use tempered fibred distributions on \(T_HM\) as in \cite{erp2015groupoid}*{7.1}):
\begin{definition}For a smooth vector bundle \(\pi\colon E\to M\), a \emph{tempered fibred distribution with compact support in the \(M\)-direction} is a continuous \(\Cont^\infty_c(M)\)-linear map \(u\colon \Schwartz(E)\to \Cont^\infty_c(M)\). Denote by	 \(\Schwartz'_{\cp}(E)\) the linear space of tempered fibred distributions.
\end{definition}
For \(u\in\Schwartz'_{\cp}(E)\) and each \(x\in M\) there is a tempered distribution \(u_x\in \Schwartz'(E_x)\) such that \(\langle u, f\rangle (x)=\langle u_x, f_x\rangle\) for all \(f\in\Schwartz(E)\). 
For \(E=T_HM\), there is a well-defined convolution \[*\colon \Schwartz_{\cp}'(T_HM)\times \Schwartz(T_HM)\to \Schwartz_{\cp}'(T_HM),\] which restricts in the fibres to the convolution on the osculating groups. To define homogeneity of fibred distributions, recall that the dilations yield an \(\Rp\)-action on \(\Schwartz(T_HM)\) given by
\[(\sigma_\lambda f)(x,\xi)=\lambda^{d_H}f(x,\delta_\lambda(\xi)) \quad\text{for }\lambda>0\text{, }f\in\Schwartz(T_HM) \text{ and }\xi\in G_x.\]
This action can be extended to \(\Schwartz_{\cp}'(T_HM)\) by
\[\langle \sigma_{\lambda*} u, f\rangle \defeq \lambda^{d_H}\langle u, \sigma_{\lambda^{-1}} f\rangle\]
for \(u\in\Schwartz'_{\cp}(T_HM), f\in\Schwartz(T_HM)\) and \(\lambda >0\). It allows to 
extend the notion of kernels and operators of type \(\nu\) on graded Lie groups (see for example \cite{fischer2016quantization}*{3.2.9}) to the bundle of osculating groups. 
\begin{definition}\label{def:kerneltype}
	Let \(\nu\in\R\). A fibred distribution \(u\in\Schwartz_{\cp}'(T_HM)\) is called a \emph{kernel of type \(\nu\)} if it is smooth away from the zero section and \(\sigma_{\lambda*}(u)=\lambda^\nu u\) for all \(\lambda>0\). Denote by \(\mathcal{K}^\nu(T_HM)\) the space of kernels of type \(\nu\). 
	The corresponding continuous operator \(T_u\colon\Schwartz(T_HM)\to\Schwartz_{\cp}'(T_HM)\) given by \(T_u(f)=u* f\) is called an \emph{operator of type \(\nu\)}.		
\end{definition}
In particular, \(T_u\) restricts at \(x\in M\) to the right-invariant continuous linear operator \(\Schwartz(G_x)\to\Schwartz'(G_x)\) given by \(f\mapsto u_x*f\) for \(f\in\Schwartz(G_x)\). 
Moreover, one calculates as in \cite{fischer2016quantization}*{3.2.7} that an operator \(T\) of type \(\nu\) satisfies
\[ T(\sigma_{\lambda^{-1}}f)=\lambda^\nu\sigma_{\lambda^{-1}}(Tf) \quad \text{for all }\lambda>0 \text{ and }f\in\Schwartz(T_HM).\]
The left regular representations \(\lambda_x\colon \Cst(G_x)\to\Bound(L^2G_x)\) for \(x\in M\) allow to understand the elements \(\KET{f}\BRA{g}\) with \(f,g\in\Rel_0\) as a smooth family of bounded, right-invariant operators \((\KET{f_x}\BRA{g_x})\) on~\(L^2(G_x)\). 
By a bundle version of the arguments in \cite{ewert2020pseudodifferential}*{6.6-6.11} (see also \cite{ewert2020index}*{Ch.~8} for details) we obtain the following result.
\begin{theorem}\label{res:type_zero}
	Let \((M,H)\) be a filtered manifold. The linear span of \(\KET{f}\BRA{g}\) for \(f,g\in\Rel_0\) is \(\mathcal{K}^0(T_HM)\). Consequently, \(\Fix^\Rp(\J_0,\cl{\Rel_0})\) is the \(\Cst\)-completion of the operators of type zero. 
\end{theorem}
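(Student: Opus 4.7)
My plan is to establish the set equality $\lspan\{\KET{f}\BRA{g} : f,g \in \Rel_0\} = \mathcal{K}^0(T_HM)$; the corollary about the $\Cst$-completion is then immediate from \cref{def:fix}. The strategy is to adapt the single-group calculation in \cite{ewert2020pseudodifferential}*{6.6-6.11} to a smooth family of osculating groups, working in the local trivializations of $T_HM$ provided by $H$-frames.

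For the inclusion $\subseteq$, I will begin from the explicit strict-limit description of $\KET{f}\BRA{g}$ in \eqref{eq:strict_limit} and argue that this integral defines a tempered fibred distribution on $T_HM$ satisfying the three requirements of \cref{def:kerneltype}: (a) homogeneity of degree zero under $\sigma_{\mu*}$, which is immediate from the invariance of $\tfrac{\diff\lambda}{\lambda}$ under $\lambda \mapsto \mu\lambda$; (b) compact $M$-support, inherited from $f^* * g \in \Schwartz_{\cp}(T_HM)$; and (c) smoothness on $T_HM$ off the zero section. The substantial point is (c). For $(x,\xi)$ with $\xi \neq 0$, I will show that the $\lambda$-integral converges absolutely with all $(x,\xi)$-derivatives locally uniformly: large-$\lambda$ decay is controlled by the Schwartz property of $f^* * g$, while for small $\lambda$ a Taylor expansion in the spirit of \cref{res:meanvalue}, combined with the vanishing fibre integrals of elements of $\Rel_0$, cancels the leading contribution and makes $\lambda^{d_H}(f^* * g)(x, \delta_\lambda \xi)\tfrac{1}{\lambda}$ integrable at $\lambda = 0$.

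For the reverse inclusion $\supseteq$, given $u \in \mathcal{K}^0(T_HM)$, I will pick $\chi \in \Cont^\infty_c(\Rp)$ with $\int_0^\infty \chi(\lambda)\tfrac{\diff\lambda}{\lambda} = 1$; homogeneity of $u$ yields the tautology $u = \int_0^\infty \chi(\lambda)\sigma_{\lambda*}(u)\tfrac{\diff\lambda}{\lambda}$. The next step is to absorb $\chi$ and a fibre cutoff into a smooth, compactly supported section of $T_HM$ and then use a partition of unity on $M$ subordinate to an atlas of $H$-charts to exhibit the integrand as $\sigma_\lambda(h)$ for a finite sum $h = \sum_i f_i^* * g_i$ with $f_i, g_i \in \Rel_0$. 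The required vanishing of fibre integrals of each $f_i^* * g_i$ will follow automatically, as $u \in \Schwartz'_{\cp}(T_HM)$ carries no $\delta$-mass at the origin that would contribute a constant under fibre integration.

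The main obstacle will be uniform bundle control: the osculating groups $G_x$ can be non-isomorphic as $x$ varies, so group convolutions, the dilation action $\delta_\lambda$ and the exponentials must all be tracked simultaneously across $M$. The Lie groupoid structure of $T_HM$ together with the $H$-chart technology developed in \cref{sec:filtered_groupoid,sec:schwartztype} --- in particular the polynomial estimates of \cref{res:invmultest,res:schwartztriangle,res:meanvalue} --- reduces the problem to uniform bounds on compact subsets of $M$ that are manifestly smooth in all variables, so the burden is mostly bookkeeping rather than genuinely new analysis, following \cite{ewert2020index}*{Ch.~8}.
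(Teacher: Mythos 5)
Your overall plan --- transporting the single-group argument of \cite{ewert2020pseudodifferential}*{6.6--6.11} to the bundle \(T_HM\) via \(H\)-charts --- is exactly the route the paper takes (its proof is a one-line citation to those arguments and to \cite{ewert2020index}*{Ch.~8}), and your sketch of the inclusion \(\subseteq\) is essentially right, apart from a misattribution: the vanishing fibre integrals of \(f^**g\) are what make \(\int_1^\infty\langle\sigma_\lambda(f^**g),\phi\rangle\tfrac{\diff\lambda}{\lambda}\) converge distributionally (since \(\langle\sigma_\lambda(h),\phi\rangle\to\phi(0)\int h\) as \(\lambda\to\infty\)), whereas the pointwise integral at fixed \(\xi\neq 0\) already converges at \(\lambda=0\) because \(\lambda^{d_H-1}\) is integrable there; the cancellation you invoke at small \(\lambda\) is not where the hypothesis is used.

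The genuine gap is in the reverse inclusion. First, you cannot obtain the integrand by ``absorbing \(\chi\) and a fibre cutoff'' into \(u\): a kernel of type \(0\) is singular at the fibre origin and agrees off the origin with a \((-d_H)\)-homogeneous function, hence is also not integrable at fibre infinity, so no cutoff of \(u\) lands in \(\Schwartz_{\cp}(T_HM)\), and the tautological average \(\int_0^\infty\chi(\lambda)\sigma_{\lambda*}(u)\tfrac{\diff\lambda}{\lambda}\) gives you nothing new. The actual reconstruction \(u=\int_0^\infty\sigma_{\lambda*}(h_0)\tfrac{\diff\lambda}{\lambda}\) with \(h_0\in\Schwartz_0\) is produced on the Fourier side: \(\widehat{u}\) is \(0\)-homogeneous and smooth off the zero section, and \cite{folland1982homogeneous}*{1.65} (the same device used in \cref{res:esshom}) writes \(\widehat{u}(x,\xi)=\int_0^\infty g(x,\lambda\cdot\xi)\tfrac{\diff\lambda}{\lambda}\) with \(g\) compactly supported and vanishing near \(\xi=0\). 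Second, and more seriously, a partition of unity on \(M\) does not ``exhibit'' \(h_0\) as a finite sum \(\sum_i f_i^**g_i\): it only decomposes \(h_0\) into a sum of localized functions, not of convolution products. Factorizing a function of \(\Schwartz_0\) into convolution products with both factors in \(\Rel_0\) is the crux of the reverse inclusion and requires the Dixmier--Malliavin factorization theorem in its bundle form, exactly as the paper invokes it in the proof of \cref{res:average=fixed}. Without this step your argument represents \(u\) as an average of \(\sigma_\lambda(h_0)\) but not as an element of \(\lspan\{\KET{f}\BRA{g}\}\).
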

\section{Comparison to the calculus by van Erp and Yuncken}\label{sec:comparison}
The pseudodifferential calculus for filtered manifolds by van Erp and Yuncken is also based on the tangent groupoid and the zoom action. Note that they use a version of the tangent groupoid which is a field over \(\R\) and not \([0,\infty)\). In this section, we outline of their
construction and compare it to the generalized fixed point algebra approach. 
\subsection{The calculus of van Erp and Yuncken}
Recall the theory of fibred distributions on Lie groupoids and their convolutions as in \cite{erp2015groupoid}*{Section~2}, see also \cite{lescure_distr_groupoids}.
\begin{definition}[\cite{erp2015groupoid}*{6}]
	Let \(\grpd\) be a Lie groupoid with unit space \(\grpd^{(0)}\) and range and source \(r,s\colon \grpd\to\grpd^{(0)}\). An \emph{\(r\)-fibred distribution}, respectively an \emph{\(s\)\nb-fibred distribution}, is a continuous \(\Cont^\infty(\grpd^{(0)})\)-linear map
	\(u\colon \Cont^\infty(\grpd)\to\Cont^\infty(\grpd^{(0)})\),
	where the \(\Cont^\infty(\grpd^{(0)})\)-module structure on \(\Cont^\infty(\grpd)\) is induced by the range, respectively the source map. The spaces of \(r\)- and \(s\)-fibred distributions are denoted by \(\Smooth'_r(\grpd)\) and \(\Smooth'_s(\grpd)\). 
\end{definition}
There are well-defined convolution products
\(*\colon \Smooth'_\pi(\grpd)\times \Smooth'_\pi(\grpd)\to \Smooth'_\pi(\grpd)\) for \(\pi=r,s\) that turn \(\Smooth'_\pi(\grpd)\) into an associative algebra \cite{lescure_distr_groupoids}*{20}. 

\begin{definition}
	A subset \(X\subset\grpd\) is \emph{proper} if the restricted range and source maps
	\begin{align*}
	r|_X\colon X\to \grpd^{(0)} \quad\text{ and }\quad s|_X\colon X\to \grpd^{(0)}
	\end{align*} are proper. 
	For \(\pi=r,s\), let \(\Omega_\pi\) be the bundle of smooth densities tangent to the range fibres, respectively source fibres, of \(\grpd\). Let \(\Cont^\infty_{\p}(\grpd;\Omega_\pi)\) denote the space of smooth sections \(f\) such that \(\supp(f)\) is proper.
\end{definition}
Then \(\Cont^\infty_{\p}(\grpd;\Omega_r)\) is a right ideal in \(\Smooth'_r(\grpd)\), whereas \(\Cont^\infty_{\p}(\grpd;\Omega_s)\) is a left ideal in \(\Smooth'_s(\grpd)\) (see \cite{erp2015groupoid}*{9} and \cite{lescure_distr_groupoids}*{21}).  

The zoom action from \cref{def:zoomgroupoid} induces an \(\Rp\)-action \(\alpha_*\) on \(\Smooth'_r(\T_HM)\) by automorphisms. Each \(\alpha_{\lambda*}\) restricted to  \(\Cont^\infty_{c}(\T_HM;\Omega_r)\) coincides with \(\sigma_{\lambda^{-1}}\) as in \cref{res:zoom_on_tangent} when identifying \(\Cont^\infty_c(\T_HM;\Omega_r)\) with \(\Cont^\infty_c(\T_HM)\) using the left invariant Haar system. 

\begin{definition}[\cite{erp2015groupoid}*{18, 19}]
	A properly supported \(\P\in\Smooth'_r(\T_HM)\) is \emph{essentially homogeneous of weight \(m\in\R\)} if
	\[\alpha_{\lambda*}(\P)-\lambda^{m}\P\in \Cont^\infty_{\p}(\T_HM;\Omega_r) \quad \text{for all }\lambda>0.\] The space of these distributions is denoted by \(\Pseu^m_H(M)\).
	
	A distribution \(P\in\Smooth_r'(M\times M)\) is an \emph{\(H\)-pseudodifferential kernel of order \(\leq m\)} if there is a~\(\P\in\Pseu^m_H(M)\) that restricts to \(P\) at \(t=1\). Denote by \(\Psi^m_H(M)\) the space of these kernels. For~\(P\in\Psi^m_H(M)\) define the following corresponding \emph{\(H\)-pseudodifferential operator} by
	\[\Op(P)\colon\Cont^\infty(M)\to\Cont^\infty(M), \quad \Op(P)f(x)=\langle P_x,f\rangle.\]	
\end{definition}
Moreover, they define the principal cosymbol of \(P\in\Psi^m_H(M)\) by extending it to a \(\P\in\Pseu^m_H(M)\) and restricting \(\P\) to \(t=0\). To make this independent of the choice of the extension, the space of cosymbols is defined as follows. 
\begin{definition}[\cite{erp2015groupoid}*{34, 35}]
	For \(m\in\R\), let \(\Ess^m_H(M)\) consist of all properly supported \(u\in\Smooth'_{r}(T_HM)\) such that 
	\[\delta_{\lambda*}(u)-\lambda^{m}u \in \Cont^\infty_{\p}(T_HM;\Omega_r) \quad\text{for all }\lambda>0.\]
	Here, \(\delta_*\) is induced by the dilations in the fibres of \(T_HM\). Define
	\[\Sigma^m_{H}(M)\defeq \Ess^m_H(M) / \Cont^\infty_{\p}(T_HM;\Omega_r).\]
	The \emph{principal cosymbol} of \(P\in\Psi^m_H(M)\) is defined by extending \(P\) to \(\P\in\Pseu^m_H(M)\) and setting \[\princ[m](P)\defeq[\P_0]\in\Sigma^m_H(M).\] 
\end{definition}
Van Erp and Yuncken show in \cite{erp2015groupoid}*{47} that the wave front set of \(\P\in\Pseu^m_H(M)\) is contained in the conormal to \(M\times[0,\infty)\). This implies by \cite{erp2015groupoid}*{48} that \(\P\) belongs, in fact, to the space of proper \(r\)-fibred distributions \(\Smooth'_{r,s}(\T_HM)\) as defined in \cite{erp2015groupoid}*{11}. As \(\Cont^\infty_{\p}(\T_HM;\Omega_r)\) is a two-sided ideal in \(\Smooth'_{r,s}(\T_HM)\), it is then easy to see that \(\bigcup_{m\in\Z}\Psi^m_H(M)\) is a \(\Z\)-graded algebra, see \cite{erp2015groupoid}*{49}. Moreover, there is a well-defined involution on \(\Smooth'_{r,s}(\T_HM)\) by \cite{lescure_distr_groupoids}*{20}. We summarize now the main properties of the calculus.
\begin{proposition}[\cite{erp2015groupoid}*{36, 37, 38, 49, 50, 52, 53}, \cite{davehaller}*{3.6}]
	The pseudodifferential calculus on a filtered manifold \((M,H)\) satisfies:
	\begin{enumerate}
		\item For \(m\in\Z\) there is a short exact sequence
		\begin{equation}\label{ses:filtered_order_m}
		\begin{tikzcd}
		\Psi^{m-1}_H(M) \arrow[r,hook] & \Psi^m_H(M)\arrow[r,twoheadrightarrow,"{\princ[m]}"] & \Sigma^m_H(M).
		\end{tikzcd}
		\end{equation}
		The inclusion is well-defined, which can be seen by considering the map \(\Pseu^{m-1}_H(M)\to\Pseu^{m}_H(M)\) with \(\P\mapsto t\P\) for \(\P\in\Pseu^{m-1}_H(M)\), which does not change the kernel at \(t=1\).
		\item For \(P\in\Psi^m_H(M)\) and \(Q\in\Psi^l_H(M)\) with \(m,l\in\R\), the convolution is in \(\Psi^{m+l}_H(M)\) and \(\princ[m+l](P*Q)=\princ[m](P)*\princ[l](Q)\).
		\item For \(P\in\Psi^m_H(M)\) and \(m\in\R\), the formal adjoint \(P^*\) is in \(\Psi^m_H(M)\) and \(\princ[m](P^*)=\princ[m](P)^*\).	
		\item For \(k\geq 0\) and \(r\) the step of the filtration, the following regularities hold 
		\begin{align*}
		\Psi^{-d_H-kr-1}_H(M)&\subset \Cont_{\p}^k(M\times M;\Omega_r),\\ \bigcap_{m\in\Z}\Psi^{m}_H(M)&= \Cont^\infty_{\p}(M\times M;\Omega_r).
		\end{align*}
	\end{enumerate}		
\end{proposition}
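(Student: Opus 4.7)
The proposition compiles several results from \cite{erp2015groupoid} and \cite{davehaller}, so the plan is to reproduce the main line of argument from those sources while making explicit the uniform mechanism: namely, that essential homogeneity is stable under the natural algebraic operations on properly supported fibred distributions, and the zoom action \(\alpha_{\lambda*}\) is a \(^*\)\nb-automorphism of \(\Smooth'_{r,s}(\T_HM)\). Throughout, I will use the two-sided ideal property of \(\Cont^\infty_{\p}(\T_HM;\Omega_r)\) inside \(\Smooth'_{r,s}(\T_HM)\) as the workhorse.

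For the short exact sequence (i), the inclusion \(\Psi^{m-1}_H(M)\injto \Psi^m_H(M)\) is the only subtle assertion, and I would verify it by the map \(\Pseu^{m-1}_H(M)\to\Pseu^m_H(M)\), \(\P\mapsto t\P\), which is well-defined because \(\alpha_{\lambda*}(t\P)=\lambda^{-1}t\,\alpha_{\lambda*}(\P)=\lambda^{m}t\P+\lambda^{-1}tR\) with \(R\in\Cont^\infty_{\p}(\T_HM;\Omega_r)\), and \(\lambda^{-1}tR\) is still smooth and properly supported; restriction at \(t=1\) leaves \(P\) unchanged. Surjectivity of \(\princ[m]\) is built into the definition of \(\Sigma^m_H(M)\): any representative \(u\in\Ess^m_H(M)\) can be spread to \(\P\in\Pseu^m_H(M)\) using a cutoff in the \(t\)-variable and the fact that \((T_HM\times\{0\})\cup(M\times M\times\Rp)\) carries a smooth structure. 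Exactness in the middle follows because \(\princ[m](P)=0\) means an extension \(\P\in\Pseu^m_H(M)\) has \(\P_0\in\Cont^\infty_{\p}(T_HM;\Omega_r)\); subtracting a smooth extension of \(\P_0\) to \(\T_HM\) and dividing by \(t\) in the sense above yields a lift in \(\Pseu^{m-1}_H(M)\).

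For the algebra properties (ii) and (iii), the key point is that \(\alpha_{\lambda*}\) commutes both with the convolution on \(\Smooth'_{r,s}(\T_HM)\) and with the involution from \cite{lescure_distr_groupoids}*{20}. Thus
\[
\alpha_{\lambda*}(\P*\Q)-\lambda^{m+l}\P*\Q = (\alpha_{\lambda*}\P-\lambda^{m}\P)*\alpha_{\lambda*}\Q +\lambda^{m}\P*(\alpha_{\lambda*}\Q-\lambda^{l}\Q),
\]
which lies in \(\Cont^\infty_{\p}(\T_HM;\Omega_r)\) by the ideal property. Evaluating at \(t=1\) gives the composition statement, and the identity at \(t=0\) gives multiplicativity of \(\princ\). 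The adjoint statement is analogous, using that \(\alpha_{\lambda*}(\P^*)=(\alpha_{\lambda*}\P)^*\).

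The hardest step will be the regularity (iv), because it is the one genuinely analytic input. I would reduce it to a local statement on \(\U\) using \cref{res:decompschwartz}-type partitions, then invoke the kernel representation of Folland--Stein for homogeneous distributions on graded groups, transported fibrewise to \(T_HM\): a kernel of type \(\nu\) on \(G_x\) is smooth away from the identity and, if \(\nu>0\), is locally \(\Cont^k\) near the identity as soon as \(\nu>kr\), with an analogous statement for the pseudodifferential extension at \(t=1\). Applying this to essentially homogeneous distributions of weight \(\leq -d_H-kr-1\) gives the \(\Cont^k\) regularity of the associated kernel, and intersecting over all \(m\) produces smoothness. The proper support condition is preserved throughout because we work inside \(\Smooth'_{r,s}(\T_HM)\). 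The upshot is the stated inclusion \(\Psi^{-d_H-kr-1}_H(M)\subset \Cont^k_{\p}(M\times M;\Omega_r)\) and, by taking \(k\to\infty\), the intersection formula.
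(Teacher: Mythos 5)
The paper offers no proof of this proposition --- it is quoted from \cite{erp2015groupoid} and \cite{davehaller} --- and the only mechanism it records is the one in the preceding paragraph: the wave front set of any \(\P\in\Pseu^m_H(M)\) lies in the conormal of the unit space, hence \(\P\) belongs to \(\Smooth'_{r,s}(\T_HM)\), where \(\Cont^\infty_{\p}(\T_HM;\Omega_r)\) is a two-sided ideal. Your reconstruction of (i)--(iii) runs along exactly these lines (telescoping the essential-homogeneity defect across the convolution and invoking the ideal property), and your sketch of (iv) via fibrewise Folland--Stein regularity of kernels of type \(d_H+kr+1\) is how \cite{davehaller}*{3.6} proceeds. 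But there are two concrete problems.

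First, you assume from the outset that you may ``work inside \(\Smooth'_{r,s}(\T_HM)\)''. Elements of \(\Pseu^m_H(M)\) are by definition only properly supported \(r\)-fibred distributions; that they are in addition proper in the two-sided sense --- which is what makes \(\P*\Q\) well defined and lets \(\Cont^\infty_{\p}(\T_HM;\Omega_r)\) act as a two-sided ideal on them --- is precisely the wave front set theorem \cite{erp2015groupoid}*{47, 48}, the hardest analytic input behind (ii) and (iii). Without it, neither factor in your telescoping identity is a priori defined, so the argument for multiplicativity and for the adjoint has a genuine hole. Second, the homogeneity computation for \(t\P\) is inconsistent as written: with the conventions of this paper one has \(\alpha_{\lambda*}(t\P)=\lambda t\,\alpha_{\lambda*}(\P)\), not \(\lambda^{-1}t\,\alpha_{\lambda*}(\P)\), because \(t\circ\alpha_\lambda^{-1}=\lambda t\); and with your factor the middle expression equals \(\lambda^{m-2}t\P+\lambda^{-1}tR\) for \(\P\in\Pseu^{m-1}_H(M)\), not \(\lambda^m t\P+\lambda^{-1}tR\). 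The correct factor gives \(\alpha_{\lambda*}(t\P)-\lambda^m t\P=\lambda t\bigl(\alpha_{\lambda*}(\P)-\lambda^{m-1}\P\bigr)\in\Cont^\infty_{\p}(\T_HM;\Omega_r)\), so the conclusion \(t\P\in\Pseu^m_H(M)\) stands, but only after the exponent is repaired.
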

\begin{remark}
	Dave and Haller extend in \cite{davehaller} the pseudodifferential calculus to operators acting between sections of vector bundles \(E,F\) over \(M\). In this case, distributions in \[\Smooth'_r(\T_HM;\hom(s^*(\E),r^*(\F))\otimes\Omega_r)\] are used, where \(\E= E\times\R\) and \(\F = F\times\R\) are the vector bundles over the unit space \(M\times\R\).
\end{remark}

We will use \emph{global exponential coordinates} as in \cite{erp2017tangent}*{15}. They identify an open subset \(\widetilde{\W}\) of \(\lie{t}_HM\times\R\), the Lie algebroid of \(\T_HM\), with an open, zoom-invariant neighbourhood  \(\W\) of \[T_HM\times{0}\cup \Delta_M\times(0,\infty)\subset \T_HM.\] Here, \(\Delta_M\) denotes the diagonal in \(M\times M\). 		

Suppose \(\varphi_1\in\Cont^\infty(M\times M)\) is constant \(1\) on \(\Delta_M\) and vanishes outside \(\W_1\). Then \(\varphi\in\Cont^\infty(\T_HM)\) defined by \(\varphi\defeq \varphi_1\circ (r_1,s_1)\) is called an \emph{exponential cutoff} in \cite{erp2015groupoid}*{27}. It is invariant under the zoom action. It is shown in \cite{erp2015groupoid}*{27} that each \(\P\in\Pseu^m_H(M)\) differs from a \(\Q\in\Pseu^m_H(M)\) supported on \(\W\) by an element of \(\Cont^\infty_{p}(\T_HM;\Omega_r)\), namely, set \(\Q = \varphi\P\) for an exponential cutoff \(\varphi\).

As we used the Schwartz type algebra in the construction of the generalized fixed point algebra \(\Fix^\Rp(\J,\cl{\Rel})\), the corresponding operators on \(L^2(M)\) as in \cref{res:fix_on_l2} have compactly supported kernels. Therefore, we compare it to the following variant of van Erp and Yuncken's calculus with compact instead of proper supports. 

\begin{definition}
	Let \(\Pseu^m_{H,c}(M)\) consist of all \(\P\in\Pseu^m_{H}(M)\) such that there is a compact subset \(K\subset M\times M\) such that all \(\P_t\) for \(t\neq0\) are supported in \(K\). Likewise, \(\Ess^m_{H,c}(M)\) denotes all \(u\in\Ess^m_{H}(M)\) with compact support in the \(x\)-direction and	 \[\Sigma^m_{H,c}(M)\defeq \Ess^m_{H,c}(M) / \Cont^\infty_{c}(T_HM;\Omega_r).\]
\end{definition}
\begin{lemma}
	Let \(m\in\Z\). The pseudodifferential extension of order \(m\) in \eqref{ses:filtered_order_m} restricts to a short exact sequence 
	\begin{equation}\label{ses:pseudo_comp_supp}
	\begin{tikzcd}
	\Psi^{m-1}_{H,c}(M) \arrow[r,hook]& \Psi^m_{H,c}(M)\arrow[r,twoheadrightarrow,"{\princ[m]}"] & \Sigma_{H,c}^m(M)	
	\end{tikzcd}
	\end{equation}
\end{lemma}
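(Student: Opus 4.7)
The plan is to verify four facts: (i) the inclusion $\Psi^{m-1}_{H,c}(M)\hookrightarrow\Psi^m_{H,c}(M)$ is well-defined with image in $\ker(\princ[m])$; (ii) $\princ[m]$ maps $\Psi^m_{H,c}(M)$ into $\Sigma^m_{H,c}(M)$; (iii) $\princ[m]$ is surjective; and (iv) $\ker(\princ[m])\subseteq\Psi^{m-1}_{H,c}(M)$. For (i), given $P\in\Psi^{m-1}_{H,c}(M)$ with extension $\P\in\Pseu^{m-1}_{H,c}(M)$, the distribution $t\P\in\Pseu^m_H(M)$ shares the same compact support for $t\neq 0$, satisfies $(t\P)_1=P$, and has $(t\P)_0=0$, so $P\in\Psi^m_{H,c}(M)$ and $\princ[m](P)=0$.

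For (ii), let $\P\in\Pseu^m_{H,c}(M)$ extend $P$ with $\supp\P_t\subseteq K$ for all $t\neq 0$ and a fixed compact $K\subset M\times M$. In an $H$-chart, a sequence $(x_n,y_n,t_n)$ with $(x_n,y_n)\in K$ and $t_n\to 0^+$ only admits a limit in $\U$ if $y_n\to x_n$; hence the $M$-projection of $\supp\P_0$ is contained in $\pr_1(K)$ and is compact, so $\P_0\in\Ess^m_{H,c}(M)$ and $\princ[m](P)\in\Sigma^m_{H,c}(M)$. Note that the canonical map $\Sigma^m_{H,c}(M)\to\Sigma^m_H(M)$ is injective, since a section of $T_HM\to M$ with both proper support and compact $M$-projection is compactly supported.

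For (iii), let $[u]\in\Sigma^m_{H,c}(M)$ be represented by $u\in\Ess^m_{H,c}(M)$. By surjectivity of \eqref{ses:filtered_order_m}, choose $\P'\in\Pseu^m_H(M)$ with $\P'_0-u\in\Cont^\infty_{\p}(T_HM;\Omega_r)$. Pick $\rho\in\Cont^\infty_c(M)$ equal to $1$ on the compact set $\pi(\supp u)$, and an exponential cutoff $\varphi=\varphi_1\circ(r_1,s_1)$ whose $\varphi_1$ is supported in a metric tube of the diagonal so that $\supp\varphi_1$ is proper in $M\times M$. Both $\varphi$ and $\rho\circ r_1$ are zoom-invariant, so $\chi\defeq\varphi\cdot(\rho\circ r_1)$ is zoom-invariant; multiplication by $\chi$ commutes with $\alpha_{\lambda*}$ and preserves essential homogeneity, giving $\chi\P'\in\Pseu^m_H(M)$. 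For $t\neq 0$ the support of $\chi\P'$ lies in $\supp\varphi_1\cap(\supp\rho\times M)$, which is compact by properness of $\supp\varphi_1$. At $t=0$, since $\varphi_1|_{\Delta_M}=1$, $(\chi\P')_0=\rho\cdot\P'_0=u+\rho\cdot(\P'_0-u)$, and the error lies in $\Cont^\infty_c(T_HM;\Omega_r)$ because $\rho$ is compactly supported. Thus $\chi\P'\in\Pseu^m_{H,c}(M)$ is the required preimage.

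For (iv), suppose $P\in\Psi^m_{H,c}(M)$ has $\princ[m](P)=0$ in $\Sigma^m_{H,c}(M)$ and let $\P\in\Pseu^m_{H,c}(M)$ extend $P$; by (ii), $\P_0\in\Cont^\infty_c(T_HM;\Omega_r)$. Using $H$-chart extensions on a finite cover of $\pi(\supp\P_0)$ together with a cutoff $\eta\in\Cont^\infty_c([0,\infty))$ with $\eta(0)=1$ and $\eta(1)=0$, build $\tilde u\in\Cont^\infty_c(\T_HM;\Omega_r)$ with $\tilde u_0=\P_0$ and $\tilde u_1=0$. Then $\tilde u\in\Pseu^m_{H,c}(M)$, so $\P'\defeq\P-\tilde u\in\Pseu^m_{H,c}(M)$ still extends $P$ and now has $\P'_0=0$. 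Since $\P'$ is smooth in $t$ as a family of $r$-fibred distributions and vanishes at $t=0$, Hadamard's lemma produces a smooth $r$-fibred distribution $\P''$ with $\P'=t\,\P''$ and $\supp\P''\subseteq\supp\P'$. Using $t\circ\alpha_{\lambda^{-1}}=\lambda t$,
\begin{equation*}
t\bigl(\lambda\,\alpha_{\lambda*}(\P'')-\lambda^m\P''\bigr)=\alpha_{\lambda*}(\P')-\lambda^m\P'\in\Cont^\infty_{\p}(\T_HM;\Omega_r),
\end{equation*}
and since the right-hand side vanishes at $t=0$, a second application of Hadamard followed by division by $\lambda$ yields $\alpha_{\lambda*}(\P'')-\lambda^{m-1}\P''\in\Cont^\infty_{\p}(\T_HM;\Omega_r)$. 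Hence $\P''\in\Pseu^{m-1}_{H,c}(M)$ with $\P''_1=P$, so $P\in\Psi^{m-1}_{H,c}(M)$. The main obstacle here is the division-by-$t$ step: one must know that $\P'/t$ extends to a smooth $r$-fibred distribution at $t=0$ and that the analogous quotient of the homogeneity remainder lies again in $\Cont^\infty_{\p}$; both reduce to Hadamard's lemma applied in the $t$-direction, but the compatibility with the changing fibre structure of $\T_HM$ at $t=0$ requires some care.
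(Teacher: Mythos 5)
Your proof is correct and follows essentially the same route as the paper, which handles exactness by the van Erp--Yuncken arguments (the $\P\mapsto t\P$ inclusion and the division-by-$t$ step for the kernel) and handles surjectivity by an exponential cutoff adapted to the compact support of the cosymbol. The only cosmetic difference is in the surjectivity step: the paper extends $u$ to the constant distribution $\tilde{\distru}_t=u$ in exponential coordinates and cuts off with a compactly supported $\varphi_1$, whereas you pull back a properly supported preimage from \eqref{ses:filtered_order_m} and cut off with $\varphi\cdot(\rho\circ r_1)$ --- both rest on the same zoom-invariance of the cutoff.
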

\begin{proof}
	This can be shown analogously to \cite{erp2015groupoid}*{36, 37, 38}. For surjectivity of the principal cosymbol map, extend a compactly supported \(u\in\Ess^m_{H,c}(M)\) as in  \cite{erp2015groupoid}*{36} to the constant distribution \(\tilde{\distru}\in\Smooth'_r(\lie{t}_HM\times\R)\) given by \(\tilde{\distru}_t=u\).  Using an exponential cutoff \(\varphi\)  for a \(\varphi_1\in\Cont^\infty_c(M\times M)\) that is constant \(1\) on a neighbourhood of \(\Delta_{r(\supp u)}\subset M\times M\) and zero outside of~\(\W_1\), one obtains an \(\distru\in\Pseu^m_{H,c}(M)\) that extends \(u\). 
\end{proof}

\subsection{Principal cosymbols as generalized fixed points}
First, we compare the space of principal cosymbols of order \(0\) to the generalized fixed point algebra at \(t=0\), namely, \(\Fix^\Rp(\J_0,\cl{\Rel_0})\).  Consider the following bundle version of the space of approximately homogeneous distributions defined in~\cite{taylornoncommutative}*{2}.
\begin{definition}
	A distribution \(u\in\Schwartz_{\cp}'(T_HM)\) is \emph{approximately \(0\)-homogeneous} if
	\begin{enumerate}
		\item \(u\) is smooth outside the zero section,
		\item \(u=u_1+u_2\) for some \(u_1\in\Schwartz_{\cp}(T_HM)\) and \(u_2\in\Smooth'_{\cp}(T_HM)\),
		\item \(\sigma_\lambda(u)-u\in\Schwartz_{\cp}(T_HM)\) for all \(\lambda>0\).
	\end{enumerate}
	Denote by \(\mathfrak{H}^0_{\cp}(T_HM)\) the space of approximately \(0\)-homogeneous distributions. 
\end{definition}
By \cite{taylornoncommutative}*{2.2, 2.4} they are closely related to kernels of type \(0\) from \cref{def:kerneltype}. 
\begin{proposition}\label{res:taylor=kernels}
	There is a surjective linear map \(\Phi\colon \mathfrak{H}^0_{\cp}(T_HM)\to \mathcal{K}^0(T_HM)\) with  \(\ker(\Phi)=\mathfrak{H}^0_{\cp}(T_HM)\cap\Cont^\infty_{\cp}(T_HM)\). 
\end{proposition}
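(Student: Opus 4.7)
My plan is to construct $\Phi$ by zooming into the zero section. For $u\in\mathfrak{H}^0_{\cp}(T_HM)$, I set
\[
\Phi(u)\;\defeq\;\lim_{\lambda\to 0^+}\sigma_\lambda u \qquad\text{in }\Schwartz'_{\cp}(T_HM).
\]
Once existence of the limit is established, exact $0$-homogeneity follows from $\sigma_\mu\Phi(u)=\lim_{\lambda\to 0^+}\sigma_{\mu\lambda}u=\Phi(u)$ for every $\mu>0$, and smoothness off the zero section follows from smoothness of each $\sigma_\lambda u$ off zero together with uniform-in-derivatives convergence on compact subsets of $T_HM\setminus 0$.

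The main technical step is the existence of the limit. Using the decomposition $u=u_1+u_2$ from condition (ii), one shows $\sigma_\lambda u_1\to 0$ in $\Schwartz'_{\cp}(T_HM)$ as $\lambda\to 0^+$: pairing against a Schwartz function $f$ and changing variables gives $\langle\sigma_\lambda u_1,f\rangle=\int u_1(\eta)\,f(\delta_{\lambda^{-1}}\eta)\,d\eta$, which vanishes by dominated convergence using the Schwartz decay of $f$. For the compactly supported distributional part I would run a Cauchy argument via
\[
\sigma_\lambda u-\sigma_\mu u\;=\;\sigma_\mu\!\left(\sigma_{\lambda\mu^{-1}}u-u\right),
\]
where condition (iii) puts $\sigma_{\lambda\mu^{-1}}u-u$ in $\Schwartz_{\cp}(T_HM)$ and applying $\sigma_\mu$ as $\mu\to 0^+$ sends the resulting Schwartz function to zero by the same vanishing argument. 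Hence $\{\sigma_\lambda u\}$ is Cauchy and converges in $\Schwartz'_{\cp}(T_HM)$.

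For the kernel, if $u\in\Cont^\infty_{\cp}(T_HM)\cap\mathfrak{H}^0_{\cp}(T_HM)$ then $u$ is globally smooth, so the distributional part $u_2$ in condition (ii) is smooth and compactly supported; together with $u_1\in\Schwartz_{\cp}$ this forces $u\in\Schwartz_{\cp}(T_HM)$, and the limit computation gives $\Phi(u)=0$. Conversely, $\Phi(u)=0$ combined with the limit construction forces $u\in\Schwartz_{\cp}\subseteq\Cont^\infty_{\cp}(T_HM)$. For surjectivity, given $v\in\mathcal{K}^0(T_HM)$, pick $\chi\in\Cont^\infty_{\cp}(T_HM)$ equal to $1$ on a neighbourhood of the zero section and set $u\defeq\chi v$. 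Then $u\in\mathfrak{H}^0_{\cp}(T_HM)$: it is smooth off zero, compactly supported, and $\sigma_\lambda u-u=(\chi\circ\delta_\lambda-\chi)\cdot v$ is smooth compactly supported because the cutoff difference vanishes on a neighbourhood of the zero section where $v$ is singular. Using exact $0$-homogeneity one computes $\sigma_\lambda(\chi v)=(\chi\circ\delta_\lambda)\cdot v$, which tends to $v$ as $\lambda\to 0^+$ since $\chi\circ\delta_\lambda\to 1$ pointwise off zero, so $\Phi(\chi v)=v$.

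The principal obstacle I expect is rigorously establishing the distributional limit defining $\Phi$ in $\Schwartz'_{\cp}(T_HM)$: this requires careful tracking of how Schwartz seminorms transform under the dilations, and may force the decomposition in (ii) to be chosen with some additional cancellation (a PV-style normalization of $u_2$) to guarantee Cauchy convergence uniformly as $\lambda\to 0^+$.
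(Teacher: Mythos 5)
Your construction is the Fourier-dual of the paper's: the paper defines \(\Phi(u)\) as the inverse Fourier transform of the radial limit \(\lim_{\lambda\to\infty}\widehat{u}(x,\delta_\lambda\xi)\), and since \(\widehat{\sigma_{\lambda*}u}=\widehat{u}\circ\delta_{\lambda^{-1}}\) this is formally your \(\lim_{\lambda\to 0^+}\sigma_{\lambda*}u\). Your surjectivity argument (cutting \(v\) off \emph{near} the zero section on the group side, rather than \emph{away from} the zero section on the Fourier side) is correct and, if anything, cleaner. The essential difference is that the paper does not prove that the limit exists, is smooth, and differs from \(u\) by an element of \(\Cont^\infty_{\cp}(T_HM)\): it cites Taylor's Propositions 2.2 and 2.4 for precisely these facts and only assembles them into the decomposition \(u=f+w\).

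Those facts are where your proof has genuine gaps. First, the Cauchy argument: in \(\sigma_{\lambda*}u-\sigma_{\mu*}u=\sigma_{\mu*}\bigl(\sigma_{(\lambda/\mu)*}u-u\bigr)\) the Schwartz function \(g_{\lambda/\mu}\defeq\sigma_{(\lambda/\mu)*}u-u\) varies with both parameters, and condition (iii) gives no uniformity of its Schwartz seminorms in the ratio \(\lambda/\mu\) (the cocycle identity \(g_{st}=\sigma_{s*}g_t+g_s\) shows the family need not be bounded), so ``the same vanishing argument'' does not apply. One needs a quantitative estimate --- for instance a telescoping over \(\lambda=2^{-n}\) using the single fixed function \(g_{1/2}\) together with the bound \(\norm{\sigma_{\mu*}g}_\infty=\mu^{d_H}\norm{g}_\infty\) and its analogues for derivatives --- which you do not carry out; this estimate is the substance of Taylor's 2.2. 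Second, and independently of convergence, the kernel computation: the claim that \(\Phi(u)=0\) forces \(u\in\Schwartz_{\cp}(T_HM)\) is asserted, not proved. What is actually needed is that \(u-\Phi(u)\) is smooth \emph{across} the zero section for every \(u\), i.e.\ that the decomposition of condition (ii) upgrades to \(u=f+w\) with \(f\in\Cont^\infty_{\cp}(T_HM)\) and \(w\in\mathcal{K}^0(T_HM)\); this is Taylor's 2.4 and does not follow from mere distributional convergence. Without it you obtain neither \(\ker\Phi\subseteq\Cont^\infty_{\cp}(T_HM)\) nor the smoothness of \(\Phi(u)\) off the zero section, for which your ``uniform-in-derivatives convergence on compact subsets'' is likewise unestablished. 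You flag the first gap yourself, but the missing ingredient is not a normalization of \(u_2\); it is the uniform seminorm control that the paper imports from the cited results.
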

\begin{proof}
	By the bundle version of \cite{taylornoncommutative}*{2.4} every \(u\in\mathfrak{H}^0_{\cp}(T_HM)\) can be written as \(u = f + w\) with \(f\in\Cont^\infty_{\cp}(T_HM)\) and \(w\in \mathcal{K}^0(T_HM)\). In fact, it is shown in \cite{taylornoncommutative}*{2.2}, that \(\widehat{u}\in\Cont^\infty(\lie{t}^*_HM)\) admits a radial limit \(\tilde{w}(x,\xi)\defeq\lim_{\lambda\to\infty}u(x,\lambda \xi)\), which defines a smooth, \(0\)-homogeneous function~\(\tilde{w}\). Therefore, \(\tilde{w}\) extends to a tempered fibred distribution and \(w\) is its inverse under Fourier transform. The above decomposition of \(u\) is unique as kernels of type \(0\) coincide with smooth \((-d_H)\)-homogeneous functions outside the zero section. Therefore, \(\Cont^\infty_{\cp}(T_HM)\cap \mathcal{K}^0(T_HM)=\{0\}\) and \(u\mapsto w\) is a well-defined linear map. 
	
	For surjectivity, let \(w\in\mathcal{K}^0(T_HM)\) be a kernel of type \(0\). The Euclidean Fourier transform \(\widehat{w}\in\Schwartz'(\lie{t}^*_HM)\) is smooth and \(0\)\nb-homogeneous outside the zero section \(M\times\{0\}\). Take a smooth cutoff function \(\chi\in\Cont^\infty(\lie{t}^*_HM)\) which vanishes near the zero section and is constant \(1\) outside an \(r\)-compact set. Then \(u=\Four^{-1}(\chi\widehat{w})\) is an approximately \(0\)-homogeneous distribution by \cite{taylornoncommutative}*{2.2} and one can write
	\[u = w - \Four^{-1}((1-\chi)\widehat{w}).\]
	The latter is smooth as \((1-\chi)\widehat{w}\in\Smooth_{\cp}'(\lie{t}_HM)\), so that \(\Phi(u)=w\). 		
	The claim concerning the kernel follows from the uniqueness of the decomposition above and the definition of \(\Phi\). 
\end{proof}
Note that \(\mathfrak{H}^0_{\cp}(T_HM)\) is larger than the space \(\Ess^0_{H,c}(M)\) we consider. However, the following result holds. See \cite{davehaller}*{3.8} for a more general result for \(\Sigma^m_{H}(M)\).
\begin{lemma}\label{res:sigma=kernels}There is a linear bijection \(\Theta\colon \Sigma^0_{H,c}(M)\to\kernel^0(T_HM)\).
\end{lemma}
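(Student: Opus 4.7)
The plan is to use \cref{res:taylor=kernels} as the main technical input: it gives a surjection $\Phi : \mathfrak{H}^0_\cp(T_HM) \to \mathcal{K}^0(T_HM)$ with kernel the smooth compactly supported elements, so it remains to match $\Ess^0_{H,c}(M)$ modulo $\Cont^\infty_c(T_HM;\Omega_r)$ with a suitable subspace of $\mathfrak{H}^0_\cp(T_HM)$. Then $\Theta$ is induced by $\Phi$ through the natural quotients.

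I would proceed in three steps. First, I would verify the regularity claim that every $u \in \Ess^0_{H,c}(M)$ is smooth away from the zero section $M \times \{0\} \subset T_HM$: the defining condition $\delta_{\lambda*}u - u \in \Cont^\infty_\p(T_HM;\Omega_r)$ for all $\lambda>0$ forces the singular support of $u$ to be invariant under all fibrewise dilations $\delta_\lambda$, and the only $\delta$-invariant subset compatible with proper support is the zero section. Second, I would note that since $u$ has compact $M$-support, properness over $M$ collapses to genuine compactness of support, so $\delta_{\lambda*}u - u$ actually lies in $\Cont^\infty_c(T_HM;\Omega_r)$ for each $\lambda>0$; combining this with the regularity, a smooth cutoff $\chi$ around the zero section realises $u$ modulo a smooth compactly supported correction as an element of $\mathfrak{H}^0_\cp(T_HM)$, whose asymptotic fibre behaviour is captured by the type-zero kernel $\Phi(u)$. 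Third, setting $\Theta([u]) \defeq \Phi(u)$, well-definedness and injectivity both follow from the description of $\ker \Phi$ in \cref{res:taylor=kernels}; for surjectivity, any $w \in \mathcal{K}^0(T_HM)$ lifts to some $v \in \mathfrak{H}^0_\cp(T_HM)$, and multiplying by a smooth cutoff $\psi \in \Cont^\infty_c(M)$ equal to one on the base of $\supp v$ produces an element of $\Ess^0_{H,c}(M)$ mapping to $w$.

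The main technical obstacle is the passage in the second step from essential homogeneity to membership in $\mathfrak{H}^0_\cp(T_HM)$, since the latter requires a Schwartz-plus-smooth decomposition, whereas $u$ has smoothness only up to $\Cont^\infty_c(T_HM;\Omega_r)$ and a priori no Schwartz decay of its smooth tail in the fibre directions; indeed a type-zero kernel exhibits only $|\xi|^{-d_H}$ behaviour in the fibres. Producing the correct representative requires a dilation-scaled cutoff near fibre infinity together with an iterated use of the essential homogeneity to absorb the non-Schwartz tail into an exactly $\delta$-invariant piece plus a Schwartz remainder, delicately balancing the compactness of the $M$-base against the asymptotic fibre behaviour.
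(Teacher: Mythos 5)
Your overall architecture (route everything through \cref{res:taylor=kernels} and its kernel description) is the same as the paper's, and your step 1 — smoothness of \(u\in\Ess^0_{H,c}(M)\) away from the zero section via dilation-invariance of the singular support — is a correct and necessary ingredient. But you have misplaced the difficulty. For the bundle \(T_HM\to M\) the range map is the base projection, so proper support plus compact support in the \(x\)-direction means that every \(u\in\Ess^0_{H,c}(M)\) is a \emph{compactly supported} distribution on \(T_HM\). Hence condition (ii) of approximate \(0\)-homogeneity holds trivially (take \(u_1=0\), \(u_2=u\)), and \(\sigma_\lambda(u)-u\) is smooth and compactly supported, hence Schwartz, so condition (iii) is automatic. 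The "main technical obstacle" you describe — absorbing a non-Schwartz fibre tail of \(u\) via dilation-scaled cutoffs — does not arise: the \(\abs{\xi}^{-d_H}\) behaviour belongs to the type-zero kernel \(\Phi(u)\), not to \(u\) itself. The inclusion \(\Ess^0_{H,c}(M)\subset\mathfrak{H}^0_{\cp}(T_HM)\) therefore needs only your step 1, and no cutoff at all.

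The genuine gap is in your surjectivity argument, and it is the mirror image of the above. A lift \(v\in\mathfrak{H}^0_{\cp}(T_HM)\) of \(w\in\kernel^0(T_HM)\) is compactly supported in the base direction but in general \emph{not} in the fibre directions (e.g.\ \(v=\Four^{-1}(\chi\widehat{w})\) agrees with \(w\) up to a smooth function, and \(w\) is \((-d_H)\)-homogeneous away from the zero section, so \(v\) has full fibre support). Multiplying by \(\psi\in\Cont^\infty_c(M)\), a function of the base point only, changes nothing in the fibre direction — indeed \(\psi v=v\) if \(\psi\equiv 1\) on the base of \(\supp v\) — so your candidate does not lie in \(\Ess^0_{H,c}(M)\), whose elements must be properly supported. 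What is needed is a cutoff \(\omega\in\Cont^\infty_c(T_HM)\) on the total space, equal to \(1\) on a neighbourhood of \(r(\supp v)\times\{0\}\), followed by the verification that \(\omega v\) is still essentially \(0\)-homogeneous; this holds because
\begin{equation*}
\sigma_\lambda(\omega v)-\omega v=-\sigma_\lambda\bigl((1-\omega)v\bigr)+\bigl(\sigma_\lambda(v)-v\bigr)+(1-\omega)v
\end{equation*}
is smooth (the terms involving \(1-\omega\) are supported away from the zero section, where \(v\) is smooth) and compactly supported. Since \(v-\omega v=(1-\omega)v\) is smooth and lies in \(\ker\Phi\), one gets \(\Theta([\omega v])=\Phi(v)=w\). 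Without this total-space cutoff and the accompanying homogeneity check, the surjectivity step fails.
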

\begin{proof}
	The map \(\Phi\) in \cref{res:taylor=kernels} induces a bijective, linear map
	\[\mathfrak{H}^0_{\cp}(T_HM)/\ker(\Phi)\to \mathcal{K}^0(T_HM).\]
	We show that there is a linear bijection \(\Sigma^0_{H,c}(M)\to \mathfrak{H}^0_{\cp}(T_HM)/\ker(\Phi)\).  The inclusion of \(\mathrm{Ess}^0_{H,c}(M)\) into \(\mathfrak{H}^0_{\cp}(T_HM)\) induces a linear map to the quotient
	\[\psi\colon \mathrm{Ess}^0_{H,c}(M) \to \mathfrak{H}^0_{\cp}(T_HM)/\ker(\Phi).\]
	To see that it is surjective, let \(u\in \mathfrak{H}^0_{\cp}(T_HM)\) and choose a function \(\omega\in\Cont^\infty_c(T_HM)\) which is constant \(1\) in a neighbourhood of \(r(\supp u)\times\{0\}\) and consider \(\omega u\in\Smooth'_{\cp}(T_HM)\). It is essentially \(0\)-homogeneous as for all \(\lambda>0\)
	\begin{align*}\sigma_\lambda(\omega u)-\omega u&=-\sigma_\lambda((1-\omega) u)+\sigma_\lambda(u)-u+(1-\omega)u\end{align*}
	is smooth. It is also compactly supported as the left hand side is. 
	As \(u-\omega u=(1-\omega)u\) is smooth, it follows that \(\psi(\omega  u)=[u]\). 
	
	Finally, we show that \(\ker(\psi)=\Cont^\infty_c(T_HM)\). If \(u\in \mathrm{Ess}^0_{H,c}(M)\) is contained in the kernel of \(\psi\), it is smooth by the description of the kernel of \(\Phi\). It is also compactly supported. So it must lie in \(\Cont^\infty_{c}(T_HM)\). As \(\Cont^\infty_c(T_HM)\subset\Cont^\infty_{\cp}(T_HM) \) the converse inclusion holds, too. 
\end{proof}
By the description of \(\kernel^0(T_HM)\) in \cref{res:type_zero}, we obtain a linear map \[\Theta\colon \Sigma^0_{H,c}(M)\to\Fix^\Rp(\J_0,\cl{\Rel})\]
with dense image. However, it is not clear from the proof that it is a \(^*\)-homomorphism for the convolution and involution of distributions. This will be shown later.

\subsection{Pseudodifferential operators  as generalized fixed points}
It was observed in \cite{debordskandalis2014}*{Theorem 3.7} that classical pseudodifferential operators on a manifold \(M\) can be written as averages over the zoom action of functions \(f\in\A(\T M)\) with \(f_0\in\Schwartz_0(TM)\). We will show this for filtered manifolds for the order zero case.
\begin{lemma}\label{res:fix_as_distr}
	Let \(g\in\Cont^\infty_c(\lie{t}^*_HM\times\R)\) vanish at all points \((x,0,0)\) for \(x\in M\). Then
	\(\int_0^\infty \chi_i(\lambda)g(x,\lambda\cdot\xi,\lambda )\tfrac{\diff \lambda}{\lambda}\)
	converges in \(\Schwartz_{\cp}'(\lie{t}^*_HM)\). 
\end{lemma}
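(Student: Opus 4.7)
Convergence at $\lambda\to\infty$ is automatic: since $g\in\Cont^\infty_c(\lie{t}^*_HM\times\R)$ is compactly supported, one can fix $T>0$ with $g(x,\xi,t)=0$ whenever $|t|\geq T$, and then $g(x,\lambda\cdot\xi,\lambda)=0$ for $\lambda>T$ irrespective of $\chi_i$. All the work therefore happens on $(0,T]$, where the only obstruction is the singular measure $d\lambda/\lambda$ at $\lambda=0$, which I will compensate using the hypothesis $g(x,0,0)=0$.

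The key step is a Hadamard-type decomposition. Writing $(\eta_1,\ldots,\eta_n)$ for fibre coordinates on $\lie{t}^*_HM$ dual to an $H$\nb-frame, the fundamental theorem of calculus combined with $g(x,0,0)=0$ yields
\begin{equation*}
    g(x,\eta,t)=\int_0^1\frac{d}{ds}g(x,s\eta,st)\,ds = t\,h_0(x,\eta,t)+\sum_{i=1}^n\eta_i\,h_i(x,\eta,t),
\end{equation*}
with $h_0(x,\eta,t)=\int_0^1(\partial_tg)(x,s\eta,st)\,ds$ and $h_i(x,\eta,t)=\int_0^1(\partial_{\eta_i}g)(x,s\eta,st)\,ds$. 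Each $h_k$ is smooth, globally bounded by the corresponding sup-norm of a derivative of $g$, and vanishes for $x$ outside the (compact) $x$\nb-projection of $\supp g$. Inserting $(\eta,t)=(\lambda\cdot\xi,\lambda)$ so that $\eta_i=\lambda^{q_i}\xi_i$ gives
\begin{equation*}
    g(x,\lambda\cdot\xi,\lambda)\,\frac{d\lambda}{\lambda}=\Big(h_0(x,\lambda\cdot\xi,\lambda)+\sum_{i=1}^n\lambda^{q_i-1}\xi_i\,h_i(x,\lambda\cdot\xi,\lambda)\Big)d\lambda,
\end{equation*}
and since every $q_i\geq 1$, the prefactors $\lambda^{q_i-1}$ remain bounded on $(0,T]$; the right-hand side is therefore dominated pointwise by $C(1+|\xi|)\mathbf{1}_{(0,T]}(\lambda)\,d\lambda$ for some constant $C$ depending only on $g$.

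To conclude convergence in $\Schwartz_{\cp}'(\lie{t}^*_HM)$, I would pair with an arbitrary test function $f\in\Schwartz(\lie{t}^*_HM)$: the bound above gives
\begin{equation*}
    \big|\chi_i(\lambda)f(x,\xi)g(x,\lambda\cdot\xi,\lambda)/\lambda\big|\leq C|f(x,\xi)|(1+|\xi|)\mathbf{1}_{(0,T]}(\lambda),
\end{equation*}
which is jointly integrable in $(\xi,\lambda)$ and uniformly so for $x$ in the compact $x$\nb-support. Lebesgue dominated convergence, applied to the pointwise limit $\chi_i\to 1$, produces a well-defined limit as a function of $x$ with uniform convergence in $x$; applying the same argument to the Schwartz seminorms $(1+|\xi|)^N|\partial_\xi^\alpha f|$ shows that the limit depends continuously on $f$ and is $\Cont^\infty_c(M)$\nb-valued, yielding the desired element of $\Schwartz_{\cp}'(\lie{t}^*_HM)$. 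The one point that requires care is that the $h_k$ need not inherit compact support in the fibre directions of $\lie{t}^*_HM\times\R$; their mere uniform boundedness combined with the Schwartz decay of $f$ is what takes the place of compact support here.
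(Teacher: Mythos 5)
Your proof is correct and rests on the same key idea as the paper's: a Hadamard-type decomposition \(g = t\,g_t + \sum_j \xi_j g_j\) exploiting the vanishing at \((x,0,0)\), followed by the observation that after substituting \((\lambda\cdot\xi,\lambda)\) the weights \(q_j\ge 1\) absorb the singular factor \(1/\lambda\). The one place you diverge is the choice of factors: the paper arranges \(g_j, g_t\in\Cont^\infty_c(\lie{t}^*_HM\times\R)\) (insert a cutoff equal to \(1\) on \(\supp g\) before applying Hadamard), and the compact support of the \(g_j\) in the fibre direction then forces the integrand to vanish once \(\lambda^{q_j}\abs{\xi_j}\) leaves a compact set, which is exactly what yields the \emph{uniform} bound \(\sup_{(x,\xi)}\abs{\partial_x^\alpha K(x,\xi)}<\infty\); the limit is then a bounded smooth function with compact \(x\)-support and is read off directly as an element of \(\Schwartz'_{\cp}(\lie{t}^*_HM)\). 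You keep the raw Hadamard factors, which are only bounded, so your limit grows like \(1+\abs{\xi}\); this still defines a tempered fibred distribution, but you then pay for it by pairing against Schwartz test functions and verifying continuity and \(\Cont^\infty_c(M)\)-valuedness by hand, which you do correctly (and you rightly flag the lack of compact support as the point needing care). Both routes are valid; the paper's cutoff trick just makes the limit an \(L^\infty\) function and shortens the verification.
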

\begin{proof}
	We show that for each \((x,\xi)\in\lie{t}^*_HM\) 
	\begin{align*}
	K(x,\xi)\defeq\lim_i \int_0^\infty \chi_i(\lambda)g(x,\lambda\cdot\xi,\lambda)\tfrac{\diff \lambda}{\lambda}
	\end{align*}
	converges and satisfies for all \(\alpha\in\N^n_0\) 
	\[\sup_{(x,\xi)}\abs{\partial^\alpha_x K(x,\xi)}<\infty.\]
	As \(g\) vanishes at all points \((x,0,0)\), it can be written as 
	\[g(x,\xi,t)=\sum_{j=1}^n \xi_jg_j(x,\xi,t)+t g_t(x,\xi,t)\] with \(g_j,g_t\in \Cont_c^\infty(\lie{t}^*_HM\times \R)\) for \(j=1,\ldots,n\). Since they are compactly supported, there are \(R,T>0\) such that \(g_j(x,\xi,t)=g_t(x,\xi,t)=0\) for \(j=1,\ldots,n\) whenever \(\norm{\xi}>R\) or \(t>T\). It follows that \(\abs{K(x,\xi)}\leq T(R+1)\max_{j=1,\ldots,n,t}\norm{g_j}_\infty\). This shows the claim for \(\alpha=0\). Derivatives in the \(x\)-direction give an expression of the same form so it holds for all~\(\alpha\in\N^n_0\).
\end{proof}
\begin{proposition}\label{res:esshom}
	Let \(\Theta\) be the map from \cref{res:sigma=kernels}.
	For \(\P\in\Pseu^0_{H,c}(M)\) there is \(f\in\Rel\) with \(f_0\in\Schwartz_0(T_HM)\) such that 
	\begin{align*}
	\P_1-\int_0^\infty p_1(\sigma_\lambda(f))\tfrac{\diff\lambda}{\lambda}\in\Cont^\infty_{c}(M\times M) \quad\text{ and }\quad
	\Theta([\P_0])=\int_0^\infty\sigma_\lambda(f_0)\tfrac{\diff\lambda}{\lambda}.
	\end{align*}		
\end{proposition}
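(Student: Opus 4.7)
The plan is to construct $f$ as a smooth cutoff of $\P$ off its singular support, chosen so that averaging $f$ over the zoom action recovers $\P$ modulo a smooth compactly supported term. By \cite{erp2015groupoid}*{27} we may multiply $\P$ by an exponential cutoff at the cost of an element of $\Cont^\infty_{\p}(\T_HM;\Omega_r)$, and so assume $\P$ is supported in the image $\W$ of a neighborhood of the zero section under exponential coordinates. In these coordinates the singular support of $\P$ coincides with $\{v=0\}$, that is, the diagonal for $t>0$ and the zero section for $t=0$, and this set is preserved setwise by the zoom action.

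Let $q\in\N$ be the least common multiple of the weights $q_1,\ldots,q_n$, and fix a smooth non-negative function $\Phi\colon\T_HM\to[0,\infty)$ that vanishes exactly on the singular support of $\P$ and is $(2q)$-homogeneous for the zoom action, $\Phi\circ\alpha_\lambda=\lambda^{2q}\Phi$. Locally $\Phi(x,v,t)=\sum_{j=1}^n v_j^{2q/q_j}$ works, and a global $\Phi$ is assembled via a partition of unity. Pick $\chi\in\Cont^\infty_c(\Rp)$ with compact support in $(0,\infty)$ and $\int_0^\infty\chi(s)\tfrac{\diff s}{s}=2q$, and set $f\defeq(\chi\circ\Phi)\cdot\P$. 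Because $\chi\circ\Phi$ is smooth and vanishes in a neighborhood of the singular support of $\P$, this product is a genuine smooth function; compact $M$-direction support of $\P$ and compactness of the level sets $\{\Phi\leq R\}$ in the fibres of $T_HM$ yield $f\in\A(\T_HM)$.

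The essential homogeneity of $\P$ provides $\sigma_\lambda\P=\P+R_\lambda$ with $R_\lambda\in\Cont^\infty_{\p}(\T_HM;\Omega_r)$ depending smoothly on $\lambda$ by the wavefront analysis of \cite{erp2015groupoid}*{47,48}. Combined with $\Phi\circ\alpha_\lambda=\lambda^{2q}\Phi$ this gives
\begin{equation*}
\int_0^\infty\sigma_\lambda(f)(\gamma)\tfrac{\diff\lambda}{\lambda}=\P(\gamma)\int_0^\infty\chi(\lambda^{2q}\Phi(\gamma))\tfrac{\diff\lambda}{\lambda}+\int_0^\infty\chi(\lambda^{2q}\Phi(\gamma))R_\lambda(\gamma)\tfrac{\diff\lambda}{\lambda}.
\end{equation*}
The substitution $\mu=\lambda^{2q}\Phi(\gamma)$ makes the first integral equal to $1$ whenever $\Phi(\gamma)>0$, so the first term recovers $\P$; the second integral is smooth in $\gamma$ since the effective $\lambda$-range is a compact interval depending smoothly on $\gamma$ and the integrand is jointly smooth, producing a correction $S$ whose restriction $S_1$ lies in $\Cont^\infty_c(M\times M)$ (compactness of support being inherited from the $M$-compact support of $\P$ and each $R_\lambda$). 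Applying $p_1$ yields the first assertion. For the second, $\int_0^\infty\sigma_\lambda(f_0)\tfrac{\diff\lambda}{\lambda}$ is exactly $\sigma$-invariant by translation invariance of $\tfrac{\diff\lambda}{\lambda}$ and smooth off the zero section, hence a type-zero kernel; it agrees with $\Theta([\P_0])$ off the zero section modulo the smooth correction $S_0$, and since any type-zero kernel supported on the zero section must vanish (by incompatibility of $(-d_H)$-homogeneity with the derivatives of $\delta_0$), the two type-zero kernels coincide.

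It remains to arrange $f\in\Rel$. Decomposing $\P_0=w+v$ via the proof of \cref{res:sigma=kernels} with $w=\Theta([\P_0])$ a type-zero kernel and $v\in\Cont^\infty_c(T_HM)$, and using homogeneous polar coordinates $\xi=\delta_r\omega$, one sees that $(\chi\circ\Phi)w$ has vanishing fibre integrals (the vanishing spherical mean being inherent to $\mathcal{K}^0(T_HM)$), so the obstruction to $f_0\in\Rel_0$ is a smooth compactly supported function of $x$ coming from $(\chi\circ\Phi)v$. Subtracting from $f$ a suitable $h\in\Cont^\infty_c(\T_HM)$ with $h_0\in\Schwartz_{\cp}(T_HM)$ matching these fibre integrals and satisfying $h_0(x,0)=0$ produces a replacement in $\Rel$; the vanishing $h_0(x,0)=0$ ensures $\int_0^\infty\sigma_\lambda(h)\tfrac{\diff\lambda}{\lambda}\in\Cont^\infty_c(\T_HM)$, so both identities are preserved modulo further terms in $\Cont^\infty_c$. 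The main obstacle is the rigorous justification of the distributional manipulations---the pointwise identity $\sigma_\lambda((\chi\circ\Phi)\cdot\P)=(\chi\circ\Phi\circ\alpha_\lambda)\cdot\sigma_\lambda(\P)$, the smoothness and support properties of $S$, the Fubini-type interchange with the $\diff\lambda/\lambda$ integral, and the compatibility of the $\Rel$-correction $h$ with all of these.
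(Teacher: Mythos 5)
Your overall strategy---cut $\P$ down to a zoom-invariant ``shell'' around its singular support by a homogeneous cutoff $\chi\circ\Phi$ normalised so that $\int_0^\infty\chi(\lambda^{2q}s)\tfrac{\diff\lambda}{\lambda}=1$, and recover $\P$ by averaging---is the same reconstruction idea the paper uses, but the paper carries it out on the Fourier-transform side: after reducing (via \cite{erp2015groupoid}*{Lemma~27 \emph{and} Lemma~42}) to a $\P$ that is exactly homogeneous outside $t\in[-1,1]$, it applies \cite{erp2015groupoid}*{Prop.~43} to write $\widehat{\P}=\chi A+(\text{Schwartz})$ with $A$ genuinely $0$-homogeneous, realises $A$ as $\int_0^\infty g(x,\lambda\xi,\lambda t)\tfrac{\diff\lambda}{\lambda}$ for a $g$ vanishing to infinite order at $(x,0,0)$, and sets $f=\varphi\,\Four^{-1}(g)$. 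Your kernel-side variant has genuine gaps. First, your identification of $\int_0^\infty\sigma_\lambda(f_0)\tfrac{\diff\lambda}{\lambda}$ with $\Theta([\P_0])$ rests on the claim that a type-zero kernel supported on the zero section must vanish; this is false. The Dirac distribution $\delta$ at the unit satisfies $\sigma_{\lambda*}\delta=\delta$ and is smooth (zero) off the zero section, so $\delta\in\mathcal{K}^0(T_HM)$---it is the kernel of the identity, an operator of type $0$. Hence two type-zero kernels agreeing off the zero section may differ by a smooth multiple of $\delta$, and your argument does not pin down this constant; the paper's Fourier-side construction does, because Taylor's radial-limit procedure in \cref{res:taylor=kernels} fixes the distribution, not just its restriction off the zero section.

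Second, the proposition requires $f_0\in\Schwartz_0(T_HM)$, i.e.\ \emph{all} moments $\int\xi^\alpha f_0(x,\xi)\diff\xi$ vanish (this is what feeds the Dixmier--Malliavin factorisation in \cref{res:average=fixed}); your correction $h$ only arranges the vanishing of the zeroth moment, which puts $f$ in $\Rel$ but not $f_0$ in $\Schwartz_0$. The paper gets all moments to vanish for free because $g$ vanishes to infinite order at $\xi=0$ on the Fourier side. Third, the smoothness of your correction term $S=\int_0^\infty\chi(\lambda^{2q}\Phi)R_\lambda\tfrac{\diff\lambda}{\lambda}$ up to the singular support is exactly where the analytic difficulty sits: as $\Phi(\gamma)\to0$ the effective $\lambda$-range escapes to infinity, and without first replacing $\P$ by something homogeneous on the nose outside a compact $t$-interval (the step from \cite{erp2015groupoid}*{Lemma~42} that you omit) the family $R_\lambda=\alpha_{\lambda*}\P-\P$ is not controlled for large $\lambda$, so the asserted smoothness and compact support of $S_1$ do not follow from what you have written.
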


\begin{proof}
	By \cite{erp2015groupoid}*{Lemma~27 and Lemma~42} \(\P\) differs by a kernel in \(\Cont^\infty_{\p}(\T_HM)\) from an element \(\Q\in\Pseu^0_{H}(M)\) which is supported in the global exponential coordinate patch and is homogeneous on the nose outside \([-1,1]\). Using that \(\P\) has a uniform compact support \(K\subset M\times M\) for \(t\neq0\), one can arrange that the same holds for the kernel in \(\Cont^\infty_{\p}(\T_HM)\) and that \(\Q\in\Pseu^0_{H,c}(M)\). 
	So assume, without loss of generality, that \(\P\) has is supported in the global exponential coordinate patch and is homogeneous on the nose outside \([-1,1]\).
	
	Let \(\tilde{\P}\in\Smooth'_r(\lie{t}_H M\times\R)\) be the pullback under the exponential map and \(\widehat{\P}=\Four(\tilde{\P})\in\Cont^\infty(\lie{t}^*_HM\times\R)\). Here, \(\Four\) is the fibrewise Euclidean Fourier transform \(\Four\colon \Schwartz'_r(\lie{t}_HM\times \R)\to\Schwartz'_r(\lie{t}^*_HM\times \R )\). As remarked in \cite{erp2015groupoid}*{7.3}, \(\widehat{\P}\) is approximately homogeneous for the \(\Rp\)-action \(\beta_\lambda(x,\xi,t)=(x,\lambda\cdot \xi,\lambda t)\) on \(\lie{t}^*_HM\times\R\). This is a dilation action when considering \(\lie{t}^*_HM\times\R\overset{\pi}{\to}M\) as a graded vector bundle over~\(M\) with the original dilations on \(\lie{t}^*_HM\) and the usual scaling on \(\R\) as observed in \cite{erp2015groupoid}*{7.3}.
	
	Now we use \cite{erp2015groupoid}*{Prop. 43}, which is based on the bundle version of \cite{taylornoncommutative}*{2.2}. It allows to find an \(A\in\Cont_{\cp}^\infty(\lie{t}_H^*M\times\R\setminus M\times 0\times 0)\) homogeneous of degree \(0\) such that \(\widehat{\P}-\chi A\in\Schwartz_{\cp,\pi}(\lie{t}_H^*M\times\R)\) for any smooth cutoff function \(\chi\) that vanishes in a neighbourhood of the zero section \(M\times 0\times 0\) and is constant \(1\) outside a \(\pi\)-compact set. 		
	Now choose \(g\in\Cont^\infty_c(\lie{t}^*_HM\times\R)\) vanishing with all derivatives at all points \((x,0,0)\) such that for all \((\xi,t)\neq (0,0)\)
	\[ A(x,\xi,t)=\int_0^\infty  g(x,\lambda\xi,\lambda t)\tfrac{\diff\lambda}{\lambda}.\]
	Then \(\int_0^\infty g(x,\lambda\xi,\lambda t )\tfrac{\diff \lambda}{\lambda}\) converges in \(\Schwartz'_{\cp}(\lie{t}_H^*M)\)  for \(t=0\) by \cite{folland1982homogeneous}*{1.65} and for \(t>0\) by \cref{res:fix_as_distr}. We write for all \(t\geq 0\) and a cutoff function \(\chi\) as above 
	\[\widehat{\P}_t-\int_0^\infty  g(x,\lambda\xi, \lambda t)\tfrac{\diff \lambda}{\lambda} = \left(\widehat{\P}_t-\chi(\xi,t)A_t\right)-(1-\chi(\xi, t))\int_0^\infty g(x,\lambda\xi,\lambda t)\tfrac{\diff \lambda}{\lambda}.\]	
	The first part lies in \(\Schwartz_{\cp}(\lie{t}^*_HM)\) and the second in \(\Smooth'_r(\lie{t}_H^*M)\) with compact support in the \(x\)-direction. So that we obtain under inverse Fourier transform 
	\begin{align*}
	\tilde{\P}_t&-\int_0^\infty (\tilde{\sigma}_\lambda\circ\Four^{-1}(g))(t)\tfrac{\diff\lambda}{\lambda}\in\Cont_{\cp}^\infty(\lie{t}_HM).
	\end{align*}
	Here, \(\tilde{\sigma}\) denotes the zoom action on \(\lie{t}_HM\times\R\).
	
	As \(\P\) has uniform compact support, there is an exponential cutoff \(\varphi\) built from a compactly supported \(\varphi_1\in\Cont_c^\infty(M\times M)\) such that \(\varphi \P = \P\). Let \(f\defeq\varphi \Four^{-1}(g)\), which is an element of \(\Rel\) with \(f_0\in\Schwartz_0(T_HM)\).
	We obtain for all \(t\geq 0\), using that \(\varphi\) is invariant under the zoom action, 
	\begin{align*}
	\P_t-\int_0^\infty p_t(\sigma_\lambda(f))\tfrac{\diff\lambda}{\lambda}&=\varphi_t\left(\P_t-\int_0^\infty  p_t\circ\sigma_\lambda\circ\Four^{-1}(g)\tfrac{\diff\lambda}{\lambda}\right).
	\end{align*}
	For \(t>0\), this is contained in \(\Cont^\infty_c(M\times M)\). For \(t=0\), note that \(\varphi_0=1\) on the support of \(\P_0\) and \(g\) in the \(x\)-direction. As \(\P_0\in\Sigma^0_{H,c}(M)\subseteq \mathfrak{H}^0_{\cp}(T_HM) \), the claim follows from \cref{res:taylor=kernels}.
\end{proof}
\begin{lemma}\label{res:average=fixed}
	For \(f\in\Rel\) with \(f_0\in\Schwartz_0(T_HM)\)
	\[\int^\infty_0\chi_i(\lambda)\sigma_\lambda(f)\tfrac{\diff\lambda}{\lambda}\]
	converges strictly to an element  \(Q\in\Fix^\Rp(\J,\cl{\Rel})\). Moreover, 
	\[\widetilde{p}_0(Q)= \lim_{i} \int^\infty_0\chi_i(\lambda)\sigma_\lambda(f_0)\tfrac{\diff\lambda}{\lambda}\in\Fix^\Rp(\J_0,\cl{\Rel_0}).\]
\end{lemma}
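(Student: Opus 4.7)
I would proceed in three stages: first, establish strict convergence of
\[T_i(f) \defeq \int_0^\infty \chi_i(\lambda)\sigma_\lambda(f)\tfrac{\diff\lambda}{\lambda}\]
as multipliers of $\J$; second, verify that the strict limit $Q$ lies in $\Fix^\Rp(\J,\cl\Rel)$; third, identify the image of $Q$ under $\widetilde{p}_0$.

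\textbf{Strict convergence.} Fix $a \in \Rel$. Since each $\sigma_\lambda$ is an $I$-norm isometry commuting with the involution, and $\Rel$ is $^*$-invariant,
\[\norm{\sigma_\lambda(f) * a}_I = \norm{a^* * \sigma_\lambda(f^*)}_I = \norm{\sigma_{\lambda^{-1}}(a^*) * f^*}_I.\]
Substituting $\mu = \lambda^{-1}$ and using the inversion invariance of $\tfrac{\diff\lambda}{\lambda}$, \cref{res:mainestimate} applied to the pair $(a^*, f^*) \in \Rel \times \Rel$ shows that $\lambda \mapsto \sigma_\lambda(f) * a$ belongs to $L^1(\Rp,\J;\tfrac{\diff\lambda}{\lambda})$. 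Dominated convergence gives norm convergence of $T_i(f) * a$ as $i \to \infty$; a symmetric argument handles $a * T_i(f)$. Hence $T_i(f)$ converges strictly to some $Q \in \Mult^\Rp(\J)$, and $\Rp$-invariance of $Q$ is automatic from that of each $T_i(f)$ up to a rescaling of $\chi_i$ absorbed in the net.

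\textbf{Identification with a fixed-point element.} This is the main obstacle. The plan is to construct a sequence $f^{(n)} = \sum_{j=1}^{N_n} a_j^{(n)*} * b_j^{(n)}$ with $a_j^{(n)}, b_j^{(n)} \in \Rel$ converging to $f$ strongly enough that the $L^1$-bound of the previous step is uniform in $n$. For each $n$, the description in \eqref{eq:strict_limit} gives $T_i(f^{(n)}) \to Q_n \defeq \sum_j \KET{a_j^{(n)}}\BRA{b_j^{(n)}} \in \Fix^\Rp(\J,\cl\Rel)$ strictly in $i$, while uniform control on $\lambda \mapsto \sigma_\lambda(f^{(n)} - f) * a$ (and its right-hand analogue) forces $Q_n \to Q$ in the operator norm on $\Mult(\J)$. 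Since $\Fix^\Rp(\J,\cl\Rel)$ is norm-closed, this yields $Q \in \Fix^\Rp(\J,\cl\Rel)$. The crux is producing the approximations $f^{(n)}$: at $t>0$ any smooth compactly supported function can be factored as a finite sum of convolutions within $\Cont_c^\infty(\V_\infty) \subset \Rel$, while the hypothesis $f_0 \in \Schwartz_0(T_HM)$ is precisely what is needed for an analogous fibrewise factorization on the osculating groups (via an approximate identity in $\Rel_0$ respecting the vanishing-integral condition), and the two constructions are glued with a partition of unity along the $t$-direction as in \cref{res:decompschwartz}.

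\textbf{Principal symbol.} Since $p_0$ is $\Rp$-equivariant with $p_0 \circ \sigma_\lambda = \sigma_\lambda \circ p_0$ and $p_0(f) = f_0$, the strictly continuous multiplier extension $\widetilde{p}_0$ sends $T_i(f)$ to $\int_0^\infty \chi_i(\lambda)\sigma_\lambda(f_0)\tfrac{\diff\lambda}{\lambda}$. Strict continuity of $\widetilde{p}_0$ therefore gives strict convergence of this net to $\widetilde{p}_0(Q)$ in $\Mult(\J_0)$; applying the identification step to the triple $(\J_0, \cl{\Rel_0}, f_0)$ in place of $(\J, \cl\Rel, f)$ shows the limit lies in $\Fix^\Rp(\J_0, \cl{\Rel_0})$, as required.
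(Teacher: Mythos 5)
Your first step is fine in spirit, but the proof stands or falls on the ``identification with a fixed-point element'' step, and there you have deferred the entire difficulty to an unproved claim: the existence of approximations \(f^{(n)}=\sum_j a_j^{(n)*}*b_j^{(n)}\) with \(a_j^{(n)},b_j^{(n)}\in\Rel\) converging to \(f\) with \emph{uniform} control of the \(L^1(\Rp,\J;\tfrac{\diff\lambda}{\lambda})\)-bounds. No construction is given, and the one concrete device you mention does not work: an approximate identity on the osculating groups has integrals tending to \(1\), so it cannot lie in \(\Rel_0\) (whose elements have vanishing Haar integral), yet for \(\KET{a}\BRA{b}\) to make sense \emph{both} factors must lie in \(\Rel\). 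Moreover, inspecting \cref{res:mainestimate}, the \(L^1\)-bound depends on many Schwartz seminorms and on the supports of the factors, so the required uniformity in \(n\) is itself a substantial claim. As written, the argument is circular at its core: proving that \(Q\) lies in \(\Fix^\Rp(\J,\cl{\Rel})\) is reduced to an approximate factorization statement that is at least as hard as the lemma. (A smaller issue in your first step: convergence of \(T_i(f)*a\) for \(a\) in the dense set \(\Rel\) does not by itself give strict convergence on all of \(\J\) without a uniform bound on \(\norm{T_i(f)}\).)

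The paper's proof sidesteps all of this with two observations you are missing. First, one only needs to factorize the restriction \(f_0\) at \(t=0\), and this can be done \emph{exactly}: by the Dixmier--Malliavin theorem (as in \cite{ewert2020pseudodifferential}*{6.7}), \(f_0=\sum_{j}f_j^**g_j\) as a \emph{finite} sum with \(f_j,g_j\in\Rel_0\); these factors are then extended arbitrarily to elements \(F_j,G_j\in\Rel\) using the surjectivity of \(e_0\) from \cref{res:schwartzrestsurj}. Second, the error \(h\defeq f-\sum_jF_j^**G_j\) vanishes at \(t=0\), so it can be written as \(h=t\tilde f\); this extra factor of \(t\) translates into a factor of \(\lambda\) in \(\norm{p_\lambda(h)}\), and \cref{res:compact_fact} then shows that \(\int_0^\infty\chi_i(\lambda)\sigma_\lambda(h)\tfrac{\diff\lambda}{\lambda}\) converges in \emph{norm} to a compact operator, hence to an element of the generalized fixed point algebra. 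The remaining summands converge strictly to \(\KET{F_j}\BRA{G_j}\) by \eqref{eq:strict_limit}, and applying \(\widetilde{p}_0\) kills the \(h\)-term and yields the asserted formula for \(\widetilde{p}_0(Q)\). If you replace your approximate factorization of all of \(f\) by this exact factorization of \(f_0\) together with \cref{res:compact_fact}, your outline becomes the paper's proof.
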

\begin{proof}
	Using the Dixmier--Malliavin Factorization Theorem \cite{dixmier1978factorization}*{7.2} as in the proof of \cite{ewert2020pseudodifferential}*{6.7} one can factorize \(f_0\in\Schwartz_0(T_HM)\) as \(f_0=\sum_{j=1}^n f_j^**g_j\) with \(f_j,g_j\in\Rel_0\). Let \(F_j, G_j\in\Rel\) be extensions of \(f_j,g_j\) to \(t>0\), which can be obtained as in the proof of \cref{res:schwartzrestsurj}.
	As \(h\defeq f-\sum_{j=1}F_j^**G_j\in\A(\T_HM)\) vanishes at \(t=0\), it follows from \cref{res:compact_fact} that
	\[\int_0^\infty\chi_i(\lambda)\sigma_\lambda(h)\tfrac{\diff\lambda}{\lambda}\]
	converges strictly to an element of the generalized fixed point algebra. As	
	\[\int_0^\infty\chi_i(\lambda)\sigma_\lambda(f)\tfrac{\diff \lambda}{\lambda}= \int_0^\infty\chi_i(\lambda)\sigma_\lambda(h)\tfrac{\diff\lambda}{\lambda}+ \sum_{j=1}^n\int_0^\infty\chi_i(\lambda)\sigma_\lambda(F_j^**G_j)\tfrac{\diff\lambda}{\lambda}\] 
	and the operators on the right converge to \(\KET{F_j}\BRA{G_j}\), it follows that the left hand side converges strictly to an element \(Q\in\Fix^\Rp(\J,\cl{\Rel})\). As \(h\in\ker(p_0)\), the decomposition above also shows that
	\[\widetilde{p}_0(Q)=\sum_{j=1}^n\KET{f_j^*}\BRA{g_j}= \lim_{i} \int^\infty_0\chi_i(\lambda)\sigma_\lambda(f_0)\tfrac{\diff\lambda}{\lambda}.\qedhere\]
\end{proof}

\begin{theorem}\label{res:vanerp_in_fix}
	The pseudodifferential extension of order zero from \eqref{ses:pseudo_comp_supp} for a filtered manifold \((M,H)\) embeds into the generalized fixed point algebra extension from \eqref{ses:fixedalg} such that the following diagram commutes
	\begin{equation}\label{ses:pseudo_in_fix}
	\begin{tikzcd}
	\Psi^{-1}_{H,c}(M) \arrow[d,hook]\arrow[r,hook]& \Psi^0_{H,c}(M)\arrow[r,twoheadrightarrow,"{\princ[0]}"]\arrow[d,hook] & \Sigma_{H,c}^0(M)\arrow[d,hook, "\Theta"]\\
	\Comp(L^2M) \arrow[r,hook]& \widetilde{p}_1(\Fix^\Rp(\J,\cl{\Rel})) \arrow[r,twoheadrightarrow,"{\Princ}"] & \Fix^\Rp(\J_0,\cl{\Rel_0}),
	\end{tikzcd}
	\end{equation}
	with \(\Princ=\widetilde{p}_0\circ (\widetilde{p}_1)^{-1}\).
\end{theorem}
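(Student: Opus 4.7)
The plan is to realise each kernel $P \in \Psi^0_{H,c}(M)$ as the image under $\widetilde{p}_1$ of a generalised fixed point element, modulo a smoothing correction, by combining \cref{res:esshom} and \cref{res:average=fixed}. Fix an extension $\P \in \Pseu^0_{H,c}(M)$ of $P$. \cref{res:esshom} supplies an $f \in \Rel$ with $f_0 \in \Schwartz_0(T_HM)$ such that
\[ R \defeq \P_1 - \int_0^\infty p_1(\sigma_\lambda(f))\tfrac{\diff\lambda}{\lambda} \in \Cont^\infty_c(M\times M), \]
and such that $\Theta([\P_0]) = \int_0^\infty \sigma_\lambda(f_0)\tfrac{\diff\lambda}{\lambda}$. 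By \cref{res:average=fixed}, the cutoff averages $\int_0^\infty \chi_i(\lambda)\sigma_\lambda(f)\tfrac{\diff\lambda}{\lambda}$ converge strictly to an element $Q \in \Fix^\Rp(\J,\cl{\Rel})$ with $\widetilde{p}_0(Q) = \Theta([\P_0])$. Applying the strictly continuous representation $\widetilde{p}_1$ and comparing the resulting strict limit in $\Bound(L^2 M)$ with the distributional kernel given by \cref{res:esshom} yields the operator identity $\widetilde{p}_1(Q) = \Op(P) - \Op(R)$. Since $\Op(R)\in\Comp(L^2M) \subset \widetilde{p}_1(\Fix^\Rp(\J,\cl{\Rel}))$ by \cref{res:compboundcommute}, this forces $\Op(P) \in \widetilde{p}_1(\Fix^\Rp(\J,\cl{\Rel}))$. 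Injectivity of the middle vertical arrow is automatic, as an $H$-pseudodifferential kernel that acts trivially on $L^2(M)$ must itself vanish.

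The right square commutes because
\[ \Princ(\Op(P)) = \widetilde{p}_0(Q) = \Theta([\P_0]) = \Theta(\princ[0](P)), \]
where the first equality uses that $\Op(R) \in \Comp(L^2 M) \subset \ker(\widetilde{p}_0 \circ \widetilde{p}_1^{-1})$ by exactness of \eqref{ses:fixedalg}. For the left square, any $P \in \Psi^{-1}_{H,c}(M)$ extends to some $\P' \in \Pseu^{-1}_{H,c}(M)$, and the element $t\P'\in \Pseu^0_{H,c}(M)$ still restricts to $P$ at $t=1$ but vanishes at $t=0$; hence $\princ[0](P) = 0$ in $\Sigma^0_{H,c}(M)$. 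By the commutativity just established, $\Princ(\Op(P)) = \Theta(0) = 0$, so $\Op(P) \in \ker(\Princ) = \Comp(L^2M)$, again by exactness of \eqref{ses:fixedalg}. This yields the embedding $\Psi^{-1}_{H,c}(M) \hookrightarrow \Comp(L^2M)$ and shows that both vertical arrows are compatible with the horizontal short exact sequences.

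The main technical obstacle is the identification $\widetilde{p}_1(Q) = \Op(P) - \Op(R)$. The left-hand side is a strict limit in $\Bound(L^2 M) = \Mult(\Comp(L^2M))$ of the cutoff convolution operators, while the right-hand side has a distributional kernel determined by \cref{res:esshom}. I expect the two to coincide by testing against $\psi \in \Cont_c^\infty(M)$ and using the compact support of $f$ in the $M \times M$ direction together with the type of uniform estimates underlying \cref{res:mainestimate}: these should yield dominated convergence of the cutoff kernels to the full kernel, upgrading pointwise kernel convergence to norm convergence of the corresponding operators on $\Comp(L^2M)$ and hence to strict convergence as multipliers.
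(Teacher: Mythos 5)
Your proposal is correct and follows essentially the same route as the paper: both use \cref{res:esshom} to produce \(f\in\Rel\) with the smoothing remainder, \cref{res:average=fixed} to obtain the fixed-point element \(Q\) with \(\widetilde{p}_0(Q)=\Theta([\P_0])\), \cref{res:compboundcommute} to absorb the compact correction, and commutativity of the right square plus exactness of the rows to handle \(\Psi^{-1}_{H,c}(M)\). The identification \(\widetilde{p}_1(Q)=\Op(P)-\Op(R)\) that you flag as the technical obstacle is exactly what the paper also relies on (via \cref{res:fix_on_l2} and \cref{res:average=fixed}), so there is no gap relative to the paper's own level of detail.
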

\begin{proof}
	For the inclusion of \(\Psi^0_{H,c}(M)\) in the generalized fixed point algebra, let \(P\in\Psi^0_{H,c}(M)\). By \cref{res:esshom}, there is a function \(f\in\Rel\) with \(f_0\in\Schwartz_0(T_HM)\) and
	\[P-\int_0^\infty p_1(\sigma_\lambda(f))\tfrac{\diff \lambda}{\lambda}\in\Cont^\infty_c(M\times M).\]
	By \cref{res:average=fixed},  \(\int_0^\infty\chi_i(\lambda)\sigma_\lambda(f)\tfrac{\diff \lambda}{\lambda}\) converges to an element \(Q\) of the generalized fixed point algebra \(\Fix^\Rp(\J,\cl{\Rel})\).   Its image \(\widetilde{p}_1(Q)\) in \(\Bound(L^2M)\) is the unique bounded extension of the convolution operator associated with \(\int_0^\infty p_1(\sigma_\lambda(f))\tfrac{\diff \lambda}{\lambda}\). Convolution by the kernel in \(\Cont_c^\infty(M\times M)\) extends to a compact operator \(K\). It follows that \(K+Q\) is the unique continuous extension of \(\Op(P)\) and belongs to the generalized fixed point algebra. 		
	Moreover, \cref{res:average=fixed} also implies 
	\[\Theta(\princ[0](P))= \int_0^\infty \sigma_\lambda(f_0)\tfrac{\diff\lambda}{\lambda}=\sum_{j=1}^n\KET{f_j}\BRA{g_j}=\Princ(Q)=\Princ(\Op(P)).\]
	Hence, the right square in \eqref{ses:pseudo_in_fix} commutes. 		
	For \(P\in\Psi^{-1}_{H,c}(M)\) the commutativity of the right square and exactness of the rows yields that \(\Op(P)\) extends to a compact operator on \(L^2(M)\).
\end{proof}
\begin{remark}
	In particular, we recover the result in \cite{davehaller}*{3.9} that \(H\)-pseudo\-differential operators of order \(0\) extend to bounded operators on \(L^2(M)\), while the ones of order \(-1\) define compact operators. 
\end{remark}
As the left and middle vertical arrows in \eqref{ses:pseudo_in_fix} are inclusions of \(^*\)-algebras, the quotient map \(\Theta\colon\Sigma^0_{H,c}\to\Fix^\Rp(\J_0,\cl{\Rel_0})\) must be a \(^*\)-homomorphism, too.  Therefore, \(\Fix^\Rp(\J_0,\cl{\Rel_0})\) is the \(\Cst\)-completion of \(\Sigma_{H,c}^0(M)\). 
\begin{corollary}
	Let \(\Cst(\Psi^0_{H,c}(M))\) denote the \(\Cst\)-closure of \(\Psi^0_{H,c}(M)\) inside \(\Bound(L^2M)\). Then \(\Cst(\Psi^0_{H,c}(M))\) is isomorphic to \(\Fix^\Rp(\J,\cl{\Rel})\). There is a short exact sequence of \(\Cst\)-algebras
	\begin{equation*}
	\begin{tikzcd}
	\Comp(L^2 M)\arrow[hook,r] & \Cst(\Psi^0_{H,c}(M))\arrow[r,twoheadrightarrow,"\Princ"] &\Cst(\Sigma^0_{H,c}(M)),
	\end{tikzcd}
	\end{equation*}
	such that \(\Princ\) extends the principal cosymbol map \(\princ[0]\colon \Psi^0_{H,c}(M)\to \Sigma^0_{H,c}(M)\).
\end{corollary}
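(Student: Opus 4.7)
The plan is to deduce the corollary from \cref{res:vanerp_in_fix} by a short five-lemma type argument applied to the extension \eqref{ses:fixedalg}. Using that \(\widetilde{p}_1\) is faithful (established in the lemma preceding \cref{res:fix_on_l2}), I would identify \(\Fix^\Rp(\J,\cl{\Rel})\) with its image in \(\Bound(L^2M)\) and regard \eqref{ses:fixedalg} as an extension of \(\Cst\)-subalgebras of \(\Bound(L^2M)\). By \cref{res:vanerp_in_fix}, \(\Cst(\Psi^0_{H,c}(M))\) is a \(\Cst\)-subalgebra of the middle term \(\widetilde{p}_1(\Fix^\Rp(\J,\cl{\Rel}))\), and it remains to show equality.

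First I would check that \(\Cst(\Psi^0_{H,c}(M))\) contains \(\Comp(L^2M)\). Any \(K\in\Cont^\infty_c(M\times M)\) defines a smoothing operator belonging to every order class \(\Psi^m_{H,c}(M)\), in particular to \(\Psi^0_{H,c}(M)\); since such operators are norm-dense in \(\Comp(L^2M)\), the inclusion \(\Comp(L^2M)\subset \Cst(\Psi^0_{H,c}(M))\) follows.

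Next I would verify that \(\Princ(\Cst(\Psi^0_{H,c}(M)))=\Fix^\Rp(\J_0,\cl{\Rel_0})\). Commutativity of the right square of \eqref{ses:pseudo_in_fix} shows that \(\Princ(\Op(P))=\Theta(\princ[0](P))\) for \(P\in\Psi^0_{H,c}(M)\), so the set of these cosymbols is exactly \(\Theta(\Sigma^0_{H,c}(M))\). Under the identifications of \cref{res:sigma=kernels} and \cref{res:type_zero} this is the \(^*\)-algebra \(\kernel^0(T_HM)\) of kernels of type zero, which is norm-dense in \(\Fix^\Rp(\J_0,\cl{\Rel_0})\). Since \(\Princ\) is a \(^*\)-homomorphism of \(\Cst\)-algebras, its image is closed; containing a dense \(^*\)-subalgebra forces it to equal \(\Fix^\Rp(\J_0,\cl{\Rel_0})\).

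The proof then concludes by the elementary fact that a \(\Cst\)-subalgebra \(B\) of a \(\Cst\)-algebra \(A\) containing a closed two-sided ideal \(I\idealin A\) with \(B/I=A/I\) must equal \(A\): given \(a\in A\), choose \(b\in B\) with \(b+I=a+I\), so that \(a-b\in I\subset B\) and hence \(a\in B\). Applying this with \(A=\widetilde{p}_1(\Fix^\Rp(\J,\cl{\Rel}))\), \(I=\Comp(L^2M)\) and \(B=\Cst(\Psi^0_{H,c}(M))\) yields \(\Cst(\Psi^0_{H,c}(M))\cong\Fix^\Rp(\J,\cl{\Rel})\); the asserted short exact sequence is then the image of \eqref{ses:fixedalg} under the faithful \(\widetilde{p}_1\), and that \(\Princ\) extends \(\princ[0]\) is exactly the commutativity of the right square of \eqref{ses:pseudo_in_fix}. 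The only delicate point is the closedness of \(\Princ(\Cst(\Psi^0_{H,c}(M)))\) in the quotient, which is automatic once one knows it equals a \(\Cst\)-algebra; everything else reduces to density of kernels of type zero in \(\Fix^\Rp(\J_0,\cl{\Rel_0})\), which was already established.
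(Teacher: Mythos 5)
Your proposal is correct and follows essentially the same route as the paper: both use \cref{res:vanerp_in_fix} for one inclusion, observe that \(\Cont^\infty_c(M\times M)\subset\Psi^0_{H,c}(M)\) gives \(\Comp(L^2M)\subset\Cst(\Psi^0_{H,c}(M))\), and then show every element of \(\widetilde{p}_1(\Fix^\Rp(\J,\cl{\Rel}))\) differs from an \(\Op(P)\) with the same cosymbol by a compact operator. The only cosmetic difference is that the paper runs this last step generator-by-generator on the elements \(\KET{f}\BRA{g}\), whereas you package it as surjectivity of \(\Princ\) onto \(\Fix^\Rp(\J_0,\cl{\Rel_0})\) (via density of \(\kernel^0(T_HM)\) and closedness of the range of a \(^*\)-homomorphism) followed by the standard ideal-containment lemma.
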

\begin{proof}
	We show that \(\Cst(\Psi^0_{H,c}(M))=\widetilde{p}_1(\Fix^\Rp(\J_0,\cl{\Rel}))\). The \(\Cst\)-algebra of  \(H\)-pseudodifferential operators of order \(0\) is contained in \(\widetilde{p}_1(\Fix^\Rp(\J_0,\cl{\Rel}))\) by \cref{res:vanerp_in_fix}. 
	
	For the converse, note first that \(\Comp(L^2M)\subset \Cst(\Psi^0_{H,c}(M))\). This holds as \(\Psi^0_{H,c}(M)\) contains the kernels in \(\Cont_c^\infty(M\times M)\) and these generate the compact operators on \(L^2(M)\). 		
	Now, let \(f,g\in\Rel\). Let \(u\in\Sigma^0_{H,c}(M)\) be the inverse of \(\KET{f_0}\BRA{g_0}\in\kernel^0(T_HM)\) under the map \(\Theta\) in \cref{res:sigma=kernels}. Since the principal cosymbol map is surjective, there is a \(P\in\Psi^0_{H,c}(M)\) with \(\princ[0](P)=u\).  Then the operator \[\widetilde{p}_1(\KET{f}\BRA{g})= \widetilde{p}_1(\KET{f}\BRA{g})-\Op(P)+\Op(P)\] is contained in \(\Cst(\Psi^0_{H,c}(M))\). This is because \(\Op(P)\) is and \(\widetilde{p}_1(\KET{f}\BRA{g})-\Op(P)\in\Comp(L^2M)\) as the diagram in \eqref{ses:pseudo_in_fix} commutes. The \(\Cst\)-algebra \(\Fix^\Rp(\J,\cl{\Rel})\) is generated by \(\KET{f}\BRA{g}\) with \(f,g\in\Rel\). Thus, the result follows. 
\end{proof}
The convolution algebra \(\Ess^0_{H,c}(M)\) can be completed into a \(\Cst\)-algebra.	
Consider the \(^*\)-representations \(\lambda_x\)  for \(x\in M\) given by the corresponding convolution operators, that is,
\[\lambda_x(u)f=u_x*f \quad\text{for }u\in\Ess^0_{H,c}(M)\text{ and }f\in\Schwartz(G_x).\]
To see that these extend to bounded operators on \(L^2(G_x)\), recall that by \cref{res:taylor=kernels} there is a \(w\in\kernel^0(T_HM)\) such that \(u-w\in\Cont^\infty_{\cp}(T_HM)\). 	
Let \(\chi\in\Cont^\infty_c(T_HM)\) be such that \(\chi\cdot u=u\). Then one can write \(u=\chi(u-w)+\chi w\). The first part is in \(\Cont^\infty_c(T_HM)\), so convolution with it extends to a bounded operator. As \(w_x\) extends to a continuous operator \(L^2(G_x)\to L^2(G_x)\) by \cite{knappstein}, also convolution with \(\chi_x w_x\) extends to a bounded operator by \cite{christ}*{2.10}. In fact, using the compact support in the base space, the proof of \cite{christ}*{2.10} yields a \(C>0\) with
\[\norm{\lambda_x(u)}\leq\norm{\chi(u-w)}+C\norm{w} \quad\text{for all }x\in M.\]
Thus, one can take the completion of \(\Ess^0_{H,c}(M)\) with respect to the norm
\[\norm{u}\defeq\sup_{x\in M}\norm{\lambda_x(u)} \quad\text{for }u\in\Ess^0_{H,c}(M).\]
Denote the resulting \(\Cst\)-algebra by \(\Cst(\Ess^0_{H,c}(M))\).	
The homomorphism \[\Theta\colon \Ess^0_{H,c}(M)/\Cont^\infty_c(T_HM)=\Sigma^0_{H,c}(M)\to\Fix^\Rp(\J_0,\cl{\Rel})\]
has dense image. Therefore, one obtains an extension of \(\Cst\)-algebras
\begin{equation}\label{eq:diskbundle}
\begin{tikzcd}
\Cst(T_HM) \arrow[r,hook]& \Cst(\Ess^0_{H,c}(M))\arrow[r,twoheadrightarrow] & 
\Fix^\Rp(\J_0,\cl{\Rel})
\end{tikzcd}
\end{equation}	
\begin{remark}
	For a step \(1\) filtration, the short exact sequence above corresponds under Fourier transform to the disk bundle extension
	\begin{equation*}
	\begin{tikzcd}
	\Cont_0(T^*M) \arrow[r,hook]& \Cont_0(B^*M)\arrow[r,twoheadrightarrow] & \Cont_0(S^*M).
	\end{tikzcd}
	\end{equation*}	
\end{remark}

Denote by \(\Pseu^0_{H,c}(M)|_{[0,1]}\) the \(^*\)-algebra obtained by restricting to \([0,1]\). It can be completed into a \(\Cst\)-algebra using the left regular representations \(\Op_t\) for \(t>0\) in the following way.
Let \(\P\in\Pseu^0_{H,c}(M)|_{[0,1]}\). Using \cref{res:esshom} one can find an element \(T\in\Fix^\Rp(\J,\cl{\Rel})\) with associated distributions \(K=(K_t)\) such that \(\P-K\in\Cont^\infty(\T_HM|_{[0,1]})\). Furthermore, there is a cutoff function \(\chi\in\Cont^\infty_c(\T_HM)\) that is \(1\) in a neighbourhood of the unit space such that \(\chi\P=\P\).  Then \(\chi(\P-K)\in\Cont^\infty_c(\T_HM|_{[0,1]})\). Moreover, \(\chi_t K_t\) extends to bounded operator for each \(t>0\) as \(K_t\) does and \((1-\chi_t)K_t\) is supported away from the diagonal and is, therefore, smooth and compactly supported. For \(t=0\), this was discussed above. It follows that
\[\norm{\Op_t(\P)}\leq\norm{\chi(\P-K)}+\norm{\chi K}\quad\text{for all }0<t\leq 1.\] Thus, one can complete \(\Pseu^0_{H,c}(M)|_{[0,1]}\) with respect to
\[\norm{\P}\defeq\max\left\{\sup_{0<t\leq 1}{\norm{\Op_t(\P)}},\norm{\P_0}\right\} \quad\text{for }\P\in\Pseu^0_{H,c}(M)|_{[0,1]}.\]
We will denote the \(\Cst\)-completion by \(\Cst(\Pseu^0_{H,c}(M))\).	As elements of \(\Pseu^0_{H,c}(M)\) are continuous families of distributions, \(\Cst(\Pseu^0_{H,c}(M))\) is a continuous field of \(\Cst\)\nb-algebras over \([0,1]\). Note that \(\Cst(\T_HM|_{[0,1]})\) is an ideal in \(\Cst(\Pseu^0_{H,c}(M))\).
\begin{lemma}\label{res:kernelcompact}
	The kernel of the homomorphism
	\[\mathbb{S}^0_H\colon\Cst(\Pseu^0_{H,c}(M))\to \Fix^\Rp(\J_0,\cl{\Rel_0})\]
	induced by \(\P\mapsto[\P_0]\) is \(\Cst(\T_HM|_{[0,1]})\).
\end{lemma}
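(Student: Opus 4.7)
The plan is to use the continuous field structure of $\Cst(\Pseu^0_{H,c}(M))$ over $[0,1]$ to identify both $\ker\mathbb{S}^0_H$ and $\Cst(\T_HM|_{[0,1]})$ as $\Cont([0,1])$-ideals with matching fibres. First I factor $\mathbb{S}^0_H$ through the fibre projections of the continuous field: for $t=0$, write $\mathbb{S}^0_H=\pi\circ\ev_0$, where $\ev_0\colon\Cst(\Pseu^0_{H,c}(M))\to\Cst(\Ess^0_{H,c}(M))$ is the fibre projection and $\pi$ is the surjection in~\eqref{eq:diskbundle} with $\ker\pi=\Cst(T_HM)$. For $t\in(0,1]$ I identify $\ev_t$ with the $\Cst$-extension of $\Op_t$, taking values in $\Cst(\Psi^0_{H,c}(M))\subseteq\Bound(L^2M)$; the commutativity of~\eqref{ses:pseudo_in_fix}, extended by density from $\Pseu^0_{H,c}(M)|_{[0,1]}$, then gives $\mathbb{S}^0_H=\Princ\circ\ev_t$, with $\ker\Princ=\Comp(L^2M)$ from the previous corollary.

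The inclusion $\Cst(\T_HM|_{[0,1]})\subseteq\ker\mathbb{S}^0_H$ is immediate from the $\ev_0$-factorisation, since $\ev_0$ sends $\Cst(\T_HM|_{[0,1]})$ into $\Cst(T_HM)=\ker\pi$. For the reverse inclusion I observe that both ideals are $\Cont([0,1])$-submodule ideals of $\Cst(\Pseu^0_{H,c}(M))$: for $\Cst(\T_HM|_{[0,1]})$ this holds by construction as a sub-continuous field, and for $\ker\mathbb{S}^0_H$ it follows from the identity $\mathbb{S}^0_H(\chi\cdot\P)=\chi(0)\mathbb{S}^0_H(\P)$ for every $\chi\in\Cont([0,1])$ and $\P\in\Cst(\Pseu^0_{H,c}(M))$, since the principal cosymbol only sees the value at $t=0$. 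Their fibres coincide: at $t=0$ both reduce to $\Cst(T_HM)$ (by definition on the one side, and by $\ker\pi=\Cst(T_HM)$ on the other); for $t\in(0,1]$ both reduce to $\Comp(L^2M)$ (by definition, respectively by $\ker\Princ=\Comp(L^2M)$). Since a $\Cont([0,1])$-submodule ideal in a continuous field of $\Cst$-algebras is uniquely determined by its fibrewise data, the two ideals agree, proving the lemma.

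The main obstacle I anticipate is the careful identification of the fibre projections themselves: verifying that $\ev_0$ surjects onto $\Cst(\Ess^0_{H,c}(M))$, and that for $t\in(0,1]$ the fibre of the continuous field identifies with $\Cst(\Psi^0_{H,c}(M))$ with the prescribed operator norm. The former uses the extension construction recalled just before \cref{res:kernelcompact} (any $u\in\Ess^0_{H,c}(M)$ lifts to an $\P\in\Pseu^0_{H,c}(M)|_{[0,1]}$ via an exponential cutoff applied to a constant-in-$t$ distribution), while the latter uses that the zoom action implements isomorphisms between the fibres over different $t>0$. Once these structural statements are in place, the fibrewise comparison above yields the result without further calculation.
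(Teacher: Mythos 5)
Your argument is correct in substance but follows a genuinely different route from the paper. The paper proves the hard inclusion by exhibiting generators of the kernel: it observes that \(\ker\mathbb{S}^0_H\) is generated by \(\Pseu^{-1}_{H,c}(M)|_{[0,1]}\), and for such a \(\P\) it traces back through the construction of \cref{res:esshom} to produce an \(f\in\Rel\) vanishing at \(t=0\), whence \(\Op(\P_t)\in\Comp(L^2M)\) for \(t>0\) and \(\P_0\in\Cst(T_HM)\); it then concludes pointwise membership in \(\Cst(\T_HM|_{[0,1]})\) implies membership via \cite{dixmier}*{10.4.2}. You instead avoid the explicit generators entirely by factoring \(\mathbb{S}^0_H=\pi\circ\ev_0=\Princ\circ\ev_t\), so that \(\ker\mathbb{S}^0_H=\ev_t^{-1}(\Comp(L^2M))=\ev_0^{-1}(\Cst(T_HM))\), and then comparing fibrewise with \(\Cst(\T_HM|_{[0,1]})\). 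Unwound, your final step (a \(\Cont([0,1])\)-submodule ideal is determined by its fibrewise images) is the same continuous-field principle the paper invokes, so the two proofs converge there; what you gain is that the fibre computation is done abstractly from \eqref{eq:diskbundle} and the corollary to \cref{res:vanerp_in_fix} rather than by re-entering the analysis of \cref{res:esshom}, at the price of having to verify the structural identifications you flag at the end. One point worth making explicit in your version: the \(\Cont([0,1])\)-module structure on \(\Cst(\Pseu^0_{H,c}(M))\) is not visible on the dense subalgebra \(\Pseu^0_{H,c}(M)|_{[0,1]}\) itself, since multiplication by a function of \(t\) destroys essential homogeneity of weight \(0\) (it shifts the weight, cf.\ the map \(\P\mapsto t\P\) implementing \(\Psi^{m-1}_{H,c}(M)\hookrightarrow\Psi^m_{H,c}(M)\)); it is only available because the completion is asserted to be a continuous field in the sense of Dixmier, hence locally uniformly closed. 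This is the same hypothesis the paper's appeal to \cite{dixmier}*{10.4.2} rests on, so it is not an additional gap, but your identity \(\mathbb{S}^0_H(\chi\cdot\P)=\chi(0)\mathbb{S}^0_H(\P)\) should be justified via \(\ev_0(\chi\cdot\P)=\chi(0)\ev_0(\P)\) on the completion rather than by a computation on the dense subalgebra.
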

\begin{proof}
	Since \(\Cst(\T_HM|_{[0,1]})\) is generated by \(\Cont^\infty_c(\T_HM|_{[0,1]})\) which is contained in the kernel of \(\mathbb{S}^0_H\), the first inclusion follows. For the converse inclusion, note that the kernel is generated by \(\Pseu^{-1}_{H,c}(M)|_{[0,1]}\). Let \(\P\in\Pseu^{-1}_{H,c}(M)|_{[0,1]}\). Tracing back the construction in \cref{res:esshom} one can take \(f\in\Rel\) such that \(f\) vanishes at \(t=0\). It follows that the corresponding element \(T\in\Fix^\Rp(\J,\cl{\Rel})\) defines a compact operator. Therefore, \(\Op(\P_t)\in\Comp(L^2M)\) for all \(t>0\). By assumption \(\P_0\in\Cont^\infty_c(T_HM)\subset\Cst(T_HM)\) holds, so that \(\P\) belongs  pointwise to \(\Cst(\T_HM)\). As \(\Cst(\Pseu^0_{H,c}(M))\) is a continuous field, it follows \(\P\in\Cst(\T_HM)\) by \cite{dixmier}*{10.4.2}. 
\end{proof}

\section{Morita equivalence}\label{sec:morita}

In this section, we will show that \((\J,\cl{\Rel})\) and \((\J_0,\cl{\Rel_0})\) are saturated for the zoom action of~\(\Rp\). Therefore, for each filtered manifold \((M,H)\) the \(\Cst\)\nb-algebras of order zero pseudodifferential operators \(\Fix^\Rp(\J,\cl{\Rel})\) and principal cosymbols \(\Fix(\J_0,\cl{\Rel_0})\) are Morita--Rieffel equivalent to \(\Cred(\Rp,\J)\) and \(\Cred(\Rp,\J_0)\), respectively. For the Euclidean scalings this is a result of \cite{debordskandalis2014}. 

First, recall the following result. For a graded Lie group \(G\) denote \(J_G=\ker(\widehat{\pi}_\triv)\) and \(\Rel_G\) the space of all functions \(f\in\Schwartz(G)\) with \(\int_Gf(x)\diff x=0\).

\begin{proposition}[\cite{ewert2020pseudodifferential}*{8.3}]\label{res:pointwisesaturated}
	For a graded Lie group \(G\) the \(\Rp\)-\(\Cst\)-algebra  \((J_G,\cl{\Rel_G})\) is saturated for the dilation action of \(\Rp\). 		
	The generalized fixed point algebra \(\Fix^\Rp(J_G,\cl{\Rel_G})\) is Morita--Rieffel equivalent to \(\Cred(\Rp, J_G)\).
\end{proposition}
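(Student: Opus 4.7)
The proposition has two claims, and the Morita--Rieffel equivalence follows automatically from saturation via the general imprimitivity statement in \cite{meyer2001}*{6.10}, so the real content is saturation: the closed two-sided ideal \(\mathcal{I} \idealin \Cred(\Rp, J_G)\) generated by the brackets \(\BRAKET{a}{b}\) for \(a, b \in \cl{\Rel_G}\) is all of \(\Cred(\Rp, J_G)\). Since \(\Rp\) is amenable, \(\Cred(\Rp, J_G) = \Cst(\Rp, J_G)\), so equivalently one must show that no non-trivial irreducible representation of the crossed product annihilates every bracket.

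I would proceed spectrally. By Kirillov's orbit method for the nilpotent Lie group \(G\), there is a canonical homeomorphism \(\widehat{G} \cong \lie{g}^*/G\) onto the space of coadjoint orbits, under which the trivial representation corresponds to the orbit \(\{0\}\); hence \(\widehat{J_G} \cong (\lie{g}^* \setminus \{0\})/G\). The dilation action of \(\Rp\) is \(i\)-homogeneous on each graded piece \(\lie{g}_i^*\) with \(0\) the unique fixed point, so it descends to a free and proper \(\Rp\)-action on \(\widehat{J_G}\). Since \(G\) is type I, irreducible representations of \(\Cst(\Rp, J_G)\) are accounted for by the Mackey machine: each \(\Rp\)-orbit \([\pi]\) in \(\widehat{J_G}\) produces exactly one induced representation \(\Ind(\pi)\), with trivial stabilizer.

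To verify that \(\mathcal{I}\) meets every such primitive ideal, for each non-trivial \(\pi \in \widehat{G}\) I would pick \(a \in \Rel_G\) with \(\pi(a) \neq 0\); such \(a\) exists because \(\Rel_G\) is dense in \(J_G\) and \(\pi|_{J_G}\) is non-zero. A direct computation then shows \(\Ind(\pi)(\BRAKET{a}{a}) \neq 0\): the induced operator is essentially a continuous wavelet-type transform of \(\pi(a)\) over the dilations, and its vanishing would force \(\pi(a) = 0\) by a Calder\'on-style inversion on \(\Rp\). Combined with freeness and properness of the \(\Rp\)-action on \(\widehat{J_G}\), this exhausts the primitive ideal spectrum and gives \(\mathcal{I} = \Cst(\Rp, J_G)\).

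The main obstacle, and what makes this subtler than the Euclidean case of Debord--Skandalis, is the non-commutativity of \(\Cst(G)\), which prevents a direct appeal to Green's imprimitivity theorem for proper actions on commutative \(\Cst\)-algebras, and forces one to deal with the Mackey machine for a non-type-I-quotient situation carefully. An alternative, more hands-on route that bypasses the Mackey machinery altogether is to build an approximate unit for \(\Cst(\Rp, J_G)\) directly inside \(\mathcal{I}\) by averaging brackets: for a suitably chosen \(a \in \Rel_G\) one studies \(\int_0^\infty \chi_i(\lambda)\, \sigma_\lambda(a^* * a)\, \tfrac{\diff\lambda}{\lambda}\) and shows that these operators converge strictly to a strictly positive multiplier of \(J_G\). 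Controlling this convergence, uniformly in \(\pi\), is where the real technical work lies, and this is essentially the route pursued in \cite{ewert2020pseudodifferential}.
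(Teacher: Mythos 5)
A preliminary remark: the paper does not prove this proposition at all --- it is imported verbatim from \cite{ewert2020pseudodifferential}*{8.3} and used as a black box in the proof of \cref{res:morita} --- so there is no in-paper argument to compare against, and I assess your sketch on its own terms. Your overall architecture is sensible: the Morita--Rieffel equivalence does follow formally from saturation via \cite{meyer2001}*{6.10}, amenability of \(\Rp\) identifies the reduced and full crossed products, and saturation reduces to showing that no primitive ideal of \(\Cst(\Rp,J_G)\) contains all brackets \(\BRAKET{a}{b}\) with \(a,b\in\Rel_G\).

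There are, however, two genuine gaps. First, the dilation action on \(\widehat{J_G}=\widehat{G}\setminus\{\pi_\triv\}\) is free --- though even this requires an argument, e.g.\ via closedness of coadjoint orbits of nilpotent Lie groups --- but it is \emph{not} proper: the orbit space is non-Hausdorff in general, and already for the Heisenberg group the closure of the \(\Rp\)-orbit of a Schr\"odinger representation contains every non-trivial character. Consequently the step you treat as automatic, namely that every irreducible representation of \(\Cst(\Rp,J_G)\) is (up to equivalence) an induced representation \(\Ind(\pi)\) with \(\pi\in\widehat{J_G}\), is precisely where the work lies: one needs the Effros--Hahn/Gootman--Rosenberg theorem together with local closedness of the orbits, or else an induction over a composition series of \(\Cst(G)\) whose subquotients are of the form \(\Cont_0(\Lambda)\otimes\Comp\) with \(\Lambda\) Hausdorff and the \(\Rp\)-action on it genuinely free and proper, combined with the permanence of saturatedness under equivariant extensions (\cite{ewert2020pseudodifferential}*{2.17}). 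Second, the ``Calder\'on-style inversion'' is a red herring: once one knows that the primitive ideal in question is \(\ker\Ind(\pi)\), positivity gives \(\Ind(\pi)(\BRAKET{a}{a})=0\) if and only if \(\pi(\alpha_\lambda(a))=0\) for all \(\lambda>0\), and density of \(\Rel_G\) in \(J_G\) then finishes the argument; no uniform control ``in \(\pi\)'' is needed. Your alternative approximate-unit route is not substantiated, and I would not assert that it is the route actually pursued in \cite{ewert2020pseudodifferential}.
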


From this we deduce saturatedness for the respective ideals in the \(\Cst\)-algebras of the osculating groupoid and the tangent groupoid. 
\begin{proposition}\label{res:morita}
	Let \((M,H)\) be a filtered manifold.
	The \(\Cst\)\nb-algebra of order~\(0\) principal cosymbols \(\Fix^\Rp(\J_0,\cl{\Rel_0})\) is Morita--Rieffel equivalent to \(\Cred(\Rp,\J_0)\). The \(\Cst\)\nb-algebra  of order \(0\) pseudodifferential operators \(\Fix^\Rp(\J,\cl{\Rel})\) is Morita--Rieffel equivalent to \(\Cred(\Rp,\J)\). 
\end{proposition}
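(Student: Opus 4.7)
By \cite{meyer2001}*{6.10}, the generalized fixed point algebra $\Fix^\Rp(A,\Rel)$ is Morita--Rieffel equivalent to the ideal in $\Cred(\Rp,A)$ generated by $\BRAKET{\Rel}{\Rel}$. The claim therefore reduces to proving \emph{saturation} of the two continuously square-integrable structures $(\J_0,\cl{\Rel_0})$ and $(\J,\cl{\Rel})$, that is, that the respective brackets generate the whole reduced crossed product. Since $\Rp$ is amenable, reduced and full crossed products coincide and the extension \eqref{ses:ideals} induces a short exact sequence of crossed products.

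For $(\J_0,\cl{\Rel_0})$ I would exploit the continuous field structure of \cref{res:fix_0_isfield}. The algebra $\J_0$ is a $\Cont_0(M)$-algebra with fibers $J_x=\ker(\widehat{\pi}_\triv)\idealin \Cst(G_x)$ and, because the zoom action fixes the base variable, the reduced crossed product $\Cred(\Rp,\J_0)$ inherits a $\Cont_0(M)$-algebra structure with fibers $\Cred(\Rp,J_x)$. The fiber projection $q_x$ sends $\Rel_0$ onto a dense subset of $\Rel_x$, hence the ideal $\mathcal{I}\idealin\Cred(\Rp,\J_0)$ generated by $\BRAKET{\Rel_0}{\Rel_0}$ has fiberwise image containing $\BRAKET{\Rel_x}{\Rel_x}$, which is dense in $\Cred(\Rp,J_x)$ by the pointwise saturation \cref{res:pointwisesaturated}. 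Since $\mathcal{I}$ is $\Cont_0(M)$-invariant and has full fiber at every point, a standard partition-of-unity argument for $\Cont_0(M)$-algebras forces $\mathcal{I}=\Cred(\Rp,\J_0)$.

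For $(\J,\cl{\Rel})$ I plan to combine the previous step with a direct argument on the kernel. Let $\mathcal{J}$ be the ideal in $\Cred(\Rp,\J)$ generated by $\BRAKET{\Rel}{\Rel}$. Since $p_0(\Rel)=\Rel_0$, the image of $\mathcal{J}$ under the surjection $\Cred(\Rp,\J)\twoheadrightarrow\Cred(\Rp,\J_0)$ contains $\BRAKET{\Rel_0}{\Rel_0}$ and is therefore all of $\Cred(\Rp,\J_0)$ by the first step. For the kernel part, \cref{res:kernelev} identifies the zoom action on $\Cont_0(\Rp)\otimes\Comp(L^2M)$ with $\tau\otimes 1$, where $\tau$ is the \emph{free and proper} scaling action of $\Rp$ on itself; saturation of $(\Cont_0(\Rp),\cl{\Cont_c(\Rp)})$ for this action is classical and, after tensoring with $\Comp(L^2M)$, yields the isomorphism
\[
\Cred\bigl(\Rp,\Cont_0(\Rp)\otimes\Comp(L^2M)\bigr)\cong\Comp\bigl(L^2(\Rp\times M)\bigr).
\]
An ideal five-lemma for the extension of crossed products then forces $\mathcal{J}=\Cred(\Rp,\J)$.

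The main obstacle will be verifying this last step using the specific dense subset $\tilde{\Rel}=\cl{\Rel}\cap(\Cont_0(\Rp)\otimes\Comp(L^2M))$ inherited from $\Rel$, rather than an ad hoc continuously square-integrable structure on the ideal. Concretely, I would produce enough elements of $\Rel$ supported strictly away from $t=0$ --- of the form $(\chi\circ\theta)\cdot g$ with $\chi\in\Cont_c^\infty(\Rp)$ and $g\in\Cont_c^\infty(M\times M\times\Rp)$ --- and check via the explicit formula for $\BRAKET{\cdot}{\cdot}$ that their brackets span a dense ideal of $\Comp(L^2(\Rp\times M))$. Once both saturations are in hand, \cite{meyer2001}*{6.10} immediately delivers the stated Morita--Rieffel equivalences with $\Cred(\Rp,\J_0)$ and $\Cred(\Rp,\J)$.
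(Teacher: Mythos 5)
Your proposal follows essentially the same route as the paper: reduce both claims to saturation, handle \(\J_0\) fibrewise via \cref{res:pointwisesaturated} (the paper cites \cite{ewert2020pseudodifferential}*{2.26} for the passage from saturated fibres to a saturated continuous field, which is exactly your partition-of-unity argument), and handle \(\J\) by combining saturation of the quotient with saturation of the kernel ideal through the extension \eqref{ses:ideals} (the paper cites \cite{ewert2020pseudodifferential}*{2.17} for what you call the ideal five-lemma). The one step you leave open --- and correctly flag as the main obstacle --- is verifying saturation of the kernel with the \emph{inherited} subset \(\tilde{\Rel}=\cl{\Rel}\cap(\Cont_0(\Rp)\otimes\Comp(L^2M))\) rather than an ad hoc one. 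The paper closes this without any explicit bracket computation: by \cref{res:kernelev} the action on the kernel is \(\tau\otimes 1\) with \(\tau\) the translation action of \(\Rp\) on itself, which is proper, so by \cite{meyer2001}*{9.4} the dense, relatively continuous, complete subspace is \emph{unique}; hence \(\tilde{\Rel}\) automatically agrees with the canonical one, and freeness of \(\tau\) then gives saturation. Your plan of exhibiting enough elements of \(\Rel\) supported away from \(t=0\) and computing their brackets in \(\Comp(L^2(\Rp\times M))\) would likely also work, but the uniqueness argument makes that computation unnecessary; as written, your proof is incomplete precisely at that point.
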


\begin{proof}
As all fibres \((J_{x},\cl{\Rel_x})\) are saturated by \cref{res:pointwisesaturated},   \((\J_0,\cl{\Rel_0})\) is saturated by \cite{ewert2020pseudodifferential}*{2.26}. Therefore, the generalized fixed point algebra construction gives the Morita--Rieffel equivalence between \(\Fix^\Rp(\J_0,\cl{\Rel_0})\) and \(\Cred(\Rp,\J_0)\).
	
	 We show that \((\Cont_0(\Rp)\otimes\Comp(L^2M),\cl{\Rel}\cap\Cont_0(\Rp)\otimes\Comp(L^2M)) \) is saturated. The second claim follows then from the result on saturatedness for short exact sequences in \cite{ewert2020pseudodifferential}*{2.17} applied to the sequence \eqref{ses:ideals}. By \cref{res:kernelev} the \(\Rp\)-action on \(\Cont_0(\Rp)\otimes\Comp(L^2M) \)is given by \(\tau\otimes 1\), where \(\tau\) is induced by the action of \(\Rp\) on itself by multiplication. As \(\tau\) is proper \(\cl{\Rel}\cap (\Cont_0(\Rp)\otimes\Comp(L^2M))\) is the unique dense, relatively continuous and complete subspace by \cite{meyer2001}*{9.4}. As \(\tau\) is also a free action, it is saturated (see the preprint version of \cite{rieffel1998} and in \cite{anhuef2002}*{4.1}). The Morita--Rieffel equivalence follows again from the generalized fixed point algebra construction. 
\end{proof}
\section{\(\K\)-theory and index theory}\label{sec:k-theory_index}
In this section, we examine when an operator on a filtered manifold is elliptic in an appropriate sense. The short exact sequence~\eqref{ses:fixedalg} yields that \(P\in\Psi^0_H(M)\) is Fredholm if and only if its principal cosymbol \(S_H(P)\) is invertible in \(\Fix^\Rp(J_0,\cl{\Rel_0})\).
\begin{definition}
	A pseudodifferential operator \(P\in\Psi^0_H(M)\) is \emph{\(\Cst\)-\(H\)-elliptic} if its principal cosymbol \(S_H(P)\in\Fix^\Rp(J_0,\cl{\Rel_0})\) is invertible. 
\end{definition}
\subsection{Rockland condition and \(H\)-ellipticity}
The goal of this section is to understand \(\Cst\)-\(H\)-ellipticity better. Moreover, 
\(\Cst\)-\(H\)-ellipticity is compared to \(H\)-ellipticity, which was defined by van Erp and Yuncken. We also discuss the relation to the Rockland condition. 
\begin{definition}[\cite{erp2017tangent}*{54}]
	An operator \(P\in\Psi^m_{H}(M)\) on a compact filtered manifold \((M,H)\) is \emph{\(H\)-elliptic} if its principal cosymbol \(\princ[m](P)\) is invertible in \(\Smooth'_r(T_HM)/\Cont^\infty_c(T_HM)\).
\end{definition}
If \(P\in\Psi^m_H(M)\) is \(H\)-elliptic, it admits a two-sided parametrix \(Q\in\Psi^{-m}_H(M)\), that is, \(PQ-1,QP-1\in\Cont^\infty_{c}(M\times M)\), see \cite{erp2017tangent}*{60}. If \(P\) is an \(H\)-elliptic differential operator, this implies that \(P\) is hypoelliptic \cite{erp2017tangent}*{61}. 
For \(m=0\), \(P\) is \(H\)-elliptic if and only if \(\princ[0](P)\) is invertible in~\(\Sigma^0_{H}(M)\). This follows from \cite{erp2017tangent}*{55}.

The \(\Cst\)\nb-algebra of principal cosymbols \(\Fix^\Rp(\J_0,\cl{\Rel_0})\) is a continuous field of \(\Cst\)-algebras over~\(M\) with fibres \(\Fix^\Rp(J_x,\cl{\Rel_x})\) by \cref{res:fix_0_isfield}. 
\begin{lemma}
	Let \(A\) be a unital \(\Cst\)-algebra. Suppose \(A\) is a continuous field of \(\Cst\)-algebras over a compact Hausdorff space \(X\) with fibre projections \(q_x\colon A\to A_x\). Then \(a\in A\) is invertible if and only if \(a_x\defeq q_x(a)\) is invertible for all \(x\in X\).  
\end{lemma}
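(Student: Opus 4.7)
The plan is to prove the nontrivial direction (``\(a_x\) invertible for all \(x\) implies \(a\) invertible'') by a standard partition-of-unity argument for \(\Cont(X)\)-algebras; the other direction is immediate, since if \(ab = ba = 1\) in \(A\), then \(q_x(b)\) is a two-sided inverse for \(q_x(a)\) in \(A_x\).

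For the reverse direction, fix \(a \in A\) with each \(a_x\) invertible. First, for each \(x \in X\), I would lift \(a_x^{-1}\) to some \(\tilde b_x \in A\) using surjectivity of \(q_x\); then \(q_x(a\tilde b_x - 1) = 0\). By upper semi-continuity of the fibre norms, which is part of the continuous field structure, the set \(U_x \defeq \{y \in X : \|q_y(a\tilde b_x - 1)\| < 1/2\}\) is an open neighbourhood of \(x\). Since \(X\) is compact, finitely many \(U_{x_1},\ldots,U_{x_n}\) cover \(X\); let \((\phi_i)_{i=1}^n\) be a subordinate partition of unity. The continuous field structure endows \(A\) with a \(\Cont(X)\)-module action, so I can define \(c \defeq \sum_{i=1}^n \phi_i \tilde b_{x_i} \in A\).

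For any \(y \in X\), the element \(q_y(ac - 1) = \sum_i \phi_i(y)\,q_y(a\tilde b_{x_i} - 1)\) is a convex combination (using \(\sum_i \phi_i(y) = 1\)) of vectors of norm less than \(1/2\), since only indices with \(y \in U_{x_i}\) contribute. Hence \(\|q_y(ac - 1)\| < 1/2\) for every \(y\), and because the \(\Cst\)-norm on a continuous field over compact \(X\) equals the supremum of the fibre norms, \(\|ac - 1\| \leq 1/2 < 1\). Therefore \(ac\) is invertible in the unital algebra \(A\), so \(a\) admits a right inverse; a symmetric argument, replacing \(a\tilde b_x\) by \(\tilde b_x a\), produces a left inverse, and the two must then coincide. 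The only substantive point in the argument is isolating the two ingredients supplied by the continuous field structure, namely upper semi-continuity of \(y \mapsto \|q_y(\cdot)\|\) and the \(\Cont(X)\)-module action needed to form \(\sum_i \phi_i \tilde b_{x_i}\); both are automatic once the field structure is in place, so no further obstacle arises.
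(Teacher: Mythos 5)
Your proof is correct and follows essentially the same route as the paper: both directions lift fibrewise inverses via surjectivity of the \(q_x\), use continuity of the norm functions and compactness of \(X\) to produce a finite cover with a subordinate partition of unity, and conclude with a Neumann series argument, treating left and right inverses symmetrically. The only cosmetic difference is that the paper inverts \(ab\) locally on each \(U_x\) and glues the exact local right inverses \(b(ab)^{-1}\), whereas you glue the approximate inverses into \(c=\sum_i\phi_i\tilde b_{x_i}\) first and apply the Neumann series once globally to \(ac\).
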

\begin{proof}
	Clearly, invertibility of \(a\) implies that \(q_x(a)\) is invertible for all \(x\in M\). 		
	Conversely, suppose that \(q_x(a)\) is invertible at every point \(x\in M\). Fix \(x\in X\) and let \(b_x\in A_x\) be an inverse of \(a_x\). As \(q_x\) is surjective, there is a \(b\in A\) with \(q_x(b)=b_x\). Let \(c\defeq 1-ab\). Because \(q_x(c)=0\), one can find an open neighbourhood \(U_x\) of \(x\) such that \(\norm{q_y(c)}\leq 1/2\) for all \(y\in U_x\) by continuity. By the von Neumann series \(ab\) is locally invertible on \(U_x\). Therefore, \(b(ab)^{-1}\) is right inverse to \(a\) on \(U_x\). Using a continuous partition of unity which is subordinate to the open cover \(U_x\) of \(X\), one can glue together the local inverses to a global right inverse of \(a\). Similarly, one can construct a left inverse of \(a\). It follows that \(a\) is invertible. 
\end{proof}
Therefore, \(\Cst\)-\(H\)-ellipticity is a pointwise condition.
\begin{corollary}\label{res:ellipticispointwise}
	Let \((M,H)\) be a compact filtered manifold. A principal cosymbol \(u\in\Fix^\Rp(\J_0,\cl{\Rel_0})\) is \(\Cst\)-\(H\)-elliptic if and only if \(u_x\in\Fix^\Rp(J_x,\cl{\Rel_x})\) is invertible for all \(x\in M\).
\end{corollary}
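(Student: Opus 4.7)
The proof is a direct application of the preceding lemma to the continuous field structure exhibited in \cref{res:fix_0_isfield}. Setting $A \defeq \Fix^\Rp(\J_0,\cl{\Rel_0})$, that proposition tells us that $A$ is a continuous field of $\Cst$-algebras over the compact Hausdorff space $M$ with fibre projections $\widetilde{q}_x\colon A \to A_x \defeq \Fix^\Rp(J_x,\cl{\Rel_x})$. The only mild complication is that the preceding lemma requires a unital algebra, while $A$ need not be unital; the natural fix is to pass to the minimal unitization.

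I would proceed as follows. Form the unitization $A^+$, which inherits a continuous field structure over $M$ with fibres $A_x^+$ and fibre projections $\widetilde{q}_x^+ = \widetilde{q}_x \oplus \Id_\C$. This step is routine for continuous $\Cst$-bundles over compact Hausdorff bases. Since invertibility of a principal cosymbol $u \in A$ is naturally interpreted in $A^+$, I apply the preceding lemma to the unital continuous field $A^+$; this yields immediately that $u$ is invertible in $A^+$ if and only if $\widetilde{q}_x^+(u) = (\widetilde{q}_x(u),1)$ is invertible in $A_x^+$ for every $x\in M$, which unpacks to the stated condition that $u_x$ is invertible for each $x$.

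There is essentially no substantive obstacle: all the content sits in the preceding lemma, and this corollary merely records that the continuous-field philosophy applies to the specific algebra $\Fix^\Rp(\J_0,\cl{\Rel_0})$. The only bookkeeping worth an explicit remark is compatibility of unitization with the fibre structure, which is standard.
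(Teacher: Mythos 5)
Your core idea --- apply the preceding lemma to the continuous field structure from \cref{res:fix_0_isfield} over the compact base \(M\) --- is exactly the paper's argument. The problem is the unitization detour, which is not only unnecessary but incorrectly executed. If \(A\defeq\Fix^\Rp(\J_0,\cl{\Rel_0})\) were genuinely non-unital, then \(A\) would be a proper closed ideal in \(A^+\), and no element of a proper ideal is ever invertible in the ambient unital algebra; so \emph{no} \(u\in A\) would be invertible in \(A^+\), and your reduction would render the statement vacuous rather than prove it. Relatedly, for \(u\in A\subset A^+\) one has \(\widetilde{q}_x^+(u)=(\widetilde{q}_x(u),0)\), not \((\widetilde{q}_x(u),1)\) as you write; the element \((u_x,0)\) is never invertible in \(A_x^+\), so the claimed ``unpacking'' to invertibility of \(u_x\) in \(A_x\) does not go through. (There is also a smaller issue: \(A^+\) is not naturally a \(\Cont(M)\)-algebra, since the adjoined unit does not lie in the image of \(\Cont(M)\cdot A\); the honest fibrewise unitization would live inside \(\Mult(A)\). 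But this is moot.)

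The missing observation that makes the corollary work is that, for \(M\) compact, \(\Fix^\Rp(\J_0,\cl{\Rel_0})\) and each fibre \(\Fix^\Rp(J_x,\cl{\Rel_x})\) are already unital: the identity operator is the fibred delta distribution along the unit section of \(T_HM\), which is homogeneous of degree \(0\) and hence a kernel of type \(0\), so it lies in \(\mathcal{K}^0(T_HM)\subset\Fix^\Rp(\J_0,\cl{\Rel_0})\) by \cref{res:type_zero} (this is also implicit in the very definition of \(\Cst\)-\(H\)-ellipticity, which speaks of invertibility in this algebra). The surjections \(\widetilde{q}_x\) are then automatically unital. With this in hand the preceding lemma applies verbatim, with no unitization needed.
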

Consider now the Rockland condition as described in \cite{christ1992pseudo}, see also \cite{pongeheisenberg}*{Sec.~3.3.2} or \cite{davehaller}*{Sec.~3.4}. It generalizes the Rockland condition for differential operators defined in~\cite{rockland}. 

Let~\(G\) be a graded Lie group of homogeneous dimension \(Q\) and \(m\in\Z\). Let \(u\in\Smooth'(G)/\Cont^\infty_c(G)\) be \(m\)-homogeneous. It was shown in \cite{davehaller}*{3.8} that the class \(u\in\Smooth'(G)/\Cont^\infty_c(G)\) can be uniquely represented by a kernel \(a\) of type~\(-m\) if \(-m-Q\notin\N_0\). For \(m=0\) this is \cref{res:sigma=kernels}. If \(-m-Q\in\N_0\), it can be represented by \(a=k+p\log(\norm{x})\) with \(k\in\kernel^{-m}(G)\) and a \((-m-Q)\)-homogeneous polynomial~\(p\). This representation is not necessarily unique. However, the map \(\Schwartz_{0}(G)\to\Schwartz_{0}(G)\) given by \(f\mapsto a*f\) does not depend on the chosen representative \(a\) of \(u\) (see~\cite{davehaller}*{Sec.~3.4}). Here, \(\Schwartz_0(G)\) consists of \(f\in\Schwartz(G)\) with \(\int_Gx^\alpha f(x)\diff x\) for all \(\alpha\in\N^n_{0}\).

For a unitary, irreducible representation \(\pi\colon G\to\Uni(\Hils_\pi)\), let \(\Hils^0_\pi\) be spanned by \(\widehat{\pi}(f)v\) for \(f\in\Schwartz_0(G)\) and \(v\in\Hils_\pi\). The operator \(\pi(u)\) defined on \(\Hils^0_\pi\) by
\[\pi(u)(\widehat{\pi}(f)v)\defeq \widehat{\pi}(a*f)v \quad\text{for }f\in\Schwartz_0(G)\text{, }v\in\Hils_\pi,\]
is closable. Denote its closure by \(\cl{\pi(u)}\). 
\begin{definition}
	Let \(G\) be a graded Lie group and let \(u\in\Smooth'(G)/\Cont^\infty_c(G)\) be homogeneous. Then~\(u\) satisfies the \emph{Rockland condition} if \(\cl{\pi(u)}\) is injective on \(\Hils^\infty_\pi\) for all \(\pi\in \widehat{G}\!\setminus\! \{\pi_{\triv}\}\). Here, \(\Hils^\infty_\pi\) denotes the space of smooth vectors. 
\end{definition}

\begin{example}
	Recall that for a usual compact manifold of dimension \(n\) the osculating groups are isomorphic to \(\R^n\). Let \(P\) be a pseudodifferential operator on~\(M\) with model operators \(\Sigma(P)_x\) for \(x\in M\). Then \(\Sigma(P)_x\) satisfies the Rockland condition if and only if \(\widehat{\Sigma(P)_x}(\xi)\neq 0\) for all \(\xi\neq 0\). This is the usual condition on the principal symbol of \(P\). Therefore, \(P\) is elliptic if and only if \(\Sigma(P)_x\) satisfies the Rockland condition for all \(x\in M\). 
\end{example}

For a graded Lie group \(G\), \(\Fix^\Rp(J_G,\cl{\Rel_G})\) is the \(\Cst\)-algebra of kernels of type \(0\) by \cite{ewert2020pseudodifferential}*{6.11}. The spectrum of \(\Fix^\Rp(J_G,\cl{\Rel_G})\) can be identified with \((\widehat{G}\!\setminus\!\{\pi_{\triv}\})/\Rp\) by \cite{ewert2020pseudodifferential}*{8.4}. This allows to describe invertibility in \(\Fix^\Rp(J_G,\cl{\Rel_G})\) in terms of the representations of \(G\).
\begin{definition}
	An element \(u\in\Fix^\Rp(J_G,\cl{\Rel_G})\) satisfies the \emph{\(\Cst\)-Rockland condition} if \(\pi(u)\) is invertible for all \(\pi\in\widehat{G}\!\setminus\!\{\pi_{\triv}\}\).
\end{definition}
It is well-known that an element \(a\) of a unital \(\Cst\)-algebra \(A\) is invertible if and only if \(\pi(a)\) is invertible for all irreducible representations \(\pi\in\widehat{A}\), see \cite{exel}*{2.2} for a direct proof. Hence, the following holds.
\begin{proposition}\label{res:cstelliptic-rockland}
	Let \(u\in\Fix^\Rp(J_G,\cl{\Rel_G})\). Then \(u\) is invertible if and only if  \(u\) satisfies the \(\Cst\)-Rockland condition.
\end{proposition}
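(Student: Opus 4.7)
The plan is to reduce the statement to the classical fact from \cite{exel}*{2.2} that an element of a $\Cst$-algebra (or, if the algebra is non-unital, of its unitization) is invertible if and only if every irreducible representation sends it to an invertible operator. Thus the proposition will follow once I identify the irreducible representations of $\Fix^\Rp(J_G,\cl{\Rel_G})$ with the non-trivial unitary irreducible representations of $G$ modulo the dilation action, and verify that the representation of the fixed point algebra associated with $[\pi]\in(\widehat G\setminus\{\pi_\triv\})/\Rp$ acts on an element $u$ precisely by the operator $\pi(u)$ defined via smooth vectors.

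First I would invoke \cite{ewert2020pseudodifferential}*{8.4}, which provides a canonical identification of the spectrum $\widehat{\Fix^\Rp(J_G,\cl{\Rel_G})}$ with the orbit space $(\widehat{G}\setminus\{\pi_\triv\})/\Rp$. Since $\Fix^\Rp(J_G,\cl{\Rel_G})$ is separable and type I (being Morita--Rieffel equivalent to an ideal in $\Cred(\Rp,J_G)$ by \cref{res:pointwisesaturated}, and this crossed product inherits type-I-ness from $J_G$), every point of the spectrum corresponds to a unique equivalence class of irreducible representations. It remains to pin down these representations concretely: given $\pi\in\widehat G\setminus\{\pi_\triv\}$, the map $u\mapsto \pi(u)$ extends (by \cref{res:type_zero} and the boundedness of kernels of type $0$ due to Christ--Geller--G{\l}owacki--Polin) to a bounded $^*$\nb-representation of $\Fix^\Rp(J_G,\cl{\Rel_G})$ on $\Hils_\pi$; its irreducibility comes from the fact that $\pi$ itself is irreducible together with the density of kernels of type $0$ among $\pi$-smooth integral operators. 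Finally, for $\lambda>0$ the representations $\pi$ and $\pi\circ\delta_\lambda$ yield the same operator $\pi(u)$ up to unitary equivalence because $u$ is $\Rp$-invariant under the dilation action, so only the $\Rp$\nb-orbit $[\pi]$ enters.

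With these identifications in place, the proposition is immediate: for $u\in\Fix^\Rp(J_G,\cl{\Rel_G})$, invertibility in the unitization is equivalent, by \cite{exel}*{2.2}, to invertibility of the image under every irreducible representation, which via the above identification is exactly the $\Cst$\nb-Rockland condition that $\pi(u)$ be invertible for all $\pi\in\widehat{G}\setminus\{\pi_\triv\}$.

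The main obstacle is the identification step: one must be careful that the abstract parametrisation of $\widehat{\Fix^\Rp(J_G,\cl{\Rel_G})}$ coming from \cite{ewert2020pseudodifferential}*{8.4} really is implemented by the concrete assignment $\pi\mapsto\pi(u)$ and not merely by some non-canonically equivalent family. This is plausible because both descriptions are compatible with the Morita equivalence with $\Cred(\Rp,J_G)$, under which representations of the crossed product induced from $\pi$ correspond to $\pi(u)$ acting on the smooth-vector space $\Hils^0_\pi$; nevertheless, verifying this compatibility is where the real content lies, and everything else in the proof is then a formal application of the standard $\Cst$-algebraic invertibility criterion.
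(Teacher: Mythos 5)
Your proposal follows exactly the paper's route: the paper likewise cites \cite{ewert2020pseudodifferential}*{8.4} to identify the spectrum of \(\Fix^\Rp(J_G,\cl{\Rel_G})\) with \((\widehat{G}\setminus\{\pi_\triv\})/\Rp\) and then deduces the proposition immediately from the standard criterion of \cite{exel}*{2.2}, offering no further argument. The compatibility issue you flag (that the abstract parametrisation of the spectrum is really implemented by \(\pi\mapsto\pi(u)\)) is exactly the content the paper delegates to the cited reference, so your write-up is, if anything, more explicit about where the work lies.
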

Now, we deduce that \(P\in\Psi^0_H(M)\) is \(\Cst\)-\(H\)-elliptic if and only if it is \(H\)-elliptic.
\begin{proposition}
	Let \((M,H)\) be a compact filtered manifold. For \(P\in\Psi^0_H(M)\) the following are equivalent:
	\begin{enumerate}
		\item\label{item:cstelliptic}\(P\) is \(\Cst\)-\(H\)-elliptic,
		\item\label{item:cstrockland} \(\princ[0](P)_x\) satisfies the \(\Cst\)-Rockland condition for all \(x\in M\),
		\item\label{item:rockland} \(\princ[0](P)_x\) and \(\princ[0](P)_x^*\) satisfy the Rockland condition for all \(x\in M\),
		\item\label{item:elliptic} \(P\) is \(H\)-elliptic.
	\end{enumerate}
\end{proposition}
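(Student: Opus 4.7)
The plan is to close the implication cycle $(\text{i})\Leftrightarrow(\text{ii})\Rightarrow(\text{iii})\Rightarrow(\text{iv})\Rightarrow(\text{i})$. Since $M$ is compact, proper and compact supports coincide, so $\Psi^0_H(M)=\Psi^0_{H,c}(M)$ and $\Sigma^0_H(M)=\Sigma^0_{H,c}(M)$; this lets me identify $\princ[0](P)$ with its image under the inclusion $\Theta$ of \cref{sec:comparison} throughout.

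For $(\text{i})\Leftrightarrow(\text{ii})$ the required results are already at hand. By \cref{res:ellipticispointwise}, invertibility of $\princ[0](P)$ in $\Fix^\Rp(\J_0,\cl{\Rel_0})$ is equivalent to invertibility of each fibre restriction $\princ[0](P)_x$ in $\Fix^\Rp(J_x,\cl{\Rel_x})$. Applying \cref{res:cstelliptic-rockland} pointwise to the osculating graded Lie group $G_x$ converts fibrewise invertibility into the $\Cst$-Rockland condition for $\princ[0](P)_x$. The step $(\text{ii})\Rightarrow(\text{iii})$ is elementary: if $\pi(\princ[0](P)_x)\in\Bound(\Hils_\pi)$ is invertible for every $\pi\in\widehat{G_x}\setminus\{\pi_\triv\}$, then so is its adjoint $\pi(\princ[0](P)_x^*)=\pi(\princ[0](P)_x)^*$, and both are in particular injective on $\Hils^\infty_\pi$, so both $\princ[0](P)_x$ and $\princ[0](P)_x^*$ satisfy the Rockland condition.

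The step $(\text{iii})\Rightarrow(\text{iv})$ is the main obstacle and the only step that is not bookkeeping: it is the parametrix theorem. The fibrewise statement for graded Lie groups is due to Christ et al.\ \cite{christ1992pseudo}, and the transfer to filtered manifolds is carried out inside the calculus of van Erp--Yuncken \cite{erp2015groupoid} (see also \cite{davehaller}). The input is that if $\princ[0](P)_x$ and $\princ[0](P)_x^*$ satisfy the Rockland condition at every $x\in M$, one can build $Q\in\Psi^0_H(M)$ with $PQ-\mathrm{id},\;QP-\mathrm{id}\in\Cont^\infty_{\p}(M\times M;\Omega_r)$, so that $\princ[0](P)$ is two-sided invertible in $\Sigma^0_H(M)$.

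Finally, $(\text{iv})\Rightarrow(\text{i})$ follows from the identification of $\Fix^\Rp(\J_0,\cl{\Rel_0})$ as the $\Cst$-completion of $\Sigma^0_{H,c}(M)$, established as a consequence of \cref{res:vanerp_in_fix}: the injective $^*$-homomorphism $\Theta\colon\Sigma^0_{H,c}(M)\hookrightarrow\Fix^\Rp(\J_0,\cl{\Rel_0})$ maps an inverse of $\princ[0](P)$ in $\Sigma^0_{H,c}(M)$ to an inverse of $\princ[0](P)$ in the $\Cst$-algebra, because any unital $^*$-homomorphism preserves invertibility. This closes the cycle.
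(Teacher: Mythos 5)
Your proposal is correct, and it reaches the conclusion by a genuinely different logical organization than the paper. The paper establishes the three pairwise equivalences separately: (i)\(\Leftrightarrow\)(ii) from \cref{res:ellipticispointwise} and \cref{res:cstelliptic-rockland}, (ii)\(\Leftrightarrow\)(iii) by citing G\l owacki's theorems, and (iii)\(\Leftrightarrow\)(iv) via the arguments of Dave--Haller following \cite{christ1992pseudo} and \cite{pongeheisenberg}. You instead close the cycle (i)\(\Leftrightarrow\)(ii)\(\Rightarrow\)(iii)\(\Rightarrow\)(iv)\(\Rightarrow\)(i), which lets you replace the hard direction (iii)\(\Rightarrow\)(ii) (Rockland condition implies invertibility of every \(\pi(u)\), which is exactly what G\l owacki provides) by the elementary adjoint argument for (ii)\(\Rightarrow\)(iii) together with a detour through the parametrix theorem. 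This buys a cleaner-looking proof at the cost of making the single citation for (iii)\(\Rightarrow\)(iv) carry the entire analytic weight of the statement; since the parametrix construction on graded groups itself rests on G\l owacki-type results, the dependence is reshuffled rather than removed. Two small points you should make explicit: in (ii)\(\Rightarrow\)(iii) you are tacitly identifying the bounded operator \(\pi(u)\) from the \(\Cst\)-Rockland condition with the closure \(\cl{\pi(u)}\) of the densely defined operator \(\widehat{\pi}(f)v\mapsto\widehat{\pi}(a*f)v\) in the definition of the Rockland condition --- for order zero these agree because a kernel of type \(0\) is a multiplier of \(J_{G_x}\), but this deserves a sentence; and in (iv)\(\Rightarrow\)(i) the argument needs \(\Theta\) to be \emph{unital}, which holds since \(M\) is compact so the family of Dirac deltas lies in \(\Ess^0_{H,c}(M)\) and is sent to the identity of \(\Fix^\Rp(\J_0,\cl{\Rel_0})\), together with the fact (from \cite{erp2017tangent}) that the inverse of \(\princ[0](P)\) in \(\Smooth'_r(T_HM)/\Cont^\infty_c(T_HM)\) actually lies in \(\Sigma^0_H(M)\).
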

\begin{proof}
	The equivalence of \ref{item:cstelliptic} and \ref{item:cstrockland} follows from \cref{res:ellipticispointwise} and \cref{res:cstelliptic-rockland}. By the results of G\l owacki in \cite{glowacki}*{4.3 and 4.9}, \ref{item:cstrockland} and \ref{item:rockland} are equivalent for all \(x\in M\). The arguments in \cite{davehaller}*{3.11, 3.12}, which follow \cite{christ1992pseudo} and \cite{pongeheisenberg}, show that \ref{item:rockland} and \ref{item:elliptic} are equivalent. 
\end{proof}
\begin{remark}
	The Rockland condition is also defined for operators acting between vector bundles \(E,F\) over \(M\) (see \cite{pongeheisenberg}*{Sec.~3.3.2} or \cite{davehaller}*{Sec.~3.4}). In this case, the model operators map \(\Schwartz_0(G_x,E_x)\to \Schwartz_0(G_x,F_x)\). It is shown in \cite{davehaller}*{3.11,~3.12} that \(H\)-ellipticity is again equivalent to satisfying the Rockland condition at all points. 
\end{remark}
\subsection{Deformation to Abelian case}\label{sec:adiabatic}
For a filtered manifold \((M,H)\), consider the restriction of the short exact sequence in \eqref{ses:tangentgroupoid} to \([0,1]\) 
\begin{equation*}
\begin{tikzcd}
\Cont_0((0,1])\otimes\Comp(L^2M) \arrow[r,hook] & \Cst(\T_HM|_{[0,1]})\arrow[r,twoheadrightarrow,"\ev^H_0"] & \Cst(T_HM).
\end{tikzcd}
\end{equation*}	
The \(\Cst\)-algebra on the left is contractible and \(\Cst(T_HM)\) is nuclear as a bundle of nilpotent Lie groups. Therefore, the class \([\ev_0^H]\in\KK(\Cst(\T_HM|_{[0,1]}),\Cst(T_HM))\) is invertible. As described in \cite{debordlescure} one can define a deformation element \[[\ev^H_0]^{-1}\otimes[\ev^H_1]\in\KK(\Cst(T_HM), \Comp)\]
associated to the short exact sequence above. 	 
Likewise, there is a deformation element \([\ev_0]^{-1}\otimes[\ev_1]\in\KK(\Cont_0(T^*M), \Comp)\) for the short exact sequence of Connes' tangent groupoid
\begin{equation*}
\begin{tikzcd}
\Cont_0((0,1])\otimes\Comp(L^2M) \arrow[r,hook] & \Cst(\T M|_{[0,1]})\arrow[r,twoheadrightarrow,"\ev_0"] & \Cont_0(T^*M).
\end{tikzcd}
\end{equation*}	
Connes showed that this class is the analytical index (see \cite{connes}).

Using the adiabatic groupoid of \(\T_HM\) one can relate the deformation classes in \(\KK(\Cst(T_HM),\Comp)\) and \(\KK(\Cont_0(T^*M),\Comp)\). 
This construction was carried out in \cite{vanerpcontactI} for contact manifolds, see also \cites{mohsen2018deformation,mohsen2020index} for the filtered manifold case. In the following, we recall the argument.

The Lie algebroid \(\t_HM\) of \(\T_HM\) is described in \cite{erp2017tangent}. Denote by \(\rho\colon \t_HM\to T(M\times[0,1])\) its anchor map and let
\[[\,\cdot\,,\,\cdot\,]\colon \Gamma^\infty(\t_HM)\times\Gamma^\infty(\t_HM)\to\Gamma^\infty(\t_HM)\]
be the bracket. Let \(\T_HM^{a}\) be the adiabatic groupoid of \(\T_HM\). It is easier to describe it in terms of its Lie algebroid (see \cite{debordskandalis2014}*{2.1}). It is the vector bundle \(\t_HM\times\R\) over \(M\times\R\times\R\) with anchor 
\begin{align*}\rho_{a}\colon \t_HM\times\R&\to T(M\times\R)\times T\R\\
\rho_{a}(x,t,U,s)&= (\rho(x,t,sU),s,0) 
\end{align*}
for \(x\in M\), \(t,s\in\R\) and \(U\in\t_HM_{(x,t)}\). The bracket is defined by
\[[X,Y]_{a}(x,t,s)=s[X,Y](x,t) \quad \text{for }X,Y\in\Gamma^\infty(\t_HM\times\R).\]
The resulting Lie groupoid can be viewed as a continuous field of groupoids over each copy of \(\R\) and over \(\R^2\). 	
The fibre over \(s=1\) is the tangent groupoid \(T_HM\) of the filtered manifold. 	

For \(s=0\), the anchor and bracket are zero. One obtains a bundle of Abelian groups. It is isomorphic to \(TM\times[0,1]\) via a splitting as defined in \cite{erp2017tangent}*{9}. A splitting is a vector bundle isomorphism \(\lie{t}_HM\to TM\), which restricts on \(H^j/H^{j-1}\) to a right inverse of \(H^j\to H^j/H^{j-1}\) for \(j=1,\ldots,r\). 

For \(t=0\) the anchor is zero. Therefore, all fibres over \((0,s)\) for \(s\in\R\) are bundles of nilpotent groups. The bundle of osculating groups \(T_HM\) at \(s=1\) is deformed into a bundle of Abelian groups at \(s=0\). The latter can be identified with \(TM\) using the splitting above. Denote the subgroupoid at \(t=0\) by \(\grpd\).

Note that the fibre at \((1,1)\) is the pair groupoid of \(M\). Its adiabatic groupoid is Connes' tangent groupoid \(\T M\). It is the fibre at \(t=1\) of \(\T_HM^{a}\).

Therefore, all edges of \([0,1]^2\) can be understood as deformation groupoids and one can associate corresponding deformation classes in the respective \(\KK\)-groups.

Denote the restriction to the edges by \(r_{t=1},r_{t=0},r_{s=1}, r_{s=0}\). The following diagram commutes
\begin{equation*}
\begin{tikzcd}
\T_HM^{a} \arrow[d,"r_{s=1}"]\arrow[r,"r_{t=1}"] & \T M \arrow[d,"\ev_1"] \\
\T_HM\arrow[r,"\ev_1^H"] & M\times M.
\end{tikzcd}
\end{equation*}
Therefore, the following \(\KK\)-classes coincide:
\begin{align}\label{eq:equofkk}[r_{t=1}]\otimes[\ev_1]=[r_{s=1}]\otimes[\ev^H_1].\end{align} Denote by \(b_0\) and \(b_1\) the restrictions to \(t=0\) and \(t=1\) on the trivial bundle at \(s=0\). The deformation class \([b_0]^{-1}\otimes[b_1]\) is the identity. It follows that
\begin{align}\label{eq:1}
[r_{t=1}]\otimes[\ev_0]=[r_{t=0}]\otimes[b_1]=[r_{t=0}]\otimes[b_0].
\end{align}
Denote by \(c_0\) and \(c_1\) the respective restrictions on \(\grpd\). We obtain
\begin{align}\label{eq:2}[r_{s=1}]\otimes[\ev^H_0]=[r_{t=0}]\otimes[c_1]=([r_{t=0}]\otimes[c_0])\otimes([c_0]^{-1}\otimes[c_1]).
\end{align}
Let \(r_{00}\) be the restriction to \(t=0,s=0\). One can show that it induces a \(\KK\)\nb-equivalence as in \cite{vanerpcontactI}*{21}. This is done by writing it as a composition of the restriction to the union of \(t=0\) and \(s=0\) and further restriction to \(t=s=0\). Both maps have contractible kernels. Using \eqref{eq:1} and \eqref{eq:2}, one obtains
\begin{align*}[r_{00}]^{-1}\otimes[r_{t=1}]&=[\ev_0]^{-1},\\
[r_{00}]^{-1}\otimes[r_{s=1}]&=([c_0]^{-1}\otimes[c_1])\otimes[\ev^H_0]^{-1}.
\end{align*}	 
Inserting this into \eqref{eq:equofkk} shows that the deformation classes for \(\T_HM\) and \(\T M\) are related by
\begin{align*}
[\ev_0]^{-1}\otimes[\ev_1]=([c_0]^{-1}\otimes[c_1])\otimes([\ev^H_0]^{-1}\otimes[\ev_1^H]).
\end{align*}
The class  \(\Psi\defeq \left([c_0]^{-1} \otimes [c_1]\right)\in \KK(\Cont_0(T^*M),\Cst(T_HM))\) is a \(\KK\)-equivalence. This is a well-known consequence of the Connes--Thom isomorphism, see \cite{connesthom}*{Corollary~7} and \cite{nistor}*{Corollary~1} for the bundle version. 
We show that the \(\KK\)-equivalence \(\Psi\) restricts to the ideals used in the generalized fixed point algebra construction.
\begin{lemma}\label{res:restricts}
	The \(\KK\)-equivalence \(\Psi\in\KK(\Cont_0(T^*M), \Cst(T_HM))\) restricts to a \(\KK\)-equivalence \(\Psi|\in\KK(\Cont_0(T^*M\!\setminus\!(M\times 0)), \J_0)\).
\end{lemma}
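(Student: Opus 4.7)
The strategy is to promote the $\KK$-equivalence $\Psi$, which arises from the deformation extension of $\Cst(\grpd|_{[0,1]})$, to a $\KK$-equivalence between the corresponding ideals by constructing an intermediate ideal $J_\grpd\idealin\Cst(\grpd|_{[0,1]})$ whose restrictions at $s=0$ and $s=1$ recover $\Cont_0(T^*M\setminus(M\times 0))$ and $\J_0$, respectively. Concretely, I would set
\[
J_\grpd \defeq \bigcap_{(x,s)\in M\times[0,1]} \ker\bigl(\widehat{\pi}_\triv\circ q_{(x,s)}\bigr),
\]
where $q_{(x,s)}\colon\Cst(\grpd|_{[0,1]})\to\Cst(\grpd_{(x,s)})$ is the fibre projection onto the nilpotent group $\grpd_{(x,s)}$, which integrates $\lie{t}_HM_x$ equipped with the rescaled bracket $s[\,\cdot\,,\,\cdot\,]$ and hence equals $(T_xM,+)$ for $s=0$ and the osculating group $G_x$ for $s=1$. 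Since all these groups are amenable, \cite{landsman}*{5.6} gives $\Cst(\grpd|_{[0,1]})$ a continuous-field structure over $M\times[0,1]$, and $J_\grpd$ inherits a continuous-field structure whose fibre at $(x,s)$ is $\ker(\widehat{\pi}_\triv)\idealin\Cst(\grpd_{(x,s)})$. At $s=1$ this restricts to $\J_0\subset\Cst(T_HM)$; at $s=0$, the fibrewise Fourier transform $\Cst(TM)\cong\Cont_0(T^*M)$ sends each $\widehat{\pi}_\triv\circ q_x$ to evaluation at the origin of $T_x^*M$, so the restriction of $J_\grpd$ to $s=0$ is exactly $\Cont_0(T^*M\setminus(M\times 0))$.

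The next step is to check that the evaluation morphisms $c_0,c_1$ restrict to surjections $c_0|\colon J_\grpd\to\Cont_0(T^*M\setminus(M\times 0))$ and $c_1|\colon J_\grpd\to\J_0$, and to identify the kernel of $c_0|$. Surjectivity follows from the continuous-field structure: any element of the target ideal can be extended by multiplying with a function in $\Cont([0,1])$ that is $1$ at the relevant endpoint, using that the fibre-projection maps at $s=0$ and $s=1$ are surjective. The kernel of $c_0|$ is $J_\grpd\cap\Cst(\grpd|_{(0,1]})$, and the canonical rescaling diffeomorphism identifying $\grpd|_{(0,1]}$ with $T_HM\times(0,1]$ intertwines the trivial representations at each parameter, so this kernel is isomorphic to $\Cont_0((0,1])\otimes\J_0$, a cone over $\J_0$ and therefore contractible. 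Hence $[c_0|]\in\KK(J_\grpd,\Cont_0(T^*M\setminus(M\times 0)))$ is a $\KK$-equivalence, and
\[
\Psi|\defeq [c_0|]^{-1}\otimes[c_1|] \in \KK\bigl(\Cont_0(T^*M\setminus(M\times 0)),\J_0\bigr)
\]
is a $\KK$-equivalence. That $\Psi|$ is the restriction of $\Psi$ along the ideal inclusions follows from naturality of the deformation class, applied to the commuting square whose horizontal arrows are the inclusions $J_\grpd\hookrightarrow\Cst(\grpd|_{[0,1]})$ and (ideal at $s=i$) $\hookrightarrow\Cst(\grpd|_{s=i})$ and whose vertical arrows are $c_i|$ and $c_i$.

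The main obstacle I expect is verifying that $J_\grpd$ really has the advertised continuous-field structure, i.e.\ that the trivial representations $\widehat{\pi}_\triv\circ q_{(x,s)}$ assemble into a continuous family of characters on $\Cst(\grpd|_{[0,1]})$. Since the brackets on $\lie{t}_HM$ are only rescaled by $s$, all fibres are diffeomorphic to $\R^n$ via exponential coordinates and the Haar measure is the Lebesgue measure in these coordinates, so $\widehat{\pi}_\triv$ acts as integration against a smoothly varying measure; combined with the existing smoothness of the Haar system on $\grpd$, this yields the required continuity. Once this is in hand, the hereditary behaviour of continuous fields guarantees that $J_\grpd$ is a closed two-sided ideal with the expected fibres, and the argument closes as above.
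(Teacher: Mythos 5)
Your construction is the same as the paper's: you introduce the ideal \(\J_\grpd\idealin\Cst(\grpd)\) cut out by the trivial representations of the fibres, identify its restrictions at \(s=0\) and \(s=1\) with \(\Cont_0(T^*M\setminus(M\times 0))\) and \(\J_0\), note that \(\ker(c_0|)\cong\Cont_0((0,1])\otimes\J_0\) is contractible, and define \(\Psi|=[c_0|]^{-1}\otimes[c_1|]\), compatible with \(\Psi\) under the ideal inclusions. All of this is correct and matches the paper.

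The gap is the final step: from the invertibility of \([c_0|]\) alone you conclude that \(\Psi|=[c_0|]^{-1}\otimes[c_1|]\) is a \(\KK\)-equivalence. That would additionally require \([c_1|]\) to be invertible, i.e.\ \(\ker(c_1|)\) (the sections of the ideal field over \([0,1)\)) to be \(\KK\)-contractible. Unlike \(\ker(c_0|)\), this kernel is not a cone over a fixed algebra — the field degenerates at \(s=0\) to the Abelian fibre — and its \(\KK\)-contractibility is essentially equivalent to the lemma itself: it sits in an extension with kernel \(\Cont_0((0,1))\otimes\J_0\) and quotient \(\Cont_0(T^*M\setminus(M\times 0))\) whose boundary map is (up to Bott) the deformation class you are trying to invert. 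So as written the conclusion is unjustified. The paper closes this gap by a different argument: it places the ideal extension inside the full extensions \(\Cont_0(T^*M\setminus(M\times 0))\hookrightarrow\Cont_0(T^*M)\twoheadrightarrow\Cont_0(M)\) and \(\J_0\hookrightarrow\Cst(T_HM)\twoheadrightarrow\Cont_0(M)\), with vertical \(\KK\)-classes \(\Psi|\), \(\Psi\) and a class \(\alpha\) on the quotients, observes that the middle and right classes are \(\KK\)-equivalences, and then uses the long exact sequences in both variables together with the Five Lemma to show that the induced maps \(\KK(A,\Cont_0(T^*M\setminus(M\times 0)))\to\KK(A,\J_0)\) and \(\KK(\J_0,B)\to\KK(\Cont_0(T^*M\setminus(M\times 0)),B)\) are isomorphisms for all separable nuclear \(A,B\); taking \(A=\J_0\) and \(B=\Cont_0(T^*M\setminus(M\times 0))\) produces an explicit inverse. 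You already have the commuting square needed for this; you should add the quotient column and run the Five Lemma rather than asserting invertibility of \([c_1|]\).
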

\begin{proof}	
	Define the ideal \(\J_\grpd\subset\Cst(\grpd)\) that consists of all sections \((a_s)\in\Cst(\grpd)\) such that all \(a_{s,x}\) for \(s\in[0,1]\) and \(x\in M\) lie in the kernel of the trivial representation of the nilpotent Lie group over \((s,x)\). The trivial representations induce a commuting diagram
	\begin{equation}\label{diag:comm_restr}
	\begin{tikzcd}
	\J_0 \arrow[r,hook,"i_1"] & \Cst(T_HM) \arrow[r,twoheadrightarrow,"q_1"] & \Cont_0(M)\\
	\J_\grpd\arrow[r,hook,"i"]\arrow{u}[swap]{e_1}\arrow{d}{e_0} & \Cst(\grpd)\arrow[r,twoheadrightarrow,"q"]\arrow{u}[swap]{c_1}\arrow{d}{c_0} & \Cont([0,1],\Cont_0(M)) \arrow{u}[swap]{f_1}\arrow{d}{f_0}\\
	\Cont_0(T^*M\!\setminus\!(M\times 0))\arrow[r,hook,"i_0"] & \Cont_0(T^* M) \arrow[r,twoheadrightarrow,"q_0"]& \Cont_0(M).
	\end{tikzcd}
	\end{equation}
	As \(\ker(e_0)\) is contractible, one can build the deformation class \(\Psi|\defeq([e_0]^{-1}\circ[e_1])\in\KK(\Cont_0(T^*M\!\setminus\!(M\times 0)), \J_0)\). Similarly, there is a class \(\alpha\defeq([f_0]^{-1}\circ[f_1])\in\KK(\Cont_0(M),\Cont_0(M))\).
	Because \eqref{diag:comm_restr} commutes, there is a commuting diagram in~\(\KK\):
	\begin{equation*}
	\begin{tikzcd}
	\J_0 \arrow[r,hook,"i_1"] & \Cst(T_HM) \arrow[r,twoheadrightarrow,"q_1"] &\Cont_0(M) \\
	\Cont_0(T^*M\!\setminus\!(M\times 0))\arrow[u,"\Psi|"]\arrow[r,hook,"i_0"] & \Cont_0(T^* M)\arrow[u,"\Psi"] \arrow[r,twoheadrightarrow,"q_0"]& \Cont_0(M)\arrow[u,"\alpha"].
	\end{tikzcd}
	\end{equation*}
	The \(\KK\)-classes in the middle and on the right are \(\KK\)-equivalences. The long exact sequences in \(\KK\)-theory and the Five Lemma yield that 
	\begin{align*}
	\,\textvisiblespace\,\otimes \Psi|\colon \KK(A,\Cont_0(T^*M\!\setminus\!(M\times 0)))&\to\KK(A,\J_0)\\
	\Psi|\otimes	\,\textvisiblespace\, \colon\KK(\J_0,B)&\to\KK(\Cont_0(T^*M\!\setminus\!(M\times 0)),B)
	\end{align*}
	are isomorphisms for all separable, nuclear \(\Cst\)-algebras \(A,B\). Taking \(A=\J_0\) and \(B=\Cont_0(T^*M\!\setminus\!(M\times 0))\), one obtains a class in \(\KK(\J_0,\Cont_0(T^*M\!\setminus\!(M\times 0))\) that is the \(\KK\)-inverse of \(\Psi|\).
\end{proof}
As a consequence, the \(\Cst\)-algebra of principal cosymbols of order~\(0\) has the same \(\K\)-theory as its unfiltered counterpart. 
\begin{theorem}
	Let \((M,H)\) be a filtered manifold. Then  \(\Cont_0(S^*M)\) and the \(\Cst\)-algebra of principal cosymbols \(\Fix^\Rp(\J_0,\cl{\Rel_0})\) are \(\KK\)-equivalent. 
\end{theorem}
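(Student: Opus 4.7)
The plan is to express the desired $\KK$-equivalence as a chain of four equivalences, two of degree $0$ and two of degree $1$; since $\KK^2\cong\KK$ by Bott periodicity, the two odd degrees will cancel and we land back in $\KK^0$.

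First I would invoke \cref{res:morita}, which supplies a Morita--Rieffel equivalence $\Fix^\Rp(\J_0,\cl{\Rel_0})\sim_M\Cred(\Rp,\J_0)$ and hence an invertible element of $\KK$. Second, I would apply the Connes--Thom isomorphism to the scaling action of $\Rp\cong\R$ on $\J_0$; this furnishes an invertible class in $\KK^1(\J_0,\Cred(\Rp,\J_0))$, so that $\J_0$ and $\Cred(\Rp,\J_0)$ are $\KK^1$-equivalent. Composing these two gives $\Fix^\Rp(\J_0,\cl{\Rel_0}) \sim_{\KK^1}\J_0$.

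Third, on the other side, \cref{res:restricts} already provides a $\KK$-equivalence
\[\Psi|\in\KK(\Cont_0(T^*M\setminus(M\times 0)),\J_0),\]
obtained by deforming the Lie brackets on $\lie{t}_HM$ to zero. Fourth, a choice of Riemannian metric on $M$ trivializes the radial principal $\Rp$-bundle $T^*M\setminus(M\times 0)\to S^*M$ and yields
\[\Cont_0(T^*M\setminus(M\times 0))\cong\Cont_0(S^*M)\otimes\Cont_0(\Rp),\]
and since $\Cont_0(\Rp)\cong\Cont_0(\R)$ is $\KK^1$-equivalent to $\C$ via the Bott class, this gives a $\KK^1$-equivalence $\Cont_0(T^*M\setminus(M\times 0))\sim_{\KK^1}\Cont_0(S^*M)$. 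Putting everything together, the composition has total degree $0+1+0+1\equiv 0\pmod 2$, providing the sought $\KK$-equivalence.

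The main technical point I expect is the Connes--Thom step: one needs an invertible element of $\KK^1$, not merely an isomorphism of $\K$-groups, so that the Kasparov products with the other three classes make sense and invertibility is preserved under composition. This is available through Kasparov's realisation of the Connes--Thom map as Kasparov product with an invertible element, and the analogous step in the graded Lie group setting was already carried out in \cite{ewert2020pseudodifferential}; the bundle-over-$M$ version needed here should follow by an essentially identical argument. Once this is granted, the remainder of the proof is formal assembly of the four classes in $\KK$.
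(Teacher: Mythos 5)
Your proposal is correct and follows essentially the same chain as the paper: the Morita--Rieffel equivalence of \cref{res:morita}, the Connes--Thom equivalence between \(\Cred(\Rp,\J_0)\) and the suspension of \(\J_0\), and the class \(\Psi|\) from \cref{res:restricts}. The only (harmless) variation is the final link: where you trivialize the radial \(\Rp\)-bundle \(T^*M\setminus(M\times 0)\to S^*M\) with a metric and invoke Bott periodicity, the paper instead reruns the same Morita-plus-Connes--Thom argument for the step \(1\) filtration to identify \(\Cont_0(\R)\otimes\Cont_0(T^*M\setminus(M\times 0))\) with \(\Cont_0(S^*M)\) up to \(\KK\)-equivalence.
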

\begin{proof}
	The \(\Cst\)-algebra \(\Fix^\Rp(\J_0,\cl{\Rel_0})\) is Morita--Rieffel equivalent to \(\Cred(\Rp,\J_0)\) by \cref{res:morita}. Therefore, they are \(\KK\)-equivalent. As \((\Rp,\,\cdot\,)\cong (\R,+)\) and by the Connes--Thom isomorphism, \(\Cred(\Rp,\J_0)\) is \(\KK\)\nb-equivalent to  \(\Cont_0(\R)\otimes \J_0\). This \(\Cst\)-algebra is \(\KK\)\nb-equivalent to \(\Cont_0(\R)\otimes \Cont_0(T^*M\setminus(M\times 0))\) by \cref{res:restricts}. The converse argument, applied to the step \(1\) filtration case, yields that \(\Cont_0(\R)\otimes \Cont_0(T^*M\setminus(M\times 0))\) is \(\KK\)\nb-equivalent to \(\Cont_0(S^*M)\). 
\end{proof}
\subsection{Towards index theory}
In this section, let \((M,H)\) be a compact filtered manifold. 
Following the explanation in \cite{Baumvanerp}*{5.3} (see also \cite{connes}*{§II.9.\(\alpha\)}), one can attach to an \(H\)-elliptic \(H\)-pseudodifferential operator \(P\) of order \(m\) a class in \(\K_0(\Cst(T_HM))\). Let \(\P\) be a lift of \(P\) to \(\Pseu^m_H(M)\). By definition, the equivalence class \([\P_0]\in\Smooth'_r(T_HM)/\Cont^\infty_c(T_HM)\) is invertible. So there is a \(\Q_0\in\Smooth'_r(T_HM)\) with \(S_0\defeq 1-\Q_0*\P_0\in\Cont^\infty_c(T_HM)\) and \(S_1\defeq 1-\P_0*\Q_0\in\Cont^\infty_c(T_HM)\).  

Let \(\sigma_H(P)\defeq [e]-[e_0]\in\K_0(\Cst(T_HM))\) be the formal difference of idempotents 
\begin{align*}
e  =\begin{pmatrix*}
1-S_1^2 & \P_0*S_0\\
S_0*\Q_0*(1+S_1) & S_0^2
\end{pmatrix*} \quad\text{and}\quad
e_0 = \begin{pmatrix*}
1 & 0\\
0 & 0
\end{pmatrix*}.
\end{align*}
The same construction works for operators that act on a vector bundles over \(M\).
\begin{lemma} Let \((M,H)\) be a compact filtered manifold. 
	Consider the short exact sequence from \eqref{eq:diskbundle} given by
	\begin{equation}\label{ses:symbols}
	\begin{tikzcd}
	\Cst(T_HM)\arrow[r,hook] & \Cst(\Ess^0_H(M)) \arrow[r,twoheadrightarrow]& \Cst(\Sigma^0_H(M)).\end{tikzcd}
	\end{equation}
	For an \(H\)-elliptic \(P\in\Psi^0_H(M)\) the class \(\sigma_H(P)\in\K_0(\Cst(T_HM))\) above is the image of \([\princ[0](P)]\in\K_1(\Cst(\Sigma^0_H(M)))\) under the boundary map in \(\K\)-theory of \eqref{ses:symbols}.
\end{lemma}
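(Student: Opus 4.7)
The plan is to identify $[e]-[e_0]$ with the image of $[\princ[0](P)]$ under the index map
\[
\partial_1\colon \K_1(\Cst(\Sigma^0_H(M)))\to \K_0(\Cst(T_HM))
\]
attached to the extension in the statement by running the standard (Mishchenko-type) recipe for $\partial_1$ with the most obvious choice of lifts.

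I would begin by recalling the following presentation of $\partial_1$ for a short exact sequence $0\to A\to B\xrightarrow{\pi} C\to 0$. Given an invertible class $[u]\in\K_1(C)$, pick lifts $a,b$ in the unitization of $B$ of $u$ and $u^{-1}$, and set $S_0=1-ba$, $S_1=1-ab$, which automatically lie in $A$. Form the column $X=\begin{pmatrix}a\\ S_0\end{pmatrix}$ and the row $Y=\begin{pmatrix}b(1+S_1) & S_0\end{pmatrix}$. A short calculation using $ba=1-S_0$ gives $YX=b(1+S_1)a+S_0^2=1$, so $e\defeq XY$ is an idempotent in $M_2(\widetilde B)$ with $\pi(e)=e_0$, and the standard formula reads $\partial_1[u]=[e]-[e_0]\in\K_0(A)$.

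Applying the recipe in our situation with $u=\princ[0](P)$, $a=\P_0$ and $b=\Q_0$ is then immediate: $H$\nb-ellipticity of $P$ means exactly that $\P_0$ admits a convolution parametrix $\Q_0$ with remainders $S_0,S_1\in\Cont^\infty_c(T_HM)\subset\Cst(T_HM)$, so the above recipe applies. Substituting into $e=XY$ and using $\P_0*\Q_0=1-S_1$ yields
\[
e=\begin{pmatrix}(1-S_1)(1+S_1) & \P_0*S_0\\ S_0*\Q_0*(1+S_1) & S_0^2\end{pmatrix}=\begin{pmatrix}1-S_1^2 & \P_0*S_0\\ S_0*\Q_0*(1+S_1) & S_0^2\end{pmatrix},
\]
which is precisely the idempotent used to define $\sigma_H(P)$. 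Hence $\sigma_H(P)=\partial_1[\princ[0](P)]$.

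The main obstacle is not a $\K$\nb-theoretic one but a setup issue: one has to verify that the fibred distributions $\P_0,\Q_0\in\Ess^0_H(M)$ legitimately represent elements of a unitization of $\Cst(\Ess^0_H(M))$, and that the algebraic identities $\P_0*\Q_0=1-S_1$, $\Q_0*\P_0=1-S_0$ hold in that $\Cst$\nb-algebra. This is arranged by the construction of $\Cst(\Ess^0_H(M))$ carried out immediately before the lemma, via the family of left-regular representations $\lambda_x$ on $L^2(G_x)$ for $x\in M$, which realises $\Ess^0_H(M)$ inside a $\Cst$\nb-algebra of bounded operators; all operations in the Mishchenko formula are purely algebraic and survive this completion verbatim. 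Once this is in place, no further computation is required: the identification of $e$ with the Mishchenko idempotent is a direct match of formulas.
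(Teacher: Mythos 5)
The proposal is correct and takes essentially the same route as the paper: choose the parametrix \(\Q_0\) as a lift of \(\princ[0](P)^{-1}\), form \(S_0=1-\Q_0*\P_0\) and \(S_1=1-\P_0*\Q_0\), and observe that the standard formula for the index map (the paper invokes Cuntz--Meyer--Rosenberg 1.46 where you write it out explicitly) reproduces verbatim the idempotent defining \(\sigma_H(P)\). The one point you gloss over is why \(\Q_0\) lies in \(\Ess^0_H(M)\) rather than merely in \(\Smooth'_r(T_HM)\) — this is not automatic from the definition of \(H\)-ellipticity and is settled in the paper by citing van Erp--Yuncken (Lemma 55) — but you do flag the need for such a verification, so this is a presentational rather than a substantive gap.
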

\begin{proof}
	Let \(\Q_0\in\Smooth'_r(T_HM)\) satisfy \(1-\Q_0*\P_0\in\Cont^\infty_c(T_HM)\) and \(1-\P_0*\Q_0\in\Cont^\infty_c(T_HM)\)  as above. By \cite{erp2015groupoid}*{55}  \(\Q_0\) is contained in \(\Ess^0_H(M)\). Hence, \(\P_0\) and \(Q_0\) are lifts of \([\P_0]\) and \([\P_0]^{-1}\) in \(\Ess^0_H(M)\). Computing the image of \([\princ[0](P)]\) under the index map as in \cite{Cuntz-Meyer-Rosenberg}*{1.46} gives exactly the class above. 
\end{proof}
Up to inverting the Connes--Thom isomorphism, we prove an index theorem for \(H\)-elliptic pseudodifferential operators of order zero. For contact manifolds, this is \cite{vanerpcontactII}*{Prop.~12} in the scalar-valued case or \cite{Baumvanerp}*{Thm.~5.4.1} for operators acting on vector bundles. 
Mohsen recently proved an index theorem for filtered manifolds in \cite{mohsen2020index}. His construction involves a ``larger'' bundle of graded Lie groups over \(M\) to obtain an index theorem that does not contain the Connes--Thom isomorphism anymore. 
\begin{theorem}
	Let \((M,H)\) be a compact filtered manifold and let \(P\) be an order zero \(H\)-elliptic \(H\)-pseudodifferential operator acting on vector bundles \(E,F\) over \(M\). Let \(\Psi\colon \K^0(T^*M)\to\K_0(\Cst(T_HM))\) denote the Connes--Thom isomorphism and \(\ind_t\colon \K^0(T^*M)\to \Z\) the topological index map. Then \(P\) is Fredholm and its Fredholm index is given by
	\[\ind(P)=\ind_t(\Psi^{-1}(\sigma_H(P))).\]
\end{theorem}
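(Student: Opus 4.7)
The plan is to factor the Fredholm index through the deformation class of the tangent groupoid, and then use the adiabatic construction of Section~\ref{sec:adiabatic} to convert this into the Connes analytic index of an element of $\K^0(T^*M)$, which by the Atiyah--Singer theorem equals $\ind_t$.

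\emph{Step 1: index pairing with the deformation class of $\T_HM$.} I would first show that $P$ is Fredholm (this is immediate: $H$-ellipticity implies invertibility of $\princ[0](P)$ in $\Cst(\Sigma^0_H(M))$ and hence Fredholmness of the order zero extension coming from~\eqref{ses:pseudo_in_fix}) and that the Fredholm index is obtained as
\[
\ind(P) \;=\; \sigma_H(P) \otimes_{\Cst(T_HM)} \bigl([\ev^H_0]^{-1}\otimes[\ev^H_1]\bigr) \;\in\; \K_0(\Comp) = \Z.
\]
To establish this, lift $P$ to $\P \in \Pseu^0_{H,c}(M,E,F)$ and regard $\P$ as an element of the continuous field $\Cst(\Pseu^0_{H,c}(M,E,F))$ over $[0,1]$ from \cref{res:kernelcompact}. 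This yields a commutative diagram of extensions
\begin{equation*}
\begin{tikzcd}[column sep=small]
\Cst(\T_HM|_{[0,1]}) \arrow[r,hook]\arrow[d,"\ev^H_0"'] & \Cst(\Pseu^0_{H,c}(M)) \arrow[r,twoheadrightarrow,"\mathbb{S}_H^0"]\arrow[d,"\ev^H_0"'] & \Cst(\Sigma^0_{H,c}(M)) \arrow[d,equal]\\
\Cst(T_HM) \arrow[r,hook] & \Cst(\Ess^0_{H,c}(M)) \arrow[r,twoheadrightarrow] & \Cst(\Sigma^0_{H,c}(M)),
\end{tikzcd}
\end{equation*}
whose top row, combined with $\ev^H_1\colon \Cst(\T_HM|_{[0,1]}) \to \Comp$, computes the Fredholm index, while the bottom row computes $\sigma_H(P)$. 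Naturality of the index map in $\K$-theory then yields the identity above.

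\emph{Step 2: applying the Connes--Thom identity.} In \cref{sec:adiabatic} the following equality of $\KK$-classes was derived from the adiabatic groupoid of $\T_HM$:
\[
[\ev_0]^{-1}\otimes[\ev_1] \;=\; \Psi \,\otimes\, \bigl([\ev^H_0]^{-1}\otimes[\ev^H_1]\bigr) \quad\text{in } \KK(\Cont_0(T^*M),\Comp).
\]
Combining this with Step~1 gives
\[
\ind(P) \;=\; \Psi^{-1}(\sigma_H(P)) \,\otimes\, \bigl([\ev_0]^{-1}\otimes[\ev_1]\bigr).
\]

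\emph{Step 3: Atiyah--Singer.} By Connes' theorem, the deformation class of the classical tangent groupoid realises the analytic index on $\K^0(T^*M)$; by the Atiyah--Singer index theorem this equals $\ind_t$. Inserting this into the previous line yields $\ind(P) = \ind_t(\Psi^{-1}(\sigma_H(P)))$, as required.

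\emph{Main obstacle.} The geometric input (Steps 2--3) is already in place; the delicate point is Step~1, i.e.\ the identification of the Fredholm index with the pairing against $[\ev^H_0]^{-1}\otimes[\ev^H_1]$. One must set this up cleanly at the level of extensions so that the same lift $\P$ produces $\sigma_H(P)$ on one end and the index class in $\K_0(\Comp)$ on the other. For vector-bundle-valued operators this requires the $\hom(s^*\E,r^*\F)$-twisted version of the pseudodifferential and symbol $\Cst$\nb-algebras (as in Dave--Haller); after tensoring the extensions with the appropriate endomorphism bundles and passing to matrices via a complementary bundle, the argument is formally identical to the scalar case.
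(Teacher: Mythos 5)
Your proposal is correct and shares the paper's overall skeleton --- factor the Fredholm index through the deformation class $[\ev_0^H]^{-1}\otimes[\ev_1^H]$ of $\T_HM|_{[0,1]}$, transport it to the classical tangent groupoid via the adiabatic-groupoid identity $[\ev_0]^{-1}\otimes[\ev_1]=\Psi\otimes([\ev^H_0]^{-1}\otimes[\ev^H_1])$ from \cref{sec:adiabatic}, and finish with Connes' theorem and Atiyah--Singer. Where you genuinely diverge is in how the key identity $\ind(P)=\sigma_H(P)\otimes([\ev^H_0]^{-1}\otimes[\ev^H_1])$ is established. You stay entirely in $\K$-theory: you place $\P$ in the continuous field $\Cst(\Pseu^0_{H,c}(M))$ over $[0,1]$, use \cref{res:kernelcompact} to get the extension by $\Cst(\T_HM|_{[0,1]})$, and invoke naturality of the index map under the evaluation morphisms $\ev^H_0$ and $\ev^H_1$, matching $\sigma_H(P)$ at one end (via the lemma identifying $\sigma_H(P)$ with the boundary of $[\princ[0](P)]$) and the Fredholm index at the other (via the Calkin-type extension \eqref{ses:pseudo_in_fix}). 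The paper instead builds an explicit even Kasparov module $(\mathcal E,\phi,\D)$ over $\Cont(M)$ with values in $\Cst(\T_HM|_{[0,1]})$, verifies compactness of $\phi(f)(\D^2-1)$ and $[\phi(f),\D]$ using the same \cref{res:kernelcompact}, and then needs the natural transformation $\alpha_M\colon\KK(\C,A)\to\KK(\Cont(M),A)$ together with the $\Cont(M)$-linear version \eqref{eq:commdiagconnes} of the Connes--Thom compatibility to compare $[\D_0]$ with $\sigma_H(P)$. Your route avoids the Kasparov-module bookkeeping and the detour through $\KK(\Cont(M),-)$, at the price of having to justify carefully that the boundary map of the top row of your diagram, pushed forward by $\ev^H_1$, really is the Fredholm index of $\Op(P)$ --- this is standard but should be spelled out, and for the vector-bundle case you correctly note that the extensions must first be twisted by $\hom(s^*\E,r^*\F)$ as in Dave--Haller before the naturality argument applies. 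One small point the paper uses that you can also adopt to simplify the construction of $\sigma_H(P)$: after choosing hermitean metrics and applying polar decomposition, one may assume the principal cosymbol is unitary, so the parametrix at $t=0$ can be taken to be $\P_0^*$.
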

\begin{proof}
	Choose hermitean metrics on \(E\) and \(F\). Using polar decomposition and that the Fredholm index is invariant under homotopies, we can assume without loss of generality that the principal cosymbol of \(P\) is unitary.
	
	We follow the arguments in \cites{vanerpcontactI,vanerpcontactII,Baumvanerp} closely. Let \(\E\defeq E\times[0,1]\) and \(\F\defeq F\times[0,1]\) denote the vector bundles over the unit space \(M\times[0,1]\) of \(\T_HM|_{[0,1]}\) and extend \(P\) to \(\P\in\Pseu^0_H(M,\E,\F)\).
	
	As in \cite{Baumvanerp} construct a class \([\D]\in\KK(\Cont(M),\Cst(\T_HM|_{[0,1]}))\) from~\(\P\) as follows.
	Define the \(\Z_2\)-graded right Hilbert \(\Cst(\T_HM|_{[0,1]})\)-module \[\mathcal{E}=\Gamma_0(\E)\otimes_{\Cont(M\times[0,1])}\Cst(\T_HM|_{[0,1]})\oplus\Gamma_0(\F)\otimes_{\Cont(M\times[0,1])}\Cst(\T_HM|_{[0,1]})\] and let  
	\[\D= \begin{pmatrix*}
	0 & \P^*\\
	\P & 0
	\end{pmatrix*}\in\Cst(\Pseu^0_H(M))\otimes\End(\E\oplus \F).\]
	Note that elements of \(\Cst(\Pseu^0_{H}(M))\) act as multipliers on \(\Cst(\T_HM|_{[0,1]})\). By \cite{erp2015groupoid}*{25} there is a homomorphism \(\Cont(M)\to \Cst(\Pseu^0_H(M))\), \(f\mapsto \f\), where \(\f_t\) is the multiplication operator \(M_f\) on \(L^2(M)\) for \(t>0\) and \(\f_0\) is the fibred distribution given by \((f(x)\delta_x)_{x\in M}\). Therefore, there is a diagonal representation \(\phi\colon \Cont(M)\to\mathcal{L}(\mathcal{E})\). 	
	Moreover, \(\D\) acts as an odd operator on \(\mathcal{E}\).	
	We verify that \(\phi(f)(\D^2-1)\) and \([\phi(f),\D]\) lie in \(\mathcal{K}(\mathcal{E})\) for all \(f\in \Cont(M)\). 
	Compute \[\phi(f)(\D_t^2-1)=\phi(f)\begin{pmatrix*}
	\P_t^*\P_t-1&0\\
	0 & \P_t^*\P_t-1
	\end{pmatrix*}.\]
	At \(t=0\), this defines a matrix over \(\Cst(T_HM)\). Then the claim follows from \cref{res:kernelcompact}. For \([\phi(f),\D]\), note that this vanishes at \(t=0\) as functions in \(\Cont(M)\) define central multipliers of \(\Cst(T_HM)\). Therefore, \cref{res:kernelcompact} applies, too.	
	Note that one can restrict \((\mathcal{E},\phi,\D)\) to \(t\geq 0\) and denote the restricted classes by \([\D_t]\). 
	The long exact sequence in \(\KK\)-theory implies that
	\[\ev_0^H\colon \KK(\Cont(M),\Cst(\T_HM|_{[0,1]}))\to \KK(\Cont(M),\Cst(T_HM))\]
	is invertible. Similar to before, one obtains a map
	\[\ev_1^H\circ(\ev_0^H)^{-1}\colon \KK(\Cont(M),\Cst(T_HM))\to \KK(\Cont(M),\C).\]
	It satisfies \(\ev_1^H\circ(\ev_0^H)^{-1}([\D_0])=[\D_1]\). 
	Let \([u]\in\KK(\C,\Cont(M))\) be the class induced by the unital embedding \(\C\to \Cont(M)\). It is well-known that \([u]\otimes [\D_1]\in\KK(\C,\C)\cong\Z\) is the class representing the Fredholm index of \(P\) (see \cite{Cuntz-Meyer-Rosenberg}*{(12.7)}).
	
	Similarly, there is a corresponding map for Connes' tangent groupoid
	\[\ev_1\circ(\ev_0)^{-1}\colon \KK(\Cont(M),\Cont_0(T^*M))\to \KK(\Cont(M),\C).\]
	The arguments in \cref{sec:adiabatic} involving the adiabatic groupoid can be adapted to show that there is a Connes--Thom isomorphism \(\Psi\) such that the following diagram commutes
	\begin{equation}\label{eq:commdiagconnes}
	\begin{tikzcd}
	\KK(\Cont(M),\Cont_0(T^*M)) \arrow[r, "\Psi"] \arrow[d, "\ev_1\circ(\ev_0)^{-1}"]& \KK(\Cont(M),\Cst(T_HM))\arrow[dl, "\ev_1^H\circ(\ev_0^H)^{-1}"]\\
	\KK(\Cont(M),\C).
	\end{tikzcd}
	\end{equation}	
	As in \cite{Baumvanerp}, one can use the natural transformation \(\alpha_M\) that maps  \(\KK(\C,A)\) to \(\KK(\Cont(M),A)\) for \(\Cont(M)\)-algebras \(A\). It makes
	\begin{equation*}
	\begin{tikzcd}
	\KK(\C,\Cont_0(T^*M)) \arrow[r, "\Psi"] \arrow[d, "\alpha_M"]& \KK(\C,\Cst(T_HM))\arrow[d, "\alpha_M"]\\
	\KK(\Cont(M),\Cont_0(T^*M)) \arrow[r, "\Psi"] & \KK(\Cont(M),\Cst(T_HM))
	\end{tikzcd}
	\end{equation*}	
	commute. Since the principal cosymbol of \(P\) was assumed to be unitary, we can take \(\Q_0=\P_0^*\) to construct \(\sigma_H(P)\in \K_0(\Cst(T_HM))\). This class can be represented in \(\KK(\C,\Cst(T_HM))\) using the Fredholm module given by \begin{align*}\mathcal{E}_0&=\Gamma_0(E)\otimes_{\Cont(M)}\Cst(T_HM)\oplus\Gamma_0(F)\otimes_{\Cont(M)}\Cst(T_HM),\\
	\D_0&= \begin{pmatrix*}
	0 & \P_0^*\\
	\P_0 & 0
	\end{pmatrix*}.
	\end{align*}
	Its class is mapped to \([\D_0]\) by \(\alpha_M\). Together with the commutativity of \eqref{eq:commdiagconnes}, this shows that
	\([\ind P]=\ev_1\circ(\ev_0)^{-1}\circ\alpha_M\circ\Psi^{-1}(\sigma_H(P))\). Therefore, 
	the claim is reduced to the Atiyah--Singer Index Theorem. 
\end{proof}
\begin{bibdiv}
	\begin{biblist}*{labels={alphabetic}}
		\bibselect{extrareferences}
	\end{biblist}
\end{bibdiv}
\end{document}